\documentclass{amsart}

\usepackage{amsmath,amssymb, amsthm,epsfig}
\usepackage[usenames,dvipsnames]{xcolor}
\usepackage{hyperref} 
\usepackage{verbatim} 
\usepackage{color}     
\usepackage{graphicx}
\usepackage{mathrsfs}
\usepackage{tikz,tikz-cd}

\usepackage{lineno}


\newcommand\R{\mathbb R}
\newcommand\N{\mathbb N}
\newcommand\C{\mathbb C}
\newcommand\D{\mathbb D}

\newcommand\CC{{\mathcal C}}
\newcommand\T{{\mathcal T}}
\newcommand\V{V}
\newcommand\E{E}
\newcommand\F{F}

\newcommand\Julia{\mathcal{J}}

\newcommand\G{G}
\newcommand\g{g}

\newcommand\sphere{\mathbb S^2}
\newcommand\rs{\widehat{\mathbb{C}}}

\newcommand\QS{\text{QS}}
\newcommand\homeo{\text{Homeo}(\Lambda_H)}
\newcommand\homeoplus{\text{Homeo}^+(\Lambda_H)}
\DeclareMathOperator{\Int}{int}

\newtheorem{theorem}{Theorem}[section]
\newtheorem{proposition}[theorem]{Proposition}
\newtheorem{lemma}[theorem]{Lemma}
\newtheorem{corollary}[theorem]{Corollary}

\theoremstyle{remark}        

\newtheorem{definition}[theorem]{Definition}
\newtheorem{remark}[theorem]{Remark}

\begin{document}

\title[Dynamical gaskets]{On dynamical gaskets generated by\\ rational maps, Kleinian groups,\\ and Schwarz reflections}

\begin{author}[R.~Lodge]{Russell Lodge}
\address{Department of Mathematics and Computer Science, Indiana State University, Terre Haute, IN 47809, USA}
\email{russell.lodge@indstate.edu}
\end{author}

\begin{author}[M.~Lyubich]{Mikhail Lyubich}
\address{Mathematics Department and Institute for Mathematical Sciences, 100 Nicolls Rd, Stony Brook, NY 11794, USA}
\email{mlyubich@math.stonybrook.edu}
\end{author}

\begin{author}[S.~Merenkov]{Sergei Merenkov}
\address{Department of Mathematics, City College of New York, New York, NY 10031; and Mathematics Program, CUNY Graduate Center, New York, NY 10016}
\email{smerenkov@ccny.cuny.edu}
\end{author}

\begin{author}[S.~Mukherjee]{Sabyasachi Mukherjee}
\address{School of Mathematics, Tata Institute of Fundamental Research, 1 Homi Bhabha Road, Mumbai 400005, India}
\email{sabya@math.tifr.res.in}

\end{author}

\thanks{M.~Lyubich was supported by NSF grants DMS-1600519 and 1901357, and by a Fellowship
from the Hagler Institute for Advanced Study.\\
\indent S.~Merenkov was supported by NSF grant DMS-1800180.\\
\indent S.~Mukherjee was supported by the Department of Atomic Energy, Government of India, under project no.12-R\&D-TFR-5.01-0500, an endowment of the Infosys Foundation, and SERB research project grant  SRG/2020/000018.}

\date{\today}
\begin{abstract}
  According to the Circle Packing Theorem,  any triangulation of the Riemann sphere can be realized as a nerve of
  a circle packing. Reflections in the dual circles generate a Kleinian group $H$ whose limit set is
  a generalized Apollonian gasket $\Lambda_H$.  We design a surgery that relates $H$
  to a rational map $g$ whose Julia set $\Julia_g$ is (non-quasiconformally)  homeomorphic to $\Lambda_H$.
  We show for a large class of triangulations, however, the groups of quasisymmetries of $\Lambda_H$ and $\Julia_g$ are  isomorphic
  and coincide with the corresponding groups of self-homeomorphisms. Moreover, in the case of $H$,
  this group is equal to the group of M\"obius symmetries of $\Lambda_H$, which is the semi-direct product of $H$
  itself and the group of M\"obius symmetries of the underlying circle packing.  
  In the case of the tetrahedral triangulation (when $\Lambda_ H$ is the classical Apollonian gasket),
  we give a quasiregular model for the above actions which is quasiconformally equivalent to $g$
  and  produces $H$  by a David surgery.
  We also construct a mating between the group
  and the map coexisting in the same dynamical plane and show that it can be generated by Schwarz reflections
  in the deltoid and the inscribed circle.
 \end{abstract}

\maketitle

\setcounter{tocdepth}{1}

\tableofcontents

\section{Introduction}\label{sec:intro}

In this paper we will further explore the celebrated Fatou-Sullivan Dictionary connecting two branches of Conformal 
Dynamics, iteration of rational maps and actions of Kleinian groups. This dictionary is an indispensable source of 
new notions, conjectures, and  arguments, but it does not provide an explicit  common frame for the two areas.
However, in the 1990's Bullet and Penrose \cite{BP} discovered a phenomenon of explicit mating of two actions,
of a quadratic polynomial with a modular group, induced by a single algebraic correspondence on two
parts of its domain. And  recently, an abundant supply of similar matings generated by the Schwarz reflection dynamics 
was produced by Lee and Makarov in collaboration with  two of the  authors of this paper \cite{LLMM1,LLMM2,LLMM3}.   
It turns out that this machinery is relevant to the theme of this paper

Our main example is the classical Apollonian gasket $\Lambda_H$, which is the limit set of a Kleinian reflection group $H$ generated by reflections in four
pairwise kissing circles, see~Figure~\ref{F:Apollonian}. 
In this paper we demonstrate  that this limit set can be topologically realized as the Julia set  $\Julia(g)$ of a hyperbolic 
rational function \cite[\S 4]{BEKP}. In fact,  we construct $g$ in two different ways: by applying the Thurston Realization Theorem
and by constructing an explicit quasiregular model for $g$. (The subtlety of the problem has to do with the fact that $g$ is hyperbolic while $H$ is parabolic, so $\Lambda_H$ and $\Julia(g)$ are not quasiconformally equivalent.) Moreover, we show that  $H$ and $g$ can be  mated by means of the Schwarz reflection in the deltoid and an inscribed circle to produce a hybrid dynamical system alluded above. 
This mating is based upon a surgery replacing   the action of $\bar z^2$  in the disk by
the modular group action, using the classical Minkowski ``question mark function".  This surgery is not quasiconformal,
but it has David regularity. We show this by  direct geometric estimates through the  Farey algorithm.
(Note that a David relation between hyperbolic and parabolic dynamics appeared first in Ha\"issinski's work, see \cite{BF14}.)

Our motivating problem was a problem of 
quasisymmetric classification of fractals,
which attracted a good deal of attention in recent years; see, e.g., \cite{BK02, BM12, BLM, HP12, Me, kW08}. A basic quasiconformal invariant of a fractal $\Julia$ is the group $\QS(\Julia)$ of  its quasisymmetries (``quasisymmetric Galois group"). 
A natural class of fractals to test this problem is the class of Julia sets and limit sets of Kleinian groups. In papers \cite{BLM,LM}, the group $\QS(\Julia)$ was studied for a class of Sierpi\'nski carpet Julia sets and for the Basilica, yielding strikingly different rigidity/flexibility behavior. In this paper we describe $\QS(\Julia)$  for gasket Julia gaskets, exhibiting yet another phenomenon.

Namely,  we prove that $\QS(\Julia(g))$ is a countable group isomorphic to the extension  of $H$ by the tetrahedron symmetry group
(which is the full group of M\"obius symmetries of $\Lambda_H$). Moreover, $\QS(\Julia(g))$  coincides with the group $\textrm{Homeo}(\Julia(g))$ of all orientation preserving self-homeomorphisms of $\Julia(g)$ and $\Lambda_H$. 
It is quite different from the cases studied earlier:

\smallskip 
-- In the Sierpi\'nski carpet case~\cite{BLM},  $\textrm{Homeo}(\Julia)$ is uncountably infinite, while $\QS(\Julia)$ is finite 
  and coincides with the group of M\"obius symmetries of $\Julia$. This is a quasisymmetrically rigid case. 

\smallskip
-- In the Basilica case~\cite{LM}, the  topological and quasisymmetry groups are different but both are uncountably infinite. Moreover,
 they have the same countable ``core" which is an index two extension of the Thompson circle group.
 (The groups are obtained by taking the closures of the core in appropriate topologies.)

\smallskip
Going back to the Apollonian case, we see that though the group $\QS$ does  not quasiconformally distinguish
the Julia set from the corresponding Apollonian limit set, these sets are not quasiconformally equivalent  (as we have already pointed out).
In fact, {\em we do not know a single non-trivial (i.e., different from a quasi-circle) example of a Julia set which is quasiconformally equivalent to 
a limit set of a Kleinian group}.

\begin{figure}[h]
\centerline{\includegraphics[width=50mm]{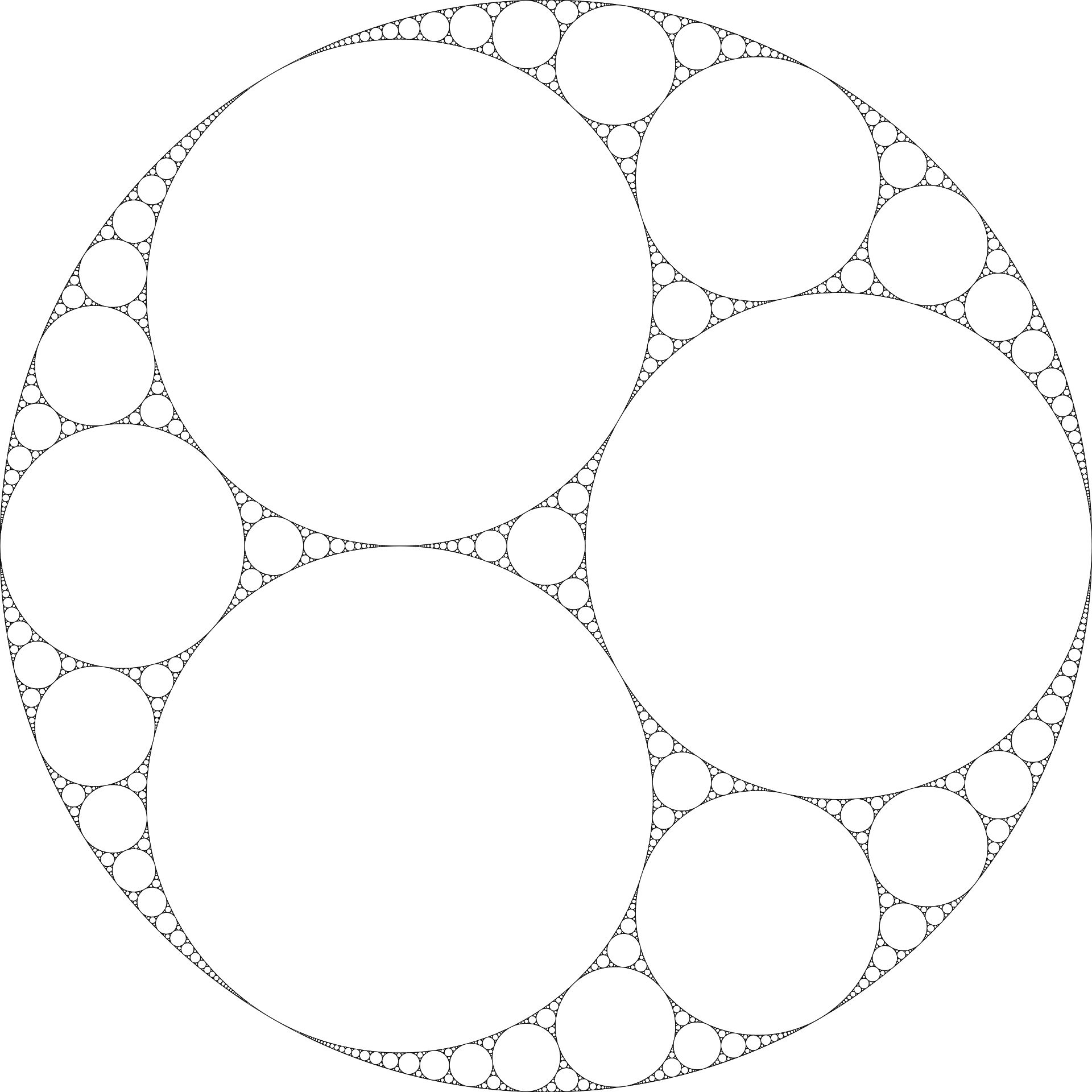}}
\caption{Classical Apollonian gasket.}
\label{F:Apollonian}
\end{figure}

\subsection{The outline}

We carry out the discussion for a family of Kleinian groups generalizing the classical Apollonian gasket. Namely, given an arbitrary triangulation of the sphere,  by the Circle Packing theorem it can be realized as the adjacency graph of some circle packing, unique up to M\"obius transformations. Consider the dual circle packing comprising the circles passing through tangency points
 of various triples of kissing circles. (The original circles are associated to the vertices of the triangulation, while the dual ones are associated to the faces.) The Kleinian reflection group generated by all reflections in the dual circles is our {\em (generalized)  Apollonian group} and its limit set is the {\em (generalized) Apollonian gasket}.\footnote{Below, we will often skip the adjective ``generalized"} Note that it is a {\em cusp group}: all components of its domain of discontinuity are round disks, and the corresponding quotient Riemann surfaces are punctured spheres. The classical Apollonian gasket corresponds to the tetrahedral triangulation, and the associated Kleinian group is a \emph{maximal cusp group}. Section~\ref{sec:TrianglesToGaskets} details this  construction.

In Section~\ref{sec:RoundGasketSym} we prove that, in the case when a triangulation is irreducible in the sense that any triangle (i.e., a $1$-cycle composed of three edges) bounds a face, every topological symmetry of an Apollonian gasket can be written as a composition of finitely many anti-conformal reflections as above and a M\"obius symmetry of the circle packing. Moreover, this  group splits into a semi-direct product of the above (Theorem~\ref{thm:groupSplits}). We conclude that  the M\"obius, topological, and quasisymmetry groups of the Apollonian gasket are all the same.

Section~\ref{group_equiv_map_sec} is devoted to the construction of a piecewise anti-M\"obius map $N$ on the Riemann sphere cooked up from the generators of the Apollonian group. This map, which we call the \emph{Nielsen map}, is orbit equivalent to the Apollonian group, and enjoys Markov properties when restricted to the limit set.

In Section~\ref{Sec:RoundGasketsNoninvertible} we carry out a surgery that turns the Nielsen map to an orientation reversing branched cover $\mathcal{G}$ that coincides with the Nielsen map on the complement of the circle packing and is topologically equivalent to $\D \rightarrow \D$,  $z\mapsto \overline{z}^k$, on each disk of the packing (with $k$ depending on the disk). By  construction, its Julia set coincides with the Apollonian gasket, on which it agrees with the Nielsen map.

In Section~\ref{sec:GasketJulia} we use W. Thurston's  Realization Theory to show that the above map $\mathcal{G}$ is equivalent to an anti-rational map $g$ (Proposition~\ref{prop:NoObstructionNerveGamma}). We then apply a Pullback Argument in Theorem~\ref{Thm_IdentificationOfLimitAndJulia} to show that, in fact, the Julia set $\Julia(g)$ is homeomorphic to the limit set $\Lambda_H$, and $g\vert_{\Julia(g)}$ is topologically conjugate to $\mathcal{G}\vert_{\Lambda_H}$. This gives one more manifestation of the intimate connection between (anti-)rational dynamics and Kleinian (reflection) actions in the spirit of the Fatou-Sullivan dictionary (Corollary~\ref{prop:group_anti_rat_conjugate}).

In Section~\ref{sec:JuliaQuasisym} we establish our main result,  Theorem \ref{thm:homeosAsQuasisym}, by showing that each topological symmetry of the Julia set of the anti-rational map $g$ is induced by a piecewise dynamical homeomorphism, and that such homeomorphisms are in fact quasisymmetries. Therefore, a complete account of the quasisymmetry group of $\Julia(g) $ is  given. Let us emphasize once again that 
due to the presence of tangent circles in the round gasket such sets are not quasisymmetric to the corresponding Julia sets. Thus, the ``obvious" way of identifying the quasisymmetry groups fails.  

In Section~\ref{affine_model_sec} we describe an alternative construction of the cubic anti-rational map $g_\T$, corresponding to the tetrahedral triangulation $\T$, by producing a quasiregular (in fact, piecewise affine outside the critical Fatou components) model and applying the Measurable Riemann Mapping Theorem. 

In Section~\ref{sec:David} we develop a technique to produce matings between a rational map and the Nielsen map of the triangle reflection group using David surgery.

Finally in Section~\ref{mating_sec}, we apply the main result of Section~\ref{sec:David} on the cubic anti-rational map $g_\T$ (constructed in Section~\ref{affine_model_sec}) to recover the Nielsen map of the classical Apollonian gasket reflection group. Along the way, we construct a ``hybrid dynamical system'' that binds together the Nielsen map of the classical Apollonian reflection group and anti-rational map $g_\T$ on the same dynamical plane, and explicitly characterize this hybrid dynamical system as the \emph{Schwarz reflection map} with respect to a deltoid and an inscribed circle.

\subsection{Further developments}

The topological connection between generalized Apollonian gasket limit sets and gasket Julia sets of critically fixed anti-rational maps
  discovered in this paper has been further developed in several follow-up papers.
  In particular, it was generalized from triangulations to arbitrary polyhedral tilings in \cite{LLM},
  \footnote{Note that Kleinian groups associated with such tiling have also appeared in the work of Kontorovich and Nakamura \cite{KN}.}
  and then our construction of Nielsen maps was applied to produce a dynamical correspondence between limit sets of ``kissing reflection groups''
  and Julia sets of critically fixed anti-rational maps. It led, in particular,  to a full classification of anti-rational maps in terms of Tischler graphs. Such a classification was independently given by Lukas Geyer \cite{Geyer}
(without relating it to groups). Like in our paper,
the method of using certain invariant arcs to justify the absence of a Thurston obstruction played a key role (in the spirit of Pilgrim and Tan Lei \cite{PT}). 
 However, the quasisymmetry groups of the corresponding limit and Julia sets remain unknown in general. 

The David surgery machinery  introduced  in this paper has also been developed further.
  Namely, in \cite{LMMN}, the David Extension Lemma~\ref{L:DavidExt} has been generalized to a broader class of situations
  (by means of  dynamical  techniques instead of  number-theoretic tools used here),
  and the surgery techniques of Theorem~\ref{T:DavidS} and Proposition~\ref{Nielsen_recovery_prop}
  have been adapted to show that the homeomorphisms between Julia sets of critically fixed anti-rational maps and limit sets of kissing reflection groups
 extend to David homeomorphisms of the plane.
  The David surgery also played a key role in \cite{LMMN}  in the proof of a Combination Theorem
  for suitable Kleinian reflection groups and anti-holomorphic polynomials.

\subsection*{Acknowledgement} During the work on this project the third author held a vi\-si\-ting position at the Institute for Mathematical Sciences at Stony Brook University. He thanks this Institute for its constant hospitality. The first and last authors would like to acknowledge the support of the Institute for Mathematical Sciences at Stony Brook University during part of the work on this project. Thanks are due to Dimitrios Ntalampekos for providing the authors with useful references on quasiconformal removability. We would also like to thank the anonymous referee for careful reading of the paper and making useful comments that led to improvement of exposition.

\section{Round gaskets from triangulations}\label{sec:TrianglesToGaskets}
All graphs are assumed to be \emph{simple}, i.e., no edge connects a vertex to itself and there is at most one edge connecting any two vertices.  
A \emph{triangulation} $\T$ of $\sphere$ is a finite embedded graph that is maximal in the sense that the addition of one edge results in a graph that is no longer both embedded and simple.
Denote the sets of vertices, edges, and faces of some triangulation $\T$ of $\sphere$ by $\V_\T$, $\E_\T$, and $\F_\T$ respectively. By convention we assume that $|\V_\T|\geq 4$ to avoid degeneracies. We also assume that two faces share at most one edge. Two embedded graphs in $\sphere$  are said to be \emph{isotopic} if there is an orientation preserving homeomorphism of $\sphere$ sending the vertices and edges of one graph to the other.

A \emph{circle packing} $\mathcal C$ is a finite collection of closed geometric disks in $\rs$ with pairwise disjoint interiors whose union is connected. The \emph{nerve} of $\mathcal C$ is a finite embedded graph whose vertices correspond to disks, and two vertices are connected by an edge if and only if the corresponding disks are tangent. Up to isotopy, we may assume that the vertices of the nerve of a circle packing $\mathcal C$ are the spherical centers of the disks in $\mathcal C$ and the edges are the unique spherical geodesic segments connecting the corresponding centers through the point of tangency.
There is evidently a bijection between edges and points of tangency in the circle packing.   If $\T$ is a triangulation of $\rs$, by the Circle Packing theorem, also known as the Koebe--Andreev--Thurston theorem~\cite[Corollary~13.6.2]{T}, we may assume that after an isotopy $\T$ is a nerve of a circle packing, and this circle packing is unique up to M\"obius transformations. 
In what follows, we fix such a circle packing and denote it by $\mathcal C_\T$. Whenever necessary, we specify a particular normalization that $\mathcal C_\T$ satisfies.  

Each face $f\in \F_\T$ contains a unique complementary component of the circle packing $\mathcal C_\T$ called an \emph{interstice} associated to $f$, which we denote by $\Delta_f$. Every interstice is an open Jordan region bounded by three circular arcs that may only intersect at their endpoints. For a given interstice $\Delta_f$, the three boundary arcs lie in three circles in $\mathcal C_\T$, and denote by $D_1, D_2, D_3$  the corresponding open disks enclosed by these (oriented) circles. Let $C$ be the unique circle that passes through the three mutual tangency points of $D_1, D_2, D_3$. We say that such a circle $C$ \emph{corresponds to} $f$. (It can also be described as a spherical circle inscribed in the face $f$, and hence circumscribing the interstice $\Delta_f$.) In this way, the collection of all circles corresponding to faces is a circle packing orthogonal to $\mathcal C_\T$, and the nerve of this orthogonal packing is the planar dual of $\T$ (see Figure \ref{F:apollonianLimit}). Denote by $R_f$ the anti-M\"obius reflection with respect to this unique circle $C$. Observe that $R_f$ fixes the three points of mutual tangency of $D_1, D_2, D_3$  in the closure $\overline{\Delta_f}$ of $\Delta_f$, and 
$$
R_f(\overline{\Delta_f})=\left(\bigcup_{g\neq f}\overline{\Delta_g}\right)\cup\left(\bigcup_{D\neq D_1, D_2, D_3}\overline{D}\right),
$$ 
where $g$ ranges over the faces of $\T$, and $D$ over the open disks in $\mathcal C_\T$.

 Let $v$ be any vertex in $\T$ and let $C_v$ be the corresponding circle in the circle packing $\T$ centered at $v$. We denote the open disk in $\widehat{\C}$ enclosed by the oriented circle $C_v$ by $D_v$.

Let $H_\T$ be the group generated by all reflections $R_f$, i.e., 
$$
H_\T=\langle R_f, f\in \F_\T \rangle.
$$ 
For convenience we omit the subscript and simply write $H$ when $\T$ is understood. 
If $f_1, f_2, \dots, f_k$ is the full list of (distinct) faces of $\mathcal T$, the group $H$ is finitely presented with the presentation
\begin{equation}\label{E:HPres}
H=\langle R_{f_1}, R_{f_2},\dots, R_{f_k}\colon  R_{f_1}^2=R_{f_2}^2=\dots =R_{f_k}^2={\rm id}\rangle.
\end{equation}
The \emph{limit set} $\Lambda_H$ of $H$ is defined to be the minimal nonempty $H$-invariant compact subset of $\rs$, and the \emph{regular set} of $H$ is given by $\Omega_H:=\rs\setminus \Lambda_H$. It is easy to show that 

\begin{equation}\label{E:LambdaUnion}
\Lambda_H=\overline{\bigcup_{h\in H}\bigcup_{v\in\V_\T}h\cdot C_v}
\end{equation}
\begin{equation}\label{E:OmegaUnion}
 \Omega_H=\bigcup_{h\in H}\bigcup_{v\in\V_\T}h\cdot (D_v).
\end{equation}

\begin{definition}
A set of this form is called a \emph{round gasket} or an \emph{Apollonian gasket}.
A \emph{gasket} is defined to be any subset of the Riemann sphere that is homeomorphic to a round gasket.
\end{definition}

\begin{figure}[h]
\centerline{\includegraphics[width=100mm]{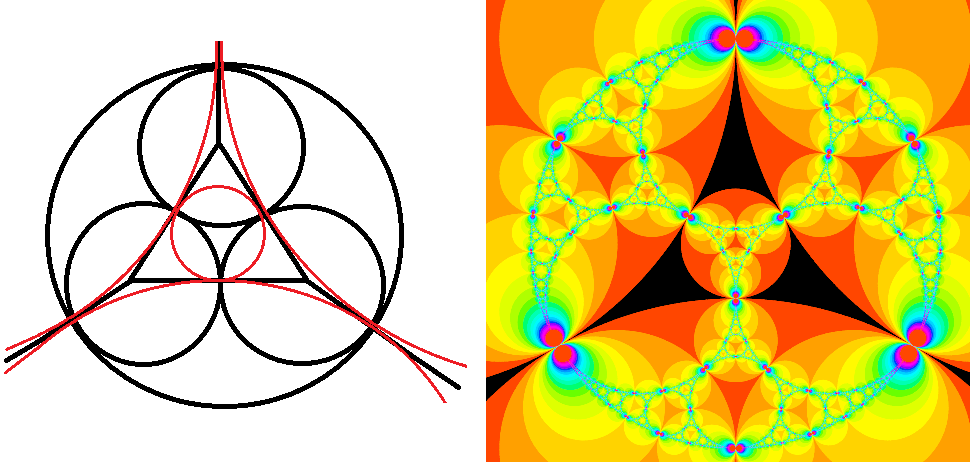}}
\caption{The tetrahedral graph $\T$ and its corresponding circle packing consisting of four circles appears on the left in black. The generators of the group $H$, which also happen to have a tetrahedral configuration, are depicted in red. The limit set $\Lambda_H$ appears on the right. 
 }
\label{F:apollonianLimit}
\end{figure}

\begin{remark}
Evidently any two peripheral topological disks (i.e., topological disks such that removal of their boundaries does not separate the gasket) in a gasket may touch at most at one point because the same property holds for round gaskets. For the same reason, at most two such disks may touch at the same point.
\end{remark}

The \emph{classical Apollonian gasket} in Figure~\ref{F:Apollonian} and Figure~\ref{F:apollonianLimit} is the limit set obtained using the construction above when $\T$ is the tetrahedral triangulation. Namely, $\T$ has four vertices and each pair of vertices is connected by an edge. The circle packing $\mathcal C_\T$ consists of four pairwise mutually tangent disks. The limit set $\Lambda_H$ in this case is the residual set obtained from $\rs$ by removing the interiors of the disks in the circle packing $\mathcal C_\T$, the largest open disk in each interstice, and in each resulting interstice, ad infinitum.

\section{Round gasket symmetries} \label{sec:RoundGasketSym}

This section describes properties of topological symmetries of round gaskets. We give an explicit description of the group of such symmetries for a large class of triangulations. In contrast to the Basilica Julia set, each topological symmetry of a round gasket is topologically extendable to a homeomorphism of the sphere.

\begin{lemma}\label{L:Extension}
Suppose that $\Lambda$ and $\Lambda'$ are compact subsets of $\rs$ whose complementary components have closures homeomorphic to closed topological disks. Moreover, assume that the sequences of the diameters of the complementary components of $\Lambda$ and $\Lambda'$ go to 0.    Then any homeomorphism $\xi\colon \Lambda\to\Lambda'$ can be extended to a global homeomorphism of $\rs$.

In particular, if $\xi:\Lambda_H\to\Lambda_H$ is a homeomorphism of a limit set $\Lambda_H$, then $\xi$ can be extended to a homeomorphism of $\rs$. 
\end{lemma}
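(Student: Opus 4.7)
The plan is to extend $\xi$ one complementary component at a time and then verify that the resulting global map is a homeomorphism. Write $\{U_i\}$ for the complementary components of $\Lambda$ and $\{U_j'\}$ for those of $\Lambda'$; by hypothesis each is an open Jordan disk whose boundary is a Jordan curve lying in $\Lambda$ (respectively $\Lambda'$).

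The main topological step is to produce a bijection $\sigma$ on indices with $\xi(\partial U_i) = \partial U_{\sigma(i)}'$. For this I would establish the intrinsic characterization (compatible with the notion of peripherality from the Remark above): a Jordan curve $J \subset \Lambda$ equals $\partial U_i$ for some $i$ if and only if $\Lambda \setminus J$ is connected. The ``if'' direction is a quick contrapositive: if $J$ is non-peripheral, both Jordan disks $D_1, D_2$ of $\rs \setminus J$ meet $\Lambda$, and $\Lambda \setminus J = (\Lambda \cap D_1) \sqcup (\Lambda \cap D_2)$ is a separation. The ``only if'' direction is the crux: given $J = \partial U_i$ and a hypothetical separation $\Lambda \setminus J = A \sqcup B$, I would extend it to a separation of the Jordan disk $V := \rs \setminus \overline{U_i}$ by assigning each $\overline{U_k} \cap V$ (for $k \neq i$) to the side containing the connected set $\partial U_k \cap V$; the null-sequence hypothesis on diameters is used to ensure the resulting unions $A', B'$ are relatively closed in $V$, which contradicts the connectedness of the Jordan domain $V$. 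Since $\xi$ preserves connectedness, this intrinsic characterization yields the desired bijection $\sigma$.

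Given $\sigma$, each boundary homeomorphism $\xi|_{\partial U_i} : \partial U_i \to \partial U_{\sigma(i)}'$ extends to a homeomorphism $\xi_i : \overline{U_i} \to \overline{U_{\sigma(i)}'}$ of the closed topological disks by a classical Schoenflies-type cone construction. Define $\Xi : \rs \to \rs$ to agree with $\xi$ on $\Lambda$ and with each $\xi_i$ on $\overline{U_i}$; the definitions match on every $\partial U_i$, so $\Xi$ is a well-defined bijection.

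Finally, one checks that $\Xi$ and $\Xi^{-1}$ are continuous. Continuity at interior points of any $U_i$ is immediate. At $p \in \Lambda$ and given $\epsilon > 0$: continuity of $\xi$ provides some $\delta_0 > 0$ with $\xi(B(p, \delta_0) \cap \Lambda) \subset B(\xi(p), \epsilon/2)$; since only finitely many $\overline{U_j'}$ have diameter $\geq \epsilon/2$, I shrink $\delta$ to keep $B(p, \delta)$ disjoint from the preimages of those ``large'' $\partial U_j'$ not passing through $\xi(p)$, and use continuity of the finitely many $\xi_i$ with $p \in \partial U_i$ and $\overline{U_{\sigma(i)}'}$ large for the remaining cases; the ``small'' $U_j'$'s are controlled by their diameters being below $\epsilon/2$. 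The symmetric argument applied to $\xi^{-1}$ handles $\Xi^{-1}$. The principal obstacle is the ``only if'' half of the peripheral-curve characterization, with its separation-extension argument sketched above.
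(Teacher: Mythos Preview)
Your approach is essentially the same as the paper's: both identify the boundaries $\partial U_i$ intrinsically as the peripheral Jordan curves of $\Lambda$, deduce that $\xi$ matches complementary components, extend across each by a Schoenflies-type argument, and invoke the null-sequence of diameters for global continuity. The paper is terser---it simply asserts the peripheral characterization (``it is easy to see'') and the continuity of the extension (``we conclude'')---whereas you supply more detail on both points and correctly flag the only-if direction of the peripheral characterization as the place requiring the most care.
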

\begin{proof} 
The boundary circle of each complementary component of $\Lambda$ or $\Lambda'$ is \emph{peripheral}, i.e., it is  a topological circle in $\Lambda$, respectively $\Lambda'$, whose removal does not separate $\Lambda$, respectively $\Lambda'$. It is easy to see that the boundary circles of complementary components of $\Lambda$ and $\Lambda'$ form the full family of peripheral circles in $\Lambda$, respectively $\Lambda'$. Thus, since $\xi$ is a homeomorphism of $\Lambda$ onto $\Lambda'$, this map takes each peripheral circle to another peripheral circle.
Therefore, we can extend $\xi$ homeomorphically into each complementary component of $\Lambda$ to a homeomorphism between the closures of complementary components of $\Lambda$ and $\Lambda'$. Since diameters of peripheral circles go to 0, we conclude that the extension of $\xi$ above is a global homeomorphism.
\end{proof}

It is clear that if $\xi$ can be extended to an orientation preserving homeomorphism, it cannot be extended to an orientation reversing one, and vice versa. We say that $\xi:\Lambda_H\to\Lambda_H$ is \emph{orientation preserving} if it can be extended to an orientation preserving homeomorphism $\rs\to\rs$. Denote by $\homeoplus$ the group of orientation preserving homeomorphisms of $\Lambda_H$, and by $\homeo$ the group of all homeomorphisms of $\Lambda_H$.

Let $D$ be a component of $\Omega_H$. Denote by $|h|$ the word length of $h\in H$ with respect to its generating set from~\eqref{E:HPres}. The \emph{generation of $D$} is defined to be the minimal word length of $h\in H$ so that $\overline{h(D)}$ is a disk in the circle packing $\mathcal C_\T$.

\begin{lemma}[Decreasing generation]\label{Lem:DecreasingGeneration}
Let $D$ be a component of $\Omega_H$ that is a subset of some face $f\in\F_\T$. Then the reflection $R_f$ reduces the generation of $D$ by one.
\end{lemma}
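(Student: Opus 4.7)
The plan is to prove equality by verifying the two inequalities $\text{gen}(R_f(B))\ge n-1$ and $\text{gen}(R_f(B))\le n-1$, where $n$ denotes the generation of $B$. First I would observe that $n\ge 1$: a generation-$0$ component is one of the packing disks $D_v\in\mathcal{C}_\T$, and no such disk is contained in a single face. In fact, since $B$ is a component of $\Omega_H$ lying in $f$, and the components of $\Omega_H$ are either disks in $\mathcal{C}_\T$ or contained in some interstice $\Delta_g$, the hypothesis forces $B\subset\Delta_f$.

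The inequality $\text{gen}(R_f(B))\ge n-1$ is the easy direction and follows immediately from minimality of word length: if $h''\in H$ realizes $\text{gen}(R_f(B))=m$, then $h''R_f$ is a word of length at most $m+1$ sending $B$ to a disk of $\mathcal{C}_\T$, so $n\le m+1$. The bulk of the work will go into the reverse inequality. I would take a reduced word $h=R_{g_1}R_{g_2}\cdots R_{g_n}$ of minimal length with $h(B)=D_u\in\mathcal{C}_\T$ and prove that necessarily $g_n=f$. Once this is established, $h'=R_{g_1}\cdots R_{g_{n-1}}$ is a reduced word of length $n-1$ with $h'(R_f(B))=D_u$, yielding $\text{gen}(R_f(B))\le n-1$.

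To prove $g_n=f$, I argue by contradiction and exploit the identity
\[
R_g(\overline{\Delta_g})=X_g,\qquad X_g:=\left(\bigcup_{g'\ne g}\overline{\Delta_{g'}}\right)\cup\left(\bigcup_{D\ne D_1^g,D_2^g,D_3^g}D\right),
\]
from Section~\ref{sec:TrianglesToGaskets}, where $D_1^g,D_2^g,D_3^g$ are the three disks bounding $\Delta_g$; since $R_g$ is an involution, it also maps $X_g$ into $\overline{\Delta_g}$. Assuming $g_n\ne f$, the inclusion $B\subset\Delta_f\subset X_{g_n}$ gives $R_{g_n}(B)\subset\overline{\Delta_{g_n}}$. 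A reverse induction on $k$ then propagates this: because $h$ is reduced, $g_k\ne g_{k+1}$, so $\overline{\Delta_{g_{k+1}}}\subset X_{g_k}$, and therefore $R_{g_k}$ carries $\overline{\Delta_{g_{k+1}}}$ into $\overline{\Delta_{g_k}}$. Iterating down to $k=1$ forces $h(B)\subset\overline{\Delta_{g_1}}$, which is impossible since a closed curvilinear triangle $\overline{\Delta_{g_1}}$ contains no disk of $\mathcal{C}_\T$. The main obstacle is precisely this nesting step: the argument hinges on combining the sharp geometric description of $R_g(\overline{\Delta_g})$ from Section~\ref{sec:TrianglesToGaskets} with the combinatorial constraint, coming from the free-product presentation~\eqref{E:HPres}, that consecutive letters in a reduced word differ.
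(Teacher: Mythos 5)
Your proposal is correct and follows the same strategy as the paper's proof: identify the minimal-length word $h$ carrying $B$ to a packing disk, argue that its last-applied letter (equivalently, the first letter applied to $B$) must be $R_f$, and then cancel that letter. The difference is one of rigor: the paper simply asserts, without further argument, that ``Since $B$ is in the face $f$, it follows that $h=R_f\circ h'$,'' whereas you supply the missing justification. Your nesting argument---assuming $g_n\ne f$ and propagating the containment $R_{g_k}\bigl(\overline{\Delta_{g_{k+1}}}\bigr)\subset\overline{\Delta_{g_k}}$ down the reduced word via the identity for $R_g(\overline{\Delta_g})$ from Section~\ref{sec:TrianglesToGaskets}, until arriving at $h(B)\subset\overline{\Delta_{g_1}}$, which cannot contain a disk of $\mathcal{C}_\T$---is exactly the right way to fill that gap, and it correctly exploits the free-product structure of $H$ (that consecutive letters in a reduced word differ). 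Your easy direction $\operatorname{gen}(R_f(B))\ge n-1$ is also correct and worth stating explicitly, as is the preliminary observation that the hypothesis forces $B\subset\Delta_f$ and $n\ge 1$. In short, this is the paper's argument with the implicit step made explicit.
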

\begin{proof}
If $D$ is a subset of a face, then $\overline{D}$ not a disk in the circle packing $\mathcal C_\T$. Then $D$ is in the $H$-orbit of the interior of some disk $D_0$ in $\mathcal C_\T$, specifically $D=h(D_0)$ for $|h|\geq 1$. Since $D$ is in the face $f$, it follows that $h= R_f\circ h'$, where $|h'|=|h|-1$. Then since $R_f$ is an involution, $|R_f\circ h|=|h'|=|h|-1$ and so the generation of $R_f(D)$ is one less than the generation of $D$.
\end{proof}

Two components of $\Omega_H$ \emph{touch} if their closures intersect. Three components of $\Omega_H$ are said to \emph{mutually touch} if each component touches the other two. We use the same terminology for disks in the original circle packing $\mathcal C_\T$.

A great deal can be said about a homeomorphism of $\Lambda_H$ by understanding its action on the boundary of three mutually touching components of $\Omega_H$.

\begin{lemma}
\label{Lem:DiskTriplesInTriangulations}
Let $D_1,D_2$ be touching components of $\Omega_H$. Then $D_1\cup D_2$ intersects at most one complementary component of the original circle packing $\mathcal C_\T$. 

As a consequence, we also conclude that if $D_1,D_2$, and $D_3$ are mutually touching components of $\Omega_H$, then $D_1\cup D_2\cup D_3$ intersects at most one complementary component of the circle packing $\mathcal C_\T$.
\end{lemma}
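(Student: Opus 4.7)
The plan is to classify each component of $\Omega_H$ according to whether it is an original packing disk or lies inside an interstice, and then to rule out the possibility that two touching components sit in two different interstices by a direct local analysis at the tangency point.

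I first observe that every component $B$ of $\Omega_H$ is a round open disk (since $H$ is generated by circle reflections); moreover, since $\bigcup_v \partial D_v \subset \Lambda_H$ by \eqref{E:LambdaUnion} and $B$ is connected and disjoint from $\Lambda_H$, $B$ must lie in a single component of $\rs \setminus \bigcup_v \partial D_v$. Thus $B$ is either the interior of some original disk $D_v \in \mathcal{C}_\T$, in which case it meets no interstice, or $B$ is contained in a unique interstice $\Delta_f$. Consequently each $B_i$ on its own intersects at most one interstice, and the first claim reduces to excluding the configuration $B_1 \subset \Delta_{f_1}$, $B_2 \subset \Delta_{f_2}$ with $f_1 \neq f_2$.

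Assume such a configuration exists. As two disjoint round open disks that touch, $B_1$ and $B_2$ are externally tangent at a single point $p \in \overline{B_1} \cap \overline{B_2} \subset \overline{\Delta_{f_1}} \cap \overline{\Delta_{f_2}}$. A direct inspection of the nerve shows that, for $f_1 \neq f_2$, the closures $\overline{\Delta_{f_1}}$ and $\overline{\Delta_{f_2}}$ meet in at most one point, and when they do the common point is the tangency $D_u \cap D_v$ of two packing disks corresponding to an edge $(u,v)$ shared by $f_1$ and $f_2$. In particular $p$ is a cusp vertex of $\Delta_{f_1}$.

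The heart of the proof, and the main obstacle, is to show that no round open disk $B \subset \Delta_{f_1}$ which is a component of $\Omega_H$ can have such a cusp $p$ on its boundary. I plan to normalize coordinates so that $p$ is the origin, the common tangent line to $\partial D_u$ and $\partial D_v$ at $p$ is the $x$-axis, and $D_u$, $D_v$ lie above and below, respectively. Locally near $p$, the interstice $\Delta_{f_1}$ is the region sandwiched between the parabolic graphs $y = x^2/(2 r_u)$ and $y = -x^2/(2 r_v)$, where $r_u, r_v$ denote the radii of $D_u, D_v$. Since this cusp has zero opening angle, any circle through $p$ whose tangent line at $p$ is not the $x$-axis would immediately enter $D_u$ or $D_v$; hence $\partial B$ must also be tangent to the $x$-axis at $p$. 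Writing $B$ as the disk of radius $s$ centered at $(0, \pm s)$ and comparing second-order expansions forces $s \geq r_u$ or $s \geq r_v$, which in turn forces $B \supseteq D_u$ or $B \supseteq D_v$. Either containment is impossible because $\partial D_u, \partial D_v \subset \Lambda_H$ while $B \subset \Omega_H$, giving the desired contradiction and forcing $f_1 = f_2$.

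The consequence for three mutually touching components follows by applying the first assertion to each of the pairs $(B_i, B_j)$: if some $B_i$ lies in an interstice $\Delta_f$, then any $B_j$ touching $B_i$ must also lie in $\Delta_f$ or be the interior of one of the original disks (which meet no interstice), so $B_1 \cup B_2 \cup B_3$ intersects at most the single interstice $\Delta_f$.
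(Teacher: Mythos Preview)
Your argument is correct and follows the same outline as the paper's: both proofs reduce to showing that a component of $\Omega_H$ lying inside an interstice $\Delta_f$ cannot have one of the cusp vertices of $\Delta_f$ (a tangency point of two packing circles) in its closure. The paper simply asserts that ``neither $B_1$ nor $B_2$ have closures intersecting $\T$'' and moves on; you actually supply the missing local analysis at the cusp, so in that sense your write-up is more complete than the paper's.

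One small wrinkle in your curvature comparison: once you know $\partial B$ is tangent to the $x$-axis at $p$, so $B$ has center $(0,\pm s)$, you write that comparing second-order expansions ``forces $s\geq r_u$ or $s\geq r_v$.'' Strictly speaking, \emph{every} value of $s$ already gives a contradiction: two open disks tangent to the same line at the same point from the same side are nested, so if the center is $(0,s)$ then either $B\subset \operatorname{int} D_u$ (when $s<r_u$) or $\operatorname{int} D_u\subset B$ (when $s\ge r_u$), and both contradict $B\subset\Delta_{f_1}$ together with $\partial D_u\subset\Lambda_H$. Your text only records the second alternative. This is harmless for the conclusion, but worth tightening. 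An even quicker way to finish is to note that a disk tangent to the $x$-axis at $0$ is symmetric under $x\mapsto -x$, hence contains points arbitrarily close to $p$ on the $x<0$ side; such points lie in $\operatorname{int} D_u$, $\operatorname{int} D_v$, or the opposite cusp $\Delta_{f_2}$, never in $\Delta_{f_1}$.
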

\begin{proof} Suppose that $D_1$ and $D_2$ intersect different complementary components of the packing. Then $D_1$ and $D_2$ are contained in distinct faces of the triangulation $\T$.  Neither $D_1$ nor $D_2$ have closures intersecting $\T$, i.e., the vertices or edges of $\T$, because the only points of $\T$ outside of the interior of the original packing $\mathcal C_{\T}$ lie at points of tangency for packing circles. But then $D_1$ and $D_2$ do not touch, contrary to the hypothesis.
\end{proof}

\begin{lemma}[Small triangles to big triangles]\label{Lem:DisksReflectedToPacking}
Let $D_1,D_2,D_3$ be mutually touching components of $\Omega_H$. Then there exists an orientation preserving $h\in H$ so that the closures of $h(D_1)$, $h(D_2)$, $h(D_3)$ are distinct mutually touching disks in the original circle packing $\mathcal C_\T$.
\end{lemma}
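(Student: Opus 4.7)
The plan is to induct on $N := \mathrm{gen}(B_1)+\mathrm{gen}(B_2)+\mathrm{gen}(B_3)$, the total generation of the triple. In the base case $N=0$ each $B_i$ is already a disk of $\mathcal{C}_\T$, and since mutually touching components of $\Omega_H$ are automatically distinct, the identity $h=\mathrm{id}$ works (and is orientation preserving).

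For the inductive step, I pick some $B_i$ with $\mathrm{gen}(B_i)\geq 1$, which by definition sits inside a unique interstice $\Delta_f$ of $\mathcal{C}_\T$. Applying Lemma~\ref{Lem:DiskTriplesInTriangulations} to each pair $\{B_i,B_j\}$, I conclude that any $B_j$ not in $\mathcal{C}_\T$ must also lie in $\Delta_f$, while any $B_j\in\mathcal{C}_\T$ can touch $B_i\subset\Delta_f$ only through $\partial\Delta_f$, and hence must be one of the three disks $D_a,D_b,D_c$ that bound $\Delta_f$ (as tangency points of $\mathcal{C}_\T$ are distinct in pairs). Applying $R_f$ reduces the generation of each $B_j\subset\Delta_f$ by one (Lemma~\ref{Lem:DecreasingGeneration}) and fixes each of $D_a,D_b,D_c$ setwise (since $R_f$ is the reflection in the circle orthogonal to all three). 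Thus $\sum_j \mathrm{gen}(R_f(B_j))<N$, and the $R_f(B_j)$ remain three mutually touching components of $\Omega_H$. The inductive hypothesis then supplies an orientation preserving $h'\in H$ with $\{h'(R_f(B_j))\}_{j=1}^3$ a triple of distinct mutually touching disks in $\mathcal{C}_\T$.

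At this point the composition $h'\circ R_f$ performs the right action on the triple but is orientation reversing, so parity must be corrected. Here I invoke the irreducibility of $\T$ (the standing hypothesis of Section~\ref{sec:RoundGasketSym}): the three mutually touching disks $\{h'(R_f(B_j))\}$ realize a $1$-cycle of three edges in $\T$, which must bound a unique face $g$. The associated generator $R_g\in H$ is a reflection in the circle orthogonal to the three bounding disks of $g$, and hence fixes each of them setwise. Therefore $h := R_g\circ h'\circ R_f$ lies in $H$, is orientation preserving (an even-length product of anti-M\"obius reflections), and still satisfies $h(B_j)=h'(R_f(B_j))$ for each $j$, closing the induction.

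The main obstacle is precisely this orientation constraint: a naive induction using only $R_f$'s produces an orientation reversing $h$. The decisive move is that irreducibility of $\T$ provides, at every step, a ``free'' reflection $R_g$ whose insertion at the end toggles parity without disturbing the images. Without irreducibility the final triple might form a separating $3$-cycle in $\T$ bounding no face, and this easy parity correction would be unavailable.
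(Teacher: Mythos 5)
Your proof is essentially the same as the paper's: reduce the total generation by repeatedly applying $R_f$ (for a face $f$ met by the triple) until all three closures lie in $\mathcal C_\T$, then fix the parity by postcomposing with the reflection through the dual circle of the face bounded by the resulting triple. The paper phrases this as an iteration rather than a formal induction, but the content is identical, including the observation that $R_f$ fixes the three generation-zero disks bounding $\Delta_f$ setwise.

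Two small remarks. First, your claim that irreducibility is ``the standing hypothesis of Section~\ref{sec:RoundGasketSym}'' is not accurate: reducedness is introduced only later in that section, and Lemma~\ref{Lem:DisksReflectedToPacking} is stated without it. You are nevertheless right that the parity-correction step (postcomposing with $R_g$ where $g$ is the face bounded by the final three disks) tacitly needs the resulting $3$-cycle of $\T$ to bound a face; the paper's own proof makes exactly this assumption in its last sentence without flagging it, and in practice the lemma is only invoked under the reducedness hypothesis. So your observation is a legitimate clarification of the paper, not an added ingredient. Second, you insert the correcting reflection $R_g$ at each inductive step rather than once at the very end; this is harmless but slightly less economical than the paper's version.
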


\begin{proof}
By Lemma \ref{Lem:DiskTriplesInTriangulations}, either all three disks intersect $\T$ in which case we are done, or there is some disk with positive generation and the union of the three disks intersects  a face $f$. Apply $R_f$ to the three disks. This decreases the generation of disks that are subsets of $f$ by Lemma \ref{Lem:DecreasingGeneration}, and  preserves generations of those disks that intersect the boundary $\partial f$, i.e., disks of generation zero. Iterate until all disks have generation zero. If the resulting map $h$ is orientation reversing, post-compose the map $h$ with $R_f$, where $f$ is a face of $\T$ whose boundary is contained in the closure of $h(D_1)\cup h(D_2)\cup h(D_3)$.  
\end{proof}

\begin{lemma}\label{L:ThreeDisks}
Let $\xi$ be an orientation preserving homeomorphism of $\rs$ such that $\xi|_{\Lambda_H}\colon \Lambda_H\to \Lambda$, where $\Lambda$ is a closed subset of $\rs$ with $\Omega=\rs\setminus\Lambda$ being a union of pairwise disjoint open geometric disks. Assume that for three mutually touching open disks $D_1, D_2, D_3$ of $\mathcal C_\T$ we have that $\xi(D_i)=D_i,\ i=1,2,3$. Then $\Lambda=\Lambda_H$ and $\xi|_{\Lambda_H}$ is the identity transformation. 
\end{lemma}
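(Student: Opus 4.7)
The plan is to show that $\xi$ fixes every component of $\Omega_H$ setwise. Once this is established, the induced bijection $\phi$ between the components of $\Omega_H$ and those of $\Omega$ becomes the identity, forcing $\Omega = \Omega_H$ and hence $\Lambda = \Lambda_H$. Moreover, every tangency point $\overline{D} \cap \overline{D'}$ between two tangent components is then fixed as the intersection of two fixed closed sets. Since these tangency points form the $H$-orbit of the parabolic fixed point $p_{12}$, which is dense in $\Lambda_H$, the continuity of $\xi$ will give $\xi|_{\Lambda_H} = \mathrm{id}$.

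To start, since $\xi(\overline{D_i}) = \overline{D_i}$ for $i = 1,2,3$, the three tangency points $p_{ij} = \overline{D_i} \cap \overline{D_j}$ are fixed. At each $p_{ij}$, the complement of $D_i \cup D_j$ locally consists of two cusp-shaped \emph{wedges} on either side of the common tangent line. Any orientation-preserving local homeomorphism that fixes $p_{ij}$ together with $\overline{D_i}$ and $\overline{D_j}$ must preserve each wedge setwise, because the cyclic arrangement of the four local regions around $p_{ij}$ forces it: the only way to swap the wedges while fixing the two disks is an orientation-reversing reflection across the tangent line. In the same spirit, $\xi|_{C_i}$ is an orientation-preserving homeomorphism of $C_i = \partial D_i$ fixing two specified points, and thus maps each arc of $C_i$ between them to itself. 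Combining these two observations, $\xi$ preserves setwise the interstice $\Delta_f$ bounded by $D_1, D_2, D_3$ and its complementary region across the dual circle $C_f$.

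I would then propagate the ``fixed component'' property along the tangency structure by induction. The key inductive step is: whenever two tangent components $D, D'$ of $\Omega_H$ are known to be fixed by $\xi$, the wedge-preservation principle applied at their (now fixed) tangency point, combined with the fact that every edge of the triangulation lies in exactly two faces (cf.\ Lemma~\ref{Lem:DiskTriplesInTriangulations}), forces the two \emph{face-neighbors} of $\{D, D'\}$ --- the components of $\Omega_H$ forming mutually tangent triples with $D$ and $D'$ on either side --- to be fixed as well. Starting from $\{D_1, D_2, D_3\}$ and using Lemma~\ref{Lem:DisksReflectedToPacking} to descend (via a suitable composition of reflections in $H$) from any mutually tangent triple to a triple of original packing disks, one can reach every component of $\Omega_H$ after finitely many inductive steps.

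The main obstacle lies in the inductive step. Since $\Omega$ is not assumed in advance to equal $\Omega_H$, one cannot directly invoke the combinatorial rigidity of $\Omega_H$ to identify the image $\xi(D'')$ of a face-neighbor $D''$, because the configuration inside $\Omega$ could a priori differ in arbitrary ways. The identification must therefore be pinned down using only the tangency structure that $\xi$ preserves together with the wedge-orientation argument, exploiting the specific combinatorial position of each face-neighbor as dictated by the triangulation. This is where the rigidity built into the packing $\mathcal C_\T$ and into the reflection group $H$ is used most delicately: the combinatorics of mutually tangent triples of components, and the way they descend under $H$ to triples of packing disks, must be shown to be enough to force $\xi$ to act as the identity on the combinatorial data at every step of the induction.
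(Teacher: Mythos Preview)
Your proposal has a genuine gap in the inductive step, which you yourself identify but do not close. The wedge-preservation argument at a fixed tangency point $p = \overline{D}\cap\overline{D'}$ only tells you that $\xi$ maps each wedge to itself; it does \emph{not} single out the specific ``face-neighbor'' $D''$ among the infinitely many components of $\Omega_H$ tangent to both $D$ and $D'$ in that wedge. Inside a gasket, the disks tangent to both $D$ and $D'$ in a given wedge form an infinite sequence accumulating at $p$, so knowing that $\xi(D'')$ is a round disk lying in the correct wedge and tangent to $D$ and $D'$ is far from enough to conclude $\xi(D'')=D''$. Your appeal to ``every edge of the triangulation lies in exactly two faces'' conflates the combinatorics of $\T$ with the geometry of $\Omega_H$: it governs which packing disks are face-neighbors, not how $\xi$ permutes the infinite family of tangent components.

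The paper's proof avoids this difficulty entirely by a global argument using the uniqueness part of the Circle Packing Theorem. Since $\xi$ is a homeomorphism and the components of $\Omega$ are round disks, $\xi(\mathcal C_\T)$ is a circle packing with the same nerve as $\mathcal C_\T$; orientation-preservation together with $\xi(D_i)=D_i$ for $i=1,2,3$ then forces $\xi$ to fix every disk of $\mathcal C_\T$ setwise. One then reflects $\mathcal C_\T$ across each dual circle and reapplies the same uniqueness argument to the enlarged packing, proceeding inductively through the generations. This finite-packing rigidity at every stage is exactly what replaces your unproven local inductive step.
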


\begin{proof}
Indeed, our assumption implies that $\xi$ takes the finite circle packing $\mathcal C_\T$ to a circle packing in $\rs$. Moreover, since $\xi$ is orientation preserving and $\xi(D_i)=D_i,\ i=1,2,3$, the uniqueness part of the Circle Packing theorem~\cite[Corollary~13.6.2]{T} implies that $\xi$ fixes setwise each disk in $\mathcal C_\T$. 

 Now, if $f$ is a face of $\T$, we use $R_f$ to reflect  $\mathcal C_\T$ across the boundary of the circle that corresponds to $f$. We denote the union circle packing of $\mathcal C_\T$ and its reflection by $\mathcal C_{\T,f}$. The nerve of this new circle packing is also a triangulation, and all the new disks resulting in the reflections are the closures of disks in $\Omega_H$. Again, from lemma's assumption and the uniqueness of the circle packing, we obtain that $\xi$ fixes each disk of $\mathcal C_{\T,f}$. Continuing this reflection  procedure inductively, say on the diameter of faces of circle packings resulting in successive reflections, we obtain that $\xi$ must fix setwise each disk of $\Omega_H$. This implies, in particular, that $\Lambda=\Lambda_H$.
 
    Finally, each point $p$ in $\Lambda_H$ is an accumulation point for shrinking disks of $\Omega_H$. Since each such disk is fixed by $\xi$, the point $p$ must be a fixed point of $\xi$. We therefore conclude that $\xi|_{\Lambda_H}$ is the identity transformation.     
\end{proof}

We now put a restriction on the triangulation to obtain a simpler formulation of our main symmetry classification results. A \emph{separating triangle} of some triangulation $\mathcal{T}$ is a 3-cycle in $\mathcal{T}$ that is not the boundary of a face. We say that a triangulation is \emph{reduced} if it has no separating triangles. It is natural to expect such a condition in light of the example that appears in Figure \ref{pic:Unreduced}. Specifically, one sees why the reduction hypothesis is needed for Theorem \ref{thm:groupSplits}. 

Examples of reduced triangulations abound. For example, the barycentric subdivision of all faces of a reduced triangulation (i.e., the subdivision of triangular faces into six triangles so that, for each face, there are four new vertices, one on each edge and one inside the face) will result in a reduced triangulation.

\begin{figure}[h]
\centerline{\includegraphics[width=110mm]{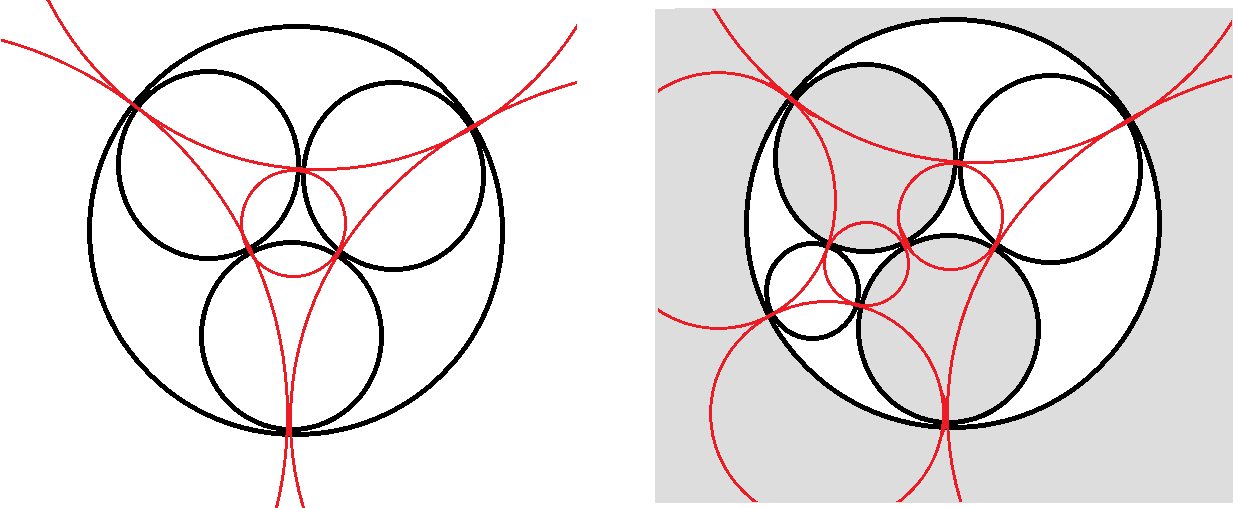}}
\caption{Circle packing corresponding to the tetrahedron in black (left), and the circle packing corresponding to a graph produced by gluing two tetrahedra along a common face also drawn in black (right). The second graph is not reduced due to the 3-cycle that passes through the three grey-shaded disks. Reflecting about the red dual circles produces the classical Apollonian gasket limit set in both cases, despite the fact that the tetrahedron is more symmetric than the second graph (cf. Theorem \ref{thm:groupSplits}).}\label{pic:Unreduced}
\end{figure}

Denote by $\text{Aut}^{\T}(\rs)$ the group of all M\"obius transformations that induce a symmetry of $\T$, i.e., preserve $\mathcal C_\T$. This group is finite because we assume that $\mathcal C_\T$ has more than two disks. The group $\text{Aut}^{\T}(\rs)$ also preserves the dual circle packing, i.e., each element of $\text{Aut}^{\T}(\rs)$ takes a circle that corresponds to $f\in F_\T$ to a circle that corresponds to another $f'\in F_\T$. Therefore, the group $\text{Aut}^{\T}(\rs)$ consists of outer automorphisms of the group $H$, and so $\text{Aut}^{\T}(\rs)$ is a subgroup of $\text{Out}(H)=\text{Aut}(H)/\text{Inn}(H)$.

If $D$ is an open disk in $\mathcal C_\T$, a \emph{flower centered at} $D$ is the disk $\overline{D}$ along with a collection of disks $\{\overline{D_i}\}_{i=0}^{n-1}$, such that each disk $D_i,\ i=0,1,\dots, n-1$, touches $D$ as well as the disks $D_{i-1}, D_{i+1}$, where the indices are taken modulo $n$, and the disks $D_i,\ i=0, 1,\dots, n-1$, are arranged in a cyclic order around $D$. The disks
$\overline{D_i},\ i=0,1,\dots, n-1$, are referred to as \emph{petals} of $D$.

The triangulation $\T$ corresponding to a circle packing $\CC=\CC_\T$ can be realized geometrically as follows.
Mark a point $x_i$ in the interior of each disk $D_i$ of the packing and call it the {\em center} of $D_i$.
Connect  any  two centers  $x_i$ and $x_j$ of two touching disks $D_i$ and $D_j$ by an edge $\gamma_{ij}$  concatenated of two
(spherical) geodesic segments in $D_i$ and $D_j$.  So, $\gamma_{ij}$ meets $\Lambda$ at a single point
where $D_i$ touches $D_j$. We say that two such geometric realizations $\T$ and $\T'$ \emph{coincide} if $\mathcal C_\T=\mathcal C_{\T'}$. In this case $\T$ and $\T'$ are isotopic relative to the points of tangency of the disks in $\mathcal C_\T=\mathcal C_{\T'}$.

If $\Lambda$ is an Apollonian gasket corresponding to a circle packing $\CC=\CC_\T$,
then we say that $\CC$ is a {\em generating packing} for $\Lambda$.   

\begin{proposition}\label{geometric triangulations}
  Let $\CC$ and $\CC'$ be two circle packings generating the same gasket $\Lambda$,
  and let $\T$ and $\T'$ be the corresponding geometric triangulations that we assume to be reduced.
  If $\T$ and $\T'$ share a face,  then they coincide,
  or equivalently $\CC=\CC'$.
\end{proposition}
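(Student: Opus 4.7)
My plan is to combine Lemma~\ref{L:ThreeDisks} with the uniqueness clause of the Circle Packing theorem, relying on the reduced hypothesis to prevent the non-rigidity illustrated by Figure~\ref{pic:Unreduced}.

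First, I would unpack the shared-face hypothesis: it provides a common triple $D_1, D_2, D_3 \in \CC \cap \CC'$ of mutually tangent disks whose interstice is a face of both triangulations. In particular, the three tangency points coincide in both packings, so the reflection $R_f$ in the circle through them lies simultaneously in $H_\T$ and $H_{\T'}$ and fixes each $D_i$ setwise. This is the ``anchor'' from which everything else will be propagated.

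The crux of the argument is to show that the abstract nerves of $\T$ and $\T'$ admit a combinatorial isomorphism fixing the three labeled vertices $D_1, D_2, D_3$. I would argue this by induction on dual-graph distance from the shared face: at each step, an edge of an already-matched face borders a new face in each triangulation which, by the packing axiom, must be an interstice bounded by the two endpoint disks and a third disk tangent to both. The reduced hypothesis is exactly what is needed to force the third disk to coincide in both triangulations --- it ensures that no $3$-cycle in the nerve fails to bound a face, ruling out the ``two glued tetrahedra'' ambiguity of Figure~\ref{pic:Unreduced} that would otherwise let two inequivalent third disks both complete valid triangulations of the same gasket.

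With the combinatorial identification in hand, the uniqueness part of the Circle Packing theorem produces a M\"obius transformation $\phi$ realizing the isomorphism, so $\phi(\CC) = \CC'$ and $\phi(D_i) = D_i$ for $i = 1, 2, 3$. Since both packings generate $\Lambda$, we have $\phi(\Lambda) = \Lambda$. Applying Lemma~\ref{L:ThreeDisks} to $\phi$, which is an orientation-preserving homeomorphism of $\rs$ fixing three mutually tangent disks of $\CC$, yields $\phi|_\Lambda = \mathrm{id}$. Since a M\"obius transformation fixing an infinite set is the identity, $\phi = \mathrm{id}$ and therefore $\CC = \CC'$. I expect the combinatorial induction above to be the main obstacle, because the ``third disk'' on a given edge is not determined by the gasket $\Lambda$ alone --- its uniqueness has to be extracted from the triangulation and packing axioms together with the reduced hypothesis used in an essential way.
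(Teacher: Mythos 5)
Your overall strategy --- anchoring at the shared face and propagating coincidence of disks outward --- has the right shape and is in the same spirit as the paper's proof (the paper propagates around the flower centered at a shared disk; you propagate across edges of the dual graph; either traversal eventually reaches all of $\CC$). But the heart of the proposition is precisely the inductive step that you defer, and you do not actually prove it. Saying ``the reduced hypothesis is exactly what is needed to force the third disk to coincide'' is the statement that has to be established, not a reduction of it, and you flag it yourself as the main obstacle. The missing geometric content is roughly this: if the third disk $D_4'$ across the shared edge $D_1D_2$ in $\CC'$ were not the outermost disk $B_0$ of $\Omega_H$ tangent to both $D_1$ and $D_2$ on that side, then $B_0$ could lie in no interstice of $\CC'$ bordering $D_1$ and $D_2$ (it is on the wrong side of both), so $B_0$ would have to belong to $\CC'$, and then $\{D_1,D_2,B_0\}$ would be a $3$-cycle of $\T'$ that is not a face --- a separating triangle, contradicting reducedness. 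The paper's version of this argument is Proposition~\ref{geometric triangulations}'s flower computation (the first disagreeing petal $D_{i+1}'$ is trapped in the interstice $\Delta_i$, and reducedness then breaks the flower $(D_j')$). Either incarnation works, but one of them has to be carried out, and your proposal leaves that step as an assertion.

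Second, the framing around an ``abstract combinatorial isomorphism'' and the closing appeal to the Circle Packing Theorem together with Lemma~\ref{L:ThreeDisks} are superfluous. What your inductive step claims is \emph{geometric} coincidence of the third disk, and iterating that over the dual graph already yields $\CC=\CC'$ directly --- there is no remaining Möbius ambiguity to kill. Even if one did only have an abstract isomorphism of nerves fixing the three labels, the resulting Möbius map $\phi$ with $\phi(D_i)=D_i$ would fix the three tangency points of $D_1,D_2,D_3$ and hence be the identity immediately, with no need to invoke Lemma~\ref{L:ThreeDisks}. So the machinery in your final paragraph adds complexity without closing the gap, which remains the unproved inductive step.
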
 

\begin{proof}
  Sharing of a face means that $\CC$ and $\CC'$ share three touching disks,
  $D_i=D_i'$,  $i=0,1,2$, and the corresponding interstice, $\Delta= \Delta'$.
  Let us show that in this case they share the whole flower centered at $D_0$.

  Orient the boundary circle $C_0= C_0'$ of $D_0=D_0'$ so that the boundary arc of $\Delta$ is oriented from
$D_1$ to $D_2$. Denote the remaining petals of $D_0$ and $D_0'$ by $\overline{D_i}$ and $\overline{D_i'}$ respectively, where $i\geq 3$. 
Let $i+1$ be the first moment when $D_{i+1}' \not= D_{i+1}$.
Assume for definiteness that $D_{i+1}'$ is closer to $D_2$ than $D_{i+1}$. Then  $D_{i+1}'$ is trapped
inside the interstice $\Delta_i$ attached to  $\{ D_0, D_i, D_{i+1} \} $; see Figure~\ref{F:petals}.
Moreover, the assumption that $\T'$ is reduced implies that $D_{i+1}$ does not belong to the flower
$\{\overline{D_j'}\}$. Hence all further $D_j',\ j> i+1$, are either contained in $\Delta_i$ or are disjoint from its closure (compare
Lemma~\ref{Lem:DiskTriplesInTriangulations}), and so the flower $\{\overline{D_j'}\}$ gets broken in the sense that the petals of this flower are separated by the disk $D_{i+1}$.

\begin{figure}[h]
\begin{tikzpicture}
    \node[anchor=south west,inner sep=0] at (0,0) {\includegraphics[width=7cm]{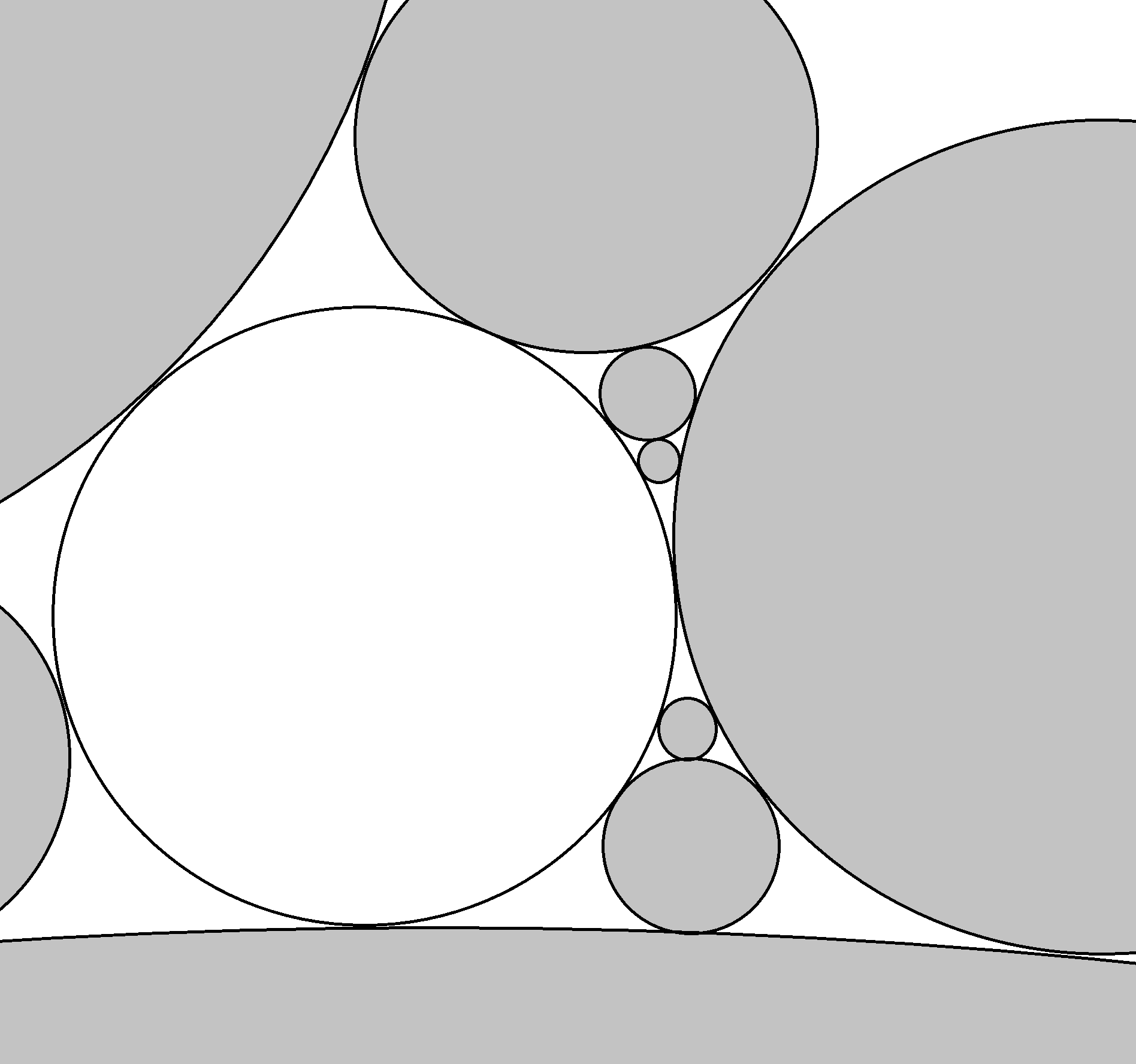}};
\node at (2.43,2.74) {$D_0=D_0'$};
\node at (6.03,3.24) {$D_2=D_2'$};
\node at (3.73,.34) {$D_1=D_1'$};
\node at (3.63,5.7) {$D_{i+1}=D_3$};
\node at (3.99,4.14) {$D_3'$};
\end{tikzpicture}
\caption{Depiction of the petals used in the proof of Proposition~\ref{geometric triangulations}}
\label{F:petals}
\end{figure}

We can now apply this result to the petals of the above flower and conclude that the two packings
share the flowers around each of these petals. 
Proceeding inductively, we complete the proof.
\end{proof}

\begin{theorem}[Decomposition of symmetries] \label{Thm:ClassificationOfSymmetries}
Let $\T$ be a reduced triangulation. Then if $\xi\in\homeo$, we have  
\[\xi=A\circ h|_{\Lambda_H}\] 
for some $h\in H$ and  $A\in\text{Aut}^{\T}(\rs)$.
\end{theorem}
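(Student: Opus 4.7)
The plan is to reduce the problem, via Proposition~\ref{geometric triangulations}, to the rigidity statement of Lemma~\ref{L:ThreeDisks}. First, by Lemma~\ref{L:Extension}, I extend $\xi$ to a homeomorphism of $\rs$. The orientation-preserving case is handled in detail; the orientation-reversing case will then follow by pre-composing $\xi$ with any reflection $R_f\in H$ and absorbing the result into the $H$-factor using that $\text{Aut}^{\T}(\rs)$ normalizes $H$.

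Choose three mutually touching disks $D_1,D_2,D_3\in\mathcal{C}_\T$ bounding a face of $\T$. Their images under $\xi$ are three mutually touching components of $\Omega_H$, so Lemma~\ref{Lem:DisksReflectedToPacking} supplies an orientation-preserving $h\in H$ such that the disks $D_i':=h(\xi(D_i))$ lie in $\mathcal{C}_\T$ and form a face. The key observation is that $\mathcal{C}':=(h\circ\xi)(\mathcal{C}_\T)$ is a circle packing of round peripheral disks of $\Lambda_H$ whose nerve is isotopic to $\T$; consequently $\mathcal{C}'$ is itself a generating packing for $\Lambda_H$ that shares the face $\{D_1',D_2',D_3'\}$ with $\mathcal{C}_\T$. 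Since $\T$ is reduced (so is the triangulation of $\mathcal{C}'$), Proposition~\ref{geometric triangulations} forces $\mathcal{C}'=\mathcal{C}_\T$.

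Therefore $h\circ\xi$ permutes the disks of $\mathcal{C}_\T$, inducing an orientation-preserving automorphism of the planar triangulation $\T$. By the uniqueness clause of the Koebe--Andreev--Thurston theorem, this automorphism is realized by a unique M\"obius symmetry $A\in\text{Aut}^{\T}(\rs)$ with $A(D)=(h\circ\xi)(D)$ for every $D\in\mathcal{C}_\T$. Then $A^{-1}\circ h\circ\xi$ is an orientation-preserving self-homeomorphism of $\rs$ fixing the three mutually touching disks $D_1,D_2,D_3$ setwise, so Lemma~\ref{L:ThreeDisks} gives $A^{-1}\circ h\circ\xi|_{\Lambda_H}=\text{id}$. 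Hence $\xi|_{\Lambda_H}=(h^{-1}\circ A)|_{\Lambda_H}$, and writing $h^{-1}\circ A=A\circ(A^{-1}h^{-1}A)$ produces the decomposition $\xi=A\circ h'$ with $h':=A^{-1}h^{-1}A\in H$, valid because $A$ permutes the dual circles and thus conjugates every generator $R_f$ of $H$ to another generator. For orientation-reversing $\xi$, apply the above to the orientation-preserving $R_f\circ\xi$ to get $R_f\circ\xi=A\circ h''$, and then $\xi=A\circ(A^{-1}R_fA)\circ h''=A\circ h'$ with $h':=(A^{-1}R_fA)h''\in H$.

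The main obstacle I anticipate is verifying that $\mathcal{C}'=(h\circ\xi)(\mathcal{C}_\T)$ really is a generating packing for $\Lambda_H$, so that Proposition~\ref{geometric triangulations} applies. This rests on two points: components of $\Omega_H$ are themselves round disks (hence $\xi(\mathcal{C}_\T)$ is a collection of round disks), and $\xi$ preserves the combinatorial pattern of peripheral disks and tangencies in $\Lambda_H$, so the reflection group built from the dual circles of $\mathcal{C}'$ recovers the same gasket. A minor technicality is that the M\"obius map $A$ realizing the induced graph automorphism must be orientation preserving; this is forced by the fact that $h\circ\xi$ is itself orientation preserving, so the permutation of disks respects the cyclic order of tangency points, and the corresponding Koebe--Andreev--Thurston realization is holomorphic rather than anti-holomorphic.
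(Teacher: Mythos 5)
Your proposal follows the same general arc as the paper's proof — extend $\xi$, normalize a face using Lemma~\ref{Lem:DisksReflectedToPacking}, invoke Proposition~\ref{geometric triangulations} to pin down the circle packing, and finish with Lemma~\ref{L:ThreeDisks} — but the two arguments sequence the steps differently, and your ordering creates a gap that the paper's ordering avoids. You apply Proposition~\ref{geometric triangulations} to $\mathcal{C}'=(h\circ\xi)(\mathcal{C}_\T)$ \emph{before} you have any M\"obius map in hand, and this forces you to justify that $\mathcal{C}'$ is a generating packing for $\Lambda_H$. That is a real obligation, not a formality: knowing that $\mathcal{C}'$ consists of round peripheral disks whose nerve is isotopic to $\T$ does not by itself tell you that the group generated by reflections in its dual circles has limit set exactly $\Lambda_H$. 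You flag this as "the main obstacle" but then dispose of it with the single word "consequently," which is the gap. The fact is true (for reduced $\T$, each dual circle of $\mathcal{C}'$ is conjugate by an element of $H$ to a generator $R_f$ — this uses Lemma~\ref{Lem:DisksReflectedToPacking} together with Lemma~\ref{Lem:DiskTriplesInTriangulations} and the reducedness hypothesis — so the resulting group $H'$ is a subgroup of $H$; then one checks $\Omega_{H'}\subseteq\Omega_H$ because every component of $\Omega_{H'}$ is an $H'$-translate of a disk in $\mathcal{C}'$ and hence a component of $\Omega_H$, giving $\Lambda_{H'}=\Lambda_H$). But none of that argument appears in your write-up.

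The paper sidesteps this entirely by a slight re-ordering: it first manufactures a M\"obius map $A$ sending the normalized triple $D_i'$ back to $D_i$, then immediately applies Lemma~\ref{L:ThreeDisks} to $A\circ h\circ\xi^{-1}$ to obtain $\xi=A\circ h$ on $\Lambda_H$. From this identity it follows that $A=\xi\circ h^{-1}$ preserves $\Lambda_H$, and since $A$ is M\"obius, $A(\mathcal{C}_\T)$ is \emph{automatically} a generating packing (a M\"obius conjugate of a Kleinian group has the conjugated limit set). Only then does the paper invoke Proposition~\ref{geometric triangulations}, at a point where its hypotheses are trivially met. Your use of $\xi$ rather than $\xi^{-1}$ also lands you in the form $\xi=h^{-1}\circ A$ and requires the extra normalization $A^{-1}HA=H$ to convert to $A\circ h'$; this is correct (and is essentially part of Theorem~\ref{thm:groupSplits}), but the paper's choice of $\xi^{-1}$ produces the stated form directly. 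To repair your proof, either supply the generating-packing argument sketched above, or re-order as the paper does so that Lemma~\ref{L:ThreeDisks} precedes Proposition~\ref{geometric triangulations}.
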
 

\begin{proof}
From Lemma~\ref{L:Extension}, we know that $\xi$ extends to a homeomorphism of $\rs$, which we continue to denote by $\xi$.
Let $D_0$ be an open disk in $\mathcal C$ and $\overline{D_1}, \overline{D_2}$ be two petals of $D_0$ in $\mathcal C$.  Since $\T$ is reduced, there is a face $f$ of $\T$ whose boundary is contained in $\overline{D_0}\cup \overline{D_1}\cup \overline{D_2}$. Then $\xi^{-1}(D_i),\ i=0,1,2$, are mutually touching disks of $\Omega_H$. By Lemma~\ref{Lem:DisksReflectedToPacking}, there is $h\in H$ so that $D_i'=h\xi^{-1}(D_i),\ i=0,1,2$, are mutually touching disks in the original circle packing $\mathcal C_\T$. We may assume that the map $h\xi^{-1}$ is orientation preserving by possibly post-composing it with the reflection in the circle passing through the three points of mutual tangency of $D_i',\ i=0,1,2$.

Because $\T$ is assumed to be reduced and $h\xi^{-1}$ is orientation preserving, we conclude that the intersection of $\T$ with $\overline{D_0'}\cup\overline{D_1'}\cup\overline{D_2'}$ bounds a face $f'$ of $\T$.
Let $\mathcal C_\T'$ be the circle packing given by 
$$
\mathcal C_\T'=h\xi^{-1}(\mathcal C_\T).
$$
Note that since $h\xi^{-1}$ is an orientation preserving homeomorphism of $\rs$, this map induces a graph isomorphism of $\T$ onto the nerve $\T'$ of $\mathcal C_\T'$ such that the face $f$ of $\T$ is mapped to the face $f'$ of $\T'$, which is also a face of $\T$.

Let $A$ be a M\"obius transformation such that $A(D_i')=D_i,\ i=0,1,2$. Such an $A$ exists because one can always map the three points of mutual tangency of $D_0', D_1', D_2'$ to the corresponding three points of mutual tangency of $D_0, D_1,D_2$ by a M\"obius transformation, and it would necessarily map $D_i'$ onto $D_i,\ i=0,1,2$. 

Now we apply Lemma~\ref{L:ThreeDisks} to  $Ah\xi^{-1}$ to conclude that this map restricted to $\Lambda_H$ is the identity map, i.e., $\xi=Ah$ on $\Lambda_H$. It remains to show that $A\in\text{Aut}^{\T}(\rs)$, i.e., that $A$ preserves $\mathcal C_\T$. This is equivalent to $\mathcal C_{\T}'=\mathcal C_\T$.

Since $\xi$ and $h$ leave $\Lambda_H$ invariant, the identity $A=\xi h^{-1}$ restricted to $\Lambda_H$ shows that so does $A$.
Hence $A(\CC_\T) $ is a generating packing for $\Lambda_H$. 
Moreover, it shares the face $f=A(f')$ of $\CC_\T$ corresponding to $\{ D_0, D_1, D_2 \}$.  
By Proposition \ref{geometric triangulations},  $A(\CC_\T)= \CC_\T$.
\end{proof}

With some additional work, this decomposition implies that $\homeo$ splits.

\begin{theorem}\label{thm:groupSplits}
Let $\T$ be a reduced triangulation. Then $H$ is a normal subgroup of $\homeo$ and 
\[\homeo=\text{Aut}^{\T}(\rs) \ltimes H.\]

\end{theorem}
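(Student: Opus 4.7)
The plan is to verify the three standard conditions for an internal semi-direct product decomposition $\homeo=\text{Aut}^{\T}(\rs)\ltimes H$: (i) $\homeo=\text{Aut}^{\T}(\rs)\cdot H$, (ii) $\text{Aut}^{\T}(\rs)\cap H=\{1\}$ viewed inside $\homeo$, and (iii) $H$ is normal in $\homeo$. Condition (i) is precisely the content of Theorem~\ref{Thm:ClassificationOfSymmetries}.

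For (iii), I first observe that any $A\in\text{Aut}^{\T}(\rs)$ is a M\"obius transformation preserving $\CC_\T$ and therefore also preserves the tangency data of the packing. Hence $A$ induces a permutation $f\mapsto A(f)$ of the faces of $\T$ and carries each dual circle $C_f$ to $C_{A(f)}$. Since a reflection is determined by its mirror circle, $AR_fA^{-1}$ is exactly the generator $R_{A(f)}\in H$, giving $AHA^{-1}=H$ for every such $A$. Combined with (i), this shows that every $\xi=Ah\in\homeo$ normalizes $H$.

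The main obstacle is (ii). Suppose $h\in H$ and $A\in\text{Aut}^{\T}(\rs)$ satisfy $h|_{\Lambda_H}=A|_{\Lambda_H}$. Both extend to (anti-)M\"obius self-maps of $\rs$, and $\Lambda_H$ contains infinitely many points not lying on any single circle (being a genuine gasket). Since an (anti-)M\"obius transformation is determined by its action on three non-collinear points, the identity $h|_{\Lambda_H}=A|_{\Lambda_H}$ forces $h=A$ globally on $\rs$. In particular $h$ must be orientation-preserving, and as a member of the finite group $\text{Aut}^{\T}(\rs)$ it has finite order.

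To conclude $h=1$, I would invoke the presentation~\eqref{E:HPres} identifying $H$ with the free product $\Z/2*\cdots*\Z/2$. A standard fact about free products of finite groups asserts that every nontrivial torsion element is conjugate to a torsion element in one of the factors, so in our case to one of the generators $R_f$. Since the parity of reduced word length gives a well-defined homomorphism $H\to\Z/2$ (the relations $R_f^2=1$ are parity-preserving), and this homomorphism records the orientation character, every nontrivial torsion element of $H$ is orientation-reversing. This contradicts the orientation-preserving nature of $h=A$ unless $h=1$, in which case $A=1$ as well, verifying (ii) and hence the desired semi-direct product decomposition.
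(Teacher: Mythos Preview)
Your proof is correct. The product decomposition (i) and normality (iii) match the paper's approach closely; in fact, for (iii) you use directly the observation (already recorded just before the theorem) that $A\in\text{Aut}^{\T}(\rs)$ carries each dual circle $C_f$ to $C_{A(f)}$, whereas the paper re-derives $\xi R_f=R_{f'}\xi$ by appealing to Lemma~\ref{L:ThreeDisks}.

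The genuine difference is in the trivial-intersection step (ii). The paper argues geometrically: any nontrivial $h\in H$ sends some disk of $\mathcal{C}_\T$ to a disk of strictly positive generation, whereas every element of $\text{Aut}^{\T}(\rs)$ preserves generation, so no nontrivial $h$ can lie in $\text{Aut}^{\T}(\rs)$. You instead argue algebraically: from $h|_{\Lambda_H}=A|_{\Lambda_H}$ and the fact that $\Lambda_H$ is not contained in a generalized circle you deduce $h=A$ as global (anti-)M\"obius maps, hence $h$ has finite order; then the torsion theorem for free products forces any nontrivial finite-order element of $H\cong(\Z/2)^{*k}$ to be conjugate to a generator $R_f$, hence of odd parity under the orientation homomorphism $H\to\Z/2$, contradicting that $A$ is M\"obius. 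The paper's route is more self-contained and stays within the geometry already set up; yours imports a standard structural fact about free products but has the virtue of making explicit why no orientation-preserving element of $H$ can be torsion. One minor phrasing issue: an (anti-)M\"obius map is not quite ``determined by three points'' without specifying orientation, but your actual argument only needs that a nontrivial (anti-)M\"obius map cannot fix $\Lambda_H$ pointwise, which is what you use.
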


\begin{proof}
First, $\text{Aut}^{\T}(\rs)\cap H$ is trivial because any nontrivial $h\in H$ has the property that there is some open disk $D$ in the original circle packing $\mathcal C_\T$ whose image $h(D)$ has non-zero generation. In contrast, every element of $\text{Aut}^{\T}(\rs)$ preserves the generation of any disk in $\mathcal C_\T$.

By Theorem \ref{Thm:ClassificationOfSymmetries}, it now suffices to prove that $\xi H = H \xi$ where $\xi\in \text{Aut}^{\T}(\rs)$. 
Let $f$ be a face of $\T$, and let $R_{f}$ be the generator of $H$ that corresponds to reflection over the boundary of $f$.  Let $f':=\xi(f)$. Then, since $\xi\in \text{Aut}^{\T}(\rs)$, we have that $f'$ is also a face of $\T$. Let $R_{f'}$ be the generator of $H$ that corresponds to the reflection over the boundary of $f'$. Then, applying Lemma~\ref{L:ThreeDisks} to the three disks intersecting the boundary of $f$,  we obtain $\xi^{-1}R_{f'}^{-1}\xi R_{f}={\rm id}$.  Thus, $\xi R_{f}=R_{f'}\xi$, implying that $\xi H = H \xi$.
\end{proof}

We conclude this section by explicitly constructing a fundamental domain for the action of $H$ on $\Omega_H$. See Figure \ref{fig:apolloFD} for the simplest example. Recall that $D_v$ denotes the (round disk) component of $\Omega_H$ containing $v\in V_\T$. Let $D_f$ denote the open disk whose boundary passes through the three vertices of the interstice $\Delta_f$ corresponding to face $f\in\F_\T$. (We recall that $\partial D_f$ is orthogonal to $\partial D_v$ for exactly three vertices $v\in\V_\T$.)

\begin{figure}
\centerline{\includegraphics[width=60mm]{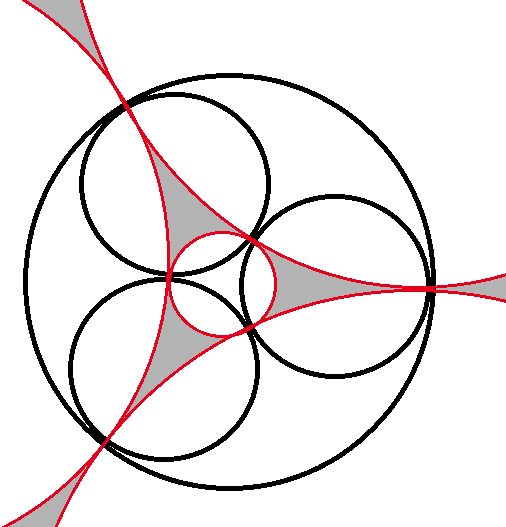}}
\caption{Black circles represent the circle packing that arise when $\T$ is a tetrahedron. The red ``dual'' circles define the generators of $H$, and the grey region represents a fundamental domain.}
\label{fig:apolloFD}
\end{figure}

\begin{proposition}[Fundamental domain for $H$]\label{Prop:FDforH} A fundamental domain for $H$ acting on $\Omega_H$ is given by
\[\left(\bigcup_{v\in\V_\T} D_v\right)\setminus \left(\bigcup_{f\in\F_\T} D_f\right).\]
\end{proposition}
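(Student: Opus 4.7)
The plan is to rewrite the proposed domain in a workable form and then verify the two defining properties of a fundamental domain --- covering of orbits and disjointness of interiors --- by combining a generation-reduction induction with the hyperbolic geometry of an ideal polygon inside a single packing disk.

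First, I would identify the proposed set (call it $D^*$) with $\rs\setminus\bigcup_{f\in\F_\T}B_f$. The dual circle $\partial B_f$ passes through the three tangency points on $\partial\Delta_f$ and is orthogonal to each of the three bounding packing circles at those points, so the three arcs of $\partial\Delta_f$ bulge into $B_f$, giving $\overline{\Delta_f}\subset\overline{B_f}$. Combined with the decomposition $\rs=\bigcup_v\overline{B_v}\cup\bigcup_f\Delta_f$, subtracting $\bigcup_f B_f$ identifies the two descriptions.

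For covering, given $p\in\Omega_H$ lying in a component $B$, I would induct on the generation of $B$. If the generation is positive, then $B$ is contained in a single face $f$ of $\T$, and applying $R_f$ strictly decreases the generation by Lemma~\ref{Lem:DecreasingGeneration}. Eventually $p\in B_v$ for some $v$. Equipping $B_v$ with its Poincar\'e hyperbolic metric, each dual circle $\partial B_f$ with $f\ni v$ is orthogonal to $\partial B_v$ and so restricts to a hyperbolic geodesic whose two ideal endpoints are the tangencies of $\partial B_v$ with the other two packing circles of $f$. The cyclic order of faces around $v$ in $\T$ matches the cyclic order of these tangency points on $\partial B_v$, so the $\deg(v)$ geodesics have consecutive non-nested endpoints and bound an ideal $\deg(v)$-gon whose open interior equals $B_v\cap D^*$. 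The $H$-stabilizer of $B_v$ is precisely $H_v:=\langle R_f:f\ni v\rangle$, acting on $B_v$ as the Coxeter reflection group of this ideal polygon; the Poincar\'e polygon theorem then yields an $h\in H_v$ with $h(p)\in\overline{B_v\cap D^*}\subset\overline{D^*}$.

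For disjointness, given a reduced word $h=R_{f_1}R_{f_2}\cdots R_{f_k}\in H$ with $k\ge 1$, I would show $h(D^*)\subset\overline{B_{f_1}}$, which combined with $D^*\cap B_{f_1}=\emptyset$ gives disjoint interiors. The key geometric input is that any two distinct dual circles are either disjoint (with $B_{f_j}$ outside $\overline{B_{f_i}}$) or tangent at a parabolic fixed point in $\Lambda_H$: a transverse crossing would force $R_{f_i}R_{f_j}$ to have finite order, contradicting the free-product presentation \eqref{E:HPres}. This yields the ping-pong inclusion $R_{f_i}(\overline{B_{f_j}})\subset\overline{B_{f_i}}$ for $j\ne i$, and induction on $k$ (starting from $D^*\subset\rs\setminus\bigcup_f B_f$) delivers the claim. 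The chief obstacle is exactly this ``disjoint or tangent'' geometry of the dual circles underpinning the ping-pong step; it is essentially equivalent to $H$ having the Coxeter presentation \eqref{E:HPres} with no extra relations, and can alternatively be packaged by invoking Poincar\'e's polyhedron theorem on the convex hyperbolic polyhedron in $\mathbb{H}^3$ bounded by the hemispheres over the dual circles.
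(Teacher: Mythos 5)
Your argument is correct and is built around the same central tool as the paper's proof: Poincar\'e's polygon theorem applied to the ideal polygon $P_v = B_v\setminus\bigcup_{f\ni v}B_f$, together with the identification of the $H$-stabilizer of $B_v$ as $\langle R_f : f\ni v\rangle$. The packaging, however, differs in two respects. For covering, the paper reads it off \eqref{E:LambdaUnion} in one line, while you reach $B_v$ by a generation-reduction induction via Lemma~\ref{Lem:DecreasingGeneration}; both are fine. For disjointness of interiors, the paper simply declares that one takes the union of the stabilizer fundamental domains over $v$, tacitly using that no nontrivial $h\in H$ carries $B_v$ to a different $B_{v'}$; you instead give an explicit ping-pong argument showing $h(D^*)\subset\overline{B_{f_1}}$ for a reduced word $h=R_{f_1}\cdots R_{f_k}$, which renders that implicit step unnecessary and is the genuinely new content in your writeup.

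The one spot to tighten is the justification of the ``disjoint or tangent'' geometry of the dual disks. A transverse crossing at angle $\theta$ makes $R_{f_i}R_{f_j}$ a rotation by $2\theta$, which has \emph{infinite} order when $\theta/\pi$ is irrational; so the free-product presentation \eqref{E:HPres} alone does not exclude a transverse crossing, and you would additionally need to invoke discreteness of $H$ --- which is itself normally obtained from the very Poincar\'e polyhedron theorem you offer as the alternative packaging, so the argument risks circularity. A cleaner route is the classical fact that the dual circles of a circle packing themselves form a circle packing whose nerve is the dual graph of $\T$, so the $B_f$ have pairwise disjoint interiors by construction, tangent precisely when the corresponding faces share an edge.
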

\begin{proof}
By Equation \eqref{E:LambdaUnion}, the $H$-orbit of a point in $\Omega_H$ must intersect $\displaystyle{\bigcup_{v\in\V_\T} D_v}$. The $H$-stabilizer of a given $D_v$ is the group generated by the reflections whose defining circles are orthogonal to the boundary of $D_v$. It thus suffices to compute the fundamental domain for this stabilizer acting on $D_v$ and the conclusion follows by taking the union of the stabilizer fundamental domains over all $v\in \V_\T$.  

The set $\displaystyle{D_v\setminus \bigcup_{f\in\F_\T} D_f}$ is an ideal polygon $P_v$ in ${D}_v$ equipped with the standard Poincar\'e metric, where the number of sides is given by the number of circles tangent to the boundary of $D_v$ in the original packing $\mathcal C_\T$. Denote the sides of $P_v$ by $s_1,\dots, s_k$, and let $R_{s_1},\dots, R_{s_k}$ be the reflections through the circles that define the sides.  Then by the Poincar\'e polygon theorem, $P_v$ is a fundamental domain for the group $\langle R_{s_1},\dots, R_{s_k}\rangle$ acting on $D_v$. The following set equality is immediate:
\[\bigcup_{v\in\V_\T} P_v = \left(\bigcup_{v\in\V_\T} D_v\right)\setminus \left(\bigcup_{f\in\F_\T}D_f\right).\]
\end{proof}


\section{Nielsen maps induced by reflection groups}\label{group_equiv_map_sec}

The goal of this section is to introduce and study some basic properties of maps that are \emph{orbit equivalent} to the reflection groups considered in the paper. These maps, which we call \emph{Nielsen maps}, are defined piecewise using anti-M{\"o}bius reflections that generate the corresponding group, and enjoy certain Markov properties when restricted to the limit set of the group. Related constructions of such Markov maps on the limit set (originally introduced to code geodesics) can be found in \cite{Bowen,BS,Nielsen, Se} (for Fuchsian groups), \cite{Rocha} (for certain Kleinian groups), \cite{CF,GH,PS} (for hyperbolic groups). Our nomenclature ``Nielsen map'' follows \cite{Bowen}, where similar maps arising from Fuchsian groups were called ``Nielsen developments''.

\subsection{The Nielsen map for the regular ideal polygon group}\label{nielsen_1_subsec}

Let us denote the open unit disk and the unit circle in the complex plane (centered at the origin) by $\D$ and $\mathbb{T}$ respectively. For $d\geq 2$, let $C_1, C_2, \cdots, C_{d+1}$ be circles of equal radii each of which intersects $\mathbb{T}$ orthogonally such that $\displaystyle\bigcup_{i=1}^{d+1} \widetilde{C}_i$ (where $\widetilde{C}_i:=C_i\cap\D$) is an ideal $(d+1)$-gon with vertices at the $(d+1)$-st roots of unity. They bound a closed (in the topology of $\D$) region $\Pi$ (see Figure~\ref{ideal_triangle_pic} for $d=2$).

\begin{figure}[ht!]
\begin{center}
\includegraphics[scale=0.32]{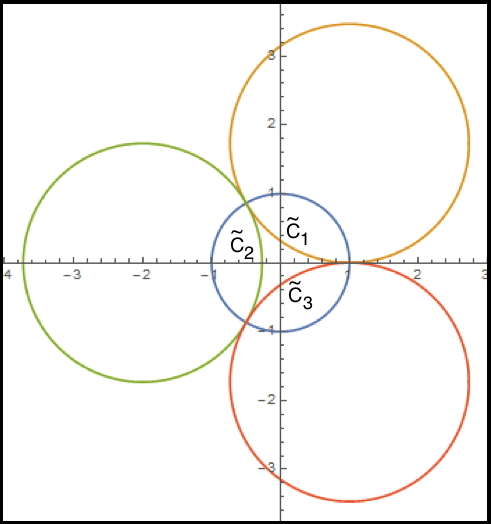}
\end{center}
\caption{The hyperbolic geodesics $\widetilde{C}_1$, $\widetilde{C}_2$ and $\widetilde{C}_3$, which are sub-arcs of the circles $C_1$, $C_2$ and $C_3$ respectively, form an ideal triangle in $\D$.}
\label{ideal_triangle_pic}
\end{figure}

Reflections with respect to the circles $C_i$ are anti-conformal involutions (hence automorphisms) of $\D$, and we call them $\rho_1, \rho_2, \cdots, \rho_{d+1}$. The maps $\rho_1, \rho_2, \cdots, \rho_{d+1}$ generate a reflection group $\mathbf{H}_d$, which is called the (regular) \emph{ideal $(d+1)$-gon group}. As an abstract group, it is given by the generators and relations $$\langle\rho_1, \rho_2, \cdots, \rho_{d+1}: \rho_1^2=\rho_2^2=\cdots=\rho_{d+1}^2=\mathrm{id}\rangle.$$ In the particular case $d=2$, the group $\mathbf{H}_2$ is called the \emph{ideal triangle group}.

We will denote the connected component of $\D\setminus \Pi$ containing $\Int{\rho_i(\Pi)}$ by $\D_i$. Note that $\overline{\D}_1\cup\cdots\cup\overline{\D}_{d+1}=\overline{\D}\setminus\Int{\Pi}$, where $\overline{\D}$ denotes closure of $\D$ in the Euclidean metric.

The \emph{Nielsen map} $\pmb{\rho}_d:\overline{\D}\setminus\Int{\Pi}\to\overline{\D}$ associated with the ideal polygon group $\mathbf{H}_d$ is defined as:
$$
z \mapsto \begin{array}{lll}
                    \rho_i(z) & \mbox{if}\ z\in \overline{\D}_i,\ i=1, \cdots, d+1
                                          \end{array}. 
$$
Clearly, $\pmb{\rho}_d$ restricts to an expansive orientation reversing $d$-fold covering of $\mathbb{T}$ with associated Markov partition $\mathbb{T}=(\partial\D_1\cap\mathbb{T})\cup(\partial\D_2\cap\mathbb{T})\cup\cdots\cup(\partial\D_{d+1}\cap\mathbb{T})$. The corresponding transition matrix is given by 
$$
M=\begin{bmatrix}
0&1&1&\dots&1&1\\
1&0&1&\dots&1&1\\
\hdotsfor{6}\\
1&1&1&\dots&1&0
\end{bmatrix}.
$$

Two points $x,y\in\overline{\D}$ are said to lie in the same grand orbit of the Nielsen map $\pmb{\rho}_d$ if there exist non-negative integers $n_1,n_2$ such that $\pmb{\rho}_d^{\circ n_1}(x)=\pmb{\rho}_d^{\circ n_2}(y)$. On the other hand, an $\mathbf{H}_d-$orbit is defined as the set $\{h(z):h\in \mathbf{H}_d\}$, for some $z\in\overline{\D}$. 
It is easy to see that the grand orbit of any point in $\overline{\D}$ under $\pmb{\rho}_d$ coincides with its orbit under $\mathbf{H}_d$ (compare Proposition~\ref{prop:orbit_equiv_1}). In other words, $\pmb{\rho}_d$ is orbit equivalent to $\mathbf{H}_d$ on $\overline{\D}$.

Let us now describe how the expanding $d$-fold covering of the circle $\overline{z}^d:\mathbb{T}\to\mathbb{T}$ is related to $\pmb{\rho}_d$. The map $\overline{z}^d\vert_\mathbb{T}$ admits the same Markov partition as $\pmb{\rho}_d$ with the same transition matrix $M$. Moreover, the symbolic coding maps for $\pmb{\rho}_d$ and $\overline{z}^d$ (coming from their common Markov partitions) have precisely the same fibers, and hence they induce a homeomorphism $\pmb{\mathcal{E}}_d:\mathbb{T}\to\mathbb{T}$ conjugating $\pmb{\rho}_d$ to $\overline{z}^d$ (see \cite[\S 2]{LLMM1} for a more detailed discussion).

\subsection{The Nielsen map for $H_\T$}\label{nielsen_2_subsec}
In this subsection, we associate to an arbitrary triangulation $\T$ of $\mathbb{S}^2$, a \emph{Nielsen map} $N_\mathcal{T}$ that is orbit equivalent to the reflection group $H=H_\T$. As mentioned earlier, the map $N_\mathcal{T}$ is defined piecewise using the anti-M{\"o}bius reflections $R_f$ ($f\in F_\mathcal{T}$), and enjoys certain Markov properties when restricted to the limit set $\Lambda_H$. 

Let us fix an arbitrary triangulation $\mathcal{T}$ of $\mathbb{S}^2$, and consider the circle packing $\mathcal{C}_\mathcal{T}$ along with its dual circle packing (see Figure~\ref{fig:group_map} for the case of the tetrahedral triangulation). We denote the open disks bounded by the dual circles by $D_f$ (such that $D_f$ contains a unique triangular interstice of $\mathcal{C}_\mathcal{T}$), and the reflection in $C_f:=\partial D_f$ by $R_f$, for $f\in F_\mathcal{T}$. The set of points where the disks $D_f$ touch is denoted by $S$. Each connected component of 
$$
T^0:=\widehat{\C}\setminus\displaystyle\left(S\bigcup_{f\in F_\mathcal{T}} D_f\right)
$$ 
is called a \emph{fundamental tile}. 

\begin{figure}[ht]
\begin{tikzpicture}
\node[anchor=south west,inner sep=0] at (-3,0) {\includegraphics[width=0.49\textwidth]{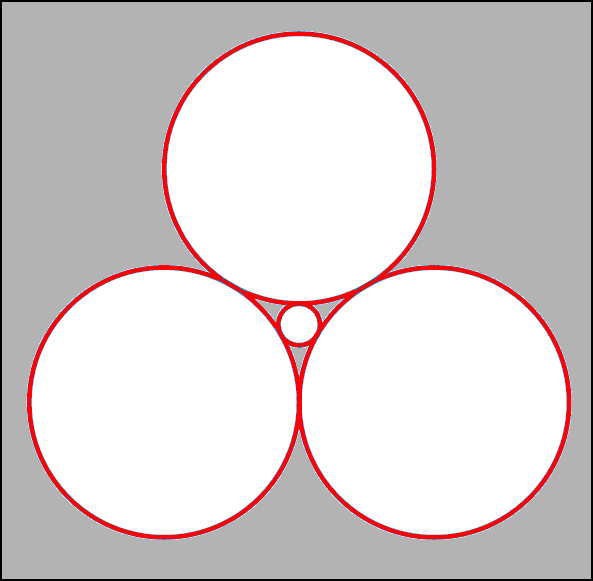}};
\node[anchor=south west,inner sep=0] at (3.6,-0) {\includegraphics[width=0.5\textwidth]{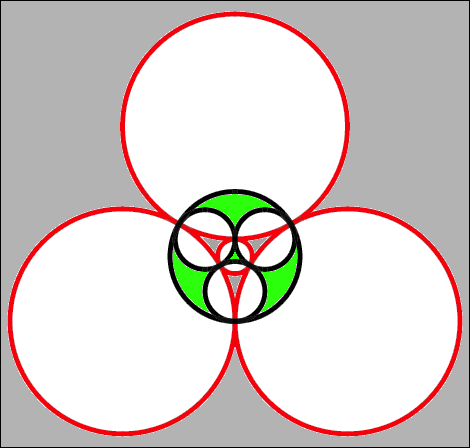}};
\node at (-1.2,1.8) {\begin{huge}$D_1$\end{huge}};
\node at (1.4,1.8) {\begin{huge}$D_2$\end{huge}};
\node at (0.2,4.2) {\begin{huge}$D_3$\end{huge}};
\node at (0.12,2.7) {\begin{tiny}$D_4$\end{tiny}};
\end{tikzpicture}
\caption{Left: The map $N_\mathcal{T}$ is defined as reflection with respect to $\partial D_i$ (in red) on $\overline{D_i}$. The grey region is $T^0$. Right: The four invariant components of the tiling set of $N_\mathcal{T}$ are precisely the disks bounded by the black circles, which form the circle packing $\mathcal{C}_\mathcal{T}$. The interstices of $\mathcal{C}_\mathcal{T}$ are marked in green.}
\label{fig:group_map}
\end{figure}

Recall from Section~\ref{sec:TrianglesToGaskets} that the anti-M{\"o}bius maps $R_f$ ($f\in F_\mathcal{T}$) generate a reflection group $$H:=H_\T=\langle R_f: f\in F_\mathcal{T}\rangle.$$ We know from Proposition~\ref{Prop:FDforH} that $T^0$ is a fundamental domain for the action of $H$ on its domain of discontinuity $\Omega_H$. Here is an alternate way of seeing this. 

For each $f\in F_\mathcal{T}$, let us consider the upper hemisphere $S_f\in\mathbb{H}^3$ such that $\partial S_f\cap\partial\mathbb{H}^3= C_f$; i.e., $C_f$ bounds the upper hemisphere $S_f$. By Poincar{\'e}'s original observation, the anti-M{\"o}bius map $R_f$ extends naturally to the reflection in $S_f$, and defines an anti-conformal automorphism of $\mathbb{H}^3$. Let $\mathcal{P}_\T$ be the convex hyperbolic polyhedron (in $\mathbb{H}^3$) whose relative boundary in $\mathbb{H}^3$ is the union of the hemispheres $S_f$ (see \cite[Figure~9]{Kon} for an illustration of the polyhedron $\mathcal{P}_\T$ in the case of the classical Apollonian gasket). Then, $\mathcal{P}_\T$ is a fundamental domain (called the Dirichlet fundamental polyhedron) for the action of the group $H$ on $\mathbb{H}^3$, and $T^0=\overline{\mathcal{P}_\T}\cap\Omega_H$ (where the closure is taken in $\Omega_H\cup\mathbb{H}^3$) is a fundamental domain for the action of $H$ on $\Omega_H$ (see \cite[\S 3.5]{Marden}, also compare \cite{V}). 

We now define the Nielsen map $N_\mathcal{T}$ on $\displaystyle\bigcup_{f\in F_\mathcal{T}} \overline{D_f}$ by setting $$N_\mathcal{T}\equiv R_f\quad \mathrm{on}\ \overline{D_f}.$$

Let us now briefly describe the Markov properties of $N_\T:\Lambda_H\to\Lambda_H$. To this end, let us first note that $$\Lambda_H=\bigcup_{f\in F_\T} T_f,\ \textrm{where}\ T_f:= \overline{D_f}\cap\Lambda_H.$$ Then, we have  
\begin{itemize}
\item $\Int{T_f}\cap \Int{T_{f'}}=\emptyset$, for $f\neq f'$ (where the interior is taken in the subspace topology of $\Lambda_H$),
\item each $T_f$ is injectively mapped by $N_\T$ onto the union $\displaystyle\bigcup_{f'\neq f}T_{f'}$.
\end{itemize} 
Hence, the sets $\{T_f\}_{f\in F_\T}$ form a Markov partition for $N_\T:\Lambda_H\to\Lambda_H$ with the $(d+1)\times(d+1)$ transition matrix 
$$
\begin{bmatrix}
0&1&1&\dots&1&1\\
1&0&1&\dots&1&1\\
\hdotsfor{6}\\
1&1&1&\dots&1&0
\end{bmatrix},
$$
where $d=|F_\T|-1$ is the number of faces of $\T$.

\begin{proposition}[Orbit equivalence]\label{prop:orbit_equiv_1}
The reflection map $N_\mathcal{T}$ is orbit equivalent to the reflection group $H$ on $\widehat{\C}$.
\end{proposition}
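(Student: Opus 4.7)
The plan is to verify that the $H$-orbit and the $N_\T$-grand-orbit of any point $z \in \widehat{\C}$ coincide, where by the grand orbit under $N_\T$ I mean the smallest set containing $z$ that is closed under forward application of $N_\T$ (wherever defined) and under taking $N_\T$-preimages. One inclusion is immediate from the definition: on each $D_f$ the map $N_\T$ acts as the generator $R_f \in H$, and any $N_\T$-preimage of a point $z$ is of the form $R_f(z)$ for some $f$. Hence every step along the $N_\T$-grand-orbit stays inside the $H$-orbit.

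For the reverse inclusion, I would argue by induction on the word length $|h|$ of $h \in H$ relative to the generators $\{R_f\}_{f \in F_\T}$. The case $|h|=0$ is trivial. For the inductive step, write $h = R_f \circ h'$ with $|h'| = |h|-1$; by the inductive hypothesis $h'(z)$ lies in the $N_\T$-grand-orbit of $z$, so it suffices to show that for every $w \in \widehat{\C}$, the point $R_f(w)$ belongs to the $N_\T$-grand-orbit of $w$.

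The key geometric fact driving the argument is that the involution $R_f$ swaps $\Int D_f$ with $\widehat{\C} \setminus D_f$ and fixes the circle $C_f$ pointwise, so exactly one of $w$, $R_f(w)$ lies in $\Int D_f$ unless $w \in C_f$ (in which case $w = R_f(w)$ and there is nothing to show). If $w \in D_f$, then $R_f(w) = N_\T(w)$ is a forward image of $w$. If instead $w \notin D_f$, then $R_f(w) \in \Int D_f$, where $N_\T$ is defined, and $N_\T(R_f(w)) = R_f(R_f(w)) = w$, so $R_f(w)$ is an $N_\T$-preimage of $w$. In every case $R_f(w)$ lies in the $N_\T$-grand-orbit of $w$, closing the induction.

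I do not foresee a substantive obstacle: the argument is essentially bookkeeping built on the involutive nature of each $R_f$ together with the partition of $\widehat{\C}$ into $D_f$ and its complement. The only minor caveat is that $N_\T$ is not single-valued at the tangency set $S$ where several disks $D_f$ meet, but such points lie on each of the corresponding circles $C_f$ and are therefore fixed by the reflections in question, so they pose no issue for the orbit-equivalence argument.
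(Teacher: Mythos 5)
Your proposal is correct and takes essentially the same route as the paper: one inclusion is immediate because $N_\T$ acts by the generators $R_f$ and its preimages are again applications of the $R_f$, and the reverse inclusion reduces by induction on word length to a single generator $R_f$, at which point the dichotomy $w \in D_f$ versus $w \notin D_f$ shows $R_f(w)$ is either a forward $N_\T$-image or an $N_\T$-preimage of $w$. Your remark on the tangency set $S$ is a nice touch, though as you note it is harmless since the competing reflections agree (and fix) those points.
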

\begin{proof}
Recall that two points $x,y\in\widehat{\C}$ are said to lie in the same grand orbit of $N_\mathcal{T}$ if there exist non-negative integers $n_1,n_2$ such that $N_\mathcal{T}^{\circ n_1}(x)=N_\mathcal{T}^{\circ n_2}(y)$. On the other hand, a $H$-orbit is defined as the set $\{h(z):h\in H\}$, for some $z\in\widehat{\C}$. We need to show two points lie in the same grand orbit of $N_\mathcal{T}$ if and only if they lie in the same $H$-orbit.

To this end, let us pick $x,y\in\widehat{\C}$ in the same grand orbit of $N_\mathcal{T}$. Since $N_\mathcal{T}$ acts by the generators $R_f$ ($f\in F_\mathcal{T}$) of the group $H$, it directly follows that there exists an element of $H$ that takes $x$ to $y$; i.e., $x$ and $y$ lie in the same $H$-orbit.

Conversely, let $x,y\in\widehat{\C}$ lie in the same $H$-orbit; i.e., there exists $h\in H$ with $h(x)=y$. By definition, we have that $h=R_{s_1}R_{s_2}\cdots R_{s_k}$, for some $s_1,\cdots s_k\in F_\mathcal{T}$. A simple application of mathematical induction shows that it suffices to prove grand orbit equivalence of $x$ and $y$ (under $N_\mathcal{T}$) in the case $k=1$. Therefore, we assume that $R_{s_1}(x)=y$. Note that either $x$ or $y$ must belong to $\overline{D_{s_1}}$. Since $R_{s_1}(x)=y$ implies $R_{s_1}(y)=x$, there is no loss of generality in assuming that $x\in \overline{D_{s_1}}$. Now, the condition $R_{s_1}(x)=y$ can be written as $N_\mathcal{T}(x)=y$, which proves that $x$ and $y$ lie in the same grand orbit of $N_\mathcal{T}$.
\end{proof}

\begin{proposition}\label{prop:domain_discont}
$$\Omega_H=\bigcup_{n\geq0} N_\mathcal{T}^{-n}(T^0).$$
\end{proposition}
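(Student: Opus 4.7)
The plan is to prove the two inclusions separately, with the substantial content lying in $\Omega_H \subseteq \bigcup_{n\geq 0} N_\mathcal{T}^{-n}(T^0)$.

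For the easy direction $\bigcup_{n\geq 0} N_\mathcal{T}^{-n}(T^0) \subseteq \Omega_H$, I would use that $T^0 \subseteq \Omega_H$ (since $T^0$ is a fundamental domain for the action of $H$ on $\Omega_H$, by Proposition~\ref{Prop:FDforH}) and that each generator $R_f$ preserves $\Omega_H$. Because $N_\mathcal{T}|_{D_f} = R_f$ and $\Omega_H$ is $H$-invariant, it follows that $N_\mathcal{T}^{-1}(\Omega_H) \subseteq \Omega_H$, and iterating yields the claim.

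For the reverse containment, fix $z \in \Omega_H$; invoke the fundamental domain property to obtain $h \in H$ with $h(z) \in T^0$, and write $h = R_{f_k} R_{f_{k-1}} \cdots R_{f_1}$ in reduced form (so $f_{i+1} \neq f_i$), using the relations $R_f^2 = \mathrm{id}$ to simplify if necessary; take $k := 0$ if $h = \mathrm{id}$. The crucial geometric input is that the open dual disks $\Int D_f$ $(f \in F_\mathcal{T})$ are pairwise disjoint---they only meet on the tangency set $S$---and $T^0$ avoids all of them. Consequently, for any pair of distinct faces $f \neq f'$ one has $D_{f'} \subseteq \widehat{\C} \setminus \Int D_f$, which gives $R_f(D_{f'}) \subseteq D_f$, and similarly $R_f(T^0) \subseteq D_f$ for every $f$.

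Starting from $R_{f_k}(T^0) \subseteq D_{f_k}$ and applying these containments inductively along the reduced word, one obtains $R_{f_j} R_{f_{j+1}} \cdots R_{f_k}(T^0) \subseteq D_{f_j}$ for every $1 \leq j \leq k$. In particular $z \in h^{-1}(T^0) = R_{f_1} R_{f_2} \cdots R_{f_k}(T^0) \subseteq D_{f_1}$, so $N_\mathcal{T}(z) = R_{f_1}(z) \in R_{f_2} \cdots R_{f_k}(T^0) \subseteq D_{f_2}$, and continuing in this way $N_\mathcal{T}^j(z) \in R_{f_{j+1}} \cdots R_{f_k}(T^0) \subseteq D_{f_{j+1}}$ for every $0 \leq j \leq k-1$. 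Thus every intermediate iterate lies in the domain $\bigcup_f D_f$ of $N_\mathcal{T}$, and $N_\mathcal{T}^k(z) \in T^0$, giving $z \in N_\mathcal{T}^{-k}(T^0)$ as required. The only real obstacle is the word-level bookkeeping: reducedness is essential so that the containments $R_f(D_{f'}) \subseteq D_f$ cascade along the word, and one must check at each step that the iterate has not escaped the domain of $N_\mathcal{T}$; both are handled by the tile-containment established inductively above.
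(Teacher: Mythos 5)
Your proof is correct, and it rests on the same two inputs as the paper's — the fundamental domain property (Proposition~\ref{Prop:FDforH}) and the fact that $N_\T$ acts by the generating reflections — but it is a genuinely more explicit route. The paper's own proof is a one-line citation of Proposition~\ref{prop:orbit_equiv_1} (orbit equivalence of $N_\T$ and $H$) together with the fundamental domain fact; unpacking it, one obtains integers $n_1,n_2\geq 0$ with $N_\T^{n_1}(z)=N_\T^{n_2}(h(z))$ and then needs the extra observation that $N_\T$ fixes $T^0\cap\bigcup_f D_f$ pointwise to reduce to $n_2=0$. Your argument sidesteps this: by insisting the word $h=R_{f_k}\cdots R_{f_1}$ be reduced, and using that the dual disks form a circle packing (so $D_{f'}\subseteq\widehat{\C}\setminus\Int D_f$ for $f'\neq f$, and hence $R_f(D_{f'})\subseteq D_f$), you obtain the clean cascade $R_{f_j}\cdots R_{f_k}(T^0)\subseteq D_{f_j}$, from which it follows simultaneously that each iterate $N_\T^j(z)$ lies in the domain of $N_\T$ (namely in $D_{f_{j+1}}$, a point worth checking since the Nielsen map is only partially defined), and that $N_\T^k(z)=h(z)\in T^0$. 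This yields the quantitative refinement that the escape time to $T^0$ is bounded by the word length $|h|$, which the orbit-equivalence route does not give directly. One small remark: the well-definedness of the expression $N_\T(w)=R_{f_{j+1}}(w)$ at a point $w=N_\T^j(z)$ on $\partial D_{f_{j+1}}$ is unambiguous because $w\in\Omega_H$ forces $w\notin S$, so $w$ lies in only one of the closed dual disks; you implicitly use this and it would be worth a word, but it is not a gap.
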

\begin{proof}
This follows from Proposition~\ref{prop:orbit_equiv_1} and the fact that $T^0$ is a fundamental domain for the action of $H$ on its domain of discontinuity $\Omega_H$.
\end{proof}

Let us conclude this subsection with a brief discussion on the index two Kleinian subgroup $\Gamma_\T\leqslant H_\T$ consisting of the (orientation preserving) M{\"o}bius maps in $H_\T$, in the case when the dual graph $\check{\T}$ of the triangulation $\T$ is Hamiltonian (i.e., $\check{\T}$ contains a cycle that visits every vertex exactly once). Note that $\Gamma_\T$ is a geometrically finite Kleinian group (its fundamental polyhedron in $\mathbb{H}^3$ is obtained by `doubling' $\mathcal{P}_\T$, and hence has finitely many sides). 

Denoting the index two Fuchsian subgroup of $\mathbf{H}_d$ by $\pmb{\Gamma}_d$ (where $\mathbf{H}_d$ is the regular ideal $(d+1)$-gon reflection group introduced in Subsection~\ref{nielsen_1_subsec}), one easily sees that the top and bottom surfaces $\D/\pmb{\Gamma}_d$ and $(\widehat{\C}\setminus\overline{\D})/\pmb{\Gamma}_d$ associated with the Fuchsian group $\pmb{\Gamma}_d$ are $(d+1)$-times punctured spheres. The assumption that $\check{\T}$ is Hamiltonian implies that the group $\Gamma_{\mathcal{T}}$ can be obtained as a limit of a sequence of quasi-Fuchsian deformations of $\pmb{\Gamma}_d$. More precisely, this is achieved by considering a sequence of quasi-Fuchsian deformations of $\pmb{\Gamma}_d$ that pinch suitable collections of simple closed non-peripheral geodesics on $\D/\pmb{\Gamma}_d$ and $(\widehat{\C}\setminus\overline{\D})/\pmb{\Gamma}_d$ simultaneously. Thus, $\Gamma_\T$ is a \emph{cusp group} that lies on the boundary of the quasi-Fuchsian deformation space of the Fuchsian group $\pmb{\Gamma}_d$. These geodesics (that we pinch) lift by $\pmb{\Gamma}_d$ to the universal covers $\D$ and $\widehat{\C}\setminus\overline{\D}$ giving rise to a pair of geodesic laminations (of $\D$ and $\widehat{\C}\setminus\overline{\D}$ respectively), \cite[\S 3.9.1]{Marden}. Since $\pmb{\rho}_d\vert_{\mathbb{T}}$ is orbit equivalent to the action of $\mathbf{H}_d$, it follows that these two laminations (viewed as equivalence relations on $\mathbb{T}$) are $\pmb{\rho}_d$-invariant; i.e., the $\pmb{\rho}_d$-images of the end-points of a leaf are the end-points of some leaf of the lamination. Moreover, the quotient of $\mathbb{T}$ by identifying the endpoints of the leaves of both these laminations produces a topological model of the limit set $\Lambda_H$, and the (equivariant) quotient map from $\mathbb{T}$ onto $\Lambda_H$ semi-conjugates $\pmb{\rho}_d:\mathbb{T}\to\mathbb{T}$ to $N_\T:\Lambda_H\to\Lambda_H$ (see \cite{Floyd}, also compare \cite{MS}). In fact, this quotient map is the Cannon-Thurston map for $H_\T$ (see \cite[\S 2.2]{MS} for the definition of Cannon-Thurston maps).

The domain of discontinuity of $\Gamma_\T$ is equal to $\Omega_H$. If the valences of the vertices of $\mathcal{T}$ are $n_1,\cdots, n_{\vert V_\T\vert}$, then the quotient $$\mathcal{M}(\Gamma_\T):=(\mathbb{H}^3\cup\Omega_H)/\Gamma_\T$$ is an infinite volume $3$-manifold whose conformal boundary $\partial\mathcal{M}(\Gamma_\T):=\Omega_H/\Gamma_\T$ consists of $\vert V_\T\vert$ Riemann surfaces which are spheres with $n_1,\cdots n_{\vert V_\T\vert}$ punctures (respectively). 

In the particular case of the tetrahedral triangulation $\T$ (which gives rise to the classical Apollonian gasket limit set), the conformal boundary $\partial\mathcal{M}(\Gamma_\T)$ consists of $4$ triply punctured spheres. In this case, $\Gamma_\T$ is obtained by pinching two geodesics, one on the top and one on the bottom $4$-times punctured sphere determined by $\pmb{\Gamma}_3$. In fact, these geodesics correspond to \emph{pants decompositions} of the top and bottom $4$-times punctured spheres. Thus in this case, $\Gamma_\T$ is a \emph{maximal cusp} group (see \cite[\S 5.3]{Marden} for a discussion of maximal cusp groups). Such groups are known to be \emph{rigid}. More precisely, if a $3$-manifold $\mathcal{M}(\Gamma')$ (arising from some Kleinian group $\Gamma'$) is homeomorphic to $\mathcal{M}(\Gamma_\T)$, then they are in fact isometric, and the two Kleinian groups $\Gamma_\T$ and $\Gamma'$ are conjugate by a M{\"o}bius map. In particular, $\Gamma_\T$ is quasiconformally rigid \cite[Theorems~3.13.4, 5.1.3]{Marden}.

\section{Topological surgery: from Nielsen map to a branched covering}\label{Sec:RoundGasketsNoninvertible}

Let $g:\sphere\to\sphere$ be a branched cover, and let $C_g$ be its set of critical points. The \emph{postcritical set} is given by 
$
P_g=\displaystyle\bigcup_{i>0}g^{\circ i}(C_g).
$
A \emph{Thurston map} is a branched cover $g:\sphere\to\sphere$ of degree $d$ so that $|P_g|<\infty$ and $|d|>1$. Contrary to the usual definition, we admit orientation reversing branched covers.

In this section we construct an orientation reversing branched cover $\G_{\T}:\sphere\to\sphere$ associated to a triangulation $\T$. This is done in such a way that each vertex in $\T$ is fixed by $\G_\T$ and each edge in $\T$ is invariant.

Recall that each face $f\in \F_\T$ contains a unique interstice $\Delta_f$ of the circle packing.
\begin{lemma}\label{L:TrianglesShrink}
Let $\Delta_f$ and $h_n=R_{f_{i_1}}R_{f_{i_2}}\dots R_{f_{i_n}}\in H,\ i_j\neq i_{j+1},\ j=1,2,\dots, n-1$, be arbitrary, and let $\Delta_{f, h_n}$ be defined by $\Delta_f=h_n(\Delta_{f, h_n})$. Then 
$$
{\rm diam}(\Delta_{f, h_n})\to 0 
$$
as the word length $|h_n|=n$ goes to infinity.
\end{lemma}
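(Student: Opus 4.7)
My plan is to pass to hyperbolic $3$-space via the Poincar\'e extension and exploit discreteness of the resulting group action, converting a word-length divergence into a spherical diameter bound on $\rs = \partial_\infty \mathbb{H}^3$.

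Each reflection $R_f$ extends to the reflection of $\mathbb{H}^3$ in the hyperbolic plane $S_f$ bounded by $C_f$, so $H$ becomes a discrete subgroup of $\mathrm{Isom}(\mathbb{H}^3)$ with Dirichlet fundamental polyhedron $\mathcal{P}_\T$ (already introduced in Section~\ref{group_equiv_map_sec}). Fix a base point $p_0 \in \Int(\mathcal{P}_\T)$. The free-product presentation \eqref{E:HPres} implies that reduced words correspond bijectively to group elements, so the $h_n$ are pairwise distinct; hence discreteness of the orbit $\{h(p_0) : h \in H\} \subset \mathbb{H}^3$ forces $d_{\mathbb{H}^3}(p_0, h_n^{-1}(p_0)) \to \infty$ as $n\to\infty$. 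For any compact set $K \subset \Omega_H$, a standard shadow / proper-discontinuity argument yields an estimate of the form $\mathrm{diam}_{\rs}(h^{-1}(K)) \leq C_K \cdot e^{-d_{\mathbb{H}^3}(p_0,\, h^{-1}(p_0))}$, and consequently the $h_n^{-1}$-image of any compact subset of $\overline{\Delta_f}$ has spherical diameter tending to $0$.

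The obstacle is that $\overline{\Delta_f}$ itself is not compact in $\Omega_H$: it meets $\Lambda_H$ exactly at its three cusp corners $p_1, p_2, p_3$, where $H$ acts parabolically (with $\mathrm{Stab}_H(p_i) = \langle R_{f_i} R_{f_i'} \rangle$ for the two faces $f_i, f_i'$ incident to the edge whose tangency point is $p_i$). I will decompose $\overline{\Delta_f} = K \cup U_1 \cup U_2 \cup U_3$ with $K$ compact in $\Omega_H$ and each $U_i$ equal to the intersection of $\overline{\Delta_f}$ with the spherical shadow of a small horoball at $p_i$; apply the shadow estimate above to $K$, and control each $h_n^{-1}(U_i)$ via the parabolic stabilizer, using that iterates of a parabolic element contract horoball shadows to the parabolic fixed point in the spherical metric. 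The hard part is to verify that the thin cuspidal pieces really do shrink \emph{spherically} and cannot contribute a long-and-thin tail to $\Delta_{f, h_n}$; this requires decomposing each $h_n$ according to its coset in $H / \mathrm{Stab}_H(p_i)$ and tracking how the bounded coset representatives interact with the contracting parabolic factor. Combining the thick-part estimate with the cuspidal estimate then yields $\mathrm{diam}(\Delta_{f, h_n}) \to 0$.
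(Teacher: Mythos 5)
You take a genuinely different route from the paper — Poincar\'e extension to $\mathbb{H}^3$, shadow estimates, parabolic analysis at the cusps — but as written the proposal does not constitute a proof: the step you yourself flag as ``the hard part'' (showing the cuspidal pieces $h_n^{-1}(U_i)$ shrink spherically) is precisely the substantive content of the lemma, and you leave it as a program rather than a verification. Your shadow estimate is valid only for $K$ compact in $\Omega_H$, and the three cusp vertices $p_1,p_2,p_3\in\overline{\Delta_f}\cap\Lambda_H$ are exactly where it breaks. For instance, if $h_n$ is a long power of the parabolic word generating $\mathrm{Stab}_H(p_i)$, then $h_n^{-1}$ fixes $p_i$ with $(h_n^{-1})'(p_i)=1$ and does not converge to a constant on any neighborhood of $p_i$; the bound $\mathrm{diam}(h_n^{-1}(U_i))\lesssim e^{-d(p_0,\,h_n^{-1}(p_0))}$ has no reason to hold, and the shrinking of $h_n^{-1}(U_i)$ in this case must come from the wedge shape of $U_i$ (the image is a long thin strip near $p_i$), not from pointwise contraction. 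Pushing this through for general $h_n$ — via the horoball packing and coset representatives of $H/\mathrm{Stab}_H(p_i)$, as you propose — is plausible and would even yield a quantitative rate, but none of those estimates are actually carried out, so the lemma is not established.

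The paper's own proof is strictly softer and never touches $\mathbb{H}^3$ or the parabolic structure. It argues by contradiction: if $\mathrm{diam}(\Delta_{f,h_n})\geq\delta$ along a subsequence, then since each $\Delta_{f,h_n}$ is a curvilinear triangle bounded by arcs of three mutually tangent complementary round disks of $\Lambda_H$, at least one bounding disk has diameter bounded below, and — as only finitely many complementary disks have diameter $\geq\varepsilon$ — one can pass to a subsequence where first one and then two bounding disks are fixed. The third disk is tangent to both fixed ones, so either it shrinks, forcing $\Delta_{f,h_n}$ to collapse to the tangency point of the two fixed disks (contradiction), or after another extraction it too is fixed; but then $\Delta_{f,h_n}$ is a single fixed interstice, which has trivial stabilizer in $H$, forcing $h_n$ to be constant and contradicting $|h_n|\to\infty$. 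This gives no rate, but it sidesteps the cusp bookkeeping entirely — which is exactly the part your proposal defers.
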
 
\begin{proof}
We argue by contradiction and assume that there exist $\delta>0$ and a sequence $(h_n)$ such that ${\rm diam}(\Delta_{f, h_n})\geq \delta$ for all $n\in\N$. Since there are only finitely many disks in $\Omega_H$ whose diameters are bounded away from 0 by a fixed positive number, there must exist a subsequence $(h_{n_k})$ such that one of the sides of $\Delta_{f, h_{n_k}}$ is contained in the boundary of a fixed disk of $\Omega_H$, for all $k\in\N$. In fact, the triangle inequality applied to the vertices of $\Delta_{f, h_{n_k}}$ implies that, by possibly passing to a further subsequence of $(h_{n_k})$, we may assume that two of the sides of $\Delta_{f, h_{n_k}}$ are contained in the boundaries of two distinct fixed disks of $\Omega_H$. 

Now, let $D_{n_k}$ be the disk in $\Omega_H$ whose boundary contains the third side of $\Delta_{f, h_{n_k}}$. This disk has to be mutually touching with the two disks whose boundaries contain the other two sides. If ${\rm diam}(D_{n_k})\to0$ as $k\to\infty$, we get a contradiction with our assumption. Otherwise, by possibly passing to yet another subsequence, we may assume that $D_{n_k}$ is also fixed for all $k\in\N$. This however also leads to a contradiction because the word length $|h_{n_k}|$ goes to $\infty$. 
\end{proof}
  
We first define $\G_\T$ on each closure $\overline{\Delta_f}$ to be the restriction $\G_\T=R_f|_{\overline{\Delta_f}}$. Note that this implies $N_\mathcal{T}\equiv G_\mathcal{T}$ on $\Lambda_H.$ Now, let $v\in V_\T$ be arbitrary and let $D_v$ be the corresponding open disk in the circle packing $\mathcal C_\T$. The map $\G_\T$ is already defined on the boundary circle $\partial D_v$ of $D_v$, and it is a piecewise reflection map. Let $k=k(v)$ be the number of triangular interstices adjacent to $D_v$ minus one. We have $k+1$ points $p_0, p_1,\dots, p_{k}$ on $\partial D_v$ that are common points of pairs of adjacent interstices. We assume that they are enumerated in a cyclic order along $\partial D_v$. These points are fixed by $\G_\T$, and they are the only fixed points of $\G_\T$. Moreover, the degree of the map $\G_\T|_{\partial D_v}$ is $-k$. Therefore, there is an orientation preserving homeomorphism $\phi_v$ of $\partial D_v$ onto the unit circle in the plane that conjugates $\G_\T|_{\partial D_v}$ to the map $g_k(z)=\bar z^k$. The map $\phi_v$ takes $p_0, p_1, \dots, p_k$ to $e^{2\pi i j/(k+1)},\ j=0,1,\dots, k$, the fixed points of $g_k$. The fact that such a conjugating homeomorphism $\phi_v$ exists follows from Lemma~\ref{L:TrianglesShrink}. Indeed, this lemma implies that the lengths of the complementary intervals of $\displaystyle\bigcup_{j=0}^k \G_\T^{-n}(p_j)$ go to 0 as $n\to\infty$.

Let $\Phi_v$ be a homeomorphism of the closure $\overline{D_v}$ onto $\overline{\D}$ that extends $\phi_v$.
For each $v\in V_\T$, we define $\G_\T|_{D_v}=\Phi_v^{-1}\circ g_k\circ \Phi_v$. This defines a global continuous map $\G_\T$ of $\rs$ to itself. 

The map $g_k$ fixes setwise each ray $\rho_j$ from the origin to the fixed point $e^{2\pi i j/(k+1)}$, $j=0,1,\dots, k$.
For each $v\in V_\T$, the point $u_v=\Phi^{-1}_v(0)\in D_v$ is a fixed point of $\G_\T$ and the arcs $\alpha_{v,j}=\Phi_v^{-1}(\rho_j),\ j=0,1,\dots, k$, are setwise fixed. If two vertices $v_1, v_2\in V_\T$ are such that the corresponding disks $D_{v_1}, D_{v_2}$ are tangent, then there exist two fixed arcs $\alpha_{v_1,j_1}$ in $D_{v_1}$ and $\alpha_{v_2, j_2}$ in $D_{v_2}$ that have the same endpoint, the tangent point of $D_{v_1}$ and $D_{v_2}$. Their concatenation along with the tangent point form a fixed arc $t_{v_1v_2}$ that connects $u_{v_1}$ and $u_{v_2}$. 
The triangulation $\T$ is isotopic to the triangulation $\T'$ whose vertices are the points $u_v,\ v\in V_\T$, and the edges are $t_{v_1v_2}$, where $v_1, v_2$ are such that $D_{v_1}, D_{v_2}$ are tangent. In what follows, we identify $\T$ and $\T'$, and therefore conclude that $\G_\T$ keeps $\T$ invariant. More specifically, $\G_\T$ fixes the vertices of $\T$ pointwise and fixes the edges of $\T$ setwise. 
We summarize the properties of $\G_T$ in the following proposition.

\begin{proposition}[Properties of the branched covering $G_\T$]\label{prop:AntiThurstonMappingProperties}
The map $\G_\T:\sphere\to\sphere$ is an orientation reversing branched cover such that
\begin{enumerate}
\item the degree of $\G_\T$ is $1-|\F_\T|$,
\item the set of critical points of $\G_\T$ is given by $\V_\T$, and all critical points are fixed,
\item all edges and vertices are invariant, and
\item the restriction of $\G_\T$ to any face of $\T$ is univalent.
\item $N_\mathcal{T}\equiv G_\mathcal{T}$ on $\Lambda_H.$
\end{enumerate}
\end{proposition}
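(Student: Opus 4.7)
The plan is to verify the five properties essentially from the piecewise definition of $G_\T$: reflections $R_f$ on interstices $\overline{\Delta_f}$, and conjugates $\Phi_v^{-1}\circ g_{k_v}\circ \Phi_v$ of the power map $g_{k_v}(z)=\bar z^{k_v}$ on packing disks $D_v$, where $k_v$ is the number of interstices incident to $D_v$ minus one. Property (5) is immediate: because each $\Int D_v$ is a component of $\Omega_H$, we have $\Lambda_H\subset \sphere\setminus\bigcup_v\Int D_v=\bigcup_f\overline{\Delta_f}$, and on every $\overline{\Delta_f}\subset\overline{D_f}$ both $G_\T$ and $N_\T$ coincide with $R_f$ by construction. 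Properties (2) and (3) follow from the local model: on an interstice $R_f$ is an anti-M\"obius diffeomorphism with no critical points; on $D_v$ the power map $g_{k_v}$ has its unique critical point at the origin, which corresponds to $u_v$. Since any triangulation of $\sphere$ with $|V_\T|\geq 4$ has every vertex of valence $\geq 3$, we have $k_v\geq 2$ and each $u_v$ is a genuine critical point fixed by $G_\T$ with local degree $k_v$. Moreover, each ray $\rho_j\subset\D$ is setwise fixed by $g_{k_v}$ (the map acts radially as $r\mapsto r^{k_v}$ while preserving the argument of a $(k_v{+}1)$-st root of unity), so each arc $\alpha_{v,j}$, and hence each edge $t_{v_1 v_2}$ and each vertex $u_v$, is invariant.

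The main step is (4). Fix a face $f$ with adjacent disks $D_{v_1},D_{v_2},D_{v_3}$, and decompose $\overline{f}=\overline{\Delta_f}\cup\overline{S_1}\cup\overline{S_2}\cup\overline{S_3}$, where $S_i:=f\cap \Int D_{v_i}$ is the ``pie slice'' of $D_{v_i}$ bounded by two consecutive invariant arcs $\alpha_{v_i,j_i}, \alpha_{v_i,j_i+1}$. In $\Phi_{v_i}$-coordinates, $S_i$ becomes an angular sector of width $2\pi/(k_i+1)$, strictly less than $2\pi/k_i$; consequently $g_{k_i}$ is injective on $S_i$ and its image is the complementary sector $\D\setminus\overline{S_i}$, which is the union of the other $k_i-1$ pie slices of $D_{v_i}$. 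On the interstice, the explicit formula
$$R_f(\overline{\Delta_f})=\bigcup_{g\neq f}\overline{\Delta_g}\cup\bigcup_{D\neq D_{v_1},D_{v_2},D_{v_3}}D$$
guarantees that $R_f(\Delta_f)$ is disjoint from each $D_{v_i}$. Since the three images $g_{k_i}(S_i)\subset D_{v_i}$ lie in distinct packing disks (meeting only at tangency points) and none of them meets $R_f(\Delta_f)$, the four pieces of $G_\T|_f$ have pairwise disjoint images whose union is exactly $\sphere\setminus\overline{f}$. Hence $G_\T$ carries $\overline{f}$ homeomorphically onto $\sphere\setminus f$, proving (4).

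Property (1) then follows by counting preimages of a generic point $w$ in the interior of a single face $f_0$: by (4), $w$ has exactly one preimage in each of the other $|F_\T|-1$ faces and no preimage in $\overline{f_0}$, so $|\deg G_\T|=|F_\T|-1$; since $G_\T$ is orientation reversing, $\deg G_\T=1-|F_\T|$. The main obstacle is the angular bookkeeping in (4)---checking that $g_{k_i}$ sends the narrow sector $S_i$ injectively onto the complementary $(k_i-1)$-sector region, and that the four image pieces of a face fit together without overlap to tile $\sphere\setminus\overline{f}$. Everything else is essentially unwound from the definitions.
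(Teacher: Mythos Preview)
Your proof is correct and follows the same route the paper intends. In the paper this proposition is stated without proof, introduced as ``We summarize the properties of $\G_\T$ in the following proposition''; the properties are meant to be read off directly from the preceding construction of $\G_\T$ as $R_f$ on interstices and $\Phi_v^{-1}\circ g_{k_v}\circ\Phi_v$ on packing disks. Your verification supplies exactly those details---the sector/complementary-sector computation for (4) and the preimage count for (1)---that the paper leaves to the reader.
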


\begin{remark}
Here is an equivalent way of constructing the map $G_\T$ from the Nielsen map $N_\T$. Note that $N_\T$ has $\vert V_\T\vert$ invariant components each of which is a round disk. For such a round disk $D_v$, the restriction $N_\T:\overline{D_v}\setminus\Int{T^0}\to\overline{D_v}$ is topologically conjugate to $\pmb{\rho}_k:\overline{\D}\setminus\Int{\Pi}\to\overline{\D}$ (see Subsection~\ref{nielsen_1_subsec}), where the component of $T^0$ contained in $D_v$ is an ideal $(k+1)$-gon. Using a homeomorphic extension of $\pmb{\mathcal{E}}_k$ to $\overline{\D}$, one can now glue the action of $\overline{z}^k:\D\to\D$ in $D_v$. Clearly, this produces a branched cover of $\mathbb{S}^2$ that agrees with $N_\T$ on $\Lambda_H$, and that is Thurston equivalent to $G_\T$.
\end{remark}

\begin{proposition}\label{cor:orbit_equiv_2}
The branched cover $G_\mathcal{T}$ is orbit equivalent to the reflection group $H_\mathcal{T}$ on $\Lambda_H$. In particular, $\Lambda_H$ is the minimal non-empty $\G_\T$-invariant compact subset of $\rs$.
\end{proposition}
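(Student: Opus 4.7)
The plan is to reduce the statement to Propositions~\ref{prop:orbit_equiv_1} and~\ref{prop:AntiThurstonMappingProperties}(5), splitting the proof into the orbit equivalence and the minimality parts. For orbit equivalence on $\Lambda_H$, I combine $G_\T \equiv N_\T$ on $\Lambda_H$ (Proposition~\ref{prop:AntiThurstonMappingProperties}(5)) with Proposition~\ref{prop:orbit_equiv_1}. First I would verify that both $\Lambda_H$ and $\Omega_H$ are forward $G_\T$-invariant: the former since $G_\T = N_\T$ there and $\Lambda_H$ is $H$-invariant, the latter because $G_\T|_{D_v}$ is conjugate to $\overline{z}^k$ on $\overline{\D}$ (preserving $\Int D_v$) and $G_\T|_{\overline{\Delta_f}} = R_f \in H$ preserves $\Omega_H$. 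Then, if $x \in \Lambda_H$ and $y \in \rs$ share a $G_\T$-grand orbit, the common forward image lies in $\Lambda_H$, forcing $y \in \Lambda_H$; since $G_\T \equiv N_\T$ on $\Lambda_H$, the $G_\T$-grand orbit of $x$ coincides with its $N_\T$-grand orbit, which equals the $H$-orbit by Proposition~\ref{prop:orbit_equiv_1}.

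\smallskip

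For the minimality statement I interpret ``$G_\T$-invariant'' in the total sense $G_\T(K) = K = G_\T^{-1}(K)$, the natural analog of group-theoretic invariance. First I would observe that $\Lambda_H$ itself is totally invariant: forward invariance is above, and a case analysis shows $G_\T^{-1}(\Lambda_H) \subseteq \Lambda_H$ (preimages of $z \in C_v$ inside $D_v$ under $\overline{z}^k$ lie on $C_v$, while any preimage in $\overline{\Delta_{f'}}$ equals $R_{f'}(z) \in \Lambda_H$). Now let $K$ be any non-empty compact totally $G_\T$-invariant subset of $\rs$; the goal is $\Lambda_H \subseteq K$. If $K \cap \Lambda_H \neq \emptyset$, pick $q \in K \cap \Lambda_H$: total invariance together with the orbit equivalence just established puts $H \cdot q$ inside $K$, and density of $H$-orbits in $\Lambda_H$ (the classical minimality of Kleinian limit sets) together with closedness of $K$ yields $\Lambda_H \subseteq K$.

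\smallskip

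The main obstacle is the case $K \subseteq \Omega_H$. Here I would use a generation argument in the spirit of Lemma~\ref{Lem:DecreasingGeneration}: if $p$ lies in a component $B$ of $\Omega_H$ of positive generation, then $B \subseteq \Int \Delta_f$ for some $f$ and $G_\T(p) = R_f(p)$ lies in a component $R_f(B)$ of generation one less. Iterating drives the forward orbit into an original disk $D_v$ after finitely many steps, where the model $\overline{z}^k$ attracts it to the fixed point $u_v$; compactness of $K$ then places $u_v \in K$. By total invariance, all iterated $G_\T$-preimages of $u_v$ lie in $K$. Since $|\V_\T| \geq 4$ there exists a face $f$ not adjacent to $v$, and $R_f(u_v) \in \Int \Delta_f$ is a preimage of $u_v$ distinct from $u_v$; iterating this construction yields arbitrarily long reduced words of $H$ applied to $u_v$ sitting inside $K$. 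As $u_v \in \Omega_H$ and $H$ acts properly discontinuously on $\Omega_H$, the closure of this preimage set necessarily meets (indeed contains) $\Lambda_H$, contradicting $K \subseteq \Omega_H$ and completing the proof.
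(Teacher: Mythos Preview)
Your argument is correct. For the orbit equivalence on $\Lambda_H$ you use exactly the paper's approach: combine Proposition~\ref{prop:orbit_equiv_1} with $G_\T\equiv N_\T$ on $\Lambda_H$ (Proposition~\ref{prop:AntiThurstonMappingProperties}(5)). The paper's proof is literally that one sentence; your additional verification that $\Lambda_H$ and $\Omega_H$ are each forward $G_\T$-invariant (hence $\Lambda_H$ is totally invariant) is a useful sanity check the paper leaves implicit.

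Where you genuinely go beyond the paper is the minimality clause. The paper treats ``In particular, $\Lambda_H$ is the minimal non-empty $G_\T$-invariant compact subset'' as an immediate consequence of the same two facts and gives no separate argument. You correctly observe that orbit equivalence on $\Lambda_H$ only handles the case $K\cap\Lambda_H\neq\emptyset$, and you supply an independent argument ruling out $K\subseteq\Omega_H$: push any $p\in K$ forward via the generation-decreasing mechanism of Lemma~\ref{Lem:DecreasingGeneration} into some $D_v$, where the $\overline{z}^k$ model forces $u_v\in K$, then pull back through faces $f$ not adjacent to $v$ to produce infinitely many $H$-translates of $u_v$ in $K$, whose accumulation set (by proper discontinuity on $\Omega_H$) must meet $\Lambda_H$. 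This is a clean, self-contained justification of a point the paper glosses over; the trade-off is that the paper keeps the proposition as a one-line corollary, while your version is a real proof.
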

\begin{proof}
This follows from Proposition~\ref{prop:orbit_equiv_1} and the fact that $N_\mathcal{T}\equiv G_\mathcal{T}$ on $\Lambda_H$.
\end{proof}

\begin{remark}
With a slight modification, the construction above can be extended to any polyhedral graph in place of $\T$. 
\end{remark}

\section{Gasket Julia sets}\label{sec:GasketJulia}
Let $f:\rs\to\rs$ be an anti-rational map; i.e., the complex conjugate of a rational map. The \emph{Fatou set} of $f$ is denoted $\mathcal{F}(f)$ and is defined to be the set of $z\in\rs$ so that $\{f^{\circ n}\}_{n=1}^\infty$ is a normal family on some neighborhood of $z$. The \emph{Julia set} of $f$ is defined by $\mathcal{J}(f):=\rs\setminus\mathcal{F}(f)$. It is apparent from the definition that
\[f(\mathcal{F}(f))=\mathcal{F}(f)=f^{-1}(\mathcal{F}(f)),\]
\[f(\mathcal{J}(f))=\mathcal{J}(f)=f^{-1}(\mathcal{J}(f)).\] 
This section shows that $\G_\T$ is realized by an anti-holomorphic map whose Julia set has a natural dynamical equivalence with the Apollonian limit set $\Lambda_H$ discussed above.

\subsection{No obstructions}\label{no_obs_subsec}
W. Thurston's characterization theorem for rational maps \cite{DH} has not yet been extended to anti-rational maps, but the existing techniques can be leveraged by passing to the second iterate.

Two Thurston maps $f$ and $g$ are \emph{equivalent} if there exist two orientation-preserving homeomorphisms $h_0,h_1:(\mathbb{S}^2,P_f)\to(\mathbb{S}^2,P_g)$ so that $h_0\circ f = g\circ h_1$ where $h_0$ and $h_1$ are homotopic relative to $P_f$. The Teichm\"uller space $\text{Teich}(\mathbb{S}^2,P_f)$ associated to a Thurston map $f$ is the set of homeomorphisms $\phi:\mathbb{S}^2\to\rs$ subject to the following equivalence relation: two homeomorphisms $\phi_1$ and $\phi_2$ are equivalent if and only if there is a M\"obius transformation $M$ so that $M\circ \phi_1$ is isotopic to $\phi_2$ rel $P_f$.

It is known that each orientation preserving Thurston map $g$ has an associated pullback map $\sigma_g:\text{Teich}(\mathbb{S}^2, P_g)\to \text{Teich}(\mathbb{S}^2, P_g)$ on Teichm\"uller space. In Douady and Hubbard's proof, the pullback $\sigma_g$ was taken to be the map on Teichm\"uller space induced by the pullback on almost complex structures. To avoid discussion of quasi-regularity of $g$, an equivalent definition of the pullback can be made directly on Teichm\"uller space (see e.g. \cite{BEKP}). It is known that $g$ is equivalent to a rational map if and only if $\sigma_g$ has a fixed point \cite[Proposition 2.1]{DH}. If $g$ has hyperbolic orbifold, the second iterate of $\sigma_g$ is strictly contracting in the Teichm\"uller metric which implies uniqueness of the fixed point \cite[Corollary 3.4]{DH}. Recall that any Thurston map with $|P_g|>4$ has hyperbolic orbifold.

To each orientation reversing Thurston map $f:\mathbb{S}^2\to\mathbb{S}^2$ with $|P_f|\geq 3$ we now show how to define the associated pullback map \[\sigma_f:\text{Teich}(\mathbb{S}^2, P_f)\to \text{Teich}(\mathbb{S}^2, P_f).\] For convenience fix a triple $\{p_1,p_2,p_3\}\subseteq P_f$ and let $\tau\in \text{Teich}(\mathbb{S}^2, P_f)$ be represented by a homeomorphism $\phi:\mathbb{S}^2\to\rs$ so that $\phi(p_1)=0,\ \phi(p_2)=1,\ \phi(p_3)=\infty$. Then $\phi\circ f:\mathbb{S}^2\to\rs$ defines a complex structure on its domain (the restriction of $f$ to $\mathbb{S}^2\setminus f^{-1}(P_f)$ is a cover so charts are immediate there, leaving only finitely many removable singularities). Let $\psi:\mathbb{S}^2\to\rs$ be the unique uniformizing map of this complex structure normalized so that $\phi(p_i)=\psi(p_i),\ i=1,2,3$, and observe that $F_\tau:=\phi\circ f\circ \psi^{-1}$ is an anti-rational map so that the following diagram commutes:
\[ \begin{tikzcd}
\mathbb{S}^2 \arrow{r}{\psi} \arrow[swap]{d}{f} & \rs \arrow{d}{F_\tau} \\
\mathbb{S}^2 \arrow{r}{\phi}& \widehat{\C}
\end{tikzcd}
\]
 Let $\tau'$ be the point in Teichm\"uller space represented by $\psi$ and define $\sigma_f(\tau)=\tau'$. This is well-defined by the homotopy lifting property.

The map $\sigma_f$ has a fixed point if and only if $f$ is equivalent to an anti-rational map using the same argument found in \cite[Proposition 2.3]{DH}. It is also immediate that $\sigma_{f\circ f}=\sigma_f\circ\sigma_f$ where we emphasize that the pullback map on the left is the classical pullback for orientation preserving case as defined in \cite{BEKP}.

\begin{proposition}\label{prop:secondIterate} 
Let $f$ be an orientation reversing Thurston map so that $f\circ f$ has hyperbolic orbifold. Then $f$ is equivalent to an anti-rational map if and only if $f\circ f$ is equivalent to a rational map. Moreover, if $f$ is equivalent to an anti-rational map, the map is unique up to M\"obius conjugacy.
\end{proposition}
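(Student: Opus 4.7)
The forward direction is immediate from the definitions: if $f$ is equivalent to an anti-rational map $g$ via homeomorphisms $h_0, h_1$, then composing the two equivalence diagrams shows that $f\circ f$ is equivalent to $g\circ g$, which is rational. So the content is in the converse and in uniqueness.

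For the converse, the key relation established in the preceding paragraphs of the excerpt is
\begin{equation*}
\sigma_{f\circ f} \;=\; \sigma_f \circ \sigma_f,
\end{equation*}
where the left-hand side is the classical (orientation preserving) pullback and the right-hand side is the square of the orientation reversing pullback just defined. The plan is then the following. Suppose $f\circ f$ is equivalent to a rational map. By the orientation preserving version of the criterion (analogous to \cite[Proposition 2.1]{DH}), this means $\sigma_{f\circ f}$ has a fixed point $\tau_0$. Since $f\circ f$ has hyperbolic orbifold, \cite[Corollary 3.4]{DH} implies that $\sigma_{f\circ f}^{\circ 2} = \sigma_f^{\circ 4}$ is strictly contracting on Teichm\"uller space, so $\tau_0$ is the \emph{unique} fixed point of $\sigma_{f\circ f} = \sigma_f^{\circ 2}$. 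Now observe that
\begin{equation*}
\sigma_f^{\circ 2}\bigl(\sigma_f(\tau_0)\bigr) \;=\; \sigma_f\bigl(\sigma_f^{\circ 2}(\tau_0)\bigr) \;=\; \sigma_f(\tau_0),
\end{equation*}
so $\sigma_f(\tau_0)$ is also a fixed point of $\sigma_f^{\circ 2}$. By uniqueness, $\sigma_f(\tau_0)=\tau_0$. Hence $\sigma_f$ has a fixed point, which (by the same argument as in \cite[Proposition 2.3]{DH}, adapted to the orientation reversing setting as in the paragraph defining $\sigma_f$) means that $f$ is equivalent to an anti-rational map.

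For the uniqueness clause, I would use the standard dictionary: fixed points of $\sigma_f$ correspond bijectively to M\"obius conjugacy classes of anti-rational realizations of $f$ (given $\tau$ fixed by $\sigma_f$, the map $F_\tau = \phi\circ f\circ \psi^{-1}$ from the commutative diagram is anti-rational, and different normalizations of $\phi$ change $F_\tau$ only by M\"obius conjugation). Every such fixed point of $\sigma_f$ is in particular a fixed point of $\sigma_f^{\circ 2} = \sigma_{f\circ f}$, which we have already shown has the unique fixed point $\tau_0$. Therefore the anti-rational realization of $f$ is unique up to M\"obius conjugacy.

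The main point to be careful about is that the orientation reversing pullback $\sigma_f$ is not itself contracting, even on a second iterate — contraction is only known after two more iterations, via the identification $\sigma_f^{\circ 4} = \sigma_{f\circ f}^{\circ 2}$. This is exactly why passing to $f\circ f$ is necessary, and it is the one place where the hyperbolic orbifold hypothesis on $f\circ f$ is used. Everything else is a routine transcription of the Douady--Hubbard argument to the orientation reversing setting, using the intertwining $\sigma_{f\circ f}=\sigma_f\circ\sigma_f$ noted in the excerpt.
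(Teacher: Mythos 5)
Your proposal is correct and follows essentially the same route as the paper: both proofs use the identity $\sigma_{f\circ f}=\sigma_f\circ\sigma_f$, deduce from contraction of an iterate of $\sigma_{f\circ f}$ (via the hyperbolic orbifold hypothesis) that $\sigma_{f\circ f}$ has a unique fixed point $\tau_0$, observe that $\sigma_f(\tau_0)$ is also fixed by $\sigma_{f\circ f}$ and hence equals $\tau_0$, and settle uniqueness by noting every fixed point of $\sigma_f$ is a fixed point of $\sigma_{f\circ f}$. The only cosmetic difference is that the paper phrases the converse as ``$\sigma_f$ has a two-cycle or a fixed point'' and then rules out the two-cycle, which is the same observation you make directly by computing $\sigma_f^{\circ 2}(\sigma_f(\tau_0))=\sigma_f(\tau_0)$.
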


\begin{proof} 
Suppose $f$ is equivalent to an anti-rational map. Then there exists $\tau$ so that $\sigma_f(\tau)=\tau$, and $\sigma_{f^2}(\tau)=\sigma_f\circ\sigma_f(\tau)=\tau$ so $f^2$ is equivalent to a rational map.

Now suppose that $f^2$ is equivalent to a rational map. Then $\sigma_{f^2}$ has a unique fixed point. Since $\sigma_{f^2}=\sigma_f\circ\sigma_f$, it follows that $\sigma_f$ either has a two-cycle or a fixed point. A two cycle for $\sigma_f$ would yield two distinct fixed points for $\sigma_{f^2}$, but this is impossible since an iterate of $\sigma_{f^2}$ is contracting in the Teichm\"uller metric. Thus $\sigma_f$ has a fixed point and so $f$ is equivalent to an anti-rational map

Suppose that $f$ is equivalent to an anti-rational map. As mentioned, each fixed point of $\sigma_f$ is a fixed point of $\sigma_{f^2}$. If $\sigma_f$ fixes $\tau_1$ and $\tau_2$, then $\sigma_{f^2}$ fixes $\tau_1$ and $\tau_2$. Since some iterate of $\sigma_{f^2}$ contracts the Teichm\"uller metric $\tau_1=\tau_2$  and there is a unique fixed point of  $\sigma_f$ which implies that $f$ is unique up to M\"obius conjugacy.
\end{proof}

Let $\G_\T$ be one of the orientation reversing Thurston maps from Proposition \ref{prop:AntiThurstonMappingProperties}. 

\begin{proposition}[$G_\T$ is unobstructed]\label{prop:NoObstructionNerveGamma}
The map $\G_\T$ is equivalent to an anti-rational map $\g_\T$ that fixes all of its critical points. The map $\g_\T$ is unique up to M\"obius conjugacy.
\end{proposition}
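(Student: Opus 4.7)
The plan is to apply Proposition~\ref{prop:secondIterate}, so it suffices to show that the orientation preserving Thurston map $G_\T\circ G_\T$ has hyperbolic orbifold and is Thurston equivalent to a rational map. Uniqueness of $g_\T$ up to M\"obius conjugacy is then provided by the same proposition, and the property of fixing all critical points will be extracted from the equivalence at the end.

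First I would verify the orbifold condition. By Proposition~\ref{prop:AntiThurstonMappingProperties}, every critical point of $G_\T$ lies in $V_\T$ and is fixed by $G_\T$, so the critical set of $G_\T^2$ is $C_{G_\T^2}=V_\T\cup G_\T^{-1}(V_\T)$ while the postcritical set satisfies $P_{G_\T^2}=V_\T$, all of whose points are fixed critical points of $G_\T^2$. Consequently each $v\in V_\T$ has orbifold weight $\nu(v)=\infty$, and the orbifold Euler characteristic equals $2-|V_\T|\le -2$ because $|V_\T|\ge 4$ by our standing convention. Hence the orbifold of $G_\T^2$ is hyperbolic, and in particular some iterate of the pullback map on Teichm\"uller space is strictly contracting.

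The main obstacle, and the technical core of the argument, is to show that $G_\T^2$ admits no Thurston obstruction. My plan is to exploit the invariant combinatorial structure supplied by $\T$: again by Proposition~\ref{prop:AntiThurstonMappingProperties}, the $1$-skeleton $\T^1$ is forward invariant (vertices fixed pointwise, edges fixed setwise), the postcritical set $V_\T$ is contained in $\T^1$, and on each face of $\T$ the map $G_\T$ is univalent. This places us in the ``invariant graph'' setting: $G_\T^{-1}(\T^1)$ contains and strictly refines $\T^1$ inside every face, and iterating produces a sequence of finer and finer triangulations with combinatorially shrinking faces. Given a hypothetical invariant multicurve $\Gamma\subset \mathbb S^2\setminus V_\T$, I would isotope its components to be transverse to $\T^1$ with minimal intersection, enumerate the essential non-peripheral components of $(G_\T^2)^{-1}(\gamma)$ face-by-face using the univalence on faces and the controlled branching over each disk $D_v$, and verify that the resulting Thurston transition matrix has strictly sub-stochastic columns, so that its spectral radius is strictly less than $1$. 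Equivalently, one may package this verification by appealing to a general ``invariant Jordan graph'' criterion (in the spirit of Bonk--Meyer), under which a Thurston map admitting a forward invariant graph containing its postcritical set and on whose complementary faces it acts as a homeomorphism is combinatorially expanding and hence unobstructed. Either way, the hard work is this combinatorial verification.

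With $G_\T^2$ shown to be Thurston equivalent to a rational map, Proposition~\ref{prop:secondIterate} produces an anti-rational $g_\T$ Thurston equivalent to $G_\T$ that is unique up to M\"obius conjugacy. It remains to transfer the fixed-critical-point property from $G_\T$ to $g_\T$. The Thurston equivalence consists of homeomorphisms $h_0,h_1\colon \mathbb S^2\to\widehat{\mathbb C}$ with $h_0\circ G_\T \simeq g_\T\circ h_1$ rel $P_{G_\T}=V_\T$, and $h_0,h_1$ agree on $V_\T$; since $h_0$ carries $V_\T$ bijectively onto the critical set of $g_\T$, and every $v\in V_\T$ satisfies $G_\T(v)=v$, the homotopy relation rel $V_\T$ forces $g_\T(h_0(v))=h_0(v)$ for every $v$, so all critical points of $g_\T$ are fixed. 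This completes the proof.
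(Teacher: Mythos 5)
Your overall scaffolding is correct and matches the paper: reduce to showing $G_\T^2$ is unobstructed and has hyperbolic orbifold, then apply Proposition~\ref{prop:secondIterate}, and finally transfer the fixed-critical-point property using the Thurston equivalence rel $P_{G_\T}=V_\T$. Your orbifold computation ($\nu(v)=\infty$ at each of the $\ge 4$ fixed critical points, hence orbifold Euler characteristic $\le -2$) is a cleaner uniform version of the paper's two-case argument, and your transfer of the fixed-critical-point property is fine.

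However, the central step---proving that $G_\T^2$ has no Thurston obstruction---is left as a plan, and you say so explicitly (``the hard work is this combinatorial verification''). That is the genuine gap. You propose either to estimate the Thurston transition matrix and show its columns are strictly sub-stochastic, or to invoke a Bonk--Meyer-style ``expanding map with invariant graph is unobstructed'' criterion; neither is carried out, and neither is obviously immediate for this map. In particular, the sub-stochasticity of the columns of the Thurston matrix is not something you can read off directly from univalence on faces, and the Bonk--Meyer criterion as usually stated requires an expansion hypothesis that would itself need justification here.

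The paper sidesteps any spectral estimate with a topological contradiction: assume an irreducible obstruction $\Gamma$, placed in minimal position with respect to $\T$ realized by geodesics in $\rs\setminus V_\T$. Each edge of $\T$ is a forward-invariant irreducible arc system, so the Pilgrim--Tan arc/curve intersection theorem (\cite[Theorem~3.2]{PT}) forces the lifted curve system $\tilde\Gamma$ (the union of components of $\G^{-2}(\Gamma)$ isotopic to elements of $\Gamma$) to be disjoint from $\G^{-2}(\Lambda)\setminus\tilde\Lambda$ for every edge $\Lambda$, hence disjoint from the closed graph $\overline{\G^{-2}(\T)\setminus\tilde\T}$. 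One then checks, using univalence of $G_\T$ on faces, that this graph is connected and contains $V_\T$; but $\tilde\Gamma$ contains an isotopic copy of $\Gamma$ and must separate $V_\T$, a contradiction. You should replace the spectral/expansion plan with this arc-obstruction argument (or actually carry out your expansion verification in detail) before the proof can be considered complete.
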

\begin{proof}
We write $\G_\T$ as $\G$ for simplicity. Using W. Thurston's characterization theorem for rational maps, we first argue that $\G^2$ is equivalent to a rational map by showing that no obstruction exists for this orientation preserving branched cover. 

Assume that there is a Thurston obstruction $\Gamma$ for $\G^2$ to be a rational map; see~\cite{PT} for background and terminology. Such a $\Gamma$ is a curve system in the punctured sphere $\rs\setminus V_\T$, where $V_\T$ is the vertex set of $\T$. By possibly passing to a subsystem, we may further assume that $\Gamma$ is irreducible, i.e., that the corresponding Thurston linear transformation $\G^2_\Gamma$ is irreducible. Finally, by possibly changing $\T$ and $\Gamma$ in their respective homotopy classes relative to $V_\T$, we may assume that $\Gamma$ minimally intersects each edge of $\T$. Indeed, the latter can be seen by choosing the edges of $\T$ and the curves of $\Gamma$ to be geodesics in the punctured sphere $\rs\setminus V_\T$ equipped with the hyperbolic metric.

It follows from Proposition~\ref{prop:AntiThurstonMappingProperties} that the homotopy class of each edge of $\T$ is invariant under $\G$. Thus, each such edge forms an irreducible arc system $\Lambda$ which is forward invariant under $\G^2$ up to isotopy relative to the vertex set $V_\T$, in the terminology of~\cite{PT}. Let $\tilde\Lambda$ denote the component of $G^{-2}(\Lambda)$ that is isotopic to $\Lambda$ relative to $V_\T$, and let $\tilde\T$ be the arc system consisting of all arcs $\tilde\Lambda$. The system $\tilde\T$ forms a triangulation of $\rs$ isotopic to $\T$ relative to $V_\T$. This follows from the fact that $\G$ is univalent away from the vertices $V_\T$.
Similarly, let $\tilde\Gamma$ denote the union of those components of $G^{-2}(\Gamma)$ which are isotopic to elements of $\Gamma$.  Since $\Gamma$ is assumed to be irreducible, we conclude that $\tilde\Gamma$ contains a curve system that is isotopic to  $\Gamma$ relative to $V_\T$.

 It now follows from \cite[Theorem 3.2]{PT} that, as subsets of $\rs$, the arc $\tilde\Gamma$ may not intersect $\G^{-2}(\Lambda)\setminus\tilde \Lambda$, for any edge $\Lambda$ of $\T$. Indeed, since $\Gamma$ minimally intersects each $\Lambda$, the first case of \cite[Theorem 3.2]{PT} means that $\Gamma\cap \Lambda=\emptyset$ as sets. Thus we have that $\G^{-2}(\Gamma)\cap \G^{-2}(\Lambda)=\emptyset$, and, in particular, $\tilde\Gamma\cap (\G^{-2}(\Lambda)\setminus\tilde \Lambda)=\emptyset$. If, in the second case, $\Gamma\cap \Lambda\neq\emptyset$, then 
\cite[Theorem 3.2, 2(a)]{PT} gives that $\tilde \Gamma\cap (G^{-2}(\Lambda)\setminus\tilde\Lambda)=\emptyset$.

Since $\tilde \Gamma\cap (G^{-2}(\Lambda)\setminus\tilde\Lambda)=\emptyset$ for each edge $\Lambda$ of $\T$, we conclude that $\tilde \Gamma$ cannot intersect the set $\G^{-2}(\T)\setminus\tilde\T$. Indeed, if $\tilde\Gamma$ did intersect $\G^{-2}(\T)\setminus\tilde\T$, then it would intersect $\G^{-2}(\T)$, i.e., there would exist an edge $\Lambda$ of $\T$ such that $\tilde\Gamma\cap\G^{-2}(\Lambda)\neq\emptyset$. Since $\tilde \Gamma\cap (G^{-2}(\Lambda)\setminus\tilde\Lambda)=\emptyset$, we would conclude that $\tilde \Gamma$ can only intersect $\G^{-2}(\T)$ at points of $\tilde\T$, and the claim follows.

We now argue that the closure of  $\G^{-2}(\T)\setminus\tilde \T$ contains a connected graph containing the postcritical set of $\G^2$, which is $V_\T$. 
Since $\G$ is a covering map over $\rs\setminus V_\T$, it is enough to check this statement for the original triangulation $\T$ whose edges are invariant under $\G$, rather than the isotopic triangulation whose edges are the geodesics as above. In this case $\tilde\T=\T$, and since $\G^{-1}(\T)\subseteq\G^{-2}(\T)$, it is enough to check that $\G^{-1}(\T)\setminus\T$ is a connected graph containing $\V_\T$.
Let $f\in F$ be a face of the graph $\T$. Since $\T$ is a triangulation, the closure of $\T\setminus\overline{f}$ is a connected set containing the three vertices of $f$. Recall that $\G$ is univalent on each face (Proposition \ref{prop:AntiThurstonMappingProperties}), so $\G^{-1}(\T\setminus\overline{f})\cap f$ is also a connected set containing the three vertices of $f$. Carrying out this procedure for all faces and taking the union, it is shown that the closure of $\G^{-1}(\T)\setminus\T$ is connected and contains $V_\T$. 

The curve system $\tilde\Gamma$ contains an isotopic copy relative to $V_\T$ of a Thurston obstruction $\Gamma$. Therefore, $\tilde\Gamma$ must separate the postcritical set $V_\T$ without intersecting the closure of the connected set $\G^{-2}(\T)\setminus\tilde\T$ that contains $V_\T$. This is impossible, and thus no Thurston obstructions exist for $\G^2$, and so $\G^2$ is equivalent to a rational map. The conclusion that $\G$ is anti-rational follows from Proposition \ref{prop:secondIterate}. Thurston equivalence preserves local degrees and postcritical dynamics, so $\g_\T$ must also fix all of its critical points.

The uniqueness statement will follow from Proposition \ref{prop:secondIterate} once it is seen that $G^2$ has hyperbolic orbifold. Since $G(P_G)=P_{G}$, the equation $|P_{ G\circ G}|=|P_{\G}|$ holds. Since $\T$ was assumed to have at least 4 vertices, it follows from Proposition \ref{prop:AntiThurstonMappingProperties} that $\G^2$ has at least 4 postcritical points. If $|P_{\G}|>4$, it is immediate that $\G^2$ has hyperbolic orbifold because $|P_{\G^2}|>4$. If $|P_{\G}|=4$, then $\G \circ \G$ has postcritical set consisting of four fixed critical points, and by direct computation, $\G\circ \G$ has hyperbolic orbifold. 
\end{proof}

\subsection{Isotopic nerves}\label{nerves_isotopic_subsec}

Suppose an anti-rational map $g$ fixes each of its critical points. If a Fatou component contains a critical point, it is called a \emph{critical Fatou component}. Let $U$ be a critical Fatou component. Adapting the classical B\"ottcher theorem, there is a B\"ottcher coordinate $\phi:U\to\mathbb{D}$ so that $\phi\circ g=g_d\circ\phi$, where $g_d(z)=\bar{z}^d$. By Carath\'eodory's theorem, the map $\phi^{-1}$ extends continuously to a semiconjugacy of $\overline\D$ onto $\overline U$. A \emph{B\"ottcher ray of angle $\theta_0$} for $\phi$ is defined to be the subset of $\overline{U}$ of the form $\phi^{-1}(re^{i\theta_0})$ where $r\in[0,1]$.  Note that the ray of angle $\frac{j}{d+1}$ is $g_d$-invariant for $j=0,...,d$, and so the Fatou component $U$ has $d+1$ corresponding $g$-invariant rays. A \emph{ray connection} of $g$ is the union of two B\"ottcher rays (either in the same or different Fatou components) whose intersection contains a point in the Julia set. Two distinct critical Fatou components are said to \emph{touch} if there is a ray connection between their corresponding critical points.

There is a general result of Pilgrim that can be used to prove the existence of ray connections \cite[Theorem 5.13]{KMPthesis} and show that only finitely many ray connections exist (though the precise number of ray connections is not specified). The proof of the following lemma adapts Pilgrim's argument to our specific setting.

 Let $h_0:\sphere\to\rs$ represent the unique fixed point of the pullback map $\sigma_G$ on Teichm\"uller space, where $h_0$ is normalized to carry the postcritical set of $\G_\T$ (this is the same as the set of critical points) to that of $\g_\T$.

\begin{lemma}\label{Lem:ExistsRay}
Let $\alpha$ be an edge in $\T$. Then the arc $h_0(\alpha)$ is isotopic (rel the postcritical set) to a ray connection of $g_\T$. Moreover, there is a lift of $h_0(\alpha)$ under $g_\T$ that is isotopic to $h_0(\alpha)$.
\end{lemma}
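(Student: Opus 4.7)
The plan is to iteratively pull back $\beta_0 := h_0(\alpha)$ under $g_\T$ via the Thurston pullback, producing a sequence of arcs in a common isotopy class that converges to a ray connection.

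Since $h_0$ represents the fixed point of $\sigma_{\G_\T}$, there is a homeomorphism $h_1 \colon \sphere \to \rs$, isotopic to $h_0$ rel $P_{\G_\T}$, with $g_\T \circ h_1 = h_0 \circ \G_\T$. Iterating the construction yields a sequence $(h_n)_{n \ge 0}$ of homeomorphisms, each isotopic to $h_0$ rel $P_{\G_\T}$, satisfying $g_\T \circ h_n = h_{n-1} \circ \G_\T$. Set $\beta_n := h_n(\alpha)$. By Proposition~\ref{prop:AntiThurstonMappingProperties}, $\G_\T(\alpha) = \alpha$, so $g_\T(\beta_n) = h_{n-1}(\G_\T(\alpha)) = \beta_{n-1}$; thus $\beta_n$ is a $g_\T$-preimage of $\beta_{n-1}$, and every $\beta_n$ is isotopic to $\beta_0$ rel $P_{g_\T}$ (since $h_n$ agrees with $h_0$ on $P_{\G_\T}$ up to isotopy). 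In particular, $\beta_1 = h_1(\alpha)$ already provides the preimage required by the moreover statement.

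To produce the ray connection, I would next analyze $\beta_n$ near the endpoints of $\alpha$. Let $v, w$ denote the endpoints, and let $p_v := h_0(v)$, $p_w := h_0(w)$ be the corresponding fixed critical points of $g_\T$, with local degrees $k_v = \deg_\T(v) - 1$ and $k_w = \deg_\T(w) - 1$ (these are the local degrees of $\G_\T$ at $v, w$ by the construction in Section~\ref{Sec:RoundGasketsNoninvertible}, hence of $g_\T$ under Thurston equivalence). Fix a B\"ottcher coordinate $\phi_v \colon U_v \to \D$ conjugating $g_\T|_{U_v}$ to $\bar z^{k_v}$. Since preimages under $\bar z^{k_v}$ contract angular intervals by the factor $k_v$, the angular width of $\phi_v(\beta_n \cap U_v)$ near $\partial \D$ shrinks geometrically in $n$, and the only possible angular accumulation points are the fixed directions $j/(k_v + 1)$, $j = 0, \ldots, k_v$. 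Hence $\beta_n \cap U_v$ converges to a fixed B\"ottcher ray $r_v$; the analogous statement at $p_w$ yields a fixed ray $r_w$ in a critical Fatou component $U_w$.

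It remains to show that the middle portion of $\beta_n$, lying outside small B\"ottcher disks around $p_v$ and $p_w$, collapses to a single point $q \in \Julia(g_\T)$, so that $r_v \cup r_w$ is a genuine ray connection isotopic to $\beta_0$. For this I would use that $g_\T$ is hyperbolic: its postcritical set equals its (fixed) critical set, which lies in the Fatou set, so $g_\T$ is uniformly expanding in a suitable conformal metric on a neighborhood of $\Julia(g_\T)$. Each constrained isotopic pullback then contracts the middle by a definite factor, forcing its Hausdorff limit to be a single Julia point $q$. The main obstacle is precisely this middle-contraction argument — verifying that the forced isotopic lifts $\beta_n$ behave uniformly and do not accumulate on a nontrivial Julia subset. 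I expect it to be resolved by combining the B\"ottcher angular contraction with the uniform Julia-set expansion to obtain uniform length and diameter estimates independent of $n$.
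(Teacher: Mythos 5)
Your proposal follows essentially the same strategy as the paper's proof: iterate the Thurston pullback to produce the sequence $h_n$ and the arcs $\beta_n = h_n(\alpha)$, note that invariance of $\alpha$ under $G_\T$ makes $\beta_{n+1}$ an isotopic lift of $\beta_n$ (which immediately gives the ``moreover'' clause via $\beta_1$), and then show the middle portion $K_n$ collapses to a single Julia point using uniform expansion of the hyperbolic map $g_\T$ on compacta avoiding the postcritical set. The one place you take a slightly more laborious route is at the endpoints: you analyze angular contraction of $\phi_v(\beta_n \cap U_v)$ under lifting by $\bar z^{k_v}$ and argue convergence to a fixed B\"ottcher ray, whereas the paper sidesteps this by first isotoping $\beta_0$ rel $P_{g_\T}$ so that $\beta_0 \cap U_1$ and $\beta_0 \cap U_2$ are each a single B\"ottcher ray of some (not necessarily invariant) angle; forward invariance of $U_1, U_2$ then forces every $\beta_n \cap U_j$ to be a B\"ottcher ray automatically, and a subsequential Hausdorff limit of $\{\beta_n\}$ is a ray connection without needing the limiting ray to be the invariant one. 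Your angular-width claim would need some care to make precise (what exactly shrinks, and why only fixed directions accumulate), so the paper's normalization is the cleaner move, but the overall argument and the identification of hyperbolic expansion as the crux are the same.
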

\begin{proof}
Recall from the construction of the orientation reversing Thurston map $G_\T$ that an edge in $\T$ is a geodesic arc $\alpha$ connecting the centers of two circles that are tangent to each other, and that $\alpha$ is invariant under $G_\T$ (see Proposition \ref{prop:AntiThurstonMappingProperties}).
Define  the sequence of orientation preserving  homeomorphisms $\{h_i:\sphere\to\rs\}_{i=1}^{\infty}$ inductively by the pullback equation $h_{i-1}\circ G_\T = g_\T\circ h_{i},\ i=1, 2,\dots$. Each $h_i$ is likewise normalized so that it carries the postcritical set  of $\G_\T$ to that of $\g_\T$.

Let $\beta_i:=h_i(\alpha)$. Note that $\beta_i$ does not intersect postcritical points of $g_\T$, other than its endpoints, because $\alpha$ does not intersect any of the postcritical points of $G_\T$ other than its end points. Since $\alpha$ is a lift of itself under $G_\T$, it follows that $\beta_{i+1}$ is a lift of $\beta_i$ under $g_\T$ for $i\geq 0$, and that $\beta_{i+1}$ is isotopic relative to the postcritical set to $\beta_i$ (though possibly tracing a different arc). Denote by $U_1$ and $U_2$ the two Fatou components that contain the endpoints of $\beta_i$ for all $i$. Applying an isotopy to $\beta_0$ relative to the postcritical set, it may be assumed that $\beta_0\cap U_1$ and $\beta_0\cap U_2$ each consist of exactly one component which is a B\"ottcher ray. The forward invariance of $U_1$ and $U_2$ imply that for each $i$, the sets $\beta_i^1:=\beta_i\cap U_1$ and $\beta_i^2:=\beta_i\cap U_2$ each consist of a single B\"ottcher ray. Define the sequence of compact sets $K_i:=\beta_i\setminus(\beta_i^1\cup \beta_i^2)$, and observe that $g_\T(K_{i+1})=K_i$ and $g_\T|_{K_{i+1}}$ is injective for each $i$.

A hyperbolic rational map is uniformly expanding on compact subsets of $\rs$ that do not intersect the postcritical set \cite[\S 19]{Milnor}. Thus the sequence of compact sets $K_i$ has diameter converging to zero as $i\to\infty$, and therefore some subsequence of $\{\beta_i\}$ has Hausdorff limit $\beta$ that is a ray connection between the critical points in $U_1$ and $U_2$. 

Since every postcritical point outside of $U_1\cup U_2$ is contained in a Fatou component and hence has positive distance from the Julia set, it follows that $K_i$ has a definite positive distance from each such point for all $i$. Thus the limiting ray connection $\beta$ is in the same isotopy class as $\beta_i$ for all $i$.
\end{proof}

\begin{lemma}\label{lem:raysIsoToEdges}
Each ray connection of $g_\T$ that is not a loop is isotopic to an edge of $h_0(\T)$. 
\end{lemma}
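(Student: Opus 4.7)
The strategy is a pullback-and-limit argument analogous to the proof of Lemma \ref{Lem:ExistsRay}, but now applied to an arbitrary non-loop ray connection rather than to an edge of $\T$. Let $\gamma = \gamma_1 \cup \gamma_2$ with $\gamma_i \subset \overline{U_i}$ a B\"ottcher ray in a critical Fatou component $U_i$, meeting at a common landing point $p \in \mathcal{J}(g_\T)$, and let $c_i \in U_i$ be the fixed critical point, corresponding under $h_0$ to a vertex $v_i \in V_\T$.

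The first step is to construct a sequence $\gamma^{(n)}$, $n \geq 0$, with $\gamma^{(0)} = \gamma$, where each $\gamma^{(n+1)}$ is a preimage of $\gamma^{(n)}$ under $g_\T$ lying in the same isotopy class rel $P_{g_\T}$. Since $c_i$ is fixed, the ray $\gamma_i^{(n)}$ has, among its $d_i+1$ preimages in $U_i$, a unique one which, together with the ray chosen in $U_{3-i}$, admits a common landing point forming a ray connection isotopic to $\gamma^{(n)}$. Thus $\{\gamma^{(n)}\}$ is well-defined and all terms are pairwise isotopic rel $P_{g_\T}$.

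Next, an argument parallel to that of Lemma \ref{Lem:ExistsRay} applies. By hyperbolicity of $g_\T$, iterates are uniformly expanding on compact subsets of $\widehat{\C} \setminus P(g_\T)$, so the compact parts $K_n := \gamma^{(n)} \setminus (U_1 \cup U_2)$ satisfy $\mathrm{diam}(K_n) \to 0$. In the B\"ottcher coordinates of $U_i$, the angles $\theta_i^{(n)}$ of $\gamma_i^{(n)}$ obey the recursion $\theta_i^{(n)} \equiv -d_i\, \theta_i^{(n+1)} \pmod{1}$, and the branch selected by our lifting rule is contracting, forcing $\theta_i^{(n)}$ to converge to a fixed point $\theta_i^* = j_i/(d_i+1)$ of $\theta \mapsto -d_i \theta$, i.e., to the angle of an invariant B\"ottcher ray. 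A subsequence of $\gamma^{(n)}$ therefore converges in Hausdorff distance to a ray connection $\gamma^*$ between $U_1$ and $U_2$ composed of two invariant rays meeting at a fixed point of $g_\T$ on $\mathcal{J}(g_\T)$.

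Finally, the invariant rays at $c_i$ stand in natural bijection with the edges of $\T$ incident to $v_i$: the $d_i + 1 = \mathrm{valence}(v_i)$ fixed points of $g_{d_i}(z) = \bar z^{d_i}$ on the unit circle correspond, via the conjugacy $\Phi_{v_i}$ used to build $G_\T$, to the tangency points on $\partial D_{v_i}$, and hence to the edges at $v_i$. The two invariant rays of $\gamma^*$ therefore determine a unique edge $e = (v_1, v_2)$ of $\T$, and by Lemma \ref{Lem:ExistsRay}, $\gamma^*$ is isotopic to $h_0(e)$. Since $\gamma \simeq \gamma^*$ rel $P_{g_\T}$ by construction, the conclusion follows. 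The main obstacle is making the canonical choice of lift in the first step precise and verifying that it preserves the isotopy class; this hinges on the fact that $g_\T$ is univalent on each face of $h_0(\T)$ (Proposition \ref{prop:AntiThurstonMappingProperties}(4)), which ensures the preimage of the meeting point $p^{(n)}$ giving an isotopic ray connection is unambiguously determined.
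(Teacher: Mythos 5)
Your proposal runs the contraction argument backward (by pullback) instead of the paper's forward route, and the pullback step has a genuine gap. You claim that each $\gamma^{(n)}$ admits a $g_\T$-preimage that is again a ray connection in the same isotopy class rel $P_{g_\T}$; that is exactly the hard part, and it is not established. A priori there is no reason a given preimage of the landing point $p^{(n)}$ lying on $\partial U_1$ must also lie on $\partial U_2$, so there may be no common landing point at all for the lifted rays, and even if one exists the resulting ray connection need not be in the same isotopy class. Your suggested remedy---appealing to univalence on faces from Proposition \ref{prop:AntiThurstonMappingProperties}(4)---does not close the gap either, because that proposition is a statement about the topological model $G_\T$, and transporting it to $g_\T$ requires knowing how the actual nerve of $g_\T$ sits in $\widehat{\C}$, which is precisely what Lemmas \ref{Lem:ExistsRay}--\ref{lem:raysIsoToEdges} and Proposition \ref{Prop_NervesIsotopic} are in the process of establishing. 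The same circularity infects the B\"ottcher-angle recursion: you assert a consistently chosen contracting inverse branch without showing such a coherent choice exists across all $n$.

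The paper sidesteps this entirely. It first observes that any two ray connections between a fixed pair of critical points have geometric intersection number at most $3$, hence only finitely many isotopy classes of ray connections occur between them. Iterating $\beta$ \emph{forward} (which is automatic: forward images of ray connections are ray connections) then forces a repetition, $g_\T^{\circ i}(\beta)\simeq g_\T^{\circ i+j}(\beta)=:\beta'$, so $\beta'$ comes equipped, for free, with a $g_\T^{\circ j}$-lift isotopic to itself. Only \emph{then} does one transfer to the $G_\T$ side via $h_0$, run the shrinking-triangles argument (Lemma \ref{L:TrianglesShrink}) to identify the limit with an edge of $\T$, and finally use uniqueness of the lift with both endpoints critical to deduce $i=0$ and $j=1$. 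If you want to salvage your approach you would need to supply the intersection-number/finiteness argument (or an equivalent) to manufacture the periodic isotopy class before attempting any pullback.
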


\begin{proof}
Let $x$ and $y$ be two distinct critical points of $g_\T$.  Let $\gamma_1$ be an arc with endpoints $x$ and $y$ and let $\gamma_2$ satisfy the same properties. In this proof, all homotopies are considered in $\rs\setminus P_{g_\T}$ rel the endpoints of the arc. Denote by $\iota(\gamma_1,\gamma_2)$ the minimum of the quantity $|\gamma_1'\cap\gamma_2'|$ for all $\gamma_i'$ in the homotopy class of $\gamma_i$ rel $\{x,y\}$, $i=1,2$. If $\gamma_1$ and $\gamma_2$ are both ray connections, it is evident that $\iota(\gamma_1,\gamma_2)\leq 3$. Let $\{\gamma_k\}_{k=1}^\infty$ be a sequence of arcs with endpoints $\{x,y\}$ so that no two arcs are pairwise homotopic rel $\{x,y\}$. Then for any integer $M>0$, there exist indices $l$ and $m$ so that $\iota(\gamma_l,\gamma_m)>M$. 

Let $\beta$ be a ray connection of $g_\T$ with endpoints distinct. Then, for each $i\geq0$, $g_\T^{\circ i}(\beta)$ is also a ray connection. 
Therefore, by the previous paragraph, there exist integers $i\geq0$ and $j>0$ (taken to be minimal) so that $g_\T^{\circ i}(\beta)$ and $g_\T^{\circ i+j}(\beta):=\beta'$ are isotopic rel endpoints. Then $\alpha:=h_0^{-1}(\beta')$ is an arc with distinct endpoints in the vertex set of $\T$. After applying a homotopy, we may assume that $\alpha$ intersects the disks $D_1,D_2$ containing its endpoints radially. Some lift of $\alpha$ under $G_\T^{\circ j}$ is isotopic to $\alpha$. Denote by $\alpha_k$ some choice of a lift of $\alpha$ under $G_\T^{\circ kj}$ that is isotopic to $\alpha$. We argue as in the previous lemma.  The map $G_\T$ is expansive by Lemma \ref{L:TrianglesShrink}, and the nontriviality of the $\alpha_k$ implies that $D_1$ and $D_2$ must touch. By construction, $D_1$ and $D_2$ touch in at most one point which is contained in an edge in $\T$, and thus $\{\alpha_k\}_{k=1}^\infty$ converges in the Hausdorff topology to this edge of $\T$. Since edges lift to themselves under $\G_\T$ it follows that $j=1$. 

Each edge of $\T$ has exactly one $G_\T$-lift with the property that both endpoints are critical points, namely the edge itself. Thus $i=0$ and $h_0(\alpha)=\beta'$  is isotopic to $\beta$.
\end{proof}

\begin{corollary}\label{cor:LocDegFatCount}
A Fatou component of $g_\T$ with fixed critical point of multiplicity $d$ touches exactly $d+1$ invariant Fatou components distinct from itself.
\end{corollary}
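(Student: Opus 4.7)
The plan is to set up a bijection between the invariant Fatou components touching $U$ and the edges of $\T$ incident to the vertex corresponding to $U$, and then to verify that this vertex has valence exactly $d+1$.

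First, I would identify the Fatou component $U$ with a vertex $v \in V_\T$. Under the Thurston equivalence from Proposition~\ref{prop:NoObstructionNerveGamma}, the normalized homeomorphism $h_0$ sends the critical set of $\G_\T$ (which is $V_\T$ by Proposition~\ref{prop:AntiThurstonMappingProperties}) bijectively onto the critical set of $g_\T$, preserving local degrees. By the construction in Section~\ref{Sec:RoundGasketsNoninvertible}, $\G_\T$ is locally conjugate near $v$ to $\bar z^{k(v)}$, where $k(v)+1$ equals the number of triangular interstices adjacent to $D_v$, which in turn equals the valence of $v$ in $\T$. Hence if the critical point $c := h_0(v) \in U$ has local degree $d$, then $v$ has valence $d+1$ in $\T$.

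Next I would combine the two preceding lemmas. Lemma~\ref{Lem:ExistsRay} provides, for each of the $d+1$ edges at $v$, a ray connection isotopic (rel the postcritical set) to the $h_0$-image of that edge and having $c$ as one endpoint. Conversely, Lemma~\ref{lem:raysIsoToEdges} says that every non-loop ray connection at $c$ is isotopic to the $h_0$-image of some edge of $\T$. Since $h_0$ is a homeomorphism and distinct edges of $\T$ are pairwise non-isotopic in $\sphere$ rel $V_\T$, these two lemmas yield a bijection between edges of $\T$ incident to $v$ and isotopy classes of non-loop ray connections emanating from $c$. In particular there are exactly $d+1$ such ray connections.

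Finally, because $\T$ is simple (no loops and no multi-edges), the $d+1$ edges at $v$ terminate at $d+1$ distinct other vertices, so the corresponding ray connections reach critical points in $d+1$ distinct other invariant Fatou components. The only subtle point is to exclude loop ray connections at $c$ that could in principle consume some of the $d+1$ invariant B\"ottcher rays of $U$; but each ray connection uses at least one such invariant ray, and since the bijection above already accounts for all $d+1$ of them via non-loop connections, no loops can occur. The heart of the argument is already contained in Lemmas~\ref{Lem:ExistsRay} and~\ref{lem:raysIsoToEdges}, and the only new input is the combinatorial identification of the local degree at $v$ with its valence minus one.
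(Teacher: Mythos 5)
Your proof is correct and follows essentially the same route as the paper: the vertex $v = h_0^{-1}(c)$ has valence $d+1$ in $\T$, Lemma~\ref{Lem:ExistsRay} produces a ray connection for each of the $d+1$ edges at $v$, Lemma~\ref{lem:raysIsoToEdges} caps the count by forcing every non-loop ray connection from $c$ to be isotopic to one of those edges, and simplicity of $\T$ guarantees the $d+1$ other endpoints are distinct.

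One remark on your final paragraph: the exclusion of loops is not actually needed for this count, and the justification you give for it is circular at this stage. Since ``touching'' is by definition a relation between \emph{distinct} critical Fatou components, loop ray connections at $c$ have no bearing on how many other components $U$ touches; the $\geq d+1$ bound from Lemma~\ref{Lem:ExistsRay} and the $\leq d+1$ bound from Lemma~\ref{lem:raysIsoToEdges} already close the argument. Moreover, your claim that ``each ray connection uses at least one such invariant ray'' is precisely what is established later, in Lemma~\ref{Lem:RaysInv} and Lemma~\ref{Lem_NoLoops}, so invoking it here would put the cart before the horse. Drop that paragraph and the proof is clean.
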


\begin{proof}
The $h_0$ preimage of such a critical point is the endpoint of $d+1$ edges in the triangulation $\T$. The conclusion follows from Lemmas \ref {Lem:ExistsRay} and \ref{lem:raysIsoToEdges}.
\end{proof}

\begin{lemma}\label{lem:rayIntersect} 
No point is contained in the boundary of three or more critical Fatou components.
\end{lemma}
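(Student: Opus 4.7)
My strategy is by contradiction. Suppose $p\in \partial U_1\cap \partial U_2\cap \partial U_3$ for three distinct critical Fatou components with fixed critical points $c_i$. Hyperbolicity of $g_\T$ gives local connectivity of $\Julia(g_\T)$, so the B\"ottcher coordinate $\phi_i\colon U_i\to \D$ extends continuously to $\overline{U_i}$. I would pick B\"ottcher rays $\alpha_i\subset \overline{U_i}$ from $c_i$ to $p$ and form the three ray connections $\gamma_{ij}:= \alpha_i\cup \alpha_j$ through $p$; these have pairwise distinct endpoint pairs $\{c_i,c_j\}$, so by Lemma~\ref{lem:raysIsoToEdges} each $\gamma_{ij}$ is isotopic rel $P_{g_\T}$ to $h_0(e_{ij})$ for some edge $e_{ij}$ of $\T$. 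Hence $\{e_{12},e_{13},e_{23}\}$ forms a triangle on the vertices $h_0^{-1}(c_i)$ of $\T$.

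The core of the argument extracts combinatorial rigidity from Lemma~\ref{Lem:ExistsRay}. The Hausdorff-limit construction $\beta=\lim h_n(\alpha)$ in that proof pins down, for each edge $\alpha$ of $\T$ incident to $h_0^{-1}(c_i)$, a canonical ray connection whose $U_i$-portion is an invariant B\"ottcher ray landing at a repelling fixed point of $g_\T$. Since $h_0$ preserves the cyclic order of edges at $h_0^{-1}(c_i)$ and the number of edges at $h_0^{-1}(c_i)$ matches the number of invariant B\"ottcher rays of $U_i$ (by Corollary~\ref{cor:LocDegFatCount} and the fixed-angle count of $g_\T|_{\partial U_i}$), this produces a cyclic-order-preserving bijection between the incident edges at $h_0^{-1}(c_i)$ and the invariant rays of $U_i$. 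Applied at $c_1$, the two distinct edges $e_{12}, e_{13}$ must correspond to two distinct invariant rays $R_{12}, R_{13}\subset U_1$, landing at repelling fixed points $q_{12},q_{13}\in \partial U_1\cap\partial U_2$ and $\partial U_1\cap \partial U_3$ respectively.

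To conclude I would combine this with the fact that $\gamma_{12}$ and $\gamma_{13}$ are isotopic rel $P_{g_\T}$ to the canonical representatives through $q_{12},q_{13}$ respectively, while $\gamma_{12}$ and $\gamma_{13}$ share the single $U_1$-ray $\alpha_1$ running from $c_1$ to $p$. A symmetric analysis at $c_2$ and $c_3$ yields analogous pairs of invariant rays in $U_2$ and $U_3$. A planar case analysis of the resulting cyclic arrangement at $p$ (and at the fixed points $q_{ij}$) shows that the canonical configuration of six invariant rays arising from the triangle $\{e_{12},e_{13},e_{23}\}$ cannot be isotoped, three arcs at a time, to arcs passing through a single common Julia point $p$ while keeping the three $U_i$ disjoint.

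\textbf{Main obstacle.} The delicate step is executing the last planar analysis cleanly, especially ruling out the degenerate coincidences $q_{12}=q_{13}$ (two invariant rays of $U_1$ co-landing) or $q_{12}=p$ (an extra touching point between $U_1$ and $U_2$ besides $p$). The most robust route I foresee is to promote such degenerate configurations into a Levy/Thurston obstruction for the (iterate of the) Thurston map $G_\T$, contradicting Proposition~\ref{prop:NoObstructionNerveGamma}; the ray-connection tripod at $p$ naturally packages with short arcs in the Julia set near $p$ into an invariant multicurve candidate for such an obstruction.
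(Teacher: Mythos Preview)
Your proposal is incomplete: you yourself flag the ``main obstacle,'' and the planar case analysis is never carried out. The suggestion to package the degenerate configurations into a Thurston obstruction is vague and would require substantial additional work; in particular you give no construction of a multicurve, no verification of invariance, and no eigenvalue estimate. As written this is a sketch that stops short of the crux.

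The paper's proof is far shorter and uses a completely different mechanism that you overlook. The first step is a basic topological constraint: if $z$ lies on the boundary of three critical Fatou components $U,V,W$, then by the Jordan curve theorem the set $\overline U\cap\overline V\cap\overline W$ contains at most two points, so the forward orbit of $z$ under $g_\T$ has at most two elements. This immediately reduces the problem to the cases where $z$ is fixed, prefixed, or a $2$-periodic point. If $z$ is fixed (or $g_\T(z)$ is fixed), three invariant Fatou components meet at a fixed point; but the local linearization of an \emph{anti}-holomorphic map at a repelling fixed point is $w\mapsto\lambda\overline w$, which cannot leave three sectors invariant---this is where orientation-reversal enters, and your argument never uses it. If $z$ is $2$-periodic, then $\{z,g_\T(z)\}\subset\overline U\cap\overline V\cap\overline W$, and since $|V_\T|\ge4$ there is a fourth critical point in some complementary component of $\overline{U\cup V\cup W}$; this forces two non-homotopic ray connections between (say) $U$ and $V$, contradicting Lemma~\ref{lem:raysIsoToEdges} together with the simplicity of~$\T$.

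In short: the finiteness of the orbit of $z$ and the anti-holomorphic linearization are the key ideas, and your isotopy/cyclic-order approach neither reaches them nor replaces them with a complete alternative.
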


\begin{proof}
Suppose $z$ lies in the closure of three distinct Fatou components. By the Jordan curve theorem, there are at most two points that lie in the closure of the three Fatou components. Thus the forward orbit of $z$ consists of at most two points.

 If $z$ is fixed by $g_\T$, three invariant Fatou components touch at a fixed point. This is incompatible with the (orientation reversing) local linearization at that fixed point. A similar argument applies if $g_\T(z)$ is fixed.
 
The final case to consider is that $z$ is in a two-cycle. Then there are three invariant critical Fatou components $U, V,$ and $W$ so that $\{z,g_\T(z)\}\subset\overline{U}\cap\overline{V}\cap\overline{W}$. There must be some critical point $x$ in a complementary component of $\overline{U\cup V\cup W}$. Without loss of generality, we may assume that $x$ is separated from $W$ by $\overline{U\cup V}$. But then there are two non-homotopic ray connections connecting the critical points in $U$ and $V$, which is contrary to Lemma \ref{lem:raysIsoToEdges} and the fact that $\T$ contains no graph two-cycles.
\end{proof}

From Lemma \ref{lem:raysIsoToEdges} it is known that ray connections must be invariant up to isotopy, but we prove a stronger statement.

\begin{lemma}\label{Lem:RaysInv}
Each ray connection that is not a loop is invariant under $g_\T$.
\end{lemma}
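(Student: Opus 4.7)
The plan is to pair two observations: first, that $g_\T(\beta)$ is itself a ray connection between the same pair of fixed critical points and lies in the same isotopy class as $\beta$; and second, that after constructing a $g_\T$-invariant ray connection in this isotopy class via an iterative pullback, that invariant representative must coincide with $\beta$.

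Write $\beta = \beta^1 \cup \beta^2$ where each $\beta^i$ is a B\"ottcher ray of angle $\theta_i$ emanating from the fixed critical point $c_i$ in its critical Fatou component $U_i$ (with $c_1 \neq c_2$), and let $p \in \mathcal{J}(g_\T)$ be their common landing point. Each $U_i$ is $g_\T$-invariant and $c_i$ is fixed, so $g_\T(\beta^i)$ is a B\"ottcher ray of angle $-d_i\theta_i$ in $U_i$ starting at $c_i$ with landing point $g_\T(p)$. Thus $g_\T(\beta)$ is another non-loop ray connection between $c_1$ and $c_2$, and by Lemma~\ref{lem:raysIsoToEdges} both $\beta$ and $g_\T(\beta)$ are isotopic rel $P_{g_\T}$ to $h_0(\alpha)$, where $\alpha$ is the unique edge of $\T$ joining $h_0^{-1}(c_1)$ and $h_0^{-1}(c_2)$ (uniqueness holds because $\T$ is simple). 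In particular, $\beta$ and $g_\T(\beta)$ lie in the same isotopy class.

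Following the pullback scheme of Lemma~\ref{Lem:ExistsRay}, one next defines inductively a sequence of ray connections $\beta_0 = \beta$ and $\beta_{n+1}$ equal to the unique $g_\T$-preimage of $\beta_n$ isotopic to $\beta_n$ rel $P_{g_\T}$. In the B\"ottcher coordinate of $U_i$, if $\beta_n \cap U_i$ has angle $\theta_i^{(n)}$ then $-d_i\,\theta_i^{(n+1)} \equiv \theta_i^{(n)} \pmod{1}$ with the branch pinned by the isotopy constraint; this recurrence is a strict contraction by factor $1/d_i$, so $\theta_i^{(n)} \to \theta_i^*$ for some invariant angle $\theta_i^* \in \{j/(d_i+1) : 0 \le j \le d_i\}$. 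Consequently the $\beta_n$ converge in the Hausdorff topology to a ray connection $\beta^*$ built from invariant B\"ottcher rays; this $\beta^*$ satisfies $g_\T(\beta^*) = \beta^*$ and is isotopic to $\beta$ rel $P_{g_\T}$.

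To finish, the goal is to show $\beta = \beta^*$, which immediately yields $g_\T(\beta) = g_\T(\beta^*) = \beta^* = \beta$. Suppose for contradiction $\beta \neq \beta^*$. A careful analysis of the rays involved shows that $\beta \cup \beta^*$ forms a Jordan curve in $\widehat{\mathbb{C}}$ (possibly pinched at shared landing points) bounding two regions, and the isotopy $\beta \simeq \beta^*$ rel $P_{g_\T}$ forces one of these regions to be disjoint from $P_{g_\T}$. The hyperbolic expansion of $g_\T$ on compact subsets of $\widehat{\mathbb{C}} \setminus P_{g_\T}$, combined with the touching restriction of Lemma~\ref{lem:rayIntersect}, should rule out such a postcritical-point-free region bounded by two distinct isotopic ray connections. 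The main obstacle is exactly this last step: establishing that no two distinct isotopic ray connections between the same pair of fixed critical points can coexist in our setting. In abstract situations an isotopy class may harbor many geometric representatives, so the crux is to exploit the specific Apollonian-like combinatorics encoded in Lemma~\ref{lem:rayIntersect} and in the triangulation $\T$ to pin down the unique ray connection in each isotopy class.
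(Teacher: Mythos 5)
Your proposal contains a genuine gap, which you yourself flag in the last paragraph: you reduce the lemma to the assertion that two isotopic (rel $P_{g_\T}$) ray connections between the same pair of fixed critical points must coincide, but you do not prove this. This uniqueness statement is precisely what needs to be established and is not an obvious consequence of Lemma~\ref{lem:rayIntersect}. Sketching that the two connections ``should'' be ruled out by expansion plus the touching restriction is not enough; in fact, the disk bounded by $\beta\cup\beta^*$ being free of postcritical points is exactly the configuration that expansion alone does not immediately exclude, since such a disk lies in the union of two Fatou components and the Julia set, and one needs to say something specific about how $\overline{U_1}$ and $\overline{U_2}$ can intersect.

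The paper's argument is genuinely different and closes exactly this gap, without ever invoking isotopy classes or the pullback scheme. It works entirely inside the Böttcher coordinate: setting $K_i=\phi(\overline{U}\cap\overline{U_i})\subset\partial\D$, one observes $g_d(K_i)\subset K_i$, that distinct hulls $H(K_i)$ have disjoint interiors and boundaries, and then shows by a counting-plus-expansion argument that each $H(K_i)$ contains exactly one fixed point of $g_d$ and in fact $K_i$ is a singleton. This singleton statement is precisely the uniqueness you need: it says $\overline{U}$ and $\overline{U_i}$ meet at exactly one point (a fixed point of $g_\T$), so there is a unique Böttcher ray in each of $U$ and $U_i$ landing there, their union is the unique ray connection, and it is forward invariant. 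Your pullback construction of $\beta^*$ is sound as far as it goes (and correctly identifies the invariant angles $j/(d_i+1)$), but it produces an \emph{a priori} different invariant representative in the isotopy class; the identification $\beta=\beta^*$ still requires the singleton fact, so the real work of the lemma has not been done. If you want to salvage your route, you would need to independently prove that $\overline{U_1}\cap\overline{U_2}$ is a single point, at which point the isotopy machinery becomes unnecessary.
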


\begin{proof}
Let $U$ be a fixed critical Fatou component and recall that $U$ has a B\"ottcher coordinate $\phi:U\to\mathbb{D}$ so that $\phi\circ g_{\T}=g_d\circ\phi$, where $g_d(z)=\bar{z}^d$. By Carath\'eodory's theorem, the map $\phi^{-1}$ extends continuously to a semiconjugacy of $\overline\D$ onto $\overline U$. For a subset $E$ in $\partial U$, abusing notations, we denote by $\phi(E)$ the full preimage of $E$ under the extended semiconjugacy $\phi^{-1}$.

Let $U_1,...,U_{d+1}$ be the critical Fatou components that touch $U$, as guaranteed in Corollary \ref{cor:LocDegFatCount}. Define $K_i\subset\partial\mathbb{D}$ to be the compact set $\phi(\overline{U}\cap\overline{U_i})$ for all $i$. Note that for all $j\neq i$, the set $K_i$ is not separated by $K_j$ in $\partial\mathbb{D}$ since $U, U_i,$ and $U_j$ are pairwise disjoint.  Furthermore, $g_d(K_i)\subset K_i$ for all $i$. Denote by $H(K_i)$ the smallest closed circular arc in $\partial\mathbb{D}$ that contains $K_i$, and denote by $|K_i|$ the length of $H(K_i)$. Any two distinct sets of the form $H(K_i)$ have disjoint interior by planarity of the corresponding Fatou components and have disjoint boundary by Lemma \ref{lem:rayIntersect}.

Each set $H(K_i)$ is now shown to contain at least one fixed point of the power map $g_d$. If $|K_i|\geq \frac{2\pi}{d+1}$, the conclusion is immediate because of the equal distribution of the $d+1$ fixed points of $g_d$ on the circle. If $|K_i|< \frac{2\pi}{d+1}$, then the expansion of $g_d$ and the forward invariance of $K_i$ implies that $|K_i|=0$. Thus $K_i$ consists of a single point which must be fixed since $g_d(K_i)\subset K_i$. 

There are $d+1$ fixed points of $g_d$ in the circle  and $d+1$ distinct $K_i$ so each $H(K_i)$ contains exactly one fixed point of $g_d$. If $|K_i|< \frac{2\pi}{d+1}$ it was just argued that $K_i$ is a singleton. It will now be shown that this is always the case. Suppose that $|K_i|\geq \frac{2\pi}{d+1}$. Then $K_i$ contains a fixed point $z_0$ as well as a point $z_1$ that has circular distance from $z_0$ contained in $(\frac{\pi}{d+1},\frac{2\pi}{d+1})$. It follows that $g_d(z_1)$ is separated from $z_0$ by another fixed point which must also then be contained in $H(K_i)$.  Thus $H(K_i)$ contains two fixed points of $g_d$ which is a contradiction. Each $K_i$ has been shown to be a singleton.

Under the semiconjugacy $\phi^{-1}$, these $d+1$ fixed points are carried to $d+1$ distinct fixed points of $g_\T$ in $\partial U$ by Lemma \ref{lem:rayIntersect}. The fact that $K_i$ is a singleton implies that there is a unique B\"ottcher ray in $U$ landing at $\phi^{-1}(K_i)$. Similarly there is a unique B\"ottcher ray in $U_i$ landing at $\phi^{-1}(K_i)$. The union of these two B\"ottcher rays forms a ray connection which is the unique connection between the two critical points. Thus the ray connection is forward invariant.
\end{proof}

\begin{lemma}\label{Lem_NoLoops}
No ray connection of $g_\T$ is a loop.
\end{lemma}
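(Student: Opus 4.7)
The proof proceeds by contradiction. Suppose $\beta \subset \overline{U}$ is a loop ray connection through a critical point $c$ of local degree $d$, composed of B\"ottcher rays at distinct angles $\theta_0, \theta_1$ landing at a common point $p \in \partial U$. My first step would be to localize $p$: by the same singleton analysis of $K_{U'} = \phi(\overline{U}\cap\overline{U'})$ used in the proof of Lemma~\ref{Lem:RaysInv}, if $p \in \overline{U'}$ for some critical Fatou component $U'$ tangent to $U$, then both $\theta_0, \theta_1$ would lie in the singleton $K_{U'}$---a contradiction. If $U'$ is a non-tangent critical Fatou component with $p\in\overline{U'}$, the pair $(U, U')$ would produce a non-loop ray connection, which Lemma~\ref{lem:raysIsoToEdges} would force to be isotopic to a non-existent edge of $h_0(\T)$. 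Hence $p$ sits in $\partial U$ away from all other critical Fatou components.

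Next, I would iterate to generate an infinite family of loops. Pulling $\beta$ back via the conjugacy $g_\T|_U \sim \bar z^d$ (with consistent choices of preimage branches) yields loop ray connections $\{\beta_n\}_{n\geq 0}\subset\overline U$ through $c$ whose B\"ottcher angular diameters shrink like $1/d^n$, each landing at a distinct preimage $p_n$ of $p$. Because B\"ottcher rays in $U$ at distinct angles meet only at $c$, the pairwise geometric intersection numbers of the $\beta_n$'s are uniformly bounded; the bounded-intersection argument from the proof of Lemma~\ref{lem:raysIsoToEdges} then forces only finitely many isotopy classes of these loops in $\rs\setminus P_{g_\T}$ (rel $c$).

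The decisive step extracts a contradiction from postcritical-partition considerations. The isotopy class of a Jordan loop through $c$ in $\rs\setminus P_{g_\T}$ (rel $c$) is determined by how the remaining postcritical set $P_{g_\T}\setminus\{c\}$ splits between its two sides. By varying the preimage branches at each pullback, the short B\"ottcher sector of $\beta_n$ can be positioned near any prescribed point on $\partial\D$; in particular, for each fixed B\"ottcher angle $\alpha_j = 2\pi j/(d+1)$ associated to a tangent critical Fatou component $U^{(j)}$, some choice of branches places $\alpha_j$ inside the short sector (so that $\beta_n$ encloses $U^{(j)}$ on its short side) while other choices do not. These branch selections realize at least $d+2$ distinct postcritical partitions (one for each $j$, plus the case where no fixed angle is enclosed), and for triangulations where this count alone does not exceed the finite isotopy-class bound, additional distinct partitions are obtained by incorporating enclosures of non-tangent critical Fatou components appearing in the exterior of $\overline{U}$ near the varying landing points $p_n$. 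This overcounts against the finite isotopy-class bound, the desired contradiction.

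\textbf{Main obstacle.} The delicate point is to rigorously ensure that the branch-selection scheme yields strictly more realized postcritical partitions than the finite isotopy-class count, uniformly in the triangulation $\T$; a cleaner and more elegant alternative would be to exploit the orbit equivalence of Section~\ref{group_equiv_map_sec} together with the Böttcher semiconjugacy to argue directly that the only identifications on $\partial\D$ under $\phi^{-1}\colon\partial\D\to\partial U$ are already accounted for by the $d+1$ fixed-angle landings at tangent Fatou components (which are ruled out by the singleton-ness of the $K_{U'}$), forcing $\partial U$ to be a Jordan curve and precluding any loop ray connection.
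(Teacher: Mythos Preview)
Your approach diverges substantially from the paper's, and while the opening localization of $p$ and the pullback-to-loops step are sound, the decisive counting step has a genuine gap.

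The claim that ``the isotopy class of a Jordan loop through $c$ in $\rs\setminus P_{g_\T}$ (rel $c$) is determined by how $P_{g_\T}\setminus\{c\}$ splits between its two sides'' is false once $|P_{g_\T}|\ge 4$. Take any simple closed curve $\delta\subset\rs\setminus P_{g_\T}$ (not through $c$) with $\iota(\beta,\delta)>0$; the Dehn twist $T_\delta$ fixes every point of $P_{g_\T}$, preserves the partition induced by $\beta$, yet $T_\delta(\beta)$ is not isotopic to $\beta$ rel $c$ since $\iota(T_\delta(\beta),\beta)>0$. So partitions undercount based-isotopy classes, and your inequality ``$d+2$ partitions $>$ finite isotopy bound'' cannot be established this way. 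If instead you work with \emph{free} isotopy classes, partitions do classify---but then there are only finitely many partitions to begin with, and no contradiction emerges. You essentially concede this in your ``Main obstacle'' paragraph, and the fallback (``additional distinct partitions from non-tangent components'') is not made precise.

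The paper takes a different route: it iterates \emph{forward} and does a case analysis on the landing point $z$ according to whether $\beta_1,\beta_2$ are (eventually) invariant. The key ingredient you are not using is the local linearization of an \emph{anti-holomorphic} map at a repelling fixed point: the model $w\mapsto\lambda\bar w$ admits exactly two invariant real directions, so three invariant arcs cannot land at the same fixed point. Combined with Corollary~\ref{cor:LocDegFatCount} and Lemma~\ref{Lem:RaysInv} (the $d+1$ invariant B\"ottcher rays already account for all invariant landings), this disposes of the eventually-invariant cases. The remaining case---neither ray eventually invariant---is handled by iterating forward until $\beta_1,\beta_2$ are separated in $U$ by two adjacent invariant rays, and then invoking Lemma~\ref{lem:rayIntersect} on the Fatou components those rays connect to. Your alternative suggestion (showing $\partial U$ is a Jordan curve directly via orbit equivalence) is closer in spirit and could be made to work, but the anti-holomorphic linearization is again the missing mechanism.
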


\begin{proof}
Let $U$ be a fixed critical Fatou component, and suppose that  $\beta_1$ and $\beta_2$ are B\"ottcher rays that terminate at a common endpoint  $z$ in the Julia set, i.e. $\beta_1\cup\beta_2$ forms a loop.

First, suppose $z$ is fixed and $\beta_1$ and $\beta_2$ are invariant. Suppose the local degree of the critical point in $U$ is $-d$. Then by Corollary \ref{cor:LocDegFatCount}, there are exactly $d+1$ other fixed critical points that are connected by a single ray connection to the critical point in $U$. By Lemma \ref{lem:rayIntersect}, the intersection of the $d+1$ rays with $U$ is a collection of $d+1$ distinct B\"ottcher rays. Each of the rays is invariant by Lemma \ref{Lem:RaysInv} and so the collection of B\"ottcher rays must include $\beta_1$ and $\beta_2$. Thus there is an invariant ray connection that contains $\beta_1$ and terminates at the critical point of another Fatou component. But three distinct invariant arcs are incompatible with the local linearization of the anti-holomorphic map $g_\T$ at $z$. Thus $\beta_1=\beta_2$. 

Now suppose $\beta_1$ and $\beta_2$ are invariant after some finite number of iterations of $g_\T$. By the previous paragraph and the fact that there are no critical points in the Julia set (hence the iterates at $z$ are locally univalent), it follows that $\beta_1=\beta_2$.

The case that $\beta_1$ is invariant after a finite number of iterates, and $\beta_2$ is not invariant after a finite number of iterates is incompatible with the local linearization at $z$ and the B\"ottcher coordinate on $U$ (for a similar argument see \cite[Lemma 18.12]{Milnor}).

The final case to consider is that $\beta_1$ and $\beta_2$ are distinct B\"ottcher rays  that are not (eventually) invariant under $g_\T$. As before, let $z$ denote the common endpoint of $\beta_1$ and $\beta_2$ in the Julia set. Recall that the restriction of $g_\T$ to $U$ is conformally conjugate to $g_d(z)=\bar{z}^d$ on the open unit disk. Let $\alpha_1,\alpha_2$ each be a radius of the unit disk that is not eventually invariant under iteration of $g_d$. Since $g_d$ is a power map, there is some iterate $n$ so that $g_d^{\circ n}(\alpha_1)\setminus\{0\}$ and $g_d^{\circ n}(\alpha_2)\setminus\{0\}$ are separated by the union of two invariant radii. Thus under iteration, $\beta_1$ and $\beta_2$ are separated in $U$ by two invariant rays $\gamma_1$ and $\gamma_2$ in $U$. Without loss of generality, we replace $\beta_1$ and $\beta_2$ by their separated iterates. Also we may assume that $\gamma_1$ and $\gamma_2$ are neighbors in $U$, in the sense that their union has a complementary component in $U$ that does not intersect any invariant rays.  Moreover, $\gamma_1$ and $\gamma_2$ are subsets of ray connections to other Fatou components by Corollary \ref{cor:LocDegFatCount} and Lemma \ref{Lem:RaysInv}. Thus there are invariant critical Fatou components $U_1$ and $U_2$ with closures containing exactly one endpoint of $\overline{\gamma_1}$ and $\overline{\gamma_2}$ respectively. 

Suppose first that $U_1$ and $U_2$ are contained in distinct complementary components of $\overline{U}$ (see Figure \ref{F:impossible}). But $U_1$ and $U_2$ must touch since they arise from neighboring invariant rays, so $U$, $U_1$, and $U_2$ touch at $z$. This contradicts Lemma \ref{lem:rayIntersect}. Suppose next that $U_1$ and $U_2$ are contained in the same complementary component of $\overline{U}$. But then one of $\gamma_1$ or $\gamma_2$ contains $z$ in its closure, so $z$ is fixed by $g_\T$ and is the landing point of an invariant B\"ottcher ray. Once again, one can use the linearization of $g_\T$ at $z$ and the existence of the B\"ottcher coordinate on $U$ to conclude that $\beta_1$ and $\beta_2$ are invariant under finitely many iterates which contradicts the hypothesis. Thus $\beta_1=\beta_2$.
\end{proof}

\begin{figure}[h]
\begin{tikzpicture}
    \node[anchor=south west,inner sep=0] at (0,0) {\includegraphics[width=6.6cm]{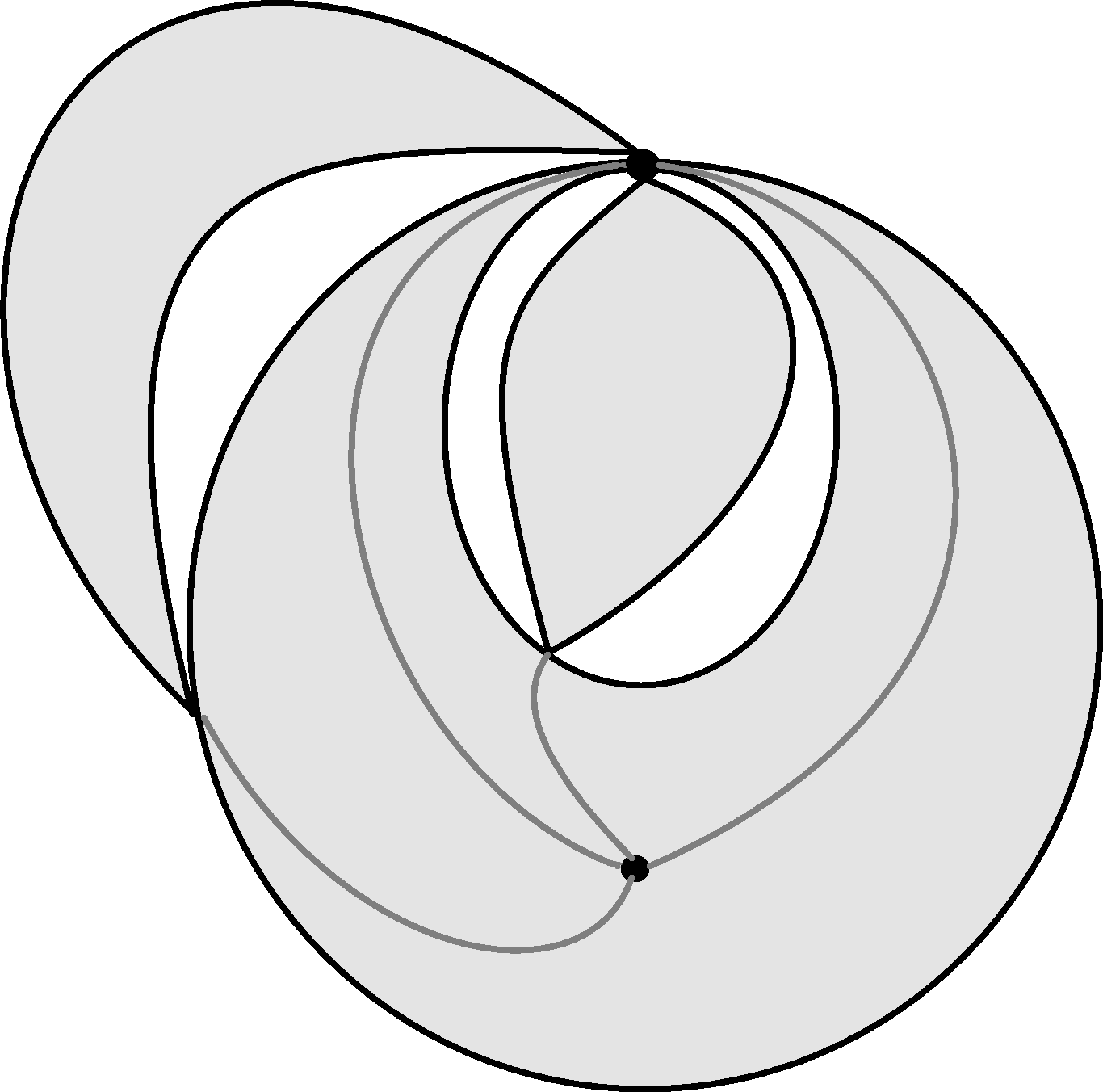}};
\node at (2,2.74) {$\beta_1$};
\node at (3.6,2) {$\gamma_2$};
\node at (3,0.6) {$\gamma_1$};
\node at (5.95,2.84) {$\beta_2$};
\node at (5,1) {$U$};
\node at (1.43,5.8) {$U_1$};
\node at (3.9,4.1) {$U_2$};
\node at (4.1,5.8) {$z$};
\end{tikzpicture}
\caption{Sample impossible configuration from the final case of the proof of Lemma \ref{Lem_NoLoops}.}\label{F:impossible}
\end{figure}

The \emph{nerve} of $g_\T$ is defined to be the graph whose vertex set is the set of fixed critical points of $g_\T$ and edge set is given given by the collection of all ray connections both of whose endpoints are fixed critical points. We do not assume that the nerve has a finite number of edges, or even that it is an embedded graph. The nerve is said to be \emph{naturally embeddable} if the intersection of each pair of ray connections is a subset of the vertex set. If the nerve is naturally embeddable, we consider it to be an embedded graph given by the obvious embedding.

\begin{proposition}[Nerve of $g_\T$]\label{Prop_NervesIsotopic}
The nerve of $g_\T$ is a naturally embeddable graph in $\rs$ that is isotopic to $\T\subset\sphere$. Moreover, each vertex of the nerve is fixed by $g_\T$ and each edge is invariant.
\end{proposition}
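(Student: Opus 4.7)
The plan is to assemble the preceding lemmas of this section into the four assertions of the proposition. Proposition~\ref{prop:NoObstructionNerveGamma} already supplies that every critical point of $g_\T$ is fixed, so the vertex set of the nerve has cardinality $|\V_\T|$ and the chosen normalization of $h_0$ bijects $\V_\T$ onto it. Invariance of each ray connection under $g_\T$ is furnished directly by Lemma~\ref{Lem:RaysInv}. Lemma~\ref{Lem_NoLoops} rules out loops, so Lemma~\ref{lem:raysIsoToEdges} applies to every ray connection and shows that each one is isotopic rel $P_{g_\T}$ to an edge of $h_0(\T)$; conversely, Lemma~\ref{Lem:ExistsRay} produces, for each edge $\alpha$ of $\T$, a ray connection isotopic to $h_0(\alpha)$. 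What remains is to upgrade this correspondence to a bijection and to check natural embeddability.

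For the bijection I would count ray connections at each critical point. A vertex $v \in \V_\T$ of valence $k(v)+1$ in $\T$ corresponds to a fixed critical point of local degree $-k(v)$, so Corollary~\ref{cor:LocDegFatCount} shows that the associated Fatou component $U$ touches exactly $k(v)+1$ other fixed critical Fatou components. Moreover, the singleton analysis of the compacts $K_i = \phi(\overline U \cap \overline{U_i})$ carried out inside the proof of Lemma~\ref{Lem:RaysInv} implies that between $U$ and each of its touching partners there is a \emph{unique} B\"ottcher ray on each side, hence a unique ray connection. Thus the valence of the nerve at $h_0(v)$ equals the valence of $v$ in $\T$, and combining this with the isotopy correspondence above promotes the edge map to a bijection.

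For natural embeddability, I would argue that two distinct ray connections cannot share a Julia-set point $w$ which is not a vertex: since the interior of any B\"ottcher ray lies in its own Fatou component and distinct radial rays in the same component are pairwise disjoint away from the critical center, such a $w$ must be a common landing point, forcing $w$ to lie in the closure of at least three distinct critical Fatou components and contradicting Lemma~\ref{lem:rayIntersect}. Once natural embeddability and the edge bijection are in hand, the rel-postcritical-set isotopies supplied by Lemma~\ref{Lem:ExistsRay} can be carried out simultaneously in disjoint tubular neighborhoods of the edges to modify $h_0$ into an orientation-preserving homeomorphism $\sphere \to \rs$ sending $\T$ onto the nerve as embedded graphs, which is exactly the notion of isotopy from Section~\ref{sec:TrianglesToGaskets}. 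The main obstacle I anticipate is the bijection step---specifically, cleanly extracting uniqueness of the ray connection between two touching fixed critical components from the $K_i$-singleton argument embedded inside Lemma~\ref{Lem:RaysInv}, rather than having it as a standalone lemma to cite.
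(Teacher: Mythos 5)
Your proposal is correct and follows essentially the same route as the paper: invariance of vertices from Proposition~\ref{prop:NoObstructionNerveGamma}, invariance of edges from Lemma~\ref{Lem:RaysInv}, natural embeddability from Lemmas~\ref{Lem_NoLoops} and~\ref{lem:rayIntersect}, and the isotopy to $\T$ from the correspondence in Lemmas~\ref{Lem:ExistsRay} and~\ref{lem:raysIsoToEdges}. Two small differences are worth noting. First, you add a valence-counting step via Corollary~\ref{cor:LocDegFatCount} to secure bijectivity of the edge correspondence; the paper skips this, getting bijectivity directly from the fact that distinct edges of a simple triangulation are never isotopic rel vertices, so the two cited lemmas already give inverse bijections up to isotopy. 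Your extra step is harmless and arguably makes the bijection more transparent. Second, in the embeddability argument you conclude a shared non-vertex Julia point $w$ would force $w$ into the closure of \emph{at least three} distinct critical Fatou components, invoking only Lemma~\ref{lem:rayIntersect}. That inference has a small gap: if the two ray connections travel between the same unordered pair of Fatou components via different B\"ottcher rays, only two components would touch at $w$, and you need Lemma~\ref{Lem_NoLoops} (two distinct rays in the same component landing at $w$ form a loop) to rule that out. The paper's proof makes this explicit by splitting on whether the intersection is a ray union a point (handled by Lemma~\ref{Lem_NoLoops}) or a single ray (handled by Lemma~\ref{lem:rayIntersect}); you should weave Lemma~\ref{Lem_NoLoops} into the embeddability argument as well, not only into the hypothesis check for Lemma~\ref{lem:raysIsoToEdges}.
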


\begin{proof} 
Suppose that two ray connections $\beta$ and $\beta'$ intersect but are not identical. It is impossible for $\beta\cap\beta'$ to be the union of a disjoint B\"ottcher ray and a point because this would imply the existence of a loop contrary to Lemma  \ref{Lem_NoLoops}. Moreover $\beta\cap\beta'$ may not be a single B\"ottcher ray because this configuration would imply that three Fatou components touch at the same point, contrary to Lemma \ref{lem:rayIntersect}. Thus $\beta\cap\beta'$ is a subset of the vertex set and the nerve is naturally embeddable.

The homeomorphism $h_0:\sphere\to\rs$ maps the embedded triangulation $\T$ to another triangulation. Since $h_0$ is a global orientation preserving homeomorphism, $h_0(\T)$ is isotopic to $\T$ (here the isotopy is not rel vertices).
Recall from Lemma \ref{Lem:ExistsRay} that each individual edge of $h_0(\T)$ is isotopic relative postcritical set to an edge in the nerve of $g_\T$, and each edge of the nerve arises in this way by Lemma \ref{lem:raysIsoToEdges}. Thus there is a global isotopy that carries $h_0(\T)$ to the nerve. It follows that $\T$ is isotopic to the nerve of $g_\T$.

Invariance of vertices was a simple consequence of Thurston equivalence in Proposition \ref{prop:NoObstructionNerveGamma}. Invariance of edges is the conclusion of Lemma \ref{Lem:RaysInv}.
\end{proof}

In conjunction with Proposition \ref{prop:NoObstructionNerveGamma} we have the following analogue of the Circle Packing theorem.
\begin{corollary}
For any triangulation $\T$ of the sphere, there exists an anti-rational map that fixes each of its critical points and has nerve that is naturally embeddable and isotopic to $\T$.
\end{corollary}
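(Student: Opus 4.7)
The plan is to assemble this corollary directly from the machinery already developed: it is essentially a packaging of Propositions~\ref{prop:AntiThurstonMappingProperties}, \ref{prop:NoObstructionNerveGamma}, and \ref{Prop_NervesIsotopic}, applied in sequence to an arbitrary triangulation $\T$ of $\sphere$.

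Starting from $\T$, I would first invoke the Circle Packing theorem to realize $\T$ up to isotopy as the nerve of a circle packing $\mathcal{C}_\T$ as in Section~\ref{sec:TrianglesToGaskets}, so that all the combinatorial data (vertices, edges, faces, interstices, dual circles, reflections $R_f$) is available. Then I would apply the construction from Section~\ref{Sec:RoundGasketsNoninvertible} to produce the orientation reversing Thurston map $G_\T\colon\sphere\to\sphere$. By Proposition~\ref{prop:AntiThurstonMappingProperties}, this map has the vertex set $\V_\T$ as its set of (necessarily fixed) critical points, is univalent on faces, and leaves each edge of $\T$ invariant; in particular the nerve of the combinatorial data feeding $G_\T$ is exactly $\T$ itself.

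Next I would apply Proposition~\ref{prop:NoObstructionNerveGamma} to $G_\T$: this produces an anti-rational map $g_\T$, unique up to M\"obius conjugacy, that is Thurston equivalent to $G_\T$, and whose critical points are all fixed (since Thurston equivalence preserves local degrees and postcritical combinatorics). At this point I have an anti-rational map with the correct critical point structure; what remains is to identify its nerve with $\T$.

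Finally, I would apply Proposition~\ref{Prop_NervesIsotopic}, which states precisely that the nerve of $g_\T$ is a naturally embeddable graph in $\rs$, isotopic to $\T$, with each vertex fixed and each edge invariant under $g_\T$. This completes the three desired conclusions. There is no real obstacle here: the only substantive work has been done in the earlier propositions, and the corollary is simply the observation that the hypotheses of those propositions are met for any triangulation $\T$ with $|\V_\T|\geq 4$ (the standing assumption from Section~\ref{sec:TrianglesToGaskets}). The one-sentence proof would read: apply Proposition~\ref{prop:NoObstructionNerveGamma} to the Thurston map $G_\T$ of Proposition~\ref{prop:AntiThurstonMappingProperties} to obtain $g_\T$, and then apply Proposition~\ref{Prop_NervesIsotopic}.
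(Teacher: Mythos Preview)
Your proposal is correct and follows essentially the same approach as the paper. The paper presents this corollary immediately after Proposition~\ref{Prop_NervesIsotopic} with only the remark ``In conjunction with Proposition~\ref{prop:NoObstructionNerveGamma},'' and your write-up simply spells out that same chain of references in more detail.
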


\subsection{Promoting Thurston equivalence to conjugacy}\label{promoting_Thurston_subsec}

Now it is shown that the Thurston equivalence between $\G_\T$ and $g_\T$ can be promoted to a conjugacy on the Julia set using a pullback argument. The existence of the conjugacy has been shown generally in \cite[Theorem 4.4]{Kam}, \cite[Corollary 1.2]{BD}, and similar results have been shown in different contexts, see e.g. \cite[Corollary 1.2]{CPT}, \cite[Theorem~11.1]{BM17}.

\begin{theorem}[Equivariant homeomorphism between limit set and Julia set] \label{Thm_IdentificationOfLimitAndJulia}
There is a homeomorphism $h:\Lambda_H\to\mathcal{J}(\g_\T)$. Moreover, $h \circ \G_\T=\g_\T \circ h$, and $h$ extends to an orientation preserving homeomorphism of the sphere.
\end{theorem}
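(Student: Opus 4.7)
The plan is a standard pullback argument promoting the Thurston equivalence of Proposition~\ref{prop:NoObstructionNerveGamma} into a genuine topological conjugacy. Let $h_0 \colon \mathbb{S}^2 \to \widehat{\mathbb{C}}$ represent the unique fixed point of $\sigma_{G_\T}$, normalized so that $h_0(V_\T) = C_{g_\T}$. By Proposition~\ref{Prop_NervesIsotopic} the naturally embedded nerve of $g_\T$ is isotopic rel $V_\T$ to $h_0(\T)$, so after an isotopy within its class we may assume $h_0$ carries $\T$ onto the nerve edge-by-edge. Since edges of $\T$ are $G_\T$-invariant (Proposition~\ref{prop:AntiThurstonMappingProperties}) and their images are $g_\T$-invariant (Lemma~\ref{Lem:RaysInv}), we may further arrange that the functional equation $h_0 \circ G_\T = g_\T \circ h_0$ holds exactly on $\T$. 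Now build orientation-preserving homeomorphisms $h_n \colon \mathbb{S}^2 \to \widehat{\mathbb{C}}$ inductively by the lifting equation
\[ g_\T \circ h_n = h_{n-1} \circ G_\T, \]
choosing the lift so that $h_n \equiv h_0$ on $\T$. Then $h_n$ sends the cell decomposition $\mathcal{D}_n := G_\T^{-n}(\T)$ onto $g_\T^{-n}(\text{nerve}(g_\T))$, and consecutive $h_n, h_{n+1}$ agree on $\mathcal{D}_n$.

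The main step is uniform convergence of $\{h_n\}$, which requires that the cells of both decompositions shrink in diameter. On the $G_\T$-side two mechanisms are at work: inside each packing disk $D_v$ the map $G_\T$ is topologically conjugate to $z \mapsto \bar{z}^k$, so preimages of $\T \cap D_v$ refine $D_v$ with mesh size tending to zero; outside the disks, expansiveness of the Nielsen map (Lemma~\ref{L:TrianglesShrink}) forces those cells of $\mathcal{D}_n$ meeting $\Lambda_H$ to shrink to points. On the $g_\T$-side, $g_\T$ is hyperbolic because every critical point is fixed (hence super-attracting) and the postcritical set lies in the Fatou set; therefore $g_\T$ is uniformly expanding on $\mathcal{J}(g_\T)$ in a suitable conformal metric, while on each super-attracting basin the B\"ottcher coordinate conjugates $g_\T$ to a power map. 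Together these imply the mesh of $g_\T^{-n}(\text{nerve}(g_\T))$ tends to zero uniformly. Since $h_n$ and $h_{n+1}$ coincide on the increasingly fine skeleton $\mathcal{D}_n$, and the source and target cells containing any given point both shrink to it, $\{h_n\}$ is uniformly Cauchy. Applying the same argument to $\{h_n^{-1}\}$ shows the limit $h$ is an orientation-preserving homeomorphism of the sphere.

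Passing to the limit in $g_\T \circ h_n = h_{n-1} \circ G_\T$ yields $g_\T \circ h = h \circ G_\T$, and minimality of $\Lambda_H$ (Proposition~\ref{cor:orbit_equiv_2}) and of $\mathcal{J}(g_\T)$ as non-empty invariant compact sets then forces $h(\Lambda_H) = \mathcal{J}(g_\T)$. The main obstacle in the plan is proving the simultaneous uniform shrinking of cells on both sides: on the $G_\T$-side one must combine the topological power-map behaviour inside circle packing disks with the expansive limit set dynamics, while on the $g_\T$-side one must marry hyperbolic expansion on the Julia set with the B\"ottcher dynamics on the super-attracting basins. Once this geometric control is established, the rest of the argument is formal.
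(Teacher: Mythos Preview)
Your pullback strategy has a genuine gap: the cells of $\mathcal{D}_n = G_\T^{-n}(\T)$ do \emph{not} shrink inside the packing disks. Since $G_\T|_{D_v}$ is topologically $\bar z^k$ and $\T\cap D_v$ consists of the $k+1$ invariant radii, every $\mathcal{D}_n\cap D_v$ is again a union of radii through the center $v$. The complementary pieces are sectors whose angular width tends to zero but whose diameter is always at least the radius of $D_v$. Exactly the same happens on the target side: $g_\T^{-n}(\text{nerve})\cap U_v$ is a fan of B\"ottcher rays, and its complementary sectors do not shrink. Hence your uniform-Cauchy argument for $\{h_n\}$ fails at every point of $\Int D_v$ (and of $\Int U_v$), and the claimed ``mesh $\to 0$'' is simply false.

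The paper sidesteps this by working only on the limit set. Instead of the triangulation pullback it uses the nested system of \emph{group interstices} (the triangular pieces of $\Lambda_H$ cut out by generations of tangent disks) and their \emph{dynamical} counterparts in $\mathcal{J}(g_\T)$; both families genuinely shrink, by Lemma~\ref{L:TrianglesShrink} and hyperbolicity of $g_\T$ respectively. Equicontinuity of $\{h_i|_{\Lambda_H}\}$ is then obtained by a short counting argument (any two nearby points of $\Lambda_H$ lie in at most a bounded number of adjacent generation-$N$ interstices). The resulting conjugacy $h:\Lambda_H\to\mathcal{J}(g_\T)$ is extended to a sphere homeomorphism \emph{abstractly} via Lemma~\ref{L:Extension}; this extension is not, and need not be, a conjugacy inside the disks. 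The global conjugacy you are aiming for is proved separately (Proposition~\ref{P:Cjgc}) by gluing in B\"ottcher coordinates. If you want to rescue your approach, you must add exactly that radial control up front---arrange $h_0$ to intertwine B\"ottcher coordinates on each $D_v$ and $U_v$ so that the $h_n$ are already stationary there---at which point the pullback is only doing work on $\Lambda_H$, and you are back to the paper's strategy.
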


\begin{proof} We have just shown that the nerve of $g_{\T}$ is isotopic to the nerve of the circle packing $\mathcal C_\T$, in particular there is an orientation preserving  homeomorphism $h_0$ of $\rs$ carrying the nerve of $G_{\T}$ to the nerve of $g_\T$. Moreover, by possibly changing $h_0$ in the same isotopy class relative to the critical points of $G_\T$, we may and will assume that the map $h_0$ takes each fixed connected component of the complement of $\Lambda_H$ onto a fixed Fatou component of $\mathcal J$, and still takes the nerve of $G_{\T}$ to the nerve of $g_\T$.

A \emph{dynamical interstice of generation} $n\in\N$ is the closure of a connected component obtained by removing from the sphere the closures of the Fatou components of $g_\T$ of generation at most $n$. A \emph{group interstice of generation} $n$ is defined similarly, but with respect to the complementary components of $\Lambda_H$. Both, dynamical and group interstices are topological triangles whose vertices are touching points of two complementary components of $\mathcal J$ and $\Lambda_H$, respectively.
 The above assumption implies that $h_0$ takes a group interstice of generation 0 onto a dynamical interstice of generation 0.

Let $h_i,\ i\ge 1$, be a lift of $h_{i-1}$, namely, 
\begin{equation}\label{E:Conj}
g_\T \circ h_i=h_{i-1}\circ G_\T.
\end{equation}
The existence of such a lift follows from the fact that $G_\T$ and $g_\T$ are equivalent maps and they are topological coverings outside their respective fixed critical points. 
Since $g_{\T}$ may not have critical points outside of the nerve, it follows from Proposition~\ref{Prop_NervesIsotopic} that $g_{\T}$ is univalent on each dynamical interstice $\Delta$ of generation 0. Moreover, it takes each such interstice $\Delta$ onto the closure of its complement minus the three Fatou components whose boundaries intersect $\Delta$.

Applying equation~\eqref{E:Conj} inductively, we conclude that the map $h_i$ takes each group interstice of generation $i$ onto a dynamical interstice of generation $i$. Moreover, the same equation gives that for each $j\ge i$, the map $h_j$ takes each group interstice $\Delta$ of generation $i$ onto the same dynamical interstice $h_i(\Delta)$ of generation $i$. 

Let $P_i,\ i\ge0$, denote the set of points common to two group interstices of generation $i$. Since the nerves of $G_\T$ and $g_\T$ are preserved by these maps, respectively, the map $h_0$ takes the nerve of  $G_\T$ to the nerve of $g_\T$, and the maps $h_1$ and $h_0$ are isotopic relative to the critical points of $G_\T$, we conclude from~\eqref{E:Conj} that $h_1$ agrees with $h_0$ on $P_0$.
Arguing inductively, equation~\eqref{E:Conj} gives that $h_j=h_i$ on $P_i$ for all $0\le i\le j$. Let 
$$
P_\infty=\cup_{i=0}^\infty P_i,
$$
and define
$$
h(x)=\lim_{i\to\infty} h_i(x),\quad x\in P_\infty.
$$
This limit exists by the preceding discussion and the set $P_\infty$ is dense in $\Lambda_H$ by Lemma~\ref{L:TrianglesShrink}. 
Equation~\eqref{E:Conj} gives
$$
g_\T \circ h(x)=h\circ G_\T(x),\quad x\in P_\infty.
$$

We now show that the family $\{h_i\}_{i=0}^\infty$ is equicontinuous on $\Lambda_H$. Indeed, let $\epsilon>0$ be arbitrary. We choose $N\in\N$ such that all dynamical interstices of $\mathcal J$ of generation $N$ have diameter at most $\epsilon$. 
It is possible to choose such an $\epsilon$ since $g_\T$ is expanding. 
If $\Delta$ is a group interstice of $\Lambda_H$ of generation $N$, we define the \emph{height} of $\Delta$ to be the smallest of distances (in the geodesic distance on the sphere) from each vertex of $\Delta$ to the side opposite to this vertex. 
Let $\delta>0$ be the smallest height among all group interstices of $\Lambda_H$ of generation $N$. Such a $\delta$ exists because there are only finitely many group interstices of generation $N$.

Let $x,y\in\Lambda_H$ be such that $d(x,y)<\delta$, where $d$ denotes the geodesic metric on the sphere. Let $l$ be the geodesic in the sphere of length $d(x,y)$ that joins $x$ and $y$. Let $D$ be a complementary disk of $\Lambda_H$ of generation at most $N$. By replacing the intersection of $l$ with each such disk $D$ by the shortest arc on the boundary $\partial D$ with the same end points, we conclude that there exists a path $l'$ in $\Lambda_H$ that connects $x$ and $y$ and whose length is at most $\pi\delta$. Note that the path $l'$ cannot be self-intersecting. We claim that there exists an absolute constant $C$ such that $l'$ intersects at most $C$ group interstices of generation $N$. If this is the case, we have $d(h_i(x), h_i(y))<C\epsilon$ for all $i\ge N$, and the equicontinuity follows. 

Now, if $x$ and $y$ are in the same generation $N$ group interstice or in two generation $N$ interstices that share a vertex, the claim is immediate. Assume that this is not the case, and let $k$ be the number of generation $N$ group interstices that intersect $l'$, excluding the interstices that contain $x$ and $y$. If $\Delta$ is one of the $k$ such group interstices, then $l'$ must contain two distinct vertices of $\Delta$. 
This follows from the observation that $l'$ is not self-intersecting.
Therefore, the length of $l'$ is at least $k\delta$. Since the length of $l'$ is at most $\pi\delta$, we conclude that $k\le 3$. Thus $C=5$ works and the proof of equicontinuity of $\{h_i\}_{i=0}^\infty$ is complete. 

The equicontinuity of  $\{h_i\}_{i=0}^\infty$ and the density of $P_\infty$ in $\Lambda_H$ implies that the map $h$ has a unique continuous extension to all of $\Lambda_H$. We continue to denote this extension by $h$. Moreover, the map $h$ has to satisfy
$$
g_\T \circ h(x)=h\circ G_\T(x),\quad x\in \Lambda_H,
$$ 
i.e., $h$ semi-conjugates $\Lambda_H$ to $\mathcal J$. 

The map $h$ is a surjective map from $\Lambda_H$ to $\mathcal J$ because $h(P_\infty)$ is dense in $\mathcal J$, which follows from hyperbolicity of $g_\T$. 

We now argue that $h$ is also injective. Let $x$ and $y$ be two distinct points in $\Lambda_H$. From Lemma~\eqref{L:TrianglesShrink} we know that there exists $i\in\N$ such that $x$ and $y$ belong to two disjoint group interstices $\Delta_x$ and $\Delta_y$ of generation $i$. Then the dynamical interstices $h_i(\Delta_x)$ and $h_i(\Delta_y)$ are disjoint. As stated above, we also have that $h_j(\Delta_x)=h_i(\Delta_x)$ and $h_j(\Delta_y)=h_i(\Delta_y)$ for all $j\ge i$. 
Thus, $h(\Delta_x\cap P_\infty)\subseteq h_i(\Delta_x)$ and 
$h(\Delta_y\cap P_\infty)\subseteq h_i(\Delta_y)$
are disjoint. Therefore, by taking closures and using the continuity of $h$, we get that $h(\Delta_x\cap\Lambda_H)\subset h_i(\Delta_x)$ and 
$h(\Delta_y\cap\Lambda_H)\subset h_i(\Delta_y)$ are disjoint, and hence $h(x)\neq h(y)$.

The fact that $h$ extends to an orientation preserving homeomorphism of the sphere follows from Lemma~\ref{L:Extension}.
\end{proof}

The previous theorem tells us that the anti-rational map $g_\mathcal{T}$ is intimately related to the reflection group $H_\mathcal{T}$. 

\begin{corollary}[Conjugacy between anti-rational map and Nielsen map]\label{prop:group_anti_rat_conjugate}
The anti-rational map $g_\mathcal{T}:\mathcal{J}(g_\mathcal{T})\to\mathcal{J}(g_\mathcal{T})$ is topologically conjugate to the Nielsen map $N_\mathcal{T}:\Lambda_H\to \Lambda_H$.
\end{corollary}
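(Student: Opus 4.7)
The plan is to deduce this corollary directly by combining two results already established in the excerpt: Theorem~\ref{Thm_IdentificationOfLimitAndJulia}, which produces an equivariant homeomorphism between $\Lambda_H$ and $\mathcal{J}(g_\mathcal{T})$ that intertwines $G_\mathcal{T}$ and $g_\mathcal{T}$, and Proposition~\ref{prop:AntiThurstonMappingProperties}(5), which asserts the pointwise identity $N_\mathcal{T} \equiv G_\mathcal{T}$ on $\Lambda_H$. Once both ingredients are in place, the conjugacy follows without any further analytic input.

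First I would invoke Theorem~\ref{Thm_IdentificationOfLimitAndJulia} to obtain a homeomorphism $h\colon \Lambda_H \to \mathcal{J}(g_\mathcal{T})$ satisfying the functional equation $h \circ G_\mathcal{T} = g_\mathcal{T} \circ h$ on $\Lambda_H$. Then I would apply Proposition~\ref{prop:AntiThurstonMappingProperties}(5) to replace $G_\mathcal{T}$ by $N_\mathcal{T}$ in this identity, since they agree as maps on $\Lambda_H$. This yields
\[
h \circ N_\mathcal{T} = h \circ G_\mathcal{T} = g_\mathcal{T} \circ h
\]
on $\Lambda_H$, which is precisely the statement that $h$ is a topological conjugacy between $N_\mathcal{T}\vert_{\Lambda_H}$ and $g_\mathcal{T}\vert_{\mathcal{J}(g_\mathcal{T})}$.

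There is no genuine obstacle here; the corollary is a repackaging of previously proved results. The only item worth remarking on is that although $N_\mathcal{T}$ is defined on the larger set $\bigcup_{f \in F_\mathcal{T}} D_f \supset \Lambda_H$ (and its action outside $\Lambda_H$ corresponds to the action of $H$ on the regular set rather than the Fatou dynamics of $g_\mathcal{T}$), the invariance of $\Lambda_H$ under $N_\mathcal{T}$ (established in Section~\ref{group_equiv_map_sec}) guarantees that the restriction $N_\mathcal{T}\vert_{\Lambda_H}$ is well defined, and this is the only restriction involved in the statement. Thus the proof reduces to these two line citations.
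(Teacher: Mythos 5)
Your proof is correct and matches the paper's argument exactly: the paper's one-line proof also cites Theorem~\ref{Thm_IdentificationOfLimitAndJulia} together with the identity $G_\mathcal{T}\equiv N_\mathcal{T}$ on $\Lambda_H$ (Proposition~\ref{prop:AntiThurstonMappingProperties}(5)). Your remark about the restriction of $N_\mathcal{T}$ to $\Lambda_H$ being well defined is a harmless bit of extra care, but no new ideas are needed.
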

\begin{proof}
Follows from Theorem~\ref{Thm_IdentificationOfLimitAndJulia} and the fact that $G_\mathcal{T}\equiv N_\mathcal{T}$ on $\Lambda_H$.
\end{proof}

\begin{figure}[ht!]
\centerline{\includegraphics[width=60mm]{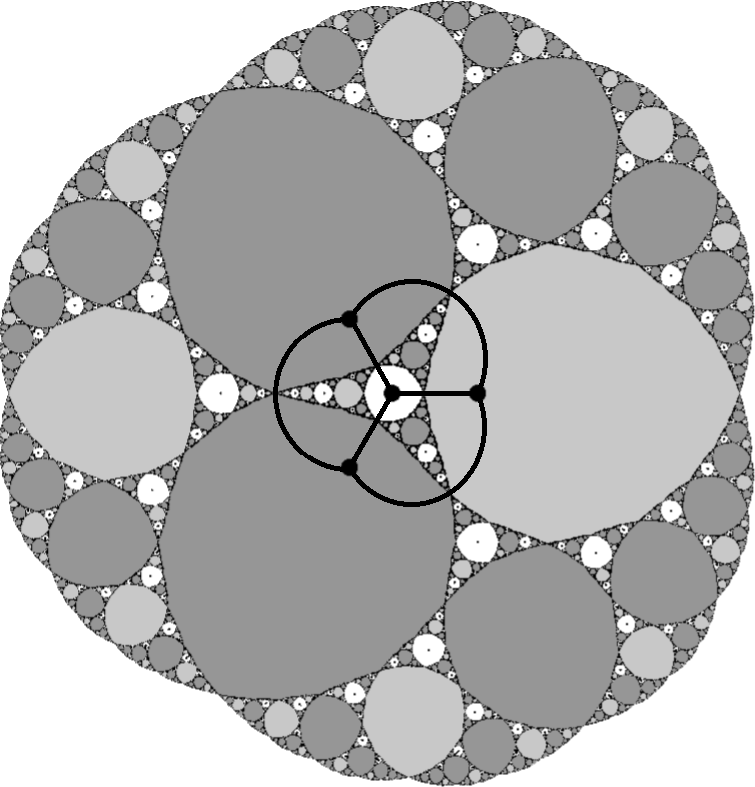}}
\caption{The Julia set corresponding to the tetrahedron with nerve superimposed.  The  anti-rational map is given by $\overline{f}$ where $f(z)=\frac{3z^2}{2z^3+1}$. (The image of $J_f$ appeared in \cite{BEKP}.) }
\label{pic:JuliaRays}
\end{figure}

\begin{proposition}[Global conjugacy between $G_\T$ and $g_\T$]\label{P:Cjgc}
There exists a homeomorphism $h$ of the whole Riemann sphere $\rs$ such that
\begin{equation}\label{E:Cjgc}
h\circ G_\T=g_\T\circ h
\end{equation} 
  on $\rs$.
\end{proposition}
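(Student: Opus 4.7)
The plan is to extend the homeomorphism $h\colon \Lambda_H\to \mathcal{J}(g_\T)$ from Theorem~\ref{Thm_IdentificationOfLimitAndJulia} across the regular set $\Omega_H$ in a dynamically compatible manner, obtaining a global conjugacy on $\widehat{\mathbb C}$.

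First, I extend $h$ across each of the invariant components $B_v$, $v\in V_\T$. By the construction of $G_\T$ in Section~\ref{Sec:RoundGasketsNoninvertible}, the restriction $G_\T|_{\overline{B_v}}$ is topologically conjugate via $\Phi_v$ to the model $\bar z^{k_v}$ on $\overline{\D}$. On the $g_\T$ side, Corollary~\ref{prop:group_anti_rat_conjugate} identifies a unique invariant critical Fatou component $U_v$; since $g_\T$ is hyperbolic, $U_v$ is a Jordan domain and the B\"ottcher coordinate extends by Carath\'eodory to a homeomorphism $\phi_v\colon \overline{U_v}\to\overline{\D}$ conjugating $g_\T|_{\overline{U_v}}$ to $\bar z^{k_v}$. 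The map $\psi_v := \phi_v\circ h\circ\Phi_v^{-1}\colon \partial\D\to\partial\D$ is a circle homeomorphism commuting with $\bar z^{k_v}$, and I extend it to a self-homeomorphism $\widetilde\psi_v$ of $\overline{\D}$ commuting with $\bar z^{k_v}$ by the standard device of choosing an angular sector that is a fundamental domain for $\bar z^{k_v}$, declaring $\widetilde\psi_v$ to use $\psi_v$ in the angular coordinate while preserving the radial coordinate on this sector, and propagating to the rest of $\overline{\D}$ by equivariance. Setting $h:=\phi_v^{-1}\circ\widetilde\psi_v\circ \Phi_v$ on $\overline{B_v}$ yields an extension which, on $\partial B_v\subset \Lambda_H$, agrees with the previously defined $h$.

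Next, I propagate equivariantly to the remaining components of $\Omega_H$. By Propositions~\ref{cor:orbit_equiv_2} and~\ref{prop:domain_discont}, every component $B\subset \Omega_H$ is eventually carried by $G_\T$ onto some $B_v$: there is a minimal $n=n(B)\geq 0$ with $G_\T^{\circ n}(\overline B) = \overline{B_v}$. Since the critical set of $G_\T$ lies in $V_\T$, the restriction $G_\T^{\circ n}\colon \overline{B}\to\overline{B_v}$ is a homeomorphism, and analogously the corresponding Fatou component $U$ of $g_\T$ maps homeomorphically onto $\overline{U_v}$ under $g_\T^{\circ n}|_{\overline U}$ (using hyperbolicity and local connectivity of $\mathcal{J}(g_\T)$). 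I define
\[
h|_{\overline B}\;:=\;(g_\T^{\circ n}|_{\overline U})^{-1}\circ h|_{\overline{B_v}}\circ G_\T^{\circ n}|_{\overline B}.
\]
This satisfies $g_\T\circ h=h\circ G_\T$ on $\overline B$, and on $\partial B\subset \Lambda_H$ agrees with the preexisting $h$ because the restriction of $h$ to the limit set already intertwines the two dynamics.

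The main obstacle is continuity of the resulting piecewise-defined map at points of $\Lambda_H$, where infinitely many extensions must fit together. Here I use that the diameters of the components of $\Omega_H$ go to zero (Lemma~\ref{L:TrianglesShrink}) and, symmetrically, that the diameters of the Fatou components of $g_\T$ go to zero by hyperbolicity (the expanding dynamics of $g_\T$ on a neighborhood of $\mathcal{J}(g_\T)$ forces inverse branches away from the postcritical set to contract uniformly). Thus the values of the extension on a small component of $\Omega_H$ lie in a small neighborhood of the corresponding boundary values in $\mathcal{J}(g_\T)$, delivering continuity at limit-set points. Combined with Lemma~\ref{L:Extension}, this produces a global homeomorphism of $\widehat{\mathbb C}$ satisfying~\eqref{E:Cjgc}.
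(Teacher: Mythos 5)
Your proof is correct and follows essentially the same route as the paper: extend $h$ from $\Lambda_H$ across the $G_\T$-invariant components of $\Omega_H$ via the model $\bar z^{k}$ and the B\"ottcher coordinate, propagate to the remaining components by pulling back along the dynamics, and obtain global continuity from the shrinking diameters of complementary components on both sides. Where you diverge is in the extension over a fixed component $B_v$: you extend the circle homeomorphism $\psi_v = \phi_v\circ h\circ\Phi_v^{-1}$ (which does commute with $\bar z^{k_v}$, by exactly the composition of the three intertwining relations) to a disk homeomorphism commuting with $\bar z^{k_v}$, whereas the paper first uses uniqueness of conjugacies of expanding circle maps to normalize the B\"ottcher coordinate $\phi_v$ so that $\psi_v=\mathrm{id}$, after which the extension is simply $h=\phi_v^{-1}\circ\Phi_v$. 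Both work, but your phrasing ``an angular sector that is a fundamental domain for $\bar z^{k_v}$ \dots propagating by equivariance'' is misleading: $\bar z^{k_v}$ on $\overline\D$ is a noninvertible map, not a group action, so there is no fundamental sector and no equivariance to propagate by. What in fact saves your construction is that the \emph{global} radial extension $\widetilde\psi_v(re^{i\theta}):=r\,\psi_v(e^{i\theta})$ already commutes with $\bar z^{k_v}$ on all of $\overline\D$ — a one-line check using $\psi_v\circ g_{k_v}=g_{k_v}\circ\psi_v$ on $\mathbb T$ — so you should simply say that and drop the sector language. (Alternatively, rigidity forces $\psi_v$ to be a rotation by a multiple of $2\pi/(k_v+1)$, which is essentially what the paper exploits by normalizing $\phi_v$.) Finally, the appeal to Lemma~\ref{L:Extension} at the end is superfluous: that lemma supplies \emph{some} homeomorphic extension, not the equivariant one you have constructed; your shrinking-diameters argument already gives continuity and hence the global homeomorphism.
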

\begin{proof}
In Theorem~\ref{Thm_IdentificationOfLimitAndJulia} we proved the existence of a homeomorphism $h$ such that \eqref{E:Cjgc} holds true on $\Lambda_H$. This theorem also states that $h$ has a homeomorphic extension to the whole sphere, but in general this extension does not have to satisfy \eqref{E:Cjgc} outside of $\Lambda_H$. We now show that a homeomorphic extension to $\rs\setminus\Lambda_H$ that satisfies  \eqref{E:Cjgc} exists.

Let $D$ be a component of $\Omega_H=\rs\setminus\Lambda_H$ that is fixed by $G_\T$,  and let $U$ be the Fatou component of $g_\T$ that corresponds to $D$ under the map $h$ from Theorem~\ref{Thm_IdentificationOfLimitAndJulia}. Furthermore, let $\Phi\colon\overline{D}\to\overline\D$ be the map from Section~\ref{Sec:RoundGasketsNoninvertible}
such that 
\begin{equation}\label{E:limitset}
\Phi\circ G_\T=g_d\circ\Phi
\end{equation}
on $\overline{D}$,
where $g_d(z)=\bar{z}^d$, and let
  $\phi\colon \overline{U}\to\overline{\D}$ be the B\"ottcher coordinate of $\overline{U}$. Note that since $U$ is a Jordan domain, such $\phi$ exists. Moreover, from conjugation of $G_\T$ and $g_\T$ on the boundary of $D$ we know that 
 \begin{equation}\label{E:Juliaset}
  \phi\circ g_T=g_d\circ \phi
  \end{equation}
   on $\overline{U}$. Also, the map $\phi$ is unique up to post-composition with a rotation by an angle which is an integer multiple of $2\pi/(d+1)$. Therefore, we may assume that $\phi$ is selected in such a way that for each fixed point $z_i$ of $G_\T$ on the boundary of $D$, we have $\phi(h(z_i))=\Phi(z_i)$. Putting together equations \eqref{E:Cjgc} on the boundary of $D$ and \eqref{E:limitset}, \eqref{E:Juliaset}, we conclude that
   $\phi(h(z))=\Phi(z)$ holds for all $z$ on the boundary of $D$. 
  
  We now extend $h$ from  Theorem~\ref{Thm_IdentificationOfLimitAndJulia} to $D$ using the formula
  $$
  h=\phi^{-1}\circ\Phi.
  $$
  In such a way we obtain a homeomorphic extension of $h$ from the boundary of each fixed component $D$ inside $D$ that satisfies \eqref{E:Cjgc} in $D$. 
  
  If $D$ is a non-fixed by $G_\T$ component of $\Omega_H$, let $k\in\N$ be the smallest integer such that $G_\T^k(D)$ is fixed by $G_\T$. From Theorem~\ref{Thm_IdentificationOfLimitAndJulia} we know that the component $h(G_\T^k(D))=g_\T^k(h(D))$ must be fixed by $g_\T$. Note that $k$ must also be the smallest integer with this property, and hence $g_\T^k$ is univalent on $h(D)$. We define the extension of $h$ inside such a $D$ by
  $$
  h=g_\T^{-k}\circ\phi^{-1}\circ\Phi\circ G_\T^k,
  $$
  where the branch of $g_\T^{-k}$ is chosen so that 
  $$
  g_\T^{-k}(g_T^k(h(D)))=h(D).
  $$
  From the above, such an extension satisfies \eqref{E:Cjgc} in $D$.
  
  Since we now were able to extend $h$ into each component of $\Omega_H$ and diameters of $D$ as well as the corresponding Fatou components $U$ go to 0, we conclude that such extensions paste into a global homeomorphism, and the proof is complete.
\end{proof}

For example, the Julia set where $\T$ is the tetrahedron is shown in Figure \ref{pic:JuliaRays}.

\begin{remark}
Recall from Subsection~\ref{nielsen_2_subsec} that if the dual graph $\check{\T}$ of the triangulation $\T$ is Hamiltonian, then the group $\Gamma_{\mathcal{T}}$ (which is the index two Kleinian subgroup of the reflection group $H_\mathcal{T}$) is obtained as a limit of a sequence of quasi-Fuchsian deformations of $\pmb{\Gamma}_d$ that pinch a suitable collection of simple closed non-peripheral geodesics on the $(d+1)$-times punctured spheres $\D/\pmb{\Gamma}_d$ and $(\widehat{\C}\setminus\overline{\D})/\pmb{\Gamma}_d$. Moreover, these geodesics lift by $\pmb{\Gamma}_d$ to the universal covers $\D$ and $\widehat{\C}\setminus\overline{\D}$ (respectively) giving rise to a pair of $\pmb{\rho}_d$-invariant geodesic laminations such that the quotient of $\mathbb{T}$ by pinching the endpoints of the leaves of both these laminations produces a topological model of the limit set $\Lambda_H$.

Each of these two laminations can be viewed as an equivalence relation on $\mathbb{T}$. Pushing forward these two laminations by the topological conjugacy $\pmb{\mathcal{E}}_d$ between $\pmb{\rho}_d$ and $g_d$, where we recall that $g
_d(z)=\overline{z}^d$, we obtain two $g_d$-invariant formal rational laminations in the sense of \cite{Kiwi}. An anti-holomorphic version of \cite[Theorem~1.1]{Kiwi} implies that these laminations are admitted by two critically fixed degree $d$ anti-polynomials. It is not hard to see that the topological mating of these two critically fixed anti-polynomials (see \cite[Definition~4.1]{MP} for the definition of topological mating of two polynomials, these notions carry over to anti-polynomials in the obvious way) is a degree $d$ orientation reversing branched cover of $\mathbb{S}^2$ that is topologically conjugate to $G_\T$. In light of Proposition~\ref{P:Cjgc}, we now conclude that the topological mating of these two critically fixed anti-polynomials is topologically conjugate to $g_\T$ such that the conjugacy can be chosen to be conformal in the interior of the Fatou sets of the anti-polynomials. In other words, $g_\T$ is a \emph{geometric mating} of these two anti-polynomials (see \cite[Definition~4.4]{MP}). In particular, the quotient of $\mathbb{T}$ by the above-mentioned $g_d$-invariant rational laminations yields a topological model of $\mathcal{J}(g_\mathcal{T})$, and the quotient map from $\mathbb{T}$ onto $\mathcal{J}(g_\mathcal{T})$ semi-conjugates $g_d:\mathbb{T}\to\mathbb{T}$ to $g_\mathcal{T}:\mathcal{J}(g_\mathcal{T})\to\mathcal{J}(g_\mathcal{T})$. 
\end{remark}

\section{Gasket Julia set quasisymmetries}\label{sec:JuliaQuasisym}

Let $\Delta$ be a complementary component of the nerve of the Fatou set of $\g_\T$. The restriction of $\g_\T$ to $\overline{\Delta}$ is evidently univalent, and $\g_\T(\overline{\Delta})=\rs\setminus\Delta$. We associate to $\Delta$ a dynamically defined map $\rho_\Delta:\rs\to\rs$ as follows:

\[\rho_\Delta(z) =
\begin{cases}
\g_\T(z) & \text{if } z\in \overline{\Delta},\\
\g_\T^{-1}(z)  & \text{if } z\notin \overline{\Delta}, \text{ where } \g_\T^{-1}:\rs\setminus \overline{\Delta}\to \Delta.
\end{cases}
\]
Note that the map $\rho_\Delta$ is not continuous on all of $\rs$, but  restricts to a continuous involution on the Julia set $\mathcal{J}(\g_\T)$. 

\begin{lemma} \label{lem:PWDynQS} For any $\Delta$ as above, $\rho_\Delta$ restricted to $\mathcal{J}(\g_\T)$ is quasisymmetric.
\end{lemma}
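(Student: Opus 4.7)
I plan to proceed in three steps.

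First, I verify that $\rho_\Delta$ restricts to a continuous involution of $\mathcal{J}(g_\T)$. The two piecewise definitions of $\rho_\Delta$ must be shown to agree on $\mathcal{J}(g_\T) \cap \partial\Delta$. Each point in this intersection is the common landing point of the two B\"ottcher rays composing one of the ray connections bounding $\Delta$. By Lemma \ref{Lem:RaysInv} each such ray connection is $g_\T$-invariant, and a direct check in B\"ottcher coordinates shows that its landing point—the unique intersection of the ray connection with $\mathcal{J}(g_\T)$—is a fixed point of $g_\T$. Consequently both $g_\T$ and its inverse branch $\psi := (g_\T|_{\overline{\Delta}})^{-1}$ fix every such point, so the two pieces glue continuously.

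Second, I use hyperbolicity to obtain bi-Lipschitz (hence quasisymmetric) control on each of $A := \mathcal{J}(g_\T) \cap \overline{\Delta}$ and $B := \mathcal{J}(g_\T) \setminus \Delta$. Since every critical point of $g_\T$ lies in the Fatou set, the critical set $C_{g_\T}$ is at positive distance from $\mathcal{J}(g_\T)$. On a small neighborhood $V$ of $\mathcal{J}(g_\T)$ disjoint from $C_{g_\T}$, the anti-rational map $g_\T$ is an anti-conformal local diffeomorphism whose derivative is uniformly bounded in modulus both above (by compactness of $\rs$) and below (by the distance from $C_{g_\T}$); the same applies to $\psi$ on $V \setminus \overline{\Delta}$, since $\psi$ is the inverse of $g_\T|_{\overline{\Delta}}$ away from critical values (which also lie in the Fatou set because each critical point is fixed). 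Restricting to $A$ and $B$ yields uniform bi-Lipschitz bounds for $\rho_\Delta|_A$ and $\rho_\Delta|_B$.

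Finally, I glue the two pieces to obtain global quasisymmetry on $\mathcal{J}(g_\T) = A \cup B$. The intersection $A \cap B$ consists of only the finitely many fixed points of $g_\T$ on $\partial\Delta$ identified in the first step. Near such a fixed point $p$, the two pieces of $\rho_\Delta$ are well-approximated in local coordinates by anti-conformal similarities with multipliers $\lambda_p$ and $1/\overline{\lambda_p}$, where $\lambda_p$ is the anti-holomorphic multiplier of $g_\T$ at $p$. Boundedness of $|\lambda_p|$ over the finitely many such points furnishes a uniform bound on the three-point quasisymmetry condition for triples straddling $\partial\Delta$ near $p$. Combined with the bi-Lipschitz estimates of step two and a standard local-to-global quasisymmetry principle on the compact doubling metric space $\mathcal{J}(g_\T)$, this yields the desired quasisymmetry. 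The main obstacle is precisely this last step: because $|\lambda_p| \neq 1$ for the hyperbolic map $g_\T$, the local scale factors of $\rho_\Delta$ on the two sides of a gluing point $p$ differ by $|\lambda_p|^2$, so $\rho_\Delta$ fails to be bi-Lipschitz at $p$ and the three-point condition must be verified directly rather than via a Lipschitz bound.
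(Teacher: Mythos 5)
Your proposal takes a genuinely different route from the paper, and it has a gap at exactly the point you yourself flag as ``the main obstacle.''

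\smallskip
\textbf{What you do differently.} You attempt a direct verification of the three-point quasisymmetry condition, decomposing $\Julia(\g_\T)$ into the pinched pieces $A=\Julia(\g_\T)\cap\overline\Delta$ and $B=\Julia(\g_\T)\setminus\Delta$, establishing bi-Lipschitz control separately on $A$ and $B$, and then trying to glue across the three pinch points $A\cap B=\{p_1,p_2,p_3\}$. The paper instead extends $\rho_\Delta$ anti-quasiconformally into the three Fatou components $D_1,D_2,D_3$ meeting $\partial\Delta$ (via Ahlfors--Beurling in B\"ottcher coordinates applied to the piecewise-dynamical circle homeomorphisms $\rho_\Delta|_{\partial D_i}$), producing a global sphere homeomorphism that is anti-quasiconformal off $\partial D_1\cup\partial D_2\cup\partial D_3$, and then uses quasiconformal removability of finite unions of quasicircles to conclude the extension is globally anti-quasiconformal, hence quasisymmetric. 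In other words the paper never touches the local geometry of $\Julia(\g_\T)$ at the pinch points; it converts the problem into a 1-dimensional quasisymmetry check on circles (where piecewise-analytic with nonvanishing one-sided derivatives is elementary) plus a known removability theorem.

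\smallskip
\textbf{The gap.} Your Step~3 is asserted, not proven, and it is precisely the hard part. Near a pinch point $p$ (in anti-holomorphic Koenigs coordinates) $\rho_\Delta$ is exactly $w\mapsto\lambda\overline w$ on the $A$-side and $w\mapsto\overline w/\overline\lambda$ on the $B$-side with $|\lambda|>1$, so the two pieces do not even extend continuously to a neighborhood of $p$ off the Julia set. Whether this piecewise model restricted to $\Julia(\g_\T)$ near $p$ is $\eta$-quasisymmetric with uniform $\eta$ depends on fine geometric input about how $A$ and $B$ sit inside a neighborhood of $p$ (e.g., an angular-separation or internal-uniformity estimate between the two sides of the ray connection at its landing point); you do not establish this. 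Likewise, the ``standard local-to-global quasisymmetry principle on a compact doubling metric space'' is not a ready-made theorem in the generality you need: the usual local-to-global statements require additional hypotheses such as quasiconvexity or bounded turning of both source and target, and you neither state nor verify them for the gasket $\Julia(\g_\T)$. Until both the local three-point estimate at the $p_i$ and the local-to-global passage are actually supplied, the argument does not close; and filling them in looks at least as much work as the paper's removability route, which bypasses both issues by design.
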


\begin{proof}
Let $U_1, U_2$, and $U_3$ be the three Fatou components of $g_\T$ that intersect the boundary of $\Delta$. 
From the definition of $\rho_{\Delta}$, we immediately conclude that this map restricted to $\Delta\cup (\rs\setminus\overline{\Delta})$ is anti-conformal. Hence, $\rho_{\Delta}$ is anti-conformal in 
$$
\left(\Delta\setminus\overline{U_1\cup U_2\cup U_3}\right)\cup \left(\rs\setminus\overline{\Delta\cup(U_1\cup U_2\cup U_3)}\right).
$$ 
Let $\phi_i\colon \overline{U_i}\to\overline{\D}$, be a B\"ottcher coordinate of $\overline{U_i},\ i=1,2,3$, that conjugates the map $g_\T|_{\overline{U_i}}$ to $g_{d_i}(z)=\bar z^{d_i}$ on $\overline{\D}$. The restriction $\rho_i$ of $\phi_i\circ\rho_{\Delta}\circ\phi_i^{-1},\ i=1,2,3$, to the boundary circle of $\D$ is then given piecewise as follows. It is $z\mapsto \bar z^{d_i}$ on an arc between two successive fixed points of $g_{d_i}$, and $z\mapsto \bar z^{1/d_i}$ on the complement of the arc, where the branch of $\bar z^{1/d_i}$ is selected so that the resulting piecewise map $\rho_i$ is a homeomorphism of the circle. Each such $\rho_i,\ i=1,2,3$, is a bi-Lipschitz orientation reversing homeomorphism of the circle. Therefore each complex conjugate map $\overline{\rho_i},\ i=1,2,3$, is an orientation preserving bi-Lipschitz map, and, in particular, it is quasisymmetric. According to the Ahlfors--Beurling theorem
it has a quasiconformal extension to $\D$. Thus each map $\rho_i,\ i=1,2,3$, has an anti-quasiconformal extension to $D$, i.e., the complex conjugation of each extension is quasiconformal. Conjugating back using the B\"ottcher coordinates $\phi_i,\ i=1,2,3$,
we conclude that the map $\rho_{\Delta}$ has an anti-quasiconformal extension into each of the Fatou components $U_1, U_2, U_3$. This way we obtain a global homeomorphism of $\rs$ that is anti-quasiconformal outside the union of three boundaries $\partial U_1\cup\partial U_2\cup\partial U_3$. Since $g_\T$ is a hyperbolic rational map, each boundary $\partial U_i,\ i=1,2,3$, is a quasicircle. According to~\cite[Theorem~4 and Proposition~9]{Y}, the set $\partial U_1\cup\partial U_2\cup\partial U_3$ is quasiconformally removable. 

Therefore, the restriction of $\rho_\Delta$ to the Julia set $\mathcal{J}(\g_\T)$ extends to an  anti-quasiconformal map of $\rs$. Since the classes of quasiconformal and quasisymmetric maps of $\rs$ coincide (see, e.g., \cite{H}), we conclude that the restriction $\rho_\Delta|_{\mathcal{J}(\g_\T)}$ must be quasisymmetric. 
\end{proof}

Recall that Theorem \ref{Thm_IdentificationOfLimitAndJulia} gives a homeomorphism $h:\Lambda_H\to\mathcal{J}(\g_\T)$. There is an obvious induced isomorphism
\[h_*:\homeo\to \text{Homeo}(\mathcal{J}(\g_{\T}))\] defined by $\xi\mapsto h\xi h^{-1}$ for $\xi\in\homeo$.

\begin{theorem}[Gasket Julia quasisymmetries]\label{thm:homeosAsQuasisym} For a reduced triangulation $\T$,  \[\text{Homeo}(\mathcal{J}(\g_{\T}))= \QS(\mathcal{J}(\g_{\T}))\] and so there is an isomorphism
\[h_*:\homeo\to \QS(\mathcal{J}(\g_{\T})).\]
\end{theorem}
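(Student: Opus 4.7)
The inclusion $\QS(\Julia(g_\T)) \subseteq \text{Homeo}(\Julia(g_\T))$ is automatic, and once we have the identity $\text{Homeo}(\Julia(g_\T)) = \QS(\Julia(g_\T))$, the isomorphism $h_*$ is immediate because $h$ extends to a global self-homeomorphism of $\rs$ by Theorem~\ref{Thm_IdentificationOfLimitAndJulia}. So the entire content of the theorem is the reverse inclusion: every topological self-symmetry of $\Julia(g_\T)$ is in fact quasisymmetric.

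Given $\eta \in \text{Homeo}(\Julia(g_\T))$, I first pull it back via $h$ to obtain $\xi = h^{-1}\eta h \in \homeo$. The Decomposition of Symmetries Theorem~\ref{Thm:ClassificationOfSymmetries} gives $\xi = A \circ \gamma|_{\Lambda_H}$ for some $\gamma \in H$ and some $A \in \text{Aut}^{\T}(\rs)$. Conjugating back,
\[ \eta = (h A h^{-1}) \circ (h \gamma h^{-1}) \quad \text{on } \Julia(g_\T), \]
so it suffices to show that each of the two factors lies in $\QS(\Julia(g_\T))$.

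\emph{The group factor.} I would decompose $\gamma = R_{f_1} \cdots R_{f_k}$ into the generating reflections of $H$ and verify the dynamical identification
\[ h R_f h^{-1}\big|_{\Julia(g_\T)} \; = \; \rho_{\Delta_f}\big|_{\Julia(g_\T)}, \]
where $\Delta_f$ is the complementary component of the nerve of $g_\T$ corresponding to $f$ under Proposition~\ref{Prop_NervesIsotopic}. Indeed, $R_f$ equals $N_\T$ on $D_f \cap \Lambda_H$ and the inverse branch of $N_\T$ landing in $D_f$ on $\Lambda_H \setminus D_f$, and the proof of Theorem~\ref{Thm_IdentificationOfLimitAndJulia} gives $h(D_f \cap \Lambda_H) = \overline{\Delta_f} \cap \Julia(g_\T)$ (group interstices of every generation are sent to dynamical interstices of the same generation). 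Transporting through $h$ then matches $R_f|_{\Lambda_H}$ to the piecewise-dynamical description of $\rho_{\Delta_f}|_{\Julia(g_\T)}$. Lemma~\ref{lem:PWDynQS} yields that each $\rho_{\Delta_f}$ is quasisymmetric, so $h\gamma h^{-1}$ is quasisymmetric.

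\emph{The M\"obius factor.} The key claim is that $A$ induces a genuine M\"obius symmetry $M_A$ of $g_\T$. Since $A$ permutes $\CC_\T$ and satisfies $A R_f A^{-1} = R_{A_*(f)}$, it commutes with $N_\T$ on all of $\rs$ and, combinatorially, is a self-equivalence of the Thurston map $G_\T$. By the uniqueness up to M\"obius conjugacy of the anti-rational realization (Proposition~\ref{prop:NoObstructionNerveGamma}, applied to $G_\T^{\circ 2}$, which has hyperbolic orbifold), the $A$-action on the Thurston equivalence class of $G_\T$ is realized by M\"obius conjugation by some $M_A \in \text{Aut}(\rs)$ with $M_A g_\T = g_\T M_A$. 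Running the pullback construction of Theorem~\ref{Thm_IdentificationOfLimitAndJulia} with an initial marker $h_0$ chosen equivariantly ($h_0 \circ A = M_A \circ h_0$), the lifting equation $h_{i-1} \circ G_\T = g_\T \circ h_i$ preserves the relation $h_i \circ A = M_A \circ h_i$ at every step, and the uniform limit inherits $h A = M_A h$ on $\Lambda_H$. Hence $h A h^{-1}|_{\Julia(g_\T)} = M_A|_{\Julia(g_\T)}$ is the restriction of a M\"obius map, which is quasisymmetric, completing the argument. The main obstacle is precisely this last step: producing $M_A$ and arranging equivariance of the pullback — a Thurston-rigidity argument combined with the uniqueness of the anti-rational realization from Proposition~\ref{prop:NoObstructionNerveGamma}.
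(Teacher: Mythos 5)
Your proof is correct and takes essentially the same route as the paper's: reduce to the semidirect-product decomposition of $\homeo$, treat the $H$-factor by matching its generators with the piecewise-dynamical involutions $\rho_\Delta$ and invoking Lemma~\ref{lem:PWDynQS}, and treat the $\text{Aut}^{\T}(\rs)$-factor by Thurston rigidity. The only departures are expository: you establish $h R_f h^{-1}=\rho_{\Delta_f}$ by tracking interstices through the pullback construction of Theorem~\ref{Thm_IdentificationOfLimitAndJulia}, whereas the paper goes in the reverse direction by applying Lemma~\ref{L:ThreeDisks} to $h^{-1}\rho_\Delta h$, and you spell out an equivariant-pullback argument that the paper compresses into the single phrase ``Thurston rigidity implies.''
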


\begin{proof}
The splitting $\homeo=\text{Aut}^{\T}(\rs) \ltimes H$ from Theorem \ref{thm:groupSplits} and surjectivity of $h_*$ implies that every element of $\text{Homeo}(\mathcal{J}(\g_{\T}))$ is a composition of an element of $h_*(H)$ and an element of $h_*(\text{Aut}^{\T}(\rs))$. To prove the theorem, it suffices to show that all such elements are quasisymmetric.

Since $h^{-1}\rho_\Delta h$ is a homeomorphism of $\Lambda_H$ that acts invariantly on three of the generation zero disks, it follows from Lemma \ref{L:ThreeDisks} that $h^{-1}\rho_\Delta h$ is a generator of $H$. Thus $ h_*(h^{-1}\rho_\Delta h)=\rho_\Delta$ is a generator of $h_*(H)$ for each $\Delta$. Lemma \ref{lem:PWDynQS} asserts that $\rho_\Delta$ is a quasisymmetry. Thus, every element of $h_*(H)$ is a composition of quasisymmetries, and hence a quasisymmetry itself. Thurston rigidity implies that every element of $h_*(\text{Aut}^{\T}(\rs))$ is either a M\"obius or anti-M\"obius symmetry of $\mathcal{J}(g_{\T})$ and thus a quasisymmetry.
\end{proof}

To conclude the section, we note that unreduced triangulations are still very much of interest, even though our theory does not directly apply to compute their symmetry group. For example, the two graphs in Figure \ref{pic:Unreduced} are realized by anti-holomorphic maps of degree $-3$ and $-5$ respectively, with Julia set homeomorphic to the classical Apollonian gasket. Evidently neither map is an iterate of the other since their degrees are prime. The procedure easily generalizes to produce infinitely many anti-holomorphic maps with Julia set homeomorphic to the classical Apollonian gasket.

\section{A quasiregular model}\label{affine_model_sec}

Throughout this section, $\mathcal{T}$ will denote the tetrahedral triangulation of the topological $2$-sphere. 

The goal of this section is to construct an orientation reversing anti-quasiregular map $\mathcal{G}$ on a tetrahedron which is piecewise affine outside the fixed Fatou components and quasiconformally conjugate to the critically fixed cubic anti-rational map $g\equiv g_\T$ of the Riemann sphere (see Proposition~\ref{prop:NoObstructionNerveGamma}). It is worth pointing out that the main result of this section provides us with an alternative construction of the anti-rational map $g$ which does \emph{not} use Thurston's characterization of rational maps.

We consider a tetrahedron; i.e., a polyhedron composed of four congruent (equilateral) triangular faces, six straight edges, and four vertices. The graph $\mathcal{T}$ defining this triangulation can be identified with the union of the edges $AB, AC, BC, AD, BD,$ and $CD$ including the vertices (see Figure~\ref{tetra_subdivision} (left)).

Let us denote the mid-points of the edges of $\mathcal{T}$ by $E, F, \cdots, J$ (see Figure~\ref{tetra_subdivision} (right)). Recall that there is a circle packing on $\widehat{\C}$ whose nerve is isomorphic to $\mathcal{T}$. In the current setting, the role of the round disks of this circle packing will be played by the open \emph{caps} with triangle boundaries $EGJ$, $EFH$, $FGI$, and $HIJ$, and which contain the vertices $A, B, C$, and $D$ respectively. We will denote them by $\widehat{EGJ}$, $\widehat{EFH}$, $\widehat{FGI}$, and $\widehat{HIJ}$. 

The complementary components of the union of the closures of the caps are equilateral triangles each of which is contained in a face of the triangulation. We denote the closures of these equilateral triangles by $\Delta EFG, \Delta FHI, \Delta EHJ, \Delta GIJ$, and call them \emph{interstices}. They play the role of the interstices of the corresponding circle packing.

\begin{figure}[ht!]
\begin{tikzpicture}
\node[anchor=south west,inner sep=0] at (-3,0) {\includegraphics[width=0.49\textwidth]{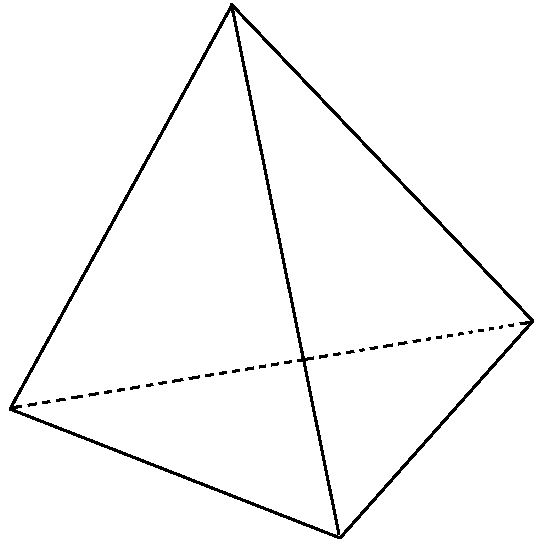}};
\node[anchor=south west,inner sep=0] at (4,-0.2) {\includegraphics[width=0.48\textwidth]{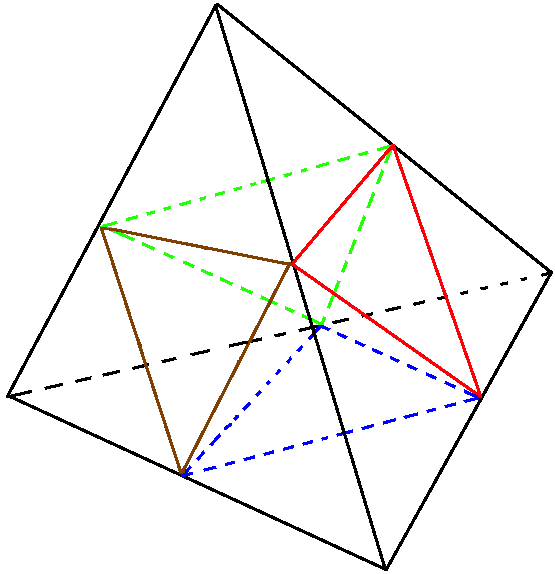}};
\node at (-3,1.2) {$A$};
\node at (-0.2,6.4) {$B$};
\node at (0.8,-0.2) {$C$};
\node at (3.2,2.25) {$D$};
\node at (4,1.4) {$A$};
\node at (6.4,6.4) {$B$};
\node at (8.2,-0.4) {$C$};
\node at (10.2,2.7) {$D$};
\node at (7.64,2.3) {\begin{tiny}$J$\end{tiny}};
\node at (5.8,0.7) {\begin{tiny}$G$\end{tiny}};
\node at (4.8,3.6) {\begin{tiny}$E$\end{tiny}};
\node at (7.44,3.24) {\begin{tiny}$F$\end{tiny}};
\node at (9.4,1.6) {\begin{tiny}$I$\end{tiny}};
\node at (8.5,4.6) {\begin{tiny}$H$\end{tiny}};
\end{tikzpicture}
\caption{Left: The tetrahedral triangulation of the topological $2$-sphere. Right: Partition of the tetrahedral surface into caps and interstices.}
\label{tetra_subdivision}
\end{figure}

The tetrahedron is naturally endowed with an affine structure via identification of its faces with equilateral triangles in the plane. More precisely, the tetrahedron can be folded from the union of four equilateral triangles in the plane as depicted in Figure~\ref{tetra_net}; the four triangles are bounded by bold edges. This configuration of triangles is called a \emph{net} of the tetrahedron. We identify the faces of the tetrahedron with the corresponding equilateral triangles in this net. Note that the vertices $D_1, D_2, D_3$ all correspond to the same vertex $D$ on the tetrahedron. Similarly, as the edge $AD$ (on the tetrahedron) is obtained by folding $AD_1$ and $AD_2$ (on the net), two points on $AD_1$, $AD_2$ that are equidistant from $A$ correspond to the same point on $AD$. The same is true for the pairs of edges $BD_1, BD_3$, and $CD_2, CD_3$. Moreover, the interstices on the tetrahedron correspond to the equilateral triangles $\Delta EFG$, $\Delta EH_1J_1$, $\Delta GI_1J_2$, and $\Delta FH_2I_2$ in the net. We will use the tetrahedron and its net (which gives an affine structure to the tetrahedron) interchangeably.

\begin{figure}[ht!]
\begin{tikzpicture}
\node[anchor=south west,inner sep=0] at (-4,0) {\includegraphics[width=1\textwidth]{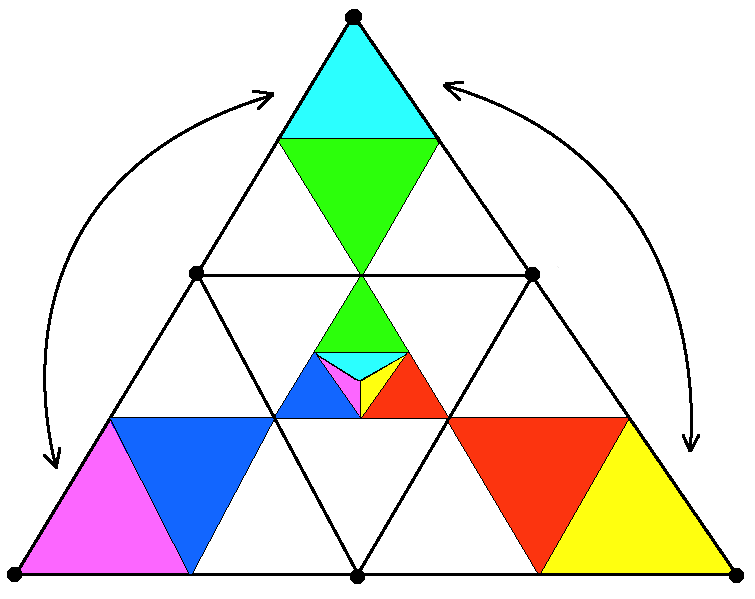}};
\node[anchor=south west,inner sep=0] at (-2.4,-1.6) {\includegraphics[width=0.75\textwidth]{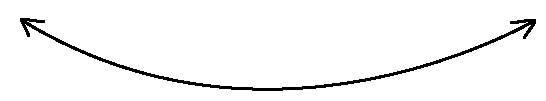}};
\node at (-1.2,5.5) {\begin{Large}$A$\end{Large}};
\node at (5.45,5.5) {\begin{Large}$B$\end{Large}};
\node at (2,-0.1) {\begin{Large}$C$\end{Large}};
\node at (2,10.2) {\begin{Large}$D_1$\end{Large}};
\node at (-4,-0.1) {\begin{Large}$D_2$\end{Large}};
\node at (8.6,-0.1) {\begin{Large}$D_3$\end{Large}};
\node at (-2.7,3) {\begin{Large}$J_2$\end{Large}};
\node at (0.32,7.7) {\begin{Large}$J_1$\end{Large}};
\node at (0.2,3.3) {$G$};
\node at (2.45,5.24) {$E$};
\node at (3.88,3.24) {$F$};
\node at (5,-0.1) {\begin{Large}$I_2$\end{Large}};
\node at (-0.8,-0.1) {\begin{Large}$I_1$\end{Large}};
\node at (7,3.1) {\begin{Large}$H_2$\end{Large}};
\node at (3.8,7.8) {\begin{Large}$H_1$\end{Large}};
\node at (1,4.2) {$K$};
\node at (3.1,4.2) {$L$};
\node at (2,2.8) {$M$};
\node at (2.06,3.88) {\begin{tiny}$N$\end{tiny}};
\end{tikzpicture}
\caption{The union of the equilateral triangles $ABC, ABD_1, ACD_2,$ and $BCD_3$ form a net of the tetrahedron. The color-coding illustrates the action of the quasiregular map $\mathcal{G}$ on the tetrahedron. The attracting basin of the $D$-vertex, represented by $D_1, D_2, D_3$ in the figure, is the union of the three attached triangles (blue, yellow, violet).}
\label{tetra_net}
\end{figure}

We now proceed to define our desired quasiregular map $\mathcal{G}$ on the tetrahedron. This will be done in two steps.

\noindent\textbf{Step I (Defining $\mathcal{G}$ on the interstices).} As in the construction of $G_\T$ in Section~\ref{Sec:RoundGasketsNoninvertible}, we first define the map on the interstices. For definiteness, let us work with the interstice $\Delta EFG$ (the definition on the other interstices will be symmetric). We subdivide $\Delta EFG$ into four congruent equilateral triangles $\Delta EKL$, $\Delta KGM$, $\Delta LMF$, and $\Delta KLM$, and further subdivide $\Delta KLM$ into three congruent triangles by joining the vertices $K, L, M$ to the barycenter $N$ (as shown in Figure~\ref{tetra_net}). Note that the three triangles $KLN$, $KMN$, $LMN$ are not equilateral. 

Let us now map $\Delta EKL$ onto $\Delta EJ_1H_1$ by an orientation reversing affine map. This can be achieved by reflecting $\Delta EKL$ in the line $AB$, and then scaling it to match with $\Delta EJ_1H_1$. This defines an anti-conformal map $\mathcal{G}$ on $\Delta EKL$. Now extend $\mathcal{G}\vert_{KL}$ affinely to the triangle $\Delta KLN$ such that it maps onto the triangle $\Delta J_1H_1D_1$ in an orientation reversing manner (since $\Delta J_1H_1D_1$ is equilateral and $\Delta KLN$ is not, the map $\mathcal{G}$ is not  anti-conformal on $\Delta KLN$). This completes the definition of $\mathcal{G}$ on the quadrilateral $EKNL$. We can now extend $\mathcal{G}$ to the quadrilaterals $GKNM$ and $FLNM$ by reflecting the previously defined map $\mathcal{G}$ in the line segments $KN$ and $NL$ such that the extended map sends the triangles $\Delta GKM$, $\Delta KMN$, $\Delta LMN$, and $\Delta FLM$ affinely onto the triangles $\Delta GJ_2I_1$, $\Delta J_2I_1D_2$, $\Delta H_2I_2D_3$, and $\Delta FH_2I_2$ (respectively). It is easy to see that this definition is compatible with the identifications of the edge pairs $(D_1J_1,D_2J_2)$, $(D_1H_1,D_3H_2)$, and $(D_2I_1,D_3I_2)$. Hence, we obtain a well-defined orientation reversing map $\mathcal{G}$ on the interstice $\Delta EFG$ of the tetrahedron. Moreover, $\mathcal{G}$ is a piecewise defined (anti-)similarity (in particular, anti-conformal) on $\Delta EFG\setminus \Delta KLM$. 

We now repeat the above procedure on the other three interstices $\Delta IFH, \Delta EHJ$, and $\Delta GJI$. Note that the above definition of the map $\mathcal{G}$ on the four interstices completely determines it on the boundaries of the four caps $\widehat{HIJ}, \widehat{EGJ}, \widehat{FGI}, \widehat{EFH}$ (around the vertices $D, A, C, B$, respectively).

\noindent\textbf{Step II (Defining $\mathcal{G}$ on the caps).} Let us first focus on the cap around the vertex $D$ (this is $\widehat{HIJ}$ on the tetrahedron). The definition of $\mathcal{G}$ on the interstices implies that $\mathcal{G}$ is an expanding double covering of the quasicircle $HIJ$ on the tetrahedron (in fact, it doubles the distance between a pair of nearby points). 

Since $\mathcal{G}:HIJ\to HIJ$ is an orientation reversing expanding double covering of a topological circle, it is easy to see that there is a unique orientation preserving topological conjugacy $\psi$ between $\mathcal{G}:HIJ\to HIJ$ and $\overline{z}^2:\mathbb{T}\to\mathbb{T}$ that sends the fixed points $H, I$ and $J$ of $\mathcal{G}$ to the fixed points $1, \omega$, and $\omega^2$ of $\overline{z}^2$ (where $\omega$ is a primitive third root of unity). Note that $\mathcal{G}:HIJ\to HIJ$ is piecewise affine, and the left and right multipliers of $\mathcal{G}$ at each fixed point are equal. It follows that the map $\mathcal{G}$ satisfies the distortion estimate of \cite[Lemma~19.65]{Lyu} with a constant $C=1$. The proof of \cite[Proposition~19.64]{Lyu} now applies to show that the conjugacy $\psi$ is a quasisymmetry.

Finally, as the cap $\widehat{HIJ}$ is quasisymmetrically equivalent to $\D$, the Ahlfors-Beurling extension theorem provides us with a quasiconformal extension of $\psi$ that maps $\widehat{HIJ}$ to the unit disk $\D$, still denoted by $\psi$. We now extend $\mathcal{G}$ to the cap $\widehat{HIJ}$ as $\psi^{-1}\circ\overline{z}^2\circ\psi$. 
Performing the same ``surgery'' on the remaining three caps, we obtain the desired anti-quasiregular map $\mathcal{G}$ of degree $-3$ on the tetrahedron. We note that $\mathcal{G}$ is \emph{not} affine on the caps. 

For an orientation reversing map $f$, the pull-back of a Beltrami coefficient $\mu$ under $f$ is defined as (see~\cite[Exercise~1.2.2]{BF14})
$$
f^*(\mu(z))=\overline{\left(\frac{\partial f/\partial z+\mu({f(z)})\partial\bar f/\partial z}{\partial f/\partial\bar z+\mu({f(z)})\partial \bar f/\partial{\bar z}}\right)}.
$$

\begin{proposition}\label{prop:tetra_model_affine}
The map $\mathcal{G}$ on the tetrahedron is quasiconformally conjugate to an anti-rational map $R$ on $\hat{\C}$.\end{proposition}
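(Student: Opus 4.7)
The plan is to apply the Measurable Riemann Mapping Theorem (Sullivan's quasiconformal surgery principle) to produce a complex structure on the tetrahedron with respect to which $\mathcal{G}$ is anti-holomorphic. Concretely, I would construct a $\mathcal{G}$-invariant Beltrami differential $\mu$ on the tetrahedron (topologically $\mathbb{S}^2$) with $\|\mu\|_\infty<1$, integrate it via the MRMT to obtain a quasiconformal homeomorphism $\phi:\mathbb{S}^2\to\rs$, and set $R:=\phi\circ\mathcal{G}\circ\phi^{-1}$.

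On each cap, I would define $\mu$ to be the Beltrami coefficient of the conjugating map $\psi$, so that $\psi$ becomes conformal with respect to $\mu$. Since $\psi$ conjugates $\mathcal{G}$ restricted to the cap to the anti-holomorphic map $\overline{z}^2$ on $\D$, the map $\mathcal{G}$ is anti-holomorphic with respect to $\mu$ on each cap, and $\|\mu\|_\infty$ on the cap is bounded away from $1$ by the quasisymmetry constant of $\psi$. Every point $z$ outside the topological gasket $\Lambda$ of $\mathcal{G}$ (the minimal non-empty compact $\mathcal{G}$-invariant set) eventually lands in a cap under iteration, so for such $z$ I would set $n(z):=\min\{n\geq 0 : \mathcal{G}^n(z)\text{ lies in a cap}\}$ and define $\mu(z):=(\mathcal{G}^{n(z)})^{*}\mu(\mathcal{G}^{n(z)}(z))$. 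On $\Lambda$, which has two-dimensional Lebesgue measure zero by the expansion of $\mathcal{G}$, set $\mu=0$. By construction $\mathcal{G}^{*}\mu=\mu$ almost everywhere.

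The heart of the argument is the bound $\|\mu\|_\infty<1$, which rests on the following structural property of the piecewise affine construction: along any orbit of $\mathcal{G}$, at most \emph{one} step outside the caps is non-anti-conformal. Indeed, on the three ``big'' sub-triangles $\Delta EKL$, $\Delta GKM$, $\Delta LMF$ of each interstice the map $\mathcal{G}$ is an orientation reversing similarity, hence anti-conformal and of dilatation $1$; on the three ``small'' triangles $\Delta KLN$, $\Delta KMN$, $\Delta LMN$ the map $\mathcal{G}$ is affine with dilatation bounded by a geometric constant $K_1$, and its image lies \emph{directly} in a cap; and on each cap $\mathcal{G}^n$ is conjugate via $\psi$ to $\overline{z}^{\,2^n}$, and so has dilatation bounded by $K(\psi)^{2}$ independently of $n$. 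Composing these estimates yields a uniform bound on the dilatation of every iterate $\mathcal{G}^n$, and hence $\|\mu\|_\infty\leq C<1$ for some $C$ depending only on $K(\psi)$ and $K_1$.

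The MRMT then produces the quasiconformal map $\phi$. The conjugate $R=\phi\circ\mathcal{G}\circ\phi^{-1}$ is a continuous anti-quasiregular map of $\rs$ which is anti-holomorphic on the image of the Fatou set of $\mathcal{G}$, a set of full measure; by Weyl's lemma $R$ is anti-holomorphic everywhere, and hence anti-rational of degree $-3$. The main obstacle — and what motivates the careful dissection of each interstice into seven pieces in the definition of $\mathcal{G}$, with the three central sub-triangles sent straight into a cap — is precisely the uniform control over the iterated dilatation; everything else is standard MRMT surgery.
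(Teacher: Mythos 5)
Your proof is correct and takes essentially the same approach as the paper: pull back the Beltrami differential of $\psi$ from the caps along the dynamics, observe that because $\mathcal{G}$ is an anti-similarity outside the first preimages of the caps the dilatation is only distorted once (at the affine small-triangle-to-cap step), conclude $\|\mu\|_\infty<1$, and integrate via the Measurable Riemann Mapping Theorem. Your version simply makes the entry-time bookkeeping and the final Weyl-lemma step more explicit than the paper does.
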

\begin{proof}
We will construct a $\mathcal{G}$-invariant Beltrami coefficient on the tetrahedron, and apply the Measurable Riemann Mapping Theorem to straighten $\mathcal{G}$ to an anti-rational map.

Denote by $\mu_0$ the standard complex structure on $\D$. Pulling $\mu_0$ back by $\psi$, we get a $\mathcal{G}$-invariant Beltrami coefficient $\mu$ on the caps of the tetrahedron. We now extend $\mu$ to the tetrahedron by pulling back the previously defined Beltrami coefficient $\mu$ (on the caps) by the iterates of $\mathcal{G}$, and setting it equal to zero outside the iterated pre-images of the caps. 
Since $\mathcal{G}$ is a piecewise (anti-)similarity outside the first pre-images of the caps, the infinitesimal ellipse field defined by $\mu$ on the caps is only distorted (i.e., the dilatation is changed) under the first pullback. Moreover, these inverse branches of $\mathcal{G}$ are piecewise affine. It follows that $\mu$ is a $\mathcal{G}$-invariant Beltrami coefficient on the tetrahedron with $\vert\vert\mu\vert\vert_\infty<1$. By the Measurable Riemann Mapping Theorem, there exists a quasiconformal homeomorphism from the tetrahedron to the Riemann sphere $\hat{\C}$ that pulls back the standard complex structure on $\hat{\C}$ to the one defined by $\mu$ on the tetrahedron. Therefore, this quasiconformal map conjugates $\mathcal{G}$ to an anti-rational map $R$ on $\hat{\C}$.
\end{proof}

\begin{corollary}\label{apollo_rat_map_formula_cor}
Up to M{\"o}bius conjugacy, $R$ can be chosen to be $g(z)=\frac{3\overline{z}^2}{2\overline{z}^3+1}$ (whose existence was demonstrated in Proposition~\ref{prop:NoObstructionNerveGamma}).
\end{corollary}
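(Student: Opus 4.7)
The plan is to invoke the uniqueness clause of Proposition~\ref{prop:NoObstructionNerveGamma}: once we verify that both the straightening $R$ of $\mathcal{G}$ and the explicit formula $g(z)=\frac{3\overline{z}^2}{2\overline{z}^3+1}$ are critically fixed cubic anti-rational maps Thurston equivalent to $G_\T$, they must both be M\"obius conjugate to $g_\T$, and hence to one another.

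First, since Proposition~\ref{prop:tetra_model_affine} produces $R$ via a quasiconformal (hence Thurston) conjugacy from $\mathcal{G}$, it suffices to show $\mathcal{G}$ is Thurston equivalent to $G_\T$. Both maps are orientation-reversing branched coverings of $\sphere$ of degree $-3$, with postcritical set exactly $\V_\T$ and each vertex a fixed critical point of local degree $-2$: for $\mathcal{G}$ this is built into Step~II via the conjugation to $\overline{z}^2$ on each cap, while for $G_\T$ it is Proposition~\ref{prop:AntiThurstonMappingProperties}. Moreover, both maps fix $\V_\T$ pointwise, preserve each edge of $\T$ setwise, and restrict on each face to an orientation-reversing homeomorphism onto the complement of the ``opposite'' face (determined by the same combinatorial rule, reflection across that face). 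From these shared data one constructs orientation-preserving homeomorphisms $\phi_0,\phi_1\colon\sphere\to\sphere$, each isotopic to the identity rel $\V_\T$, with $\phi_0\circ\mathcal{G}=G_\T\circ\phi_1$, by first setting $\phi_0=\phi_1=\mathrm{id}$ on a neighborhood of $\T$ and extending face-by-face via the Alexander trick.

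Second, I verify that $g$ has the required structure. Writing $g=\overline{f}$ with $f(z)=3z^2/(2z^3+1)$, a direct calculation gives
\[
f'(z)=\frac{6z(1-z^3)}{(2z^3+1)^2},
\]
so the critical points of $g$ are exactly $\{0,1,\omega,\omega^2\}$ with $\omega=e^{2\pi i/3}$, each simple (accounting for the total ramification $2(3-1)=4$ dictated by Riemann--Hurwitz). Substitution shows each is fixed: for instance $g(\omega)=3\overline{\omega}^2/(2\overline{\omega}^3+1)=3\omega/3=\omega$. Hence $g$ is a critically fixed cubic anti-rational map with four fixed critical points. The nerve analysis of Section~\ref{nerves_isotopic_subsec} applies to $g$ verbatim---it requires only that $g$ be critically fixed with hyperbolic orbifold, the latter following by the same reasoning as in Proposition~\ref{prop:NoObstructionNerveGamma} since $g$ likewise has exactly four fixed critical points---and produces a naturally embedded triangulation of $\rs$ on four vertices as the nerve of $g$. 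Since the tetrahedral triangulation is the only triangulation of $\sphere$ on four vertices (up to isotopy), $g$ is also Thurston equivalent to $G_\T$.

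Combining the two observations, the uniqueness clause of Proposition~\ref{prop:NoObstructionNerveGamma} forces both $R$ and $g$ to be M\"obius conjugate to $g_\T$, so $R$ can be chosen to equal $g$. The main technical point is the face-by-face construction of the Thurston equivalence between $\mathcal{G}$ and $G_\T$ in the second paragraph; once the combinatorial agreement on the $1$-skeleton $\T$ is observed, this reduces to routine Alexander-trick extensions. The remaining steps---the derivative calculation and the identification of the nerve of $g$---are essentially algebraic.
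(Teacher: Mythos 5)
Your first step---verifying that the piecewise affine model $\mathcal{G}$ is Thurston equivalent to $G_\T$ by matching both maps on a neighborhood of the $1$-skeleton $\T$ and extending face-by-face via the Alexander trick---is sound, and is in fact more explicit than the paper, which simply observes that the nerve of $R$ is isotopic to $\T$ ``by construction.'' Your derivative computation showing that $g(z)=\frac{3\overline{z}^2}{2\overline{z}^3+1}$ has four simple fixed critical points is also correct.

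The gap is in the second step. You assert that ``the nerve analysis of Section~\ref{nerves_isotopic_subsec} applies to $g$ verbatim'' because $g$ is critically fixed with hyperbolic orbifold, and thereby conclude that the nerve of $g$ is a triangulation of $\rs$ on four vertices. But the nerve analysis does \emph{not} apply to an arbitrary critically fixed anti-rational map: Lemma~\ref{Lem:ExistsRay} is built on the pullback sequence $h_i$ defined by $h_{i-1}\circ G_\T = g_\T\circ h_i$, and Lemma~\ref{lem:raysIsoToEdges}, Corollary~\ref{cor:LocDegFatCount}, and ultimately Proposition~\ref{Prop_NervesIsotopic} all inherit this dependence. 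These lemmas presuppose that the anti-rational map in question is the Thurston realization of $G_\T$---precisely the fact you are trying to prove for the explicit formula. Without that input, it is not immediate that the four critical Fatou components of $g$ pairwise touch (rather than, say, forming a sparser tangency graph), so the claim that the nerve is tetrahedral is circular as stated.

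The paper avoids this issue by going in the opposite direction: rather than verifying that the explicit formula has the right combinatorics, it \emph{derives} the formula from the abstract $g_\T$. After the M\"obius normalization (two fixed critical points at $0$ and $1$, the remaining preimage of $0$ at $\infty$), any critically fixed cubic anti-rational map with four distinct fixed critical points takes the form
\[
g(z)=\frac{(2a+b+3)\overline{z}^2}{1+a\overline{z}+b\overline{z}^2+(a+2)\overline{z}^3},
\]
and imposing that $g$ fix its two remaining critical points $-\tfrac12\pm\tfrac12\sqrt{\tfrac{a-6}{a+2}}$ forces $a=b=0$. This computation shows, with no reference to the nerve of the target formula, that there is exactly one such map up to M\"obius conjugacy, and applying it to both $R$ and $g_\T$ finishes the proof. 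If you want to preserve your strategy of verifying rather than deriving, you would need an independent argument that the nerve of $\frac{3\overline{z}^2}{2\overline{z}^3+1}$ is the tetrahedron (e.g., a direct analysis of its Fatou set), or you can simply adopt the paper's normalization computation, which does double duty by proving uniqueness and producing the formula simultaneously.
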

\begin{proof}
Note that since $R$ is quasiconformally conjugate to $\mathcal{G}$, it follows that $R$ is a critically fixed (in particular, postcritically finite) anti-rational map. By construction, its nerve is isotopic to $\T$. 

In Proposition~\ref{prop:NoObstructionNerveGamma}, we constructed the postcritically finite anti-rational map $g$ (of degree $-3$) with four fixed critical points. By Proposition~\ref{Prop_NervesIsotopic}, the nerve of $g$ is also isotopic to $\T$. By a M{\"o}bius map, we can send two of the distinct fixed critical points of $g$ to $0$ and $1$, and the only other pre-image of $0$ (under $g$) to $\infty$. Then, $g$ takes the form $$g(z)=\frac{(2a+b+3)\overline{z}^2}{1+a\overline{z}+b\overline{z}^2+(a+2)\overline{z}^3},$$ for some $a,b\in\C$. A direct computation using the fact that $g$ fixes its two other distinct critical points $\left(-\frac12\pm\frac12\sqrt{\frac{a-6}{a+2}}\right)$ now shows that $a=b=0$; i.e. $$g(z)=\frac{3\overline{z}^2}{2\overline{z}^3+1}.$$

Since $R$ and $g$ have isotopic nerves, it follows that they are Thurston equivalent. By Thurston rigidity, the anti-rational maps $R$ and $g$ are M{\"o}bius conjugate; i.e., $R$ is M{\"o}bius conjugate to $g(z)=\frac{3\overline{z}^2}{2\overline{z}^3+1}$ (see Figure~\ref{pic:JuliaRays} for the dynamical plane of $g$).
\end{proof}

\begin{remark}
Since $\mathcal{G}$ is quasiconformally conjugate to an anti-rational map, it has a unique measure of maximal entropy $\nu=\nu(\mathcal{G})$ such that the measure-theoretic entropy of $\mathcal{G}$ with respect to $\nu$ is $\ln{3}$ (the degree of $\mathcal{G}$ is $-3$). Moreover, $\nu$ is supported on the ``Julia set'' of $\mathcal{G}$, and is the Hausdorff measure of the Julia set. Since $\mathcal{G}$ is a piecewise similarity with a constant derivative $2$ on the Julia set, the Lyapunov exponent of $\mathcal{G}$ with respect to $\nu$ is $\ln{2}$. A classical formula relating Lyapunov exponent, Hausdorff dimension and entropy of a measure (see \cite{Manning}) now yields that the Hausdorff dimension of the measure $\nu$ is $\ln{3}/\ln{2}$, which is in accordance with the fact that the Julia set of $\mathcal{G}$ is the union of four affine copies of the Sierpi\'nski gasket, and hence its Hausdorff dimension is equal to $\ln{3}/\ln{2}$. 
\end{remark}

\begin{remark}
Note that while all four critical points of the cubic \emph{anti-rational} map $g$ are simple and fixed, it follows from \cite[Theorem~1]{CGNPP} that there is no cubic \emph{rational} map with this property. 
\end{remark}

\section{David surgery}
\label{sec:David}
Let $h:=\pmb{\mathcal{E}}_2^{-1}$ be the orientation preserving homeomorphism of the unit circle that conjugates the dynamics of $g_2(z)=\bar z^2$ to the dynamics of the Nielsen map $\pmb{\rho}_2$ associated to the ideal triangle $\Pi$ with vertices at the cube roots of unity (see Subsection~\ref{nielsen_1_subsec}). Namely,
$$
h\circ g_2=\pmb{\rho}_2\circ h.
$$
Since both maps, $g_2$ and $\pmb{\rho}_2$, fix the cube roots of unity, we may assume that so does $h$ (this defines $h$ uniquely). It follows that such $h$ commutes with the rotation $z\mapsto e^{2\pi i/3}z$ as well as the complex conjugation $z\mapsto\bar z$.  
Our first goal is to show that the homeomorphism $h$ has a homeomorphic David extension inside the unit disk $\mathbb D$. 

Recall, that a map $H\colon U\to V$  between two domains in $\mathbb C$ or in the Riemann sphere is called \emph{David} if $H$ is in the Sobolev class $W^{1,1}_{\rm loc}$ and there exist constants 
$C,\alpha, K_0>0$ with
$$
\sigma\{z\in U\colon K_H(z)\geq K\}\leq Ce^{-\alpha K},\quad K\geq K_0.
$$  
Here $\sigma$ denotes the Lebesgue or spherical measure and $K_H$ is the distortion function of $H$ given by
$$
K_H=\frac{1+|\mu_H|}{1-|\mu_H|},
$$
with
$$
\mu_H=\frac{\partial H/\partial\bar z}{\partial H/\partial z}
$$
being the Beltrami coefficient of $H$. Both, $\mu_H$ and $K_H$ are defined almost everywhere. 
The reader may consult~\cite{D88} for background on David maps.
Note that a map $H\colon U\to V$ in $W^{1,1}_{\rm loc}$ is David  if and only if there exist constants $M, \alpha, \epsilon_0>0$ such that its Beltrami coefficient $\mu_H$ satisfies
\begin{equation}\label{E:DavidIneq}
\sigma\{z\in U\colon |\mu_H(z)|\geq1-\epsilon\}\leq Me^{-\alpha /\epsilon},\quad \epsilon\leq \epsilon_0.
\end{equation}

To show that a David extension of $h$ exists, we will show that the scalewise distortion function $\rho_{\tilde h}(t)$ of $h$, in the sense of S.~Zakeri, satisfies 
\begin{equation}\label{E:SWD}
\rho_{\tilde h}(t)=O\left
(\log\frac{1}{t}\right),\quad t\to0+.
\end{equation}
The scalewise distortion is defined as follows. Let $\tilde h$ be the lift of the map $h$ under the covering map $x\mapsto e^{2\pi i x}$. The map $\tilde h$ is then an orientation preserving homeomorphism of the real line such that $\tilde h(x+1)=\tilde h(x)+1$. We may and will assume that $\tilde h(0)=0$. The distortion function $\rho_{\tilde h}(x,t)$ is defined to be
\begin{equation}\label{E:Distortion}
\rho_{\tilde h}(x,t)=\max\left\{\frac{\tilde h(x+t)-\tilde h(x)}{\tilde h(x)-\tilde h(x-t)},\frac{\tilde h(x)-\tilde h(x-t)}{\tilde h(x+t)-\tilde h(x)}\right\},
\end{equation}
for $x\in\mathbb R$ and $t>0$. 
The scalewise distortion is
$$
\rho_{\tilde h}(t)=\sup_{x\in\mathbb R}\rho_{\tilde h}(x,t).
$$
Since $h$ commutes with the rotation by angle $2\pi/3$, it is enough to take the above supremum over $x\in[0,1/3]$. 

To find the asymptotics of $\rho_{\tilde h}(t)$ as $t\to0+$, it is convenient to replace the map $\tilde h$ in a neighborhood of $[0,1/3]$ by the homeomorphism 
$$
h_{\rm new}(x)=\phi\circ\tilde h(x/3)
$$ 
of $[0,1]$, where $\phi$ is a bi-Lipschitz map of a neighborhood of $[0,1/3]$ onto a neighborhood of $[0,1]$ with $\phi([0,1/3])=[0,1]$, defined  as follows. There exists a M\"obius transformation $m$ that takes the upper half-plane onto the unit disk and such that $m(0)=1, m(1)=e^{2\pi i/3}$, and $m(\infty)=e^{4\pi i/3}$.  We now define 
$$
\phi(x)=m^{-1}(e^{2\pi i x})
$$ 
for $x\in(-1/6,1/3+1/6)$. The map $\phi$ is $K$-bi-Lipschitz for some $K>1$, and therefore we have the following relation
$$
\frac{1}{K^2}\rho_{\tilde h}(3t)\leq \rho_{h_{\rm new}}(t)\leq K^2\rho_{\tilde h}(t/3),
$$
for all $t>0$ small enough.
Therefore, $\rho_{\tilde h}$ satisfies~\eqref{E:SWD} if and only if 
\begin{equation}\label{E:SWD2}
\rho_{h_{\rm new}}(t)=O\left
(\log\frac{1}{t}\right),\quad t\to0+,
\end{equation}
where 
$$
\rho_{h_{\rm new}}(t)=\sup_{x\in[0,1]}\rho_{h_{\rm new}}(x,t),
$$
and the distortion function $\rho_{h_{\rm new}}(x,t)$ is defined as in~\eqref{E:Distortion} with $\tilde h$ replaced by $h_{\rm new}$, and $t>0$ small enough.

Note that the Nielsen map 
$$
\theta:\R\cup\{\infty\}\to\R\cup\{\infty\},\qquad 
\theta(t)= \left\{\begin{array}{ll}
                    -t \qquad t\in\left[-\infty,0\right],  \\
                     \frac{t}{2t-1} \qquad t\in\left[0,1\right],\\
                     2-t \qquad t\in\left[1,+\infty\right],
                                          \end{array}\right. 
$$
associated to the ideal triangle in the upper half-plane with vertices at $0, 1$, and $\infty$, maps $[0,\frac12]$ (respectively, $[\frac12,1]$) to $[-\infty,0]$ (respectively, to $[1,+\infty]$). Composing $\theta$ with a rotation that brings $\theta([0,\frac12])=[-\infty,0]$ (respectively, $\theta([\frac12,1])=[1,+\infty]$) back to $[0,1]$ defines the orientation reversing double covering 
 $$
 \tau:[0,1)\to[0,1),\qquad \displaystyle \tau(t)= \left\{\begin{array}{ll}
                     \frac{2t-1}{t-1}\ ({\rm mod}\ 1) \qquad t\in\left[0,\frac12\right),  \\
                     \frac{1-t}{t}\hspace{2.5mm} ({\rm mod}\ 1) \qquad t\in\left[\frac12,1\right).
                                          \end{array}\right. 
$$

The advantage of passing to the map $h_{\rm new}(x)=m^{-1}\left(h\left(e^{2\pi i\frac{x}{3}}\right)\right)$ is that, by construction, it conjugates the dynamics of the orientation reversing doubling map $m_{-2}:[0,1)\to[0,1)$ given by
$$
m_{-2}(x)=\left\{\begin{array}{ll}
                     -2x+1\ ({\rm mod}\ 1) \qquad x\in\left[0,\frac12\right),  \\
                     -2x+2\ ({\rm mod}\ 1) \qquad x\in\left[\frac12,1\right),
                                          \end{array}\right.
$$
to the dynamics of $\tau$. Therefore, each dyadic point in $[0,1]$ corresponds under the map $h_{\rm new}$ to the point of tangency of the corresponding Ford circle~\cite{F38} with the real line. Indeed, due to the conjugation, points of the dyadic subdivisions of $[0,1]$ correspond under $h_{\rm new}$ to points in $[0,1]$ obtained by iterated reflections in the hyperbolic geodesics that are the sides of the ideal triangle with vertices at 0, 1, and $\infty$. The three dual horocircles centered at 0, 1, and $\infty$, with those centered at 0 and 1 having equal Euclidean radii $1/2$, generate the full family of Ford circles under the reflections in the sides of the ideal triangle above.    
 
\begin{remark}\label{question_mark_rem}
The homeomorphism $h_{\rm new}$ is known in the literature as the Conway's box function; its inverse is the classical Minkowski question mark function, see \cite[\S 4]{Salem} (cf. \cite[\S 4.4.2]{LLMM1}). 
\end{remark}

Recall that a Ford circle $C[p/q]$ that corresponds to a fraction $p/q\in[0,1]$ in its lowest terms is a circle whose radius is $1/(2q^2)$ and center $(p/q,1/(2q^2))$; see Figure~\ref{fig:ford}. Two Ford circles are either disjoint or exterior-wise tangent to each other. Two Ford circles $C[p/q]$ and $C[r/s]$ are tangent to each other if and only if $p/q$ and $r/s$ are neighbors in some Farey sequence. Also, if  $C[p/q]$ and $C[r/s]$ are tangent Ford circles, then $C[(p+r)/(q+s)]$ is the Ford circle that touches both of them.
 
 \begin{figure}[h]
  \includegraphics[height=7cm]{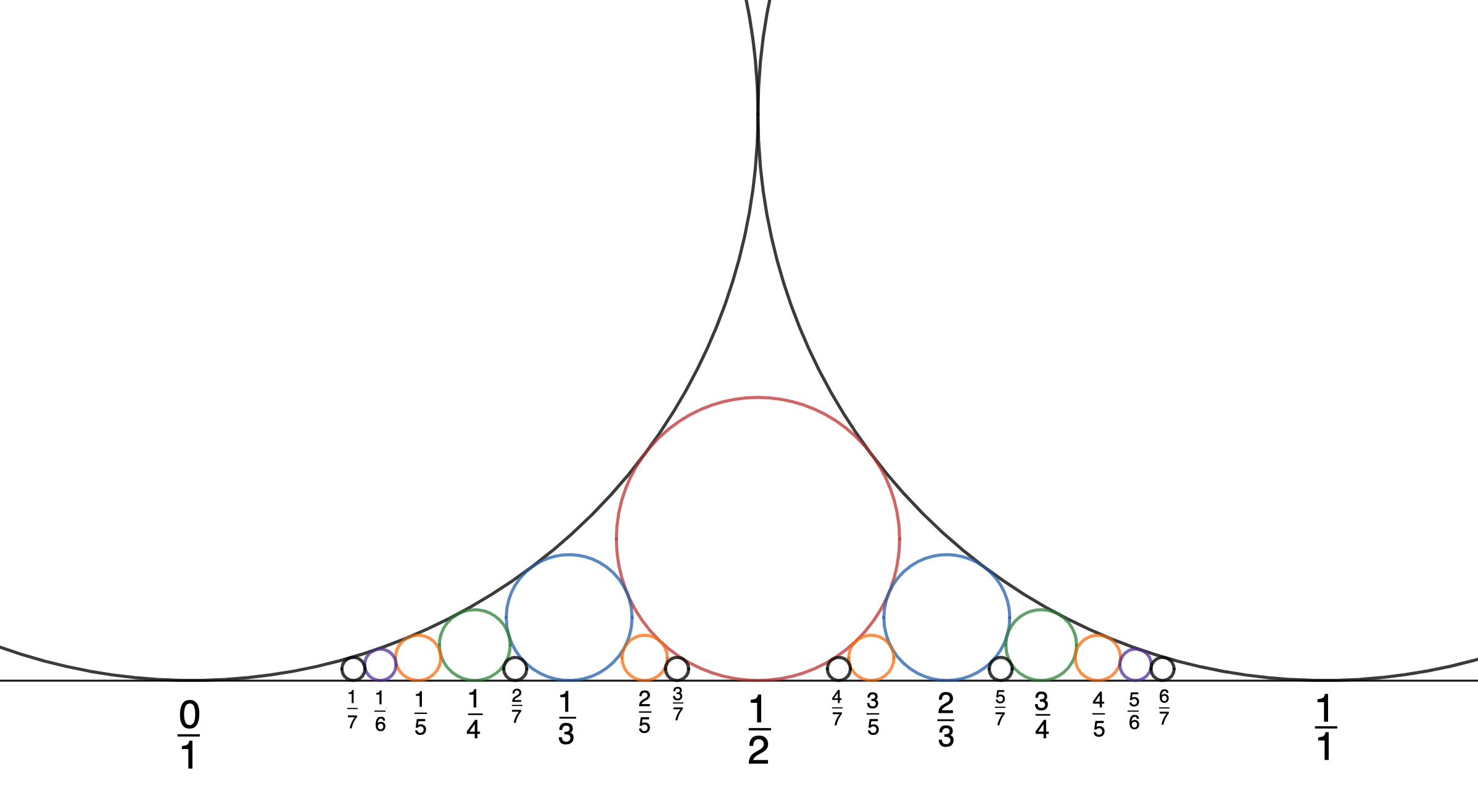}
  \caption{Ford circles.}
  \label{fig:ford}
 \end{figure}
  
Let $D_n=\{k/2^n\colon k=0,1,\dots, 2^n\}$ be the sequence of dyadic points of level $n\in\mathbb N\cup\{0\}$, and let $F_n=h_{\rm new}(D_n)$ be the corresponding sequence of Farey numbers. 
Note that if $p/q$ and $r/s$ are two neighbors in $F_n$ that correspond under $h_{\rm new}$ to two neighbors $k/2^n$ and $(k+1)/2^n$ in $D_n$, then the point $t/u$ in $F_{n+1}$ that corresponds to the midpoint $(2k+1)/2^{n+1}$ is given by
$$
\frac{t}{u}=\frac{p+r}{q+s}.
$$  
Moreover, the Euclidean distance between two neighbors $p/q$ and $r/s$ in $F_n$ is given by 
$$
\frac{1}{qs}.
$$ 
This follows from the fact that the Farey numbers are generated by the modular group. 

Furthermore, we have the following relations between Ford circles.
We say that a Ford circle $C[p/q]$, as well as $p/q$, has generation $n$ if the point $p/q$ belongs to $F_n$ but not to $F_{n-1}$. Note that $F_{n-1}\subset F_n$.  The only two Ford circles of the same generation that are tangent to each other are the circles $C[0/1]$ and $C[1/1]$ of generation 0. Also, for each pair $C[p/q]$ and $C[r/s]$ of tangent Ford circles, one of which has generation at least 1, there are exactly two other Ford circles that are tangent to both, $C[p/q]$ and $C[r/s]$. 
We need the following lemma.  
\begin{lemma}\label{L:Comps}
	There exists an absolute constant $L\geq 1$ with the following property. Let $C[p/q]$ be a Ford circle of generation $m\geq1$ and $C[t/u], C[v/w]$ be two Ford circles that are tangent to  $C[p/q]$ and belong to the same generation $n>m$. Then
	$$
	\frac{1}{L}\leq \frac{u}{w}\leq L.
	$$	
	In other words, the Euclidean lengths of two neighboring complementary intervals of $F_n$, with common end point in $F_m,\ m<n$, are comparable. 
\end{lemma}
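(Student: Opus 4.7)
The plan is to exploit the Stern--Brocot tree structure of the sequences $F_n$ to identify, at each generation past $m$, exactly which Ford circles are tangent to $C[p/q]$, and then directly compute the ratio of their denominators.

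First, I would set up the local tree structure at $p/q$. Since $p/q\in F_m\setminus F_{m-1}$, the point $p/q$ is inserted as the mediant of two consecutive elements $a/b,\, c/d\in F_{m-1}$; in particular $p=a+c$, $q=b+d$, and $b,d\geq 1$. Moreover, $a/b$ and $c/d$ are the two $F_m$-neighbors of $p/q$, and they are the only Ford circles of generation $\leq m$ tangent to $C[p/q]$ (using that tangency of Ford circles is equivalent to being a Farey pair, which is ensured by the distance formula $1/(qs)$ recalled in the excerpt).

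Next, I would establish by induction on $k\geq 0$ that the two $F_{m+k}$-neighbors of $p/q$ are
\[
\frac{a+kp}{b+kq} \quad\text{and}\quad \frac{c+kp}{d+kq},
\]
and that the \emph{new} Ford circles tangent to $C[p/q]$ in generation exactly $m+k$ (for $k\geq 1$) are precisely these two. The inductive step rests on two properties built into the definition of $F_n$: consecutive elements in $F_n$ form a Farey pair, and $F_{n+1}$ is obtained from $F_n$ by inserting the mediant between every consecutive pair. Consequently, any new tangent circle to $C[p/q]$ at generation $n>m$ must arise as the mediant of $p/q$ with one of its two $F_{n-1}$-neighbors, so exactly two new tangent circles appear per generation. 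I expect this bookkeeping to be the main (if routine) obstacle, as one must rule out stray tangencies coming from fractions introduced elsewhere in the tree.

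From the previous step, the hypothesis forces $\{t/u,\,v/w\}\subseteq\{(a+kp)/(b+kq),\,(c+kp)/(d+kq)\}$ with $k=n-m\geq 1$. If the two fractions coincide, then $u=w$; otherwise $\{u,w\}=\{b+kq,\,d+kq\}$, so up to swapping,
\[
\frac{u}{w}=\frac{b+kq}{d+kq}.
\]
Using $b+d=q$ with $1\leq b,d\leq q-1$, the numerator is at most $(k+1)q$ while the denominator is at least $kq$, giving $u/w\leq (k+1)/k\leq 2$ for $k\geq 1$; the symmetric estimate yields $u/w\geq 1/2$. Hence $L=2$ serves as an absolute constant. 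Finally, the comparability of the Euclidean lengths of the two complementary intervals of $F_n$ with common endpoint $p/q$ is immediate from the formula $1/(qs)$ recalled in the excerpt applied to the two pairs $(p/q,t/u)$ and $(p/q,v/w)$.
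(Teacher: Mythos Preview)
Your proof is correct and in fact yields the explicit constant $L=2$, which is sharper than what the paper obtains. The key identifications---that the $F_{m+k}$-neighbors of $p/q$ are $(a+kp)/(b+kq)$ and $(c+kp)/(d+kq)$, and that these are exactly the two Ford circles of generation $m+k$ tangent to $C[p/q]$---are justified by the mediant construction together with the fact that consecutive elements of each $F_n$ are Farey pairs, so your ``bookkeeping'' concern is not a real obstacle.

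The paper takes a quite different, geometric route. Instead of tracking denominators through the Stern--Brocot tree, it normalizes so that $C[p/q]$ becomes the circle of radius $1/2$ tangent at the origin, and then applies the inversion $z\mapsto 1/\bar z$. Under this inversion the Ford circles tangent to $C[p/q]$ become a string of radius-$1/2$ circles tangent to the real line at roughly unit-spaced points, and the generation-$n$ circles land at distance about $n-m$ from the origin on either side. The ratio of lengths is then estimated via the spherical metric, yielding an unspecified absolute constant. Your arithmetic approach is more elementary, pins down the constant, and makes transparent the symmetry $b+d=q$ that drives the estimate; the paper's approach, while less precise, fits its broader pattern of exploiting M\"obius normalizations and might generalize more readily to settings where the exact combinatorics are less accessible.
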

\begin{proof}
	Since $C[t/u]$ and $C[v/w]$ have the same generation $n$ and are tangent to the same Ford circle $C[p/q]$ of lower generation, the points $t/u$ and $v/w$ are separated by $p/q$. Without loss of generality we assume that $t/u<p/q<v/w$. 
	
	Let $C[t_{m+1}/u_{m+1}]$ and $C[v_{m+1}/w_{m+1}]$ be the Ford circles of generation $m+1$ that are tangent to $C[p/q]$ and such that 
	$$
	t_{m+1}/u_{m+1}<t/u<p/q<v/w<v_{m+1}/w_{m+1}.
	$$ 
	Moreover, both radii $1/(2u_{m+1}^2), 1/(2w_{m+1}^2)$ of $C[t_{m+1}/u_{m+1}], C[v_{m+1}/w_{m+1}]$, respectively, are smaller than the radius $1/(2q^2)$ of $C[p/q]$.  This implies that the $y$-coordinate of the point of tangency of $C[t_{m+1}/u_{m+1}]$ and $C[p/q]$ is less than $1/(2q^2)$. On the other hand, this $y$-coordinate has to be greater than $(1-1/\sqrt{2})/(2q^2)$, because otherwise $C[t_{m+1}/u_{m+1}]$ would not be tangent to any Ford circle of generation less than $m$, but it must be. The same is true for the $y$-coordinate of the tangency point of $C[v_{m+1}/w_{m+1}]$ and $C[p/q]$. 
	
	Since the desired inequalities are scale invariant, we may rescale and assume that the radius of $C[p/q]$ is 1/2 and it is tangent to the real line at 0. Then the corresponding $y$-coordinates are in the segment $[(1-1/\sqrt{2})/2,1/2]$. Furthermore, we may apply the inversion $z\mapsto 1/\bar z$ and look at the corresponding lengths in the spherical metric. Note that the Euclidean and the spherical metrics are locally bi-Lipschitz. The circles that correspond to  $C[t_{m+1}/u_{m+1}]$ and  $C[v_{m+1}/w_{m+1}]$ under these transformations are circles of radii 1/2 that are tangent to the real line at points contained in the segments $[-C,-1/C]$ and $[1/C,C]$, respectively, where $C\geq1$ is an absolute constant.      
	
	Now, the circles that correspond to the above transformation, i.e., scaling, translation and inversion, $C[t/u]$ and $C[v/w]$ are the circles of radii 1/2 that touch the real line at points contained in the segments 
	$$
	[-n+m+1-C, -n-m+1-1/C],\quad [n-m-1+1/C, n-m-1+C],
	$$
	respectively. 
	For $\alpha>0$, the spherical length of $[\alpha,\infty]$ is comparable to $1/\alpha$. Therefore, the largest ratio of the lengths of the two intervals is comparable to 
	$$
	\frac{n-m-1+C}{n-m-1+1/C},
		$$ 
		which has uniform lower and upper bound because $n>m\geq1$. 
	\end{proof}

The following proposition is crucial in estimating the distortion of $h_{\rm new}$. 

\begin{proposition}\label{P:Neighbors}
	There exists an absolute constant $C\geq1$ such that the following holds.
	For $n\geq 1$, let $I$ and $J$ be two complementary intervals of $F_n$ that are separated by at most two adjacent complementary intervals of $F_n$. Then for the corresponding Euclidean lengths $|I|$ and $|J|$ we have
	\begin{equation}\label{E:KeyIneq}
	\frac{1}{Cn}\leq\frac{|I|}{|J|}\leq Cn.
	\end{equation}
\end{proposition}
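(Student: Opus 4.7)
The plan is a combinatorial analysis built on the Stern--Brocot description of $F_n$: one passes from $F_{n-1}$ to $F_n$ by inserting in every $F_{n-1}$-gap $[p/q,r/s]$ the Farey mediant $(p+r)/(q+s)$. Two structural facts do most of the work: an \emph{adjacency bound} for denominators, and an \emph{alternation property} for generations. I write $\rho_n$ for the supremum of $\max\{q/s,s/q\}$ over all $F_n$-adjacent pairs $p/q,\,r/s$.

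First I would prove $\rho_n\le n+1$ by induction on $n$. If $p/q$ and $r/s$ are $F_n$-adjacent but both lay already in $F_{n-1}$, they would have been $F_{n-1}$-adjacent and the stage-$n$ mediant would then separate them, a contradiction; hence at least one, say $r/s$, is newly inserted at stage $n$, so $r=p+r'$, $s=q+s'$ with $p/q,\,r'/s'$ being $F_{n-1}$-adjacent, whence $s/q=1+s'/q\le 1+\rho_{n-1}\le n+1$ and $q/s<1$. The same reasoning shows that two $F_n$-adjacent points cannot both be at stage $n$; since $|F_n|=2|F_{n-1}|-1$, the elements of $F_n$ therefore alternate between $F_{n-1}$-points and newly inserted (stage-$n$) points.

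For five consecutive $F_n$-points $P_1,\dots,P_5$ with denominators $b_1,\dots,b_5$, alternation produces exactly two cases. In \emph{Case~A}, $P_2,P_4$ are at stage $n$ and $P_1,P_3,P_5$ are three consecutive $F_{n-1}$-points, so the mediant relations force $b_2=b_1+b_3$ and $b_4=b_3+b_5$. In \emph{Case~B}, $P_1,P_3,P_5$ are at stage $n$ and $P_2,P_4$ are $F_{n-1}$-adjacent, so $b_3=b_2+b_4$ together with $b_1=q_0+b_2$ and $b_5=b_4+q_6$, where $q_0,q_6$ are the denominators of the outer $F_{n-1}$-neighbors of $P_2,P_4$. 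In either case, every ratio $|I|/|J|=(b_ib_{i+1})/(b_jb_{j+1})$ of two intervals separated by at most two others becomes an explicit rational function of the single-step denominator ratios $u=b_3/b_1$, $v=b_5/b_3$ (Case~A) or their analogs (Case~B). Applying the adjacency bound in $F_{n-1}$, together with a short sub-analysis of three consecutive $F_{n-1}$-points (the middle denominator is either the sum of the two outer denominators, when newly inserted in $F_{n-1}$, or strictly smaller than both, with an analogous formula inherited from $F_{n-2}$), one obtains the estimate $|I|/|J|\le Cn$ by direct algebraic computation. The extremal configuration is Case~A with $P_1=0/1$, $P_3=1/n$, $P_5=1/(n-1)$, for which $|I_1|/|I_4|=(n-1)(2n-1)/(n+1)\sim 2n$, showing that the bound $Cn$ is sharp up to the constant.

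The main obstacle is that a naive combination of the adjacency bound yields only an $O(n^2)$ estimate in Case~A, since $|I_1|/|I_4|$ decomposes into a product of two factors each of which is a priori of size $n$. Linearity is recovered only by using the rigid mediant identities $b_2=b_1+b_3$ and $b_4=b_3+b_5$, which couple the denominators and forbid the two worst ratios from being attained simultaneously; the sub-analysis of three consecutive $F_{n-1}$-points is what makes this cancellation transparent.
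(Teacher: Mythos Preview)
Your approach is structurally the same as the paper's: both prove the adjacent-interval bound $\rho_n=O(n)$ by the mediant induction (the paper's Case~1), use the generational alternation of $F_n$, and then split into two cases according to which of the five consecutive points are new at stage $n$ (your Cases~A/B are the paper's two subcases of Case~3). The paper, however, isolates the key analytic input as a separate Lemma~\ref{L:Comps}: if $p/q$ has generation $m<n$, then its two $F_n$-neighbors have denominators with ratio in $[1/L,L]$ for an absolute $L$. The paper proves this geometrically via Ford circles and inversion, and then feeds it into the case analysis exactly at the places where you invoke your ``sub-analysis.''

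The gap in your write-up is precisely this lemma. Your dichotomy for three consecutive $F_{n-1}$-denominators $b_1,b_3,b_5$ is correct (either $b_3=b_1+b_5$, or $b_3<\min(b_1,b_5)$), and in the first branch the substitution indeed gives $|I_1|/|I_4|=(1+2t)t/(2+t)=O(n)$ with $t=b_5/b_1\le\rho_{n-2}$. But in the second branch you only say ``an analogous formula inherited from $F_{n-2}$,'' which sets up a recursion without closing it. In that branch one has $|I_1|/|I_4|\asymp (b_5/b_1)^2$, so what is actually needed is that $b_5/b_1$ is bounded by an \emph{absolute} constant---and this is exactly Lemma~\ref{L:Comps}. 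It does admit a short combinatorial proof: if $p/q$ has generation $m$ with $F_{m-1}$-neighbors of denominators $b,d$ (so $q=b+d$), then its $F_n$-neighbors for $n>m$ have denominators $b+(n-m)q$ and $d+(n-m)q$, whose ratio lies in $[1/2,2]$. Equivalently, your recursion unwinds to $c/a=(kb+c^{(k)})/(a^{(k)}+kb)$ and, at the step where $b=a^{(k)}+c^{(k)}$, lands in $[k/(k+1),(k+1)/k]\subset[1/2,2]$. As written, though, your proposal asserts the cancellation without supplying this bound; once you insert it, the rest of your outline goes through and coincides with the paper's argument.
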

\begin{proof}
The proof is by case analysis.

\noindent
\emph{Case~1.}
We assume first that $I$ and $J$ are adjacent. There are two subcases to consider, according to whether the Ford circle $C[p/q]$ at the common end point $p/q$ of $I$ and $J$ has generation strictly less than $n$ or equal to $n$. In the former case, the other two end points of $I$ and $J$ would have to be in $F_n\setminus F_{n-1}$, and thus Lemma~\ref{L:Comps} gives
$$
\frac{1}{L}\leq\frac{|I|}{|J|}\leq L,
$$  
for some absolute constant $L\geq1$, which is stronger than~\eqref{E:KeyIneq}. 
In the latter case, the common end point $p/q$ of $I$ and $J$ is in $F_n$. We argue by induction on $n$. If $n=1$, then $|I|=|J|=1/2$, and we are done. Suppose that~\eqref{E:KeyIneq} is true for generations at most $n-1$. 
Let $r/s$ and $t/u$ be the other two end points of $I$ and $J$, respectively. Then both of them would be in $F_{n-1}$ and 
$$
\frac{p}{q}=\frac{r+t}{s+u}.
$$ 
Moreover, exactly one of the end points $r/s$ and $t/u$ will be in $F_{n-1}\setminus F_{n-2}$. Without loss of generality we assume that it is $t/u$ and 
$$
\frac{r}{s}<\frac{p}{q}<\frac{t}{u}.
$$ 
It follows that $r/s\in F_{n-2}$ because, unless the generation is 0, no two Ford circles of the same generation are tangent: they are separated by Ford circles of lower generation. 
Let $v/w\in F_{n-2}$ be a neighbor of $r/s$ in $F_{n-2}$ with
$$
\frac{r}{s}<\frac{t}{u}<\frac{v}{w}.
$$   
Then, $u=s+w$, and thus
$$
\frac{|I|}{|J|}=\frac{u}{s}=\frac{s+w}{s}=1+\frac{w}{s}\leq n.
$$
The last inequality follows from the induction hypothesis. Using symmetry arguments, we conclude that if $I$ and $J$ are adjacent complementary intervals of $F_n$, then
$$
\frac{1}{n}\leq\frac{|I|}{|J|}\leq n,
$$
which is stronger than~\eqref{E:KeyIneq}.

\noindent
\emph{Case 2.}
The next case to consider is when $I$ and $J$ are separated by a single complementary interval $K$ of $F_n$. Let the four end points of these intervals be $a_1, a_2, a_3$, and $a_4$, with
$$
a_1<a_2<a_3<a_4.
$$ 
We assume that $I=(a_1, a_2), K=(a_2, a_3)$, and $J=(a_3, a_4)$.
By symmetry, we may further assume that $a_1$ and $a_3$ have generation $n$, and thus the generations of $a_2$ and $a_4$ are strictly less than $n$. From Lemma~\ref{L:Comps} we know that 
$$
\frac{1}{L}\leq\frac{|I|}{|K|}\leq L,
$$
for some absolute constant $L\geq1$, and thus from Case~1 we conclude that
$$
\frac{1}{Ln}\leq\frac{|I|}{|J|}\leq L n.
$$

\noindent
\emph{Case~3.} Now we look at the case when $I$ and $J$ are separated by two adjacent intervals $K_1$ and $K_2$. Let $I=(a_1, a_2), K_1=(a_2, a_3), K_2=(a_3, a_4)$, and $J=(a_4, a_5)$. There are two subcases: the generations of $a_1, a_3, a_5$ are $n$ and  the generations of $a_2, a_4$ are strictly less than $n$, or the generations of $a_1, a_3, a_5$ are strictly less than $n$ and the generations of $a_2, a_4$ are $n$. In the first subcase we use  Lemma~\ref{L:Comps} to conclude that
$$
\frac{1}{L}\leq\frac{|I|}{|K_1|}\leq L,\quad \frac{1}{L}\leq\frac{|K_2|}{|J|}\leq L.
$$
Also, from Case~1 we have
$$
\frac{1}{n}\leq\frac{|K_1|}{|K_2|}\leq n.
$$
Putting these together, we obtain
$$
\frac{1}{L^2n}\leq\frac{|I|}{|J|}\leq L^2n.
$$

In the second subcase, $(a_1, a_3)$ and $(a_3, a_5)$ are adjacent complementary intervals of $F_{n-1}$ of lengths $|I|+|K_1|$ and $|K_2|+|J|$, respectively. Therefore, either $a_1, a_5$ have generations $n-1$ and the generation of $a_3$ is strictly less than $n-1$, or $a_3$ has generation $n-1$ and the generations of $a_1, a_5$ are strictly less than $n-1$. If it is the former subsubcase, by Lemma~\ref{L:Comps} we obtain
$$
\frac{1}{L}\leq \frac{|I|+|K_1|}{|K_2|+|J|}\leq L.
$$
Using Case~1 we get
$$
\frac{|K_2|+|J|}{|J|}\leq n+1, 
$$
and therefore
$$
\frac{|I|}{|J|}\leq L(n+1)\leq 2Ln.
$$
From symmetry we get
$$
\frac{1}{2Ln}\leq\frac{|I|}{|J|}\leq 2Ln,
$$
as desired.
In the latter subsubcase we 
have by Lemma~\ref{L:Comps}
$$
\frac1{L}\leq\frac{|K_1|}{|K_2|}\leq L.
$$
Also, $|K_1|\leq|I|$ and $|K_2|\leq|J|$. Indeed, 	we prove the first of these inequalities and the second follows by symmetry. If $a_1=p/q, a_3= r/s$, and $a_5=t/u$, then 
		$$
		s=q+u,
		$$ 
		and therefore
		$$
		\frac{|K_1|}{|I|}=\frac{q}{s}\leq1.
		$$
		
Now, combining these estimates with Case~1, we obtain
$$
\frac{|I|}{|J|}\leq \frac{|I|}{|K_1|}\frac{|K_1|}{|K_2|}\frac{|K_2|}{|J|}\leq Ln,
$$
and the proposition follows.
\end{proof}

We are now ready to prove the following lemma.
\begin{lemma}\label{L:DavidExt}
The map $h$ defined at the beginning of the current section has a David extension homeomorphism $H$ of $\mathbb D$.
\end{lemma}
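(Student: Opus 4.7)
The plan is to establish the scalewise distortion bound $\rho_{h_{\rm new}}(t) = O(\log(1/t))$ as $t \to 0+$ (which, by the bi-Lipschitz comparison already recorded, is equivalent to $\rho_{\tilde h}(t) = O(\log(1/t))$), and then invoke the criterion of S.~Zakeri under which a circle homeomorphism satisfying such a logarithmic scalewise-distortion bound admits a homeomorphic David extension to the unit disk.

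For the distortion bound, I would fix $x \in [0,1]$ and small $t > 0$, and choose $n \in \N$ so that $2^{-n} \leq t < 2^{-n+1}$; in particular $n \asymp \log(1/t)$. Since $h_{\rm new}$ conjugates the orientation-reversing doubling map $m_{-2}$ to $\tau$, its restriction to $D_n$ is an order-preserving bijection onto $F_n$. Consequently each of the intervals $[x-t, x]$ and $[x, x+t]$ is covered by at most three consecutive complementary intervals of $D_n$, and their $h_{\rm new}$-images are covered by the same number of consecutive complementary intervals of $F_n$, together with at most one subinterval at each end corresponding to a non-dyadic endpoint.

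Proposition~\ref{P:Neighbors} controls the length ratio of any two complementary intervals of $F_n$ separated by at most two neighbors by $Cn$; summing over the $O(1)$ intervals on each side of $h_{\rm new}(x)$ yields an $O(n)$ bound for the ratio $(h_{\rm new}(x+t) - h_{\rm new}(x))/(h_{\rm new}(x) - h_{\rm new}(x-t))$ in the special case when $x \pm t$ are dyadic. For non-dyadic endpoints, the remaining subintervals arise as complementary intervals of $F_{n+k}$ for some $k \geq 1$; invoking the self-similarity of the Farey tiling (each complementary interval of $F_n$ is the image of $[0,1]$ under an element of the modular group of bounded geometry at the relevant scale), we apply Proposition~\ref{P:Neighbors} at generation $n+k$ and pull the resulting inequality back to obtain the same $O(n)$ bound. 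Taking the supremum over $x$ gives $\rho_{h_{\rm new}}(t) = O(\log(1/t))$, and therefore $\rho_{\tilde h}(t) = O(\log(1/t))$.

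The hard part will be the careful treatment of the subintervals at non-dyadic endpoints: Proposition~\ref{P:Neighbors} does not directly compare a proper subinterval of an $F_n$-interval with a neighboring full $F_n$-interval, so one must either pass to a slightly deeper generation $n + O(1)$ and exploit the modular-group self-similarity, or equivalently localize to a dyadic cylinder and rescale. Once the logarithmic scalewise-distortion bound is in hand, Zakeri's extension theorem produces a David homeomorphism $H$ of $\overline{\D}$ whose boundary values agree with $h$, yielding the desired extension and completing the proof of the lemma.
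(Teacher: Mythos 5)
Your proposal matches the paper's proof essentially exactly: the paper likewise reduces the lemma to the scalewise-distortion estimate $\rho_{h_{\rm new}}(t) = O(\log(1/t))$, chooses $n$ with $2^{-n}\le t<2^{-n+1}$, covers $[x-t,x]$ and $[x,x+t]$ by consecutive complementary intervals of $D_n$, invokes Proposition~\ref{P:Neighbors} to get the $O(n)$ bound, and then cites Zakeri's Theorem~3.1 from~\cite{Z08} to produce the David extension. You are, if anything, more careful than the paper at the one delicate point: the paper asserts that $[x-t,x]$ ``contains at least one'' complementary interval of $F_n$ (so the denominator is $\ge|J|$ with $J$ adjacent to $I_1$), but an interval of length in $[2^{-n},2\cdot 2^{-n})$ with non-dyadic endpoints need not contain a full level-$n$ dyadic interval. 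Your suggested remedies — passing one generation deeper, or a modular-group rescaling — are the right idea; the one thing to watch is that a naive two-step application of Proposition~\ref{P:Neighbors} (comparing $F_{n+1}$ with $F_n$ and then with a neighbor) would degrade the bound to $O(n^2)$, which is not enough for the David condition. The clean repair is to note that the proof of Proposition~\ref{P:Neighbors} extends, by the same case analysis, to pairs of $F_n$-intervals separated by any bounded number of neighbors, still with an $O(n)$ constant; with that, your argument (and the paper's) closes.
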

\begin{proof}
As discussed earlier, we need to show that  $\rho_{h_{\rm new}}$ satisfies~\eqref{E:SWD2}. 
Indeed, we choose an arbitrary $x\in[0,1]$ and $t>0$ small enough so that the segment $[x-t,x+t]$ is contained in a neighborhood of $[0,1]$ where $h_{\rm new}$ is defined. Let $n\in\mathbb N$ be chosen so that
$$
\frac{1}{2^{n}}\leq t< \frac{1}{2^{n-1}}.
$$
Then, $[x-t,x]$ and $[x,x+t]$ are each contained in at most three consecutive complementary intervals of $F_n$, and contain at least one such complementary interval.
Therefore,
$$
\frac{h_{\rm new}(x+t)-h_{\rm new}(x)}{h_{\rm new}(x)-h_{\rm new}(x-t)}\leq\frac{|I_1|+|I_2|+|I_3|}{|J|},
$$  
where $I_1, I_2, I_3$, and $J$ are distinct complementary intervals of $F_n$ so that the pairs $\{J, I_1\}, \{I_1,I_2\}$ and $\{I_2, I_3\}$ are adjacent. From Proposition~\ref{P:Neighbors} we conclude that
$$
\frac{h_{\rm new}(x+t)-h_{\rm new}(x)}{h_{\rm new}(x)-h_{\rm new}(x-t)}\leq3Cn,
$$
for some absolute constant $C\geq 1$. From symmetry we also have
$$
\frac{1}{3Cn}\leq\frac{h_{\rm new}(x+t)-h_{\rm new}(x)}{h_{\rm new}(x)-h_{\rm new}(x-t)},
$$
and thus~\eqref{E:SWD2} follows.

We now apply~\cite[Theorem~3.1]{Z08} to conclude that $h$ has a David extension inside the unit disk $\mathbb D$. 
\end{proof}

\begin{theorem}[David surgery]\label{T:DavidS}
Let $f$ be a critically periodic anti-rational map and $U_1, U_2,\dots, U_n$ be fixed Jordan domain Fatou components of $f$ so that the restriction $f|_{\partial U_i}$ to each $\partial U_i$ has degree $-2,\ i=1, 2, \dots, n$. Then there is a global David surgery that replaces the dynamics of $f$ on each $U_i$ by the dynamics of the Nielsen map $\pmb{\rho}_2$ associated to the ideal triangle $\Pi$.  
\end{theorem}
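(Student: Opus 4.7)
The plan is to twist the dynamics of $f$ inside each fixed Fatou component $U_i$ using the David homeomorphism $H$ from Lemma~\ref{L:DavidExt}, construct an invariant Beltrami coefficient on $\widehat{\C}$, and then integrate it via the David Integrability Theorem.

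Since each $U_i$ is a fixed Jordan Fatou component with $f|_{\partial U_i}$ of degree $-2$, the B\"ottcher coordinate $\phi_i : \overline{U_i} \to \overline{\D}$ extends by Carath\'eodory's theorem to a homeomorphism conjugating $f|_{\overline{U_i}}$ to $g_2(z) = \bar z^2$. Setting $\Psi_i := H \circ \phi_i$, define a new orientation-reversing dynamical system $\tilde f$ on $\widehat{\C}$ by $\tilde f := f$ outside $\bigcup_i U_i$ and $\tilde f := \Psi_i^{-1} \circ \pmb{\rho}_2 \circ \Psi_i$ on $U_i \setminus \Psi_i^{-1}(\Int \Pi)$; the relation $h \circ g_2 = \pmb{\rho}_2 \circ h$ guarantees continuity across each $\partial U_i$. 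Associate to $\tilde f$ a Beltrami coefficient $\tilde \mu$ on $\widehat{\C}$ by setting $\tilde \mu := \phi_i^*(H^*\mu_0)$ on each $U_i$; $\tilde \mu|_V := (f^k|_V)^* \tilde \mu|_{U_i}$ on each component $V$ of $f^{-k}\bigl(\bigcup_i U_i\bigr)$ on which $f^k$ is an (anti-)biholomorphism onto some $U_i$; and $\tilde \mu := 0$ off the grand orbit of $\bigcup_i U_i$ under $f$. By construction, $\tilde f^* \tilde \mu = \tilde \mu$ almost everywhere.

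The key step is to verify that $\tilde \mu$ is globally David. Since anti-/conformal pullbacks preserve pointwise moduli of Beltrami coefficients, the bad set $\{K_{\tilde \mu} \geq K\} \cap V$ on any component $V$ of $f^{-k}(U_i)$ is the preimage of the David-bad set $B_K \subset \D$ of $H$ under $\psi_V := \phi_i \circ f^k|_V$; summing and changing variables,
\begin{equation*}
\sigma\{K_{\tilde \mu} \geq K\} = \int_{B_K} \rho(w)\, dA(w), \qquad \rho(w) := \sum_{k, V} \bigl|(\psi_V^{-1})'(w)\bigr|^2,
\end{equation*}
with $\|\rho\|_{L^1(\D)} \leq \sigma(\widehat{\C})$. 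Combining the David estimate for $H$ (exponential concentration of $B_K$ near $\partial \D$) with Koebe-type distortion control for the inverse branches of $f^k$, which follows from the hyperbolicity implied by critical periodicity, one extracts an exponential bound of the form $\sigma\{K_{\tilde \mu} \geq K\} \leq M e^{-\alpha K}$. The David Integrability Theorem~\cite{D88} then produces an orientation-preserving David homeomorphism $\Phi : \widehat{\C} \to \widehat{\C}$ with $\Phi^* \mu_0 = \tilde \mu$, and $F := \Phi \circ \tilde f \circ \Phi^{-1}$ is anti-meromorphic off $\Phi\bigl(\bigcup_i \overline{U_i}\bigr)$ (since there $\tilde f = f$ and $\tilde \mu$ is $f$-invariant) and conformally conjugate on each $\Phi(\overline{U_i})$ to $\pmb{\rho}_2$ on $\overline{\D}$ via $\Psi_i \circ \Phi^{-1}$, which is conformal because both $\Phi$ and $\Psi_i$ integrate $\tilde \mu$ locally.

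The principal obstacle is the David verification: the David regularity of $H$ is a boundary-layer phenomenon near $\partial \D$, and one must propagate this exponential decay along every backward orbit of $f$ without losing the constants, using hyperbolic expansion on $\mathcal J(f)$ to summably control the inverse-branch Jacobians. Once that estimate is in place, the remainder is a standard application of David integrability and B\"ottcher normalization.
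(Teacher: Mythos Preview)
Your outline follows the same strategy as the paper: B\"ottcher coordinates, twist by $H$, build an $\tilde f$-invariant Beltrami coefficient, spread it along backward orbits, verify the David condition, integrate, and conclude. Two points deserve sharpening.

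\textbf{The David estimate.} Writing the bad-set measure as $\int_{B_K}\rho\,dA$ with $\|\rho\|_{L^1(\D)}\leq\sigma(\widehat{\C})$ is correct, but the $L^1$ bound alone does not yield exponential decay: $\rho$ blows up near $\partial\D$, exactly where $B_K$ concentrates. The paper's argument is more concrete. Hyperbolicity makes every Fatou component a uniform quasidisk, so each B\"ottcher map $\phi_i$ extends to a global quasiconformal map and hence distorts areas by a power law; this transfers the David inequality from $\D$ to $U_i$ with fixed constants. Koebe then makes each $f^k|_{U'}$ bi-Lipschitz after rescaling by $\mathrm{diam}(U')$, giving $\sigma\{|\mu|\geq 1-\epsilon\}\cap U'\leq L^2(\mathrm{diam}\,U')^2\,\sigma\{|\mu|\geq 1-\epsilon\}\cap U_i$, and the uniform quasidisk property converts $(\mathrm{diam}\,U')^2$ into $C\sigma(U')$, which is summable. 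Your sketch gestures at Koebe but omits the quasidisk/power-law step that actually makes the sum converge with the right exponent.

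\textbf{Analyticity of $F$.} You write that $F=\Phi\circ\tilde f\circ\Phi^{-1}$ is anti-meromorphic outside $\Phi(\bigcup_i\overline{U_i})$ ``since there $\tilde f=f$ and $\tilde\mu$ is $f$-invariant,'' and that $\Psi_i\circ\Phi^{-1}$ is conformal because both integrate $\tilde\mu$. These inferences are routine in the quasiconformal category but not here: a David homeomorphism need not have $W^{1,1}$ inverse, so neither the chain rule for Beltrami coefficients nor Weyl's lemma applies to $\Phi\circ\tilde f\circ\Phi^{-1}$ directly. The paper handles this by showing $\Phi\circ\tilde f\in W^{1,1}_{\mathrm{loc}}$ (using removability of quasicircles for David maps and factoring $\tilde f$ through analytic pieces), and then invoking the \emph{uniqueness} part of the David Integrability Theorem to conclude that $F$ is analytic. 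Calling this step ``standard'' understates a genuine subtlety specific to the David setting.
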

\begin{proof}
Let $i=1,2,\dots, n$, and let $\phi_i\colon U_i\to\mathbb D$ be a B\"ottcher coordinate that conjugates $f|_{U_i},\ i=1,2,\dots,n$, to the map $g_2(z)=\bar z^2$. Our assumptions imply that such  maps $\phi_i, i=1,2,\dots, n$, exist. 

Let $H$ be the David extension of $h$ guaranteed by Lemma~\ref{L:DavidExt}. We replace the map $f$ by the map 
$$
f_H=
\begin{cases}
\phi_i^{-1}\circ H^{-1}\circ \pmb{\rho}_2\circ H\circ\phi_i,\quad {\rm in}\ U_i\setminus \Int{T_{i,H}},\quad i=1,2,\dots, n,\\
f,\quad {\rm in\ } \hat{\mathbb C}\setminus\cup_{i=1}^n U_i,
\end{cases}
$$
where $T_{i,H}=\phi_i^{-1}\circ H^{-1}(\Pi)$, to obtain a continuous orientation reversing map of $\hat{\mathbb C}\setminus\cup_{i=1}^n\Int{T_{i,H}}$ onto $\hat{\mathbb C}$.
If $\mu_H$ is the pull-back to $U_i,\ i=1,2,\dots, n$, of the standard complex structure in $\mathbb D$ by the map $\phi_i\circ H$, we have $(f_H|_{U_i\setminus T_{i,H}})^*(\mu_H)=\mu_H$. 

We now  use the dynamics of $f$ to spread the Beltrami coefficient out to all the preimage components of $U=U_i,\ i=1,2,\dots, n$, under all the iterates of $f$. 
Everywhere else we use the standard complex structure, i.e., the zero Beltrami coefficient. This way we obtain a global Beltrami coefficient, still denoted by $\mu_H$, that is invariant under $f_H$.

Since $f$ is hyperbolic, the Beltrami coefficient $\mu_H$ satisfies the David condition~\eqref{E:DavidIneq} in the whole Riemann sphere $\hat{\mathbb C}$. Indeed, this follows from an observation that $\mu_H$ is David in each $U=U_i,\ i=1,2,\dots, n$, with the same constants $M,\alpha, \epsilon_0>0$, and from the Koebe Distortion Theorem. The former is a consequence of the fact that each $U=U_i$ is a quasidisk and hence $\phi$ has a global quasiconformal extension, which in turn implies that $\phi$ distorts areas via a power law; see, e.g., \cite{B57}, \cite{A94}.
The latter would give us that if $U'$ is a component of the preimage of $U=U_i, \ i=1,2,\dots, n$, under some iterate of $f$, i.e., such that $f^k(U')=U$, where $k\in\mathbb N$ is the smallest, then $f^k\circ \lambda_{U'}$ is an $L$-bi-Lipschitz map between $\frac{1}{{\rm diam}(U')}U'$ and $U$, for some absolute constant $L\geq1$, where $\lambda_{U'}(z)={\rm diam}(U')z$ is a scaling map.  This implies that, given any $0<\epsilon\leq\epsilon_0$,  
$$
\sigma\{z\in U'\colon|\mu_H(z)|\geq 1-\epsilon\}\leq L^2({\rm diam}(U'))^2\sigma\{z\in U\colon|\mu_H(z)|\geq 1-\epsilon\}.
$$  
Moreover, since all the Fatou components $U'$ (which are iterated preimages of $U$) are uniform quasidisks, there exists a constant $C>0$ such that
$$
({\rm diam}(U'))^2\leq C\sigma\{U'\}.
$$
Therefore,
\begin{equation}\notag
\begin{split}
\sigma\{z\in\hat{\mathbb C}\colon|\mu_H(z)|\geq 1-\epsilon\}&=\sum_{i=1}^n\sum_{U'}\sigma\{z\in{U'}\colon|\mu_H(z)|\geq 1-\epsilon\}\\
&\leq L^2 C\left(\sum_{U'}\sigma\{U'\}\right)\sum_{i=1}^n\sigma\{z\in U_i\colon|\mu_H(z)|\geq 1-\epsilon\}\\
&\leq nL^2C\sigma\{\hat{\mathbb C}\}Me^{-\alpha/\epsilon},\quad \epsilon\leq\epsilon_0.
\end{split}
\end{equation}

The David Integrability Theorem then gives us an orientation preserving homeomorphism $\Psi$ of $\hat{\mathbb C}$ such that the pull-back of the standard complex structure under $\Psi$ is equal to $\mu_H$.

The last claim is that the map $F=\Psi\circ f_H\circ \Psi^{-1}$ is analytic. This is the desired map that replaces the dynamics of $f$ on each $U_i,\ i=1,2,\dots,n$, with the dynamics of the Nielsen map $\pmb{\rho}_2$ associated to the ideal triangle $\Pi$. The conclusion of analyticity follows from the uniqueness part of the David Integrability Theorem. The arguments below are similar to~\cite[Section~9]{BF14}. Indeed, since $F\circ \Psi=\Psi\circ f_H$, it is enough to show that $\Psi\circ f_H$ is in $W_{\rm loc}^{1,1}$. Then, since we know that $\Psi\in W_{\rm loc}^{1,1}$ and both $\Psi$ and $\Psi\circ f_H$ integrate $\mu_H$, we can apply the uniqueness of the David Integrability Theorem to obtain that $F$ is analytic. 

Let $V$ be an open set in $\hat{\mathbb C}$. Since quasicircles are removable for David maps, see, e.g., \cite[Lemma~4.2]{Z04} that applies verbatim to quasicircles in place of the unit circle, 
to show that $\Psi\circ f_H\in W_{\rm loc}^{1,1}$, it is enough to prove that $\Psi\circ f_H\in W_{\rm loc}^{1,1}(V)$ for the  two cases: $V$ does not intersect any of the components $U'$ of the preimage of each $U=U_i,\ i=1,2,\dots, n$, under all the iterates of $f$, or $V$ is completely contained in one such component $U'$. In the first case, $f_H$ is analytic, and hence the composition $\Psi\circ f_H$ is in $W_{\rm loc}^{1,1}(V)$. For the second case, let $k\in\mathbb N\cup\{0\}$ be the smallest number such that $f^k(U')=U$. We can write 
$$
\Psi\circ f_H=(\Psi\circ f^{-k}\circ\phi^{-1}\circ H^{-1})\circ(\pmb{\rho}_2\circ H\circ \phi\circ f^k).
$$  
Since $\phi\circ f^k$ and $\pmb{\rho}_2$ are (anti-)analytic, the composition $\pmb{\rho}_2\circ H\circ \phi\circ f^k$ with a $W_{\rm loc}^{1,1}$-map is in $W_{\rm loc}^{1,1}$. Also, the map $H\circ\phi\circ f^k$ is a composition of a $W_{\rm loc}^{1,1}$-map and an (anti-)analytic map, and thus is itself in $W_{\rm loc}^{1,1}$. Both maps $\Psi$ and $H\circ\phi\circ f^k$ also integrate $\mu_H$. Therefore, from the uniqueness part of  the David Integrability Theorem we obtain that $\Psi\circ f^{-k}\circ\phi^{-1}\circ H^{-1}$ is analytic. Now, since the composition of an analytic map with a $W_{\rm loc}^{1,1}$-map is $W_{\rm loc}^{1,1}$, we are done.
\end{proof}

\begin{corollary}\label{David_surgery_cor}
The anti-analytic map $$F=\Psi\circ f_H\circ \Psi^{-1}:\widehat{\C}\setminus\bigcup_{i=1}^n\Int{\Psi(T_{i,H})}\to\widehat{\C}$$ is topologically conjugate to $\pmb{\rho}_2$ on $\Psi(U_i),\ i=1,\dots,n$, and to $f$ on $\displaystyle\widehat{\C}\setminus\bigcup_{i=1}^n\Psi(U_i)$. 
\end{corollary}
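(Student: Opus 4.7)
The plan is to verify the corollary by unpacking the piecewise construction of $f_H$ from the proof of Theorem~\ref{T:DavidS} and exhibiting, on each piece of the domain, an explicit homeomorphism that intertwines $F$ with the desired model dynamics. Since $\Psi\colon\widehat{\C}\to\widehat{\C}$ is a global homeomorphism, the decomposition $\widehat{\C}=\bigcup_{i=1}^n \Psi(U_i)\cup\bigl(\widehat{\C}\setminus\bigcup_{i=1}^n\Psi(U_i)\bigr)$ reduces the claim to two independent verifications, one on the ``group side'' and one on the ``map side''.

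First, I would handle the complement. On $\widehat{\C}\setminus\bigcup_{i=1}^n U_i$, by construction $f_H\equiv f$, and $\Psi^{-1}$ restricts to a homeomorphism from $\widehat{\C}\setminus\bigcup_{i=1}^n \Psi(U_i)$ onto this set. The defining relation $F=\Psi\circ f_H\circ\Psi^{-1}$ immediately yields
\[
F\big|_{\widehat{\C}\setminus\bigcup_i\Psi(U_i)}=\Psi\circ f\circ\Psi^{-1}\big|_{\widehat{\C}\setminus\bigcup_i\Psi(U_i)},
\]
so $\Psi^{-1}$ itself serves as the required topological conjugacy with $f$.

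Next, on each component $\Psi(U_i)$ (or, more precisely, on $\Psi(U_i)\setminus\Int\Psi(T_{i,H})$, which is where $F$ is defined), the formula $f_H|_{U_i\setminus\Int T_{i,H}}=\phi_i^{-1}\circ H^{-1}\circ\pmb{\rho}_2\circ H\circ\phi_i$ rewrites $F$ as
\[
F=\Psi\circ\phi_i^{-1}\circ H^{-1}\circ\pmb{\rho}_2\circ H\circ\phi_i\circ\Psi^{-1}.
\]
Setting $\Phi_i:=H\circ\phi_i\circ\Psi^{-1}$ produces a homeomorphism $\Psi(U_i)\to\overline{\D}$ that carries $\Psi(T_{i,H})$ to $\Pi$ and satisfies $\Phi_i\circ F=\pmb{\rho}_2\circ\Phi_i$ on $\Psi(U_i)\setminus\Int\Psi(T_{i,H})$, which is the sought conjugacy with $\pmb{\rho}_2$.

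No serious obstacle is anticipated. The only point that merits a brief check is that each factor of $\Phi_i$ extends continuously to the boundary in a compatible way, so that $\Phi_i$ is a genuine homeomorphism of $\Psi(U_i)$ rather than only of its interior. This is routine: $\phi_i$ extends by Carath\'eodory's theorem because $U_i$ is a Jordan domain, $H$ was built in Lemma~\ref{L:DavidExt} to extend the boundary homeomorphism $h$ on $\partial\D$, and $\Psi$ is a homeomorphism of $\widehat{\C}$. Thus the corollary is essentially a transcription of the piecewise identifications already built into the David surgery.
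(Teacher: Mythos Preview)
Your proposal is correct and matches the paper's treatment: the paper states this corollary with no proof, treating it as an immediate transcription of the piecewise construction of $f_H$ in Theorem~\ref{T:DavidS}, which is exactly what you have spelled out. Your explicit conjugacies $\Psi^{-1}$ (on the complement) and $\Phi_i=H\circ\phi_i\circ\Psi^{-1}$ (on each $\Psi(U_i)$) are the only natural ones, and your remark on the boundary extensions is the right justification that $\Phi_i$ is a genuine homeomorphism.
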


\section{From anti-rational map to Nielsen map and Schwarz reflections}\label{mating_sec}

For the rest of the paper, $\T$ will stand for the tetrahedral triangulation of the $2$-sphere. To ease notations, we will omit the subscript $\T$ for the objects $\mathcal{C}_\T, H_\T, N_\T, G_\T$, and $g_\T$.

Recall that the cubic anti-rational map $g$ (constructed in Section~\ref{affine_model_sec}) has four fixed Fatou components on each of which the action of $g$ is conformally conjugate to $z\mapsto\overline{z}^2$ on $\D$. In particular, the boundaries of these Fatou components are Jordan curves, and the restriction of $g$ to each of these boundaries has degree $-2$. Hence, we can apply David surgery Theorem~\ref{T:DavidS} on $n$ of these fixed Fatou components ($n\in\{1,2,3,4\}$) to replace the action of $g$ on these Fatou components by the action of the Nielsen map $\pmb{\rho}_2$ of the ideal triangle group. As a result, we produce anti-analytic maps (defined on a subset of $\widehat{\C}$) that combine the features of anti-rational maps and Nielsen maps of reflection groups. Such hybrid dynamical systems are realized as {Schwarz reflection maps} associated with {quadrature domains}. 

In particular, if the David surgery is performed on all four fixed Fatou components of $g$, we recover the Nielsen map $N$ of the classical Apollonian gasket reflection group. On the other hand, if the David surgery is carried out on three fixed Fatou components of $g$, we obtain a ``mating'' of $g$ with the Nielsen map of the Apollonian gasket reflection group. We explicitly characterize this anti-holomorphic map as the Schwarz reflection map with respect to a deltoid and an inscribed circle.

\subsection{Background on Schwarz reflection maps}\label{schwarz_back_sec}

We will denote the complex conjugation map on the Riemann sphere by $\iota$, and reflection in the unit circle by $\eta$. 

\subsubsection{Basic definitions and properties}
\begin{definition}[Schwarz function]\label{schwarz_func_def}
Let $\mathfrak{D}\subsetneq\widehat{\C}$ be a domain such that $\infty\notin\partial\mathfrak{D}$ and $\Int{\overline{\mathfrak{D}}}=\mathfrak{D}$. A \emph{Schwarz function} of $\mathfrak{D}$ is a meromorphic extension of $\iota\vert_{\partial\mathfrak{D}}$ to all of $\mathfrak{D}$. More precisely, a continuous function $S:\overline{\mathfrak{D}}\to\widehat{\C}$ of $\mathfrak{D}$ is called a Schwarz function of $\mathfrak{D}$ if it satisfies the following two properties:
\begin{enumerate}
\item $S$ is meromorphic on $\mathfrak{D}$,

\item $S=\iota$ on $\partial \mathfrak{D}$.
\end{enumerate}
\end{definition}

It is easy to see from the definition that a Schwarz function of a domain (if it exists) is unique. 

\begin{definition}[Quadrature domains]\label{qd_def}
A domain $\mathfrak{D}\subsetneq\widehat{\C}$ with $\infty\notin\partial\mathfrak{D}$ and $\Int{\overline{\mathfrak{D}}}=\mathfrak{D}$ is called a \emph{quadrature domain} if $\mathfrak{D}$ admits a Schwarz function.
\end{definition}

Note that for a quadrature domain $\mathfrak{D}$, the map $\sigma:=\iota\circ S:\overline{\mathfrak{D}}\to\widehat{\C}$ is an anti-meromorphic extension of the local reflection maps with respect to $\partial\mathfrak{D}$ near its non-singular points (the reflection map fixes $\partial\mathfrak{D}$ pointwise). We will call $\sigma$ the \emph{Schwarz reflection map of} $\mathfrak{D}$.

Simply connected quadrature domains are of particular interest, and these admit a simple characterization. 

\begin{proposition}[Simply connected quadrature domains]\label{s.c.q.d.}
A simply connected domain $\mathfrak{D}\subsetneq\widehat{\C}$ with $\infty\notin\partial\mathfrak{D}$ and $\Int{\overline{\mathfrak{D}}}=\mathfrak{D}$ is a quadrature domain if and only if the Riemann uniformization $R:\hat{\C}\setminus\overline{\mathbb{D}}\to\mathfrak{D}$ extends to a rational map on $\widehat{\C}$. In this case, the Schwarz reflection map $\sigma$ of $\mathfrak{D}$ is given by $R\circ\eta\circ(R\vert_{\hat{\C}\setminus\overline{\mathbb{D}}})^{-1}$. Moreover, if $\deg{R}\geq 2$, we have $\sigma(\overline{\mathfrak{D}})=\widehat{\C}$. 

Moreover, if the degree of the rational map $R$ is $d$, then $\sigma:\sigma^{-1}(\mathfrak{D})\to\mathfrak{D}$ is a branched covering of degree $(d-1)$, and $\sigma:\sigma^{-1}(\Int{\mathfrak{D}^c})\to\Int{\mathfrak{D}^c}$ is a branched covering of degree $d$.
\end{proposition}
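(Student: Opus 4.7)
The plan is to prove both implications by \emph{reflecting} the Riemann map across $\partial\D$, using the Schwarz function on the range side to substitute for complex conjugation.

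\textbf{Forward direction.} Assuming $\mathfrak{D}$ has a Schwarz function $S$, my plan is to build a meromorphic extension of the Riemann map $R\vert_{\widehat{\C}\setminus\overline{\D}}$ to the disk by the formula $\widetilde{R}(w):=\iota(S(R(\eta(w))))$ for $w\in\D$. Since $\eta$ carries $\D$ anti-conformally onto $\widehat{\C}\setminus\overline{\D}$, since $R$ is holomorphic with image in $\mathfrak{D}$, since $S$ is meromorphic on $\mathfrak{D}$, and since $\iota$ is anti-holomorphic, the composition $\widetilde{R}$ will be meromorphic on $\D$. On $\partial\D$ we have $\eta=\mathrm{id}$ and $S\vert_{\partial\mathfrak{D}}=\iota$, so the two occurrences of $\iota$ cancel and $\widetilde{R}=R$ there. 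Gluing $R$ and $\widetilde{R}$ along $\partial\D$ by a Morera / removable-singularity argument will then yield a meromorphic function on $\widehat{\C}$, i.e., a rational map.

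\textbf{Backward direction.} Conversely, once $R$ is rational, I will set $\sigma:=R\circ\eta\circ(R\vert_{\widehat{\C}\setminus\overline{\D}})^{-1}\colon\overline{\mathfrak{D}}\to\widehat{\C}$. A direct parity count shows $\sigma$ is anti-meromorphic on $\mathfrak{D}$; restricting to $\partial\mathfrak{D}$ and using that the inverse biholomorphism sends $\partial\mathfrak{D}$ to $\partial\D$, where $\eta$ acts as the identity, gives $\sigma\vert_{\partial\mathfrak{D}}=\mathrm{id}$, so $S:=\iota\circ\sigma$ is the advertised Schwarz function.

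\textbf{Degrees and surjectivity.} Writing $d=\deg R$, my strategy is to count preimages. As a rational map, $R$ is a degree-$d$ branched cover of $\widehat{\C}$; since its restriction to $\widehat{\C}\setminus\overline{\D}$ is a biholomorphism onto $\mathfrak{D}$, exactly $d-1$ preimages of every $z\in\mathfrak{D}$ (with multiplicity) must lie in $\overline{\D}$, while all $d$ preimages of every $z\in\Int{\mathfrak{D}^c}$ lie in $\overline{\D}$. Because $\sigma$ factors as $R\vert_{\overline{\D}}$ composed with the two diffeomorphisms $\eta$ and $(R\vert_{\widehat{\C}\setminus\overline{\D}})^{-1}$, these counts transfer directly, giving the claimed degrees $d-1$ over $\mathfrak{D}$ and $d$ over $\Int{\mathfrak{D}^c}$; the latter immediately forces $\sigma(\overline{\mathfrak{D}})=\widehat{\C}$ whenever $d\geq 2$.

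\textbf{Main obstacle.} The hardest step will be continuity of the gluing across $\partial\D$ in the forward direction, since $R$ need not extend continuously to $\partial\D$ for an arbitrary simply-connected domain. I expect to resolve this by first arguing from the existence of $S$ that $\partial\mathfrak{D}$ is a real-algebraic curve (with only finitely many singularities), hence locally connected, so that Carath\'eodory's theorem provides the required boundary extension of $R$ before the Morera argument can be applied.
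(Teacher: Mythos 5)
Your backward direction and the degree/surjectivity count are correct, and the degree count is exactly the paper's approach: the paper notes that the commutative diagram $\sigma\circ R = R\circ\eta$ on $\widehat{\C}\setminus\D$ reduces the fiber-counting for $\sigma$ to fiber-counting for the rational map $R$ restricted to $\overline{\D}$, which is the transfer you describe.

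The forward direction, however, has a genuine gap, and it is precisely where you flagged the ``main obstacle.'' The paper does \emph{not} prove this implication; it cites \cite[Theorem~1]{AS}. Your proposed fix --- first show from the existence of $S$ that $\partial\mathfrak{D}$ is real-algebraic, then invoke Carath\'eodory --- is circular as stated: the algebraicity of $\partial\mathfrak{D}$ is standardly obtained as a \emph{consequence} of $R$ being rational (since then $\partial\mathfrak{D}=R(\partial\D)$ is the image of a circle under a rational map), not as an independent input to it. The hypothesis only gives you a meromorphic $S$ \emph{inside} $\mathfrak{D}$ with continuous boundary values equal to $\iota$; this does not a priori place a real-analytic germ around $\partial\mathfrak{D}$, so you cannot read off algebraicity or even local connectivity of the boundary from $\{S=\iota\}$. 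The actual Aharonov--Shapiro argument must get the meromorphic continuation of $R$ across $\partial\D$ \emph{without} first knowing the boundary is nice, using boundary-value theory for the Riemann map of a bounded simply connected domain (e.g., the Prawitz/Hayman fact that $R\in H^p$ for some $p>0$, so radial limits exist a.e.\ and one can run a Privalov-type reflection/Morera argument through the a.e.\ boundary identity $\overline{R}=S\circ R$). That analytic step is the real content of the theorem and is what your outline leaves unproved. If your goal is only to match the paper, you should cite \cite{AS} for the equivalence, as the paper does, and keep your (correct) diagram-chase for the degree statements.
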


\begin{proof}
The first part is the content of \cite[Theorem~1]{AS}. The statements about covering properties of $\sigma$ follow from the commutative diagram below.

\[ \begin{tikzcd}
\hat{\C}\setminus\D \arrow{r}{R} \arrow[swap]{d}{\eta} & \overline{\mathfrak{D}} \arrow{d}{\sigma} \\
\D \arrow{r}{R}& \widehat{\C}
\end{tikzcd}
\]
\end{proof}

\subsection{Recovering the Nielsen map of the classical Apollonian gasket reflection group}\label{david_applications_subsec}

We will now show that the Nielsen map $N$ associated with the reflection group $H$ (arising from the tetrahedral triangulation) can be constructed from $g$ by David surgery.

\begin{proposition}[Recovering the Nielsen map]\label{Nielsen_recovery_prop}
There is a global David surgery that replaces the action of $g$ on each of its fixed Fatou components by the action of $\pmb{\rho}_2:\D\setminus\Int{\Pi}\to\D$. The resulting anti-analytic map is the Nielsen map $N$ of the classical Apollonian gasket reflection group $H$ (up to M{\"o}bius conjugacy).
\end{proposition}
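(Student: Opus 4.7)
The plan is to apply Theorem~\ref{T:DavidS} to the cubic anti-rational map $g$, taking all four of its fixed Jordan-domain Fatou components $U_1,\ldots,U_4$; the hypotheses hold because the B\"ottcher coordinate $\phi_i\colon U_i\to\D$ conjugates $g|_{U_i}$ to $\overline{z}^{\,2}$, so in particular $g|_{\partial U_i}$ has degree $-2$. The theorem produces an anti-analytic map
\[
F=\Psi\circ f_H\circ\Psi^{-1}\colon\widehat{\C}\setminus\bigcup_{i=1}^{4}\Int{\Psi(T_{i,H})}\longrightarrow\widehat{\C}.
\]
Writing $V_i:=\Psi(U_i)$, and noting that $\Psi$ and $H\circ\phi_i$ both integrate the same Beltrami coefficient $\mu_H$ on $U_i$, the composition $\psi_i:=\Psi\circ\phi_i^{-1}\circ H^{-1}\colon\D\to V_i$ is a genuine conformal Riemann map onto the Jordan domain $V_i$, and $F|_{V_i}=\psi_i\circ\pmb{\rho}_2\circ\psi_i^{-1}$. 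By Corollary~\ref{David_surgery_cor}, outside $\bigcup V_i$ the map $F$ is topologically conjugate to $g$.

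To recognize $F$ as the Apollonian Nielsen map $N$ up to M\"obius conjugation, I would show that $F$ is piecewise anti-M\"obius, assembled from four anti-M\"obius involutions $\sigma_{\Delta_1},\ldots,\sigma_{\Delta_4}$, one per interstice $\Delta_k$ of the topological circle packing formed by the $V_i$'s. For each interstice $\Delta_k$ I define $D_{\Delta_k}$ to be the topological disk consisting of $\Delta_k$ together with the three lenses inside the three adjacent $V_i$'s whose fixed-locus arcs face $\Delta_k$; this is the $F$-analog of an Apollonian dual disk $D_f$. On $\Int{D_{\Delta_k}}$ the map $F$ is anti-analytic (its formula inside the lenses is obtained by pushing forward the three generators of $\pmb{\rho}_2$ via $\psi_i$, while on $\Delta_k$ it is the $\Psi$-straightening of $g$), and its restriction to each of the three arcs of $\partial D_{\Delta_k}$ is the identity (those arcs are the fixed loci of the reflection branches of $\pmb{\rho}_2$ carried over by $\psi_i$). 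The key structural claim is that $F|_{D_{\Delta_k}}$ extends by Schwarz reflection across $\partial D_{\Delta_k}$ to a globally defined anti-analytic involution $\sigma_{\Delta_k}\colon\widehat{\C}\to\widehat{\C}$; any global anti-analytic involution of $\widehat{\C}$ is automatically a degree $-1$ anti-M\"obius map, and its fixed locus $\partial D_{\Delta_k}$ is necessarily a round circle.

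Once the $\sigma_{\Delta_k}$ are identified as anti-M\"obius reflections in round circles, the group $G:=\langle\sigma_{\Delta_1},\ldots,\sigma_{\Delta_4}\rangle$ is a Kleinian reflection group with presentation~\eqref{E:HPres} that is orbit-equivalent to $F$. Since the $V_i$'s then form a round circle packing with nerve $\T$ (the combinatorial nerve being inherited from that of $g$ via Proposition~\ref{Prop_NervesIsotopic}), the uniqueness part of the Circle Packing Theorem together with the quasiconformal rigidity of the maximal cusp group $H$ recalled in Subsection~\ref{nielsen_2_subsec} provides a single M\"obius map simultaneously conjugating $G$ to $H$ and $F$ to $N$. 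The main obstacle is producing the global anti-analytic extension $\sigma_{\Delta_k}$: the Schwarz reflection principle yields an extension in a neighborhood of each of the three arcs of $\partial D_{\Delta_k}$, but one must verify that these three local extensions agree and assemble into a single anti-analytic map of $\widehat{\C}$. This reduces to a gluing/monodromy check at the three tangent vertices of $D_{\Delta_k}$, where the anti-analyticity of $F$ on a full punctured neighborhood of each vertex (such a vertex lies in the image of the Julia set of $g$ under $\Psi$, hence in the domain of $F$ rather than in the removed central triangles) combines with the David regularity of $\Psi$ to force the monodromy to be trivial.
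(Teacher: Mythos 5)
Your first two steps match the paper: you invoke Theorem~\ref{T:DavidS} on all four fixed Fatou components, and you correctly observe (using uniqueness in the David Integrability Theorem) that $\psi_i=\Psi\circ\phi_i^{-1}\circ H^{-1}$ is a genuine conformal Riemann map onto $V_i$, so that $F|_{V_i}=\psi_i\circ\pmb{\rho}_2\circ\psi_i^{-1}$ and the arcs of $\partial D_{\Delta_k}$ (images of geodesic arcs under $\psi_i$) are analytic. The divergence is in the key middle step, where you must show each $D_{\Delta_k}$ is a round disk.

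The paper recognizes each $D_{\Delta_k}$ as a simply connected quadrature domain: $S=\iota\circ F|_{D_{\Delta_k}}$ is a Schwarz function, so Proposition~\ref{s.c.q.d.} (the Aharonov--Shapiro characterization) applies, and since $F$ maps $D_{\Delta_k}$ onto its exterior we have $\sigma^{-1}(D_{\Delta_k})=\emptyset$, forcing $\deg R=1$; hence $R$ is M\"obius and $D_{\Delta_k}$ is a round disk. No removability of $\partial D_{\Delta_k}$ enters, since the reflection argument in Aharonov--Shapiro happens on the unit circle via the Riemann map. Your route instead extends $F|_{D_{\Delta_k}}$ to a global anti-conformal involution $\sigma_{\Delta_k}$ of $\widehat{\C}$ and deduces it is anti-M\"obius with fixed circle $\partial D_{\Delta_k}$. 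This works, but the difficulty is not the one you describe. There is no monodromy problem: since $F$ is an anti-conformal bijection from $D_{\Delta_k}$ onto its exterior fixing $\partial D_{\Delta_k}$ pointwise, the Schwarz-reflected continuation across each analytic arc coincides with $(F|_{D_{\Delta_k}})^{-1}$, so the three local continuations automatically agree. The cleaner statement is: set $\sigma_{\Delta_k}=F$ on $\overline{D_{\Delta_k}}$ and $\sigma_{\Delta_k}=(F|_{D_{\Delta_k}})^{-1}$ on the exterior; this is a self-homeomorphism of $\widehat{\C}$, anti-conformal off the piecewise-analytic Jordan curve $\partial D_{\Delta_k}$, and such curves are conformally removable. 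Moreover, the specific mechanism you propose for the vertices is wrong: a tangency vertex $p$ is a boundary point of two removed ideal triangles, and the domain of $F$ near $p$ is only $D_{\Delta_k}\cup D_{\Delta_{k'}}\cup\{p\}$, \emph{not} a full punctured neighborhood of $p$ --- so ``anti-analyticity of $F$ on a full punctured neighborhood'' is not available. (The vertices are handled by removability of isolated singularities, once the arc case is settled.)

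Finally, your invocation of quasiconformal rigidity of the maximal cusp group is an unnecessary detour. Once the four $\sigma_{\Delta_k}$ are reflections in pairwise-tangent round circles, the configuration is a tetrahedral circle packing; the uniqueness clause of the Koebe--Andreev--Thurston theorem already gives a single M\"obius map carrying this configuration onto the dual packing of $\mathcal C$, and hence conjugating $F$ to $N$. This is exactly the paper's concluding step.
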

\begin{proof}
The first statement is the content of Theorem~\ref{T:DavidS}. 

Moreover, $F$ maps each of these Jordan domains anti-conformally to its exterior, and fixes the boundary pointwise. Therefore, each such Jordan domain is a quadrature domain, and $F$ acts on it as the corresponding Schwarz reflection map. The second statement of Proposition~\ref{s.c.q.d.} now implies that each of the above Jordan domains is the image of a round disk under a M{\"o}bius map, and hence is a round disk itself. In particular, $F$ acts on the disk as reflection in its boundary. 

Clearly, the configuration of these four circles is dual to the circle packing $\mathcal{C}$ corresponding to the tetrahedral triangulation (unique up to a M{\"o}bius map). Thus by definition, $F$ is the Nielsen map $N$ associated with the classical Apollonian gasket reflection group.
\end{proof}

\subsection{Schwarz reflection in a deltoid and circle}\label{schwarz_subsec}

In this final subsection, we discuss another application of David surgery that produces an anti-holomorphic dynamical system which can be viewed as a mating of the cubic anti-rational map $g$ and the Nielsen map of the classical Apollonian gasket reflection group. Furthermore, we give an explicit description of this anti-holomorphic dynamical system as a suitable Schwarz reflection map.
 
\begin{proposition}\label{deltoid_circle_prop}
There is a global David surgery that replaces the action of $g$ on three of its fixed Fatou components by the action of $\pmb{\rho}_2:\D\setminus\Int{\Pi}\to\D$. The resulting anti-analytic map is the Schwarz reflection map associated with a deltoid and a circle (up to M{\"o}bius conjugacy).
\end{proposition}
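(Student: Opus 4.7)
The plan is to invoke the David surgery machinery of Theorem~\ref{T:DavidS} on three of the four fixed Fatou components of $g$, identify the three surgered pieces as round disks via the argument of Proposition~\ref{Nielsen_recovery_prop}, and then recognize the resulting global anti-analytic map as the Schwarz reflection of a deltoid-plus-inscribed-circle configuration by matching the anti-holomorphic structure on each region. The hypotheses of Theorem~\ref{T:DavidS} are satisfied because $g$ is critically fixed (Proposition~\ref{prop:NoObstructionNerveGamma}) with four simple fixed critical points, each in a fixed Fatou component which is a Jordan domain on whose boundary $g$ restricts to a degree-$(-2)$ cover (from the local linearization of $g$ at the fixed critical point). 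Applying the theorem with $n = 3$ and $\{U_1, U_2, U_3\}$ any three of the four fixed Fatou components of $g$ produces a David homeomorphism $\Psi$ of $\widehat{\C}$ and an anti-analytic map $F = \Psi \circ f_H \circ \Psi^{-1}$ defined on $\widehat{\C} \setminus \bigcup_{i=1}^3 \Int{\Psi(T_{i,H})}$ and satisfying Corollary~\ref{David_surgery_cor}; this establishes the first assertion of the proposition.

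By the same argument as in the proof of Proposition~\ref{Nielsen_recovery_prop}, each $\Psi(U_i)$ for $i \in \{1, 2, 3\}$ is a simply connected quadrature domain whose Schwarz reflection (namely, the restriction of $F$) is an anti-conformal involution fixing the boundary pointwise; Proposition~\ref{s.c.q.d.} then forces each $\Psi(U_i)$ to be a round disk. The three disks are mutually tangent and each tangent to $\Psi(U_4)$, inheriting the tetrahedral tangency pattern of the four fixed Fatou components of $g$. On the untouched fourth component, the Beltrami coefficient $\mu_H$ constructed in the proof of Theorem~\ref{T:DavidS} vanishes on $U_4$ and on every iterated $g$-preimage of $U_4$, so $\Psi$ is conformal there; consequently $\Psi(U_4)$ is a Jordan domain, $F|_{\Psi(U_4)}$ is conformally conjugate to $\overline{z}^2 \colon \D \to \D$, and $\partial \Psi(U_4)$ is a Jordan curve with exactly three cusps, located at the three tangency points with $\Psi(U_1), \Psi(U_2), \Psi(U_3)$.

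For the global identification, I would apply a M\"obius normalization sending the attracting fixed point of $F|_{\Psi(U_4)}$ to $\infty$, so that $\mathfrak{D} := \widehat{\C} \setminus \overline{\Psi(U_4)}$ is a bounded simply connected Jordan region with three cusps on its boundary. The anti-meromorphic dynamics of $F$ on the gasket-like Julia set sitting inside $\mathfrak{D}$ together with the reflection dynamics on the three round disks allow one to extend the Schwarz function on $\partial \Psi(U_4) = \partial \mathfrak{D}$ meromorphically into $\mathfrak{D}$, realizing $\mathfrak{D}$ as a simply connected quadrature domain. Counting degrees (since $g$, and therefore $F$, has degree $-3$) together with Proposition~\ref{s.c.q.d.} pins the quadrature degree down to exactly three. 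By uniqueness (up to M\"obius equivalence) of simply connected degree-$3$ quadrature domains with three boundary cusps, $\mathfrak{D}$ coincides with the interior of the standard deltoid from~\cite{LLMM1, LLMM2, LLMM3}. The inscribed circle is the unique round circle inside $\mathfrak{D}$ on which the Schwarz reflection of $\mathfrak{D}$ restricts to the identity, and pasting this Schwarz reflection on $\overline{\mathfrak{D}}$ outside the inscribed disk together with inversion in the inscribed circle inside it yields a Schwarz reflection map which, by uniqueness of Schwarz functions on each piece, agrees with $F$ up to M\"obius conjugacy.

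The main obstacle is the rigidity step in the previous paragraph, namely the meromorphic extension of the boundary Schwarz function across the gasket inside $\mathfrak{D}$ and the consequent identification of $\mathfrak{D}$ as a degree-$3$ quadrature domain. I expect this to follow from a quasiconformal removability argument for the anti-meromorphic map $F$ across the gasket, in the spirit of Lemma~\ref{lem:PWDynQS} and the removability results invoked there, combined with degree and cusp bookkeeping that forces the quadrature degree of $\mathfrak{D}$ to be $3$ and pins the inscribed circle down as the unique round circle pointwise fixed by the resulting Schwarz reflection.
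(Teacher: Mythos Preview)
Your argument contains a fundamental misidentification of which regions become round disks. You claim that each surgered component $\Psi(U_i)$, $i=1,2,3$, is a quadrature domain on which $F$ is an anti-conformal involution fixing the boundary pointwise, and hence a round disk by Proposition~\ref{s.c.q.d.}. This is false: on $\Psi(U_i)$ the map $F$ is conjugate to $\pmb{\rho}_2\colon\D\setminus\Int\Pi\to\D$, and $\pmb{\rho}_2$ fixes $\partial\Pi$ (the ideal-triangle boundary) pointwise, \emph{not} $\partial\D$. Thus $F$ fixes $\partial\Psi(T_{i,H})$, not $\partial\Psi(U_i)$, and $\Psi(U_i)$ is not a quadrature domain for $F$. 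The same misreading occurs in your invocation of Proposition~\ref{Nielsen_recovery_prop}: there too the four round disks are the Jordan-domain components of the \emph{domain of definition} of $F$ (the complement of the four triangles), not the four sets $\Psi(U_i)$.

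Consequently your downstream plan also collapses. Your candidate quadrature domain $\mathfrak{D}=\widehat{\C}\setminus\overline{\Psi(U_4)}$ has boundary $\partial\Psi(U_4)$, on which $F$ acts with degree $-2$ (since $U_4$ was not surgered) rather than as the identity, so $\mathfrak{D}$ is not a quadrature domain and there is nothing for a Schwarz function to extend. The appeal to ``uniqueness of degree-$3$ quadrature domains with three cusps'' is also unsupported. The paper instead observes that the domain of definition of $F$ is the closure of \emph{two} Jordan domains $\mathfrak{D}_1\ni\infty$ and $\mathfrak{D}_2$ touching at three points (the complement of the three triangles $\Psi(T_{i,H})$); since $F$ is anti-holomorphic on each and fixes $\partial\mathfrak{D}_i$ pointwise, both are quadrature domains. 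One sees directly that $F\colon\mathfrak{D}_2\to\widehat{\C}\setminus\mathfrak{D}_2$ is anti-conformal, so $\mathfrak{D}_2$ is a round disk; for $\mathfrak{D}_1$ a degree count gives a degree-$3$ uniformizer $R$ with a double pole at $0$, and an explicit computation using the location of the three remaining critical points on $\mathbb{T}$ forces $R(z)=z+\tfrac{1}{2z^2}$, the deltoid map.
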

\begin{proof}
Once again, Theorem~\ref{T:DavidS} gives the existence of an anti-holomorphic map $F$, defined on a subset of $\widehat{\C}$, that is conjugate to $\pmb{\rho}_2:\D\setminus\Int{\Pi}\to\D$ on three $F$-invariant Jordan domains and conjugate to $g$ elsewhere (via a global David homeomorphism). It remains to characterize the anti-holomorphic map $F$.

By construction, $F$ has a unique critical point, and this critical point is simple and fixed. Possibly after conjugating $F$ by a M{\"o}bius map, we can assume that this critical point is at $\infty$.

It also follows from the construction that the map $F$ is defined on the complement of the interiors of three topological triangles. Since the vertices of these triangles correspond to the touching points of the fixed Fatou components of $g$, it is easily seen that the domain of definition of $F$ is the union of the closures of two disjoint Jordan domains $\mathfrak{D}_1$ and $\mathfrak{D}_2$ that touch exactly at three points. We can assume that $\infty\in\mathfrak{D}_1$.

Since the anti-holomorphic map $F$ fixes the boundaries of $\mathfrak{D}_i$ (for $i=1,2$), it follows that both $\mathfrak{D}_1$ and $\mathfrak{D}_2$ are simply connected quadrature domains. Moreover, $F$ maps $\mathfrak{D}_2$ anti-conformally to its exterior. By Proposition~\ref{s.c.q.d.}, $\mathfrak{D}_2$ is a round disk, and $F$ acts on it as reflection in the circle $\partial\mathfrak{D}_2$.

Again, the mapping properties of $F$ imply that $\infty\in\mathfrak{D}_1$ has only two pre-images in $\mathfrak{D}_1$ counting multiplicity (in fact, $F$ maps $\infty$ to itself with local degree two). Thus, $F:F^{-1}(\mathfrak{D}_1)\to\mathfrak{D}_1$ is a branched covering of degree $2$. By Proposition~\ref{s.c.q.d.}, there exists a rational map $R$ of degree $3$ which maps $\widehat{\C}\setminus\overline{\mathbb{D}}$ univalently onto $\mathfrak{D}_1$. Pre-composing $R$ with a conformal automorphism of $\widehat{\C}\setminus\overline{\D}$, we may assume that $R(\infty)=\infty$. 

In light of the commutative diagram in the proof of Proposition~\ref{s.c.q.d.}, the fact that $\infty$ is a (simple) fixed critical point of $F$ implies that $R(0)=\infty$ and $R'(0)=0$. Hence, $R$ is of the form $$R(z)=az+b+\frac{c}{z}+\frac{d}{z^2},$$ for some $a,d\in\C^*$, and $b,c\in\C$. Possibly after post-composing $R$ with an affine map (which amounts to replacing $\mathfrak{D}_1$ by an affine image of it, and conjugating $F$ by the same affine map), we may write $$R(z)=z+\frac{c}{z}+\frac{d}{z^2},$$ for some $c\in\C$ and $d\in\C^*$. 

Note that the cubic anti-rational map $R$ has four critical points (counting multiplicity), one of which is at the origin. Since $F$ has only one critical point, the same commutative diagram implies that the other three critical points of $R$ lie on the unit circle (in fact, univalence of $R$ on $\hat{\C}\setminus\overline{\D}$ implies that these critical points are distinct). In particular, the product of the solutions of the equation $z^3R'(z)=0$ has absolute value $1$. A simple computation now shows that $\vert d\vert=\frac12.$ We can now conjugate $R$ by a rotation (once again, this amounts to replacing $\mathfrak{D}_1$ by a rotated image of it, and conjugating $F$ by the same rotation), we may write $$R(z)=z+\frac{c}{z}+\frac{1}{2z^2},$$ for some $c\in\C$. Denoting the three non-zero critical points of $R$ by $\alpha, \beta$, and $\gamma$, we obtain the relations $$\alpha+\beta+\gamma=0,\ \alpha\beta+\beta\gamma+\gamma\alpha=-c,\ \textrm{and}\ \alpha\beta\gamma=1.$$ Since $\alpha, \beta,\gamma\in\mathbb{T}$, we have that 
\begin{align*}
z^3-cz-1 &\equiv (z-\alpha)(z-\beta)(z-\beta)\\
& \equiv\frac{1}{\alpha\beta\gamma} (z-\alpha)(z-\beta)(z-\gamma)\\
&\equiv (\overline{\alpha}z-1)(\overline{\beta}z-1)(\overline{\gamma}z-1)\\
&\equiv z^3+\overline{c}z^2-1.
\end{align*}

We conclude that $c=0$, and hence $R(z)=z+\frac{1}{2z^2}.$ 

\begin{figure}[ht!]
\begin{tikzpicture}
\node[anchor=south west,inner sep=0] at (-0.4,4) {\includegraphics[width=0.45\textwidth]{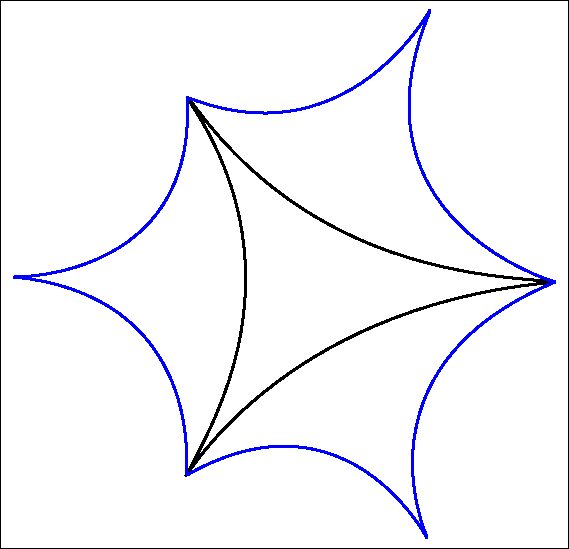}};
\node[anchor=south west,inner sep=0] at (5.6,4) {\includegraphics[width=0.45\textwidth]{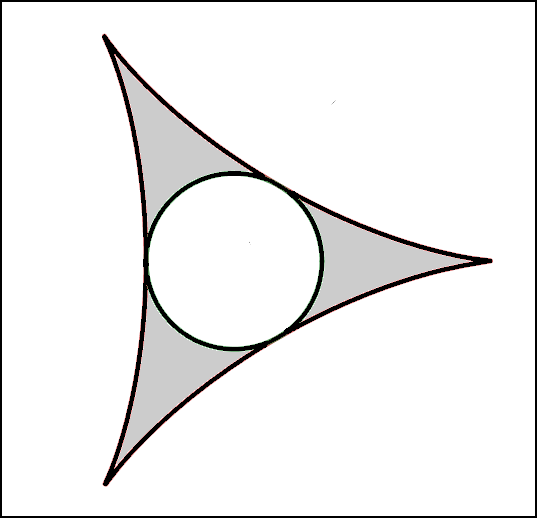}};
\node at (9.5,8.4) {\begin{Large}$\mathfrak{D}_1$\end{Large}};
\node at (8,6.8) {\begin{Large}$\mathfrak{D}_2$\end{Large}};
\node at (9.5,6.75) {$\mathfrak{T}^0_1$};
\node at (7.4,7.8) {$\mathfrak{T}^0_2$};
\node at (7.4,5.6) {$\mathfrak{T}^0_3$};
\node at (3.2,7.5) {\begin{Large}$\partial\mathfrak{D}_1$\end{Large}};
\node at (2.4,9) {\begin{Large}$\partial\sigma_1^{-1}(\mathfrak{D}_1)$\end{Large}};
\end{tikzpicture}
\caption{Left: The quadrature domain $\mathfrak{D}_1$ is the exterior of the deltoid curve (in black). The region $\sigma_1^{-1}(\mathfrak{D}_1)$ is the exterior of the hexagonal curve (in blue). Right: The domain of definition of $F$ is the closure of the union of the exterior of the deltoid and the interior of the inscribed disk. The fundamental tile $\mathfrak{T}^0$ has three connected components; namely $\mathfrak{T}^0_1, \mathfrak{T}^0_2,$ and $\mathfrak{T}^0_3$.}
\label{deltoid_pre_image}
\end{figure}

We already know that $R(\mathbb{T})$, where $\mathbb{T}$ is the unit circle, is a Jordan curve (in fact, this can be easily checked directly from the above formula of $R$). The curve $R(\mathbb{T})$ is a classical deltoid curve (compare \cite[\S 4]{LLMM1}, where the dynamics of the Schwarz reflection map associated with $\mathfrak{D}_1$ was studied in detail). Since $R$ commutes with multiplication by the third roots of unity, it follows that $\mathfrak{D}_1=R(\hat{\C}\setminus\overline{\D})$ is symmetric under rotation by $\frac{2\pi}{3}$. Moreover, the three simple critical points of $R$ on $\mathbb{T}$ produce three $3/2$-cusps on the boundary $\partial\mathfrak{D}_1$ (see Figure~\ref{deltoid_pre_image} (left)). 

Since the boundaries of $\mathfrak{D}_1$ and $\mathfrak{D}_2$ touch at three points, it follows that $\mathfrak{D}_2$ is the largest disk inscribed in $\mathfrak{D}_1$ centered at $0$ (see Figure~\ref{deltoid_pre_image} (right)).

Finally, the map $F$ is explicitly given by the Schwarz reflection maps associated with the exterior of the deltoid, and the inscribed disk. More precisely, we have that 
$$
F(w)= \left\{\begin{array}{ll}
                    \sigma_1(w) & \mbox{if}\ w\in\overline{\mathfrak{D}_1}, \\
                    \sigma_2(w) & \mbox{if}\ w\in\overline{\mathfrak{D}_2},
                                          \end{array}\right. 
$$
where $\sigma_1\equiv R\circ\eta\circ(R\vert_{\hat{\C}\setminus\mathbb{D}})^{-1}:\overline{\mathfrak{D}_1}\to\widehat{\C}$ is the Schwarz reflection map of $\mathfrak{D}_1$, and $\sigma_2$ is reflection in the circle $\partial\mathfrak{D}_2$.

This completes the proof.
\end{proof}

We will conclude by showing that the Riemann sphere splits into two $F$-invariant subsets on one of which $F$ is conjugate to $g$, and on the other it is conjugate to the Nielsen map $N$ arising from the classical Apollonian gasket reflection group $H$.

\begin{figure}[ht!]
\begin{tikzpicture}
\node[anchor=south west,inner sep=0] at (0,0) {\includegraphics[width=0.45\textwidth]{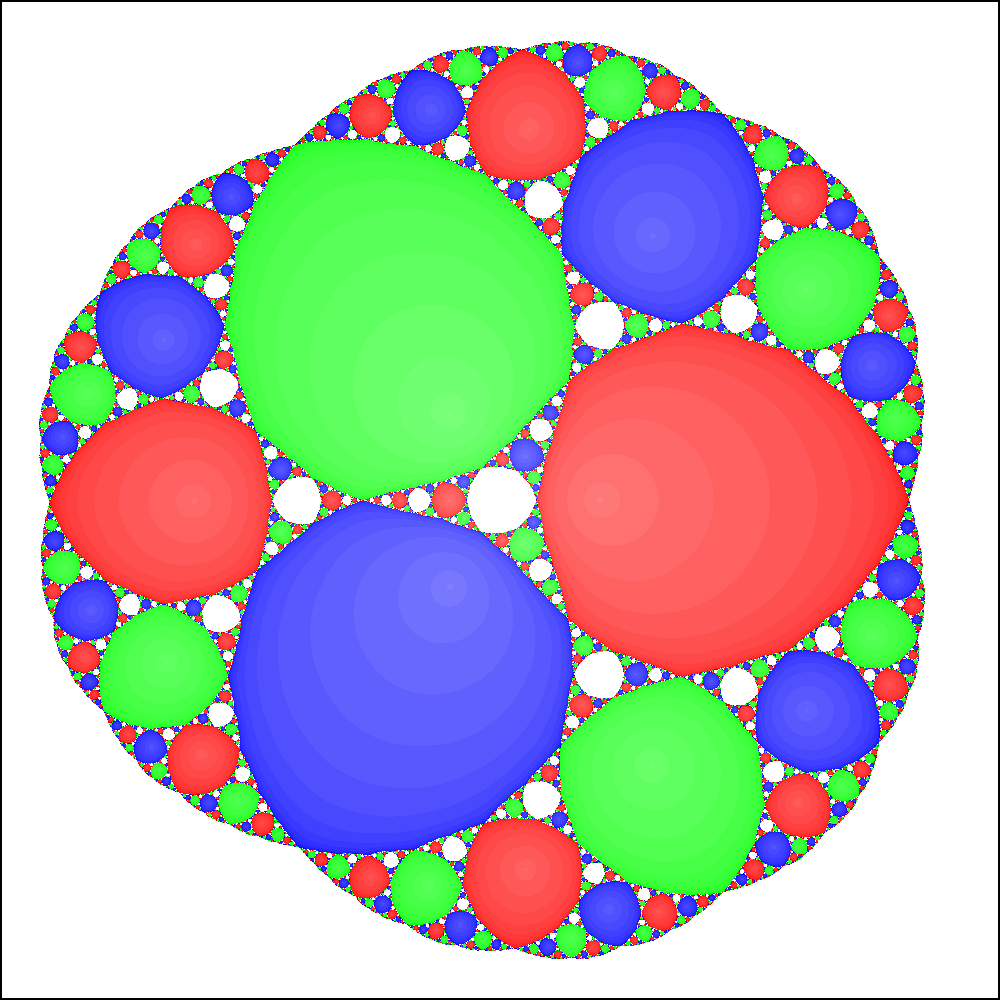}};
\node[anchor=south west,inner sep=0] at (6.4,0) {\includegraphics[width=0.455\textwidth]{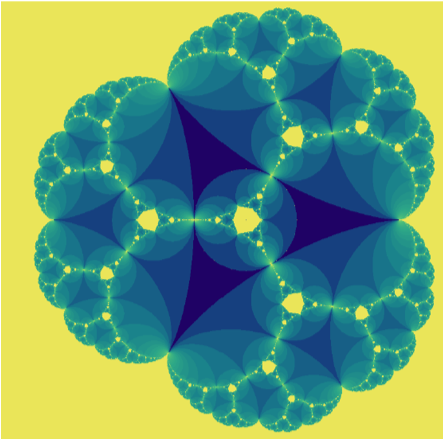}};
\node at (7.2,5.25) {\begin{huge} $\mathfrak{B}_\infty^\mathrm{imm}$ \end{huge}};
\node at (10.8,2.7) {\begin{huge}\textcolor{white}{$\mathfrak{U}_1$} \end{huge}};
\node at (8.8,3.8) {\begin{huge} \textcolor{white}{$\mathfrak{U}_2$} \end{huge}};
\node at (8.8,2) {\begin{huge} \textcolor{white}{$\mathfrak{U}_3$} \end{huge}};
\node at (5.06,5.25) {\begin{huge}\textcolor{black}{$U_4$} \end{huge}};
\node at (2.1,1.8) {\begin{huge} \textcolor{black}{$U_3$} \end{huge}};
\node at (2.1,3.8) {\begin{huge} $U_2$ \end{huge}};
\node at (3.88,2.7) {\begin{huge} \textcolor{black}{$U_1$} \end{huge}};
\draw [->,line width=0.8pt,color=black] (4.8,5) to (2.84,2.84);
\end{tikzpicture}
\caption{Left: the dynamical plane of $g$ is depicted, and the four fixed Fatou components are colored in blue, green, red, and white. Replacing the $\overline{z}^2$ dynamics on $U_1, U_2, U_3$ by $\pmb{\rho}_2$ produces the piecewise Schwarz reflection $F$. Right: The dynamics of $F$ with its basin of infinity (in yellow) and tiling set (in blue/green) marked. Their common boundary is the limit set $\mathfrak{L}$. (Picture courtesy: Seung-Yeop Lee.)}
 \label{fig:apollo}
 \end{figure}
 
Let us set $\mathfrak{T}:=\widehat{\C}\setminus\left(\mathfrak{D}_1\cup\mathfrak{D}_2\right)$. Note that $\partial \mathfrak{T}$ has six singular points. Three of them are $3/2$-cusps on $\partial\mathfrak{D}_1$. The other three singular points on $\partial \mathfrak{T}$ are the tangency points between $\partial\mathfrak{D}_1$ and $\partial\mathfrak{D}_2$. We denote the set of singularities of $\partial\mathfrak{T}$ by $\mathfrak{S}$, and define the \emph{fundamental tile} $\mathfrak{T}^0$ as $\mathfrak{T}\setminus\mathfrak{S}$. The fundamental tile $\mathfrak{T}^0$ has three connected components which we denote as $\mathfrak{T}^0_1, \mathfrak{T}^0_2,$ and $\mathfrak{T}^0_3$ (see Figure~\ref{deltoid_pre_image} (right)). In the dynamical plane of $N$ (or the Apollonian gasket reflection group), we denote the three components of $T^0$ corresponding to $\mathfrak{T}^0_1, \mathfrak{T}^0_2, \mathfrak{T}^0_3$ by $T^0_1, T^0_2, T^0_3$ (see Subsection~\ref{nielsen_2_subsec} and Figure~\ref{fig:group_map}).

\begin{definition}[Tiling Set of $F$]\label{tiling_limit_def}
We define the \emph{tiling set} $\mathfrak{T}^\infty$ of $F$ as $$\mathfrak{T}^\infty:=\displaystyle\bigcup_{n\geq0}F^{-n}(\mathfrak{T}^0).$$ The boundary of $\mathfrak{T}^\infty$ is called the \emph{limit set} of $F$, and is denoted by $\mathfrak{L}$.
\end{definition}

Let us now describe the structure of $\mathfrak{T}^\infty$. For $i=1,2,3$, we define $\mathfrak{U}_i$ to be the connected component of $\mathfrak{T}^\infty$ containing $\mathfrak{T}_i^0$. Note that each point in $\mathfrak{U}_i$ maps to $\mathfrak{T}^0_i$ under iteration of $F$. In particular, $\mathfrak{U}_i$ is a $F$-invariant component of $\mathfrak{T}^\infty$. Every other connected component of $\mathfrak{T}^\infty$ eventually maps to one of these three invariant components (see Figure~\ref{fig:apollo} (right)). In the dynamical plane of $g$, we denote the fixed Fatou components corresponding to $\mathfrak{U}_i$ by $U_i$, $i=1,2,3$ (see Figure~\ref{fig:apollo} (left)). Further, in the dynamical plane of the Nielsen map $N$, we denote the components of the domain of discontinuity corresponding to $\mathfrak{U}_i$ by $\mathcal{U}_i\supset T^0_i$, $i=1,2,3$ (in Figure~\ref{fig:group_map} (right), these are the three bounded round disks enclosed by the black circles). By construction, the tiling set $\mathfrak{T}^\infty$ of $F$ corresponds to 
$$
\Omega_{\mathrm{part}}:= \bigcup_{k\geq 0} N^{-k}\left(\bigcup_{i=1}^3 T^0_i\right) \subsetneq \Omega_H
$$
in the Nielsen dynamical plane.

Recall that $\infty$ is a super-attracting fixed point of $F$. We denote the basin of attraction of $\infty$ by $\mathfrak{B}_\infty$, and the immediate basin of attraction (i.e., the connected component of $\mathfrak{B}_\infty$ containing $\infty$) by $\mathfrak{B}_\infty^\mathrm{imm}$. The corresponding fixed Fatou component of $g$ is denoted by $U_4$ (see Figure~\ref{fig:apollo} (left)). By Corollary~\ref{apollo_rat_map_formula_cor}, the superattracting fixed point of $g$ in $U_4$ is at the origin. We denote the basin of attraction of this superattracting fixed point by $\mathcal{B}_0(g)$.

Since the tangency patterns of the fixed Fatou components of $g$ are preserved by the global David surgery, the next result is immediate.

\begin{proposition}\label{prop:desired_touching}
The Jordan domains $\mathfrak{B}_\infty^\mathrm{imm}$, $\mathfrak{U}_1$, $\mathfrak{U}_2$, and $\mathfrak{U}_3$ pairwise touch precisely at the six singular points on $\partial \mathfrak{T}$.
\end{proposition}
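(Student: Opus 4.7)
My plan is to deduce the tangency structure on the $F$-side directly from the homeomorphic nature of the David surgery $\Psi$ produced in the proof of Theorem~\ref{T:DavidS} (and specialized in Proposition~\ref{deltoid_circle_prop}), combined with the already-known tetrahedral tangency pattern for $g$'s fixed Fatou components.

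First, I would verify that the four fixed Fatou components $U_1,\dots,U_4$ of $g$ pairwise touch at exactly six distinct points, with no three of them sharing a common boundary point. This follows directly from Proposition~\ref{Prop_NervesIsotopic}: the nerve of $g$ is a naturally embeddable graph isotopic to the tetrahedral triangulation $\T$, which has $\binom{4}{2}=6$ edges; each edge corresponds by Lemmas~\ref{Lem:ExistsRay}, \ref{lem:raysIsoToEdges}, and \ref{Lem_NoLoops} to a unique ray connection between a pair of fixed Fatou components, while Lemma~\ref{lem:rayIntersect} rules out any triple touching. I would then use Corollary~\ref{David_surgery_cor} to identify the four Jordan domains in the statement with the $\Psi$-images of these Fatou components: the three Fatou components where surgery replaces the attracting dynamics by the Nielsen action are carried onto $\overline{\mathfrak U_1}, \overline{\mathfrak U_2}, \overline{\mathfrak U_3}$ (since $F\vert_{\Psi(U_i)}$ is topologically conjugate to $\pmb\rho_2\vert_{\overline{\mathbb D}}$, whose connected tiling set is the open disk, matching $\mathfrak U_i$), while the untouched fixed Fatou component is carried onto $\overline{\mathfrak B_\infty^\mathrm{imm}}$. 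Since $\Psi$ is a global homeomorphism of $\widehat{\mathbb C}$, the six pairwise touchings transport bijectively, yielding exactly six distinct pairwise touching points among $\mathfrak B_\infty^\mathrm{imm}, \mathfrak U_1, \mathfrak U_2, \mathfrak U_3$.

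Second, I would identify these six touching points with the six singular points of $\partial\mathfrak T$ by exhibiting a touching at each singular point. Each of the three tangencies between $\partial\mathfrak D_1$ and $\partial\mathfrak D_2$ is a common vertex of two adjacent fundamental tiles $\mathfrak T^0_j$ and $\mathfrak T^0_{j+1}$ (indexed cyclically, so that every such tangency is shared by the two pockets meeting along the inscribed circle), hence lies in $\partial\mathfrak U_j\cap\partial\mathfrak U_{j+1}$, producing three touchings, one per pair among $\{\mathfrak U_1,\mathfrak U_2,\mathfrak U_3\}$. Each of the three cusps of the deltoid is a vertex of exactly one fundamental tile $\mathfrak T^0_i$ (so it lies in $\partial\mathfrak U_i$) and, from the $\mathfrak D_1$-side, a boundary point of $\mathfrak B_\infty^\mathrm{imm}$ (as the Schwarz reflection dynamics of $\sigma_1$ forces the immediate basin of $\infty$ to accumulate at its critical values, which are precisely the cusps), producing three further touchings, one between $\mathfrak B_\infty^\mathrm{imm}$ and each $\mathfrak U_i$. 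Together with the count of exactly six touchings from the first step, these must exhaust all pairwise touchings, so the six touching points coincide with the six singular points.

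The main obstacle will be the last claim: that each deltoid cusp lies on $\partial\mathfrak B_\infty^\mathrm{imm}$. Verifying this requires invoking the detailed structure of the deltoid Schwarz reflection dynamics, where the cusps (critical values of $\sigma_1$ and fixed points of $F$ on $\partial\mathfrak D_1$) lie in the closure of the attracting basin of $\infty$; this was established in \cite{LLMM1} for $\sigma_1$ acting alone on $\mathfrak D_1$, and the same conclusion persists for the hybrid system $F$ because the extra $\sigma_2$-dynamics is supported away from $\mathfrak D_1$ and does not alter the qualitative behavior of orbits near the cusps.
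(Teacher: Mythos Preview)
Your first step is exactly the paper's argument: the paper's proof is the single sentence ``Since the tangency patterns of the fixed Fatou components of $g$ are preserved by the global David surgery, the next result is immediate,'' which is precisely the transport of the six tetrahedral touchings via the global homeomorphism $\Psi$. Your second step, however, is an unnecessary detour, and the ``main obstacle'' you flag (showing the deltoid cusps lie on $\partial\mathfrak{B}_\infty^{\mathrm{imm}}$ by appealing to the internal deltoid dynamics of \cite{LLMM1}) can be avoided entirely.

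The identification of the six touching points with the six singular points of $\partial\mathfrak{T}$ already comes out of the surgery construction, without any intrinsic analysis on the $F$-side. Recall from the beginning of Section~\ref{sec:David} that the David extension $H$ fixes the cube roots of unity, which are the vertices of $\Pi$; hence the vertices of each $T_{i,H}=\phi_i^{-1}\circ H^{-1}(\Pi)$ are the three fixed points of $g$ on $\partial U_i$ (the B\"ottcher coordinate $\phi_i$ conjugates $g\vert_{\partial U_i}$ to $\bar z^2$, sending fixed points to cube roots of unity). By Lemma~\ref{Lem:RaysInv} these fixed points are exactly the touching points of $U_i$ with the adjacent fixed Fatou components. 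Now $\Psi$ carries the three triangles $T_{i,H}$ onto the closures of the three fundamental-tile components $\mathfrak{T}^0_j$ (this is already observed in the proof of Proposition~\ref{deltoid_circle_prop}: the domain of definition of $F$ is the complement of $\bigcup_i\Int\Psi(T_{i,H})$), so $\Psi$ carries the touching points to the vertices of the $\overline{\mathfrak{T}^0_j}$, i.e., to the six singular points of $\partial\mathfrak{T}$. Combined with your first-step identification $\overline{\Psi(U_i)}=\overline{\mathfrak{U}_i}$ and $\overline{\Psi(U_4)}=\overline{\mathfrak{B}_\infty^{\mathrm{imm}}}$, this finishes the proposition directly. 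Your route through \cite{LLMM1} would also work, but it adds an external dependence and the hand-wavy ``the extra $\sigma_2$-dynamics does not alter the qualitative behavior'' would itself need justification; the $\Psi$-transport argument sidesteps all of that.
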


Finally, the construction of the map $F$ from the anti-rational $g$ gives rise to the following description of $F$ as a mating of $g$ and the Nielsen map $N$ of the classical Apollonian gasket reflection group.

\begin{theorem}\label{them:schwarz_mating}
The Riemann sphere admits a decomposition into three $F$-invariant subsets $$\widehat{\C}=\mathfrak{B}_\infty\sqcup\mathfrak{L}\sqcup\mathfrak{T}^\infty$$ such that $\partial\mathfrak{B}_\infty=\mathfrak{L}=\partial\mathfrak{T}^\infty.$ Moreover, $F$ is a conformal mating of $g$ and $N$ in the following sense: 
$$
F:\overline{\mathfrak{B}_\infty}\longrightarrow\overline{\mathfrak{B}_\infty}
$$ 
is topologically conjugate to 
$$
g:\overline{\mathcal{B}_0(g)}\longrightarrow\overline{\mathcal{B}_0(g)},
$$ 
and 
$$
\displaystyle F:\overline{\mathfrak{T}^\infty}\setminus\bigcup_{i=1}^3\Int{\mathfrak{T}_i^0}\longrightarrow\overline{\mathfrak{T}^\infty}
$$ 
is topologically conjugate to 
$$
N \colon \overline{\Omega_{\mathrm{part}}}\setminus\bigcup_{i=1}^3\Int{T_i^0}\longrightarrow\overline{\Omega_{\mathrm{part}}}
$$ 
such that both conjugacies are conformal on the interior of their respective domains of definition. 
\end{theorem}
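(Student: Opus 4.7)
The plan is to exploit the David surgery construction of $F$ in Proposition~\ref{deltoid_circle_prop} in order to transport the basin-and-Julia decomposition of $g$ to $F$, and then to establish the two conjugacies by running a second David surgery that produces $N$ itself, appealing throughout to the uniqueness clause of the David Integrability Theorem.

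I would first recall the global orientation preserving David homeomorphism $\Psi_F\colon\widehat{\C}\to\widehat{\C}$ from the proof of Theorem~\ref{T:DavidS} (with $n=3$), so that $F = \Psi_F \circ f_H \circ \Psi_F^{-1}$. Let $c_0$ denote the fixed critical point of $g$ in the unmodified Fatou component $U_0$ and $\mathcal{B}_g(c_0)$ its basin of attraction. Since $g$ is critically fixed and hence hyperbolic, $\widehat{\C}$ splits as the disjoint union of $\mathcal{J}(g)$ and the four basins $\mathcal{B}_g(c_i)$, with $\mathcal{J}(g)$ as the common boundary of all four. Using Corollary~\ref{David_surgery_cor}, $\Psi_F$ carries $\mathcal{B}_g(c_0)$ onto $\mathfrak{B}_\infty$, it carries $\bigcup_{i=1}^{3}\mathcal{B}_g(c_i)$ onto $\mathfrak{T}^\infty$, and it carries $\mathcal{J}(g)$ onto $\mathfrak{L}$. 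This already yields the decomposition $\widehat{\C} = \mathfrak{B}_\infty \sqcup \mathfrak{L} \sqcup \mathfrak{T}^\infty$ together with $\partial\mathfrak{B}_\infty = \mathfrak{L} = \partial\mathfrak{T}^\infty$. Moreover, the invariant Beltrami coefficient constructed in the proof of Theorem~\ref{T:DavidS} is supported on the $g$-iterated preimages of $U_1, U_2, U_3$ and vanishes on $\mathcal{B}_g(c_0)$, so $\Psi_F|_{\overline{\mathcal{B}_g(c_0)}}$ is conformal on the interior and supplies a topological conjugacy between $g|_{\overline{\mathcal{B}_g(c_0)}}$ and $F|_{\overline{\mathfrak{B}_\infty}}$, conformal on the interior.

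For the conjugacy on $\overline{\mathfrak{T}^\infty}$ I would apply Theorem~\ref{T:DavidS} a second time with $n=1$ to perform a David surgery on the remaining fixed Fatou component $\mathfrak{B}_\infty^\mathrm{imm}$ of $F$. This produces another David homeomorphism $\widetilde{\Psi}$ whose invariant Beltrami coefficient is supported on $\mathfrak{B}_\infty$, so $\widetilde{\Psi}$ is conformal on $\overline{\mathfrak{T}^\infty}$. Composing, $\Psi_N := \widetilde{\Psi}\circ\Psi_F$ is a David homeomorphism of $\widehat{\C}$ that integrates the Beltrami coefficient associated with the simultaneous surgery on all four fixed Fatou components of $g$. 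The uniqueness clause of the David Integrability Theorem, combined with Proposition~\ref{Nielsen_recovery_prop}, then identifies the resulting anti-analytic map with the Nielsen map $N$ of the Apollonian gasket reflection group $H$ up to M\"obius conjugacy. Restricting $\widetilde{\Psi}$ to $\overline{\mathfrak{T}^\infty}$ thus yields the desired topological conjugacy between $F|_{\overline{\mathfrak{T}^\infty}\setminus\bigcup_{i=1}^{3}\Int{\mathfrak{T}^0_i}}$ and the corresponding restriction of $N$, conformal on the interior.

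The main obstacle will be the careful bookkeeping of the two successive Beltrami coefficients and invoking David uniqueness in the correct way to identify the two-stage surgery with the one-step full surgery producing $N$. A secondary subtlety is confirming that the two conjugacies extend continuously to $\mathcal{J}(g)$ and $\mathfrak{L}$ respectively, but this follows from the global nature of the David homeomorphisms together with Lemma~\ref{L:Extension}, since the peripheral disks on both sides shrink in diameter. With these pieces in place, the simultaneous presence of $g$-type dynamics on $\overline{\mathfrak{B}_\infty}$ and $N$-type dynamics on $\overline{\mathfrak{T}^\infty}$, glued along the limit set $\mathfrak{L}$, is precisely the sense in which $F$ is a conformal mating of $g$ and $N$.
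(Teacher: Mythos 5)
Your overall strategy is the natural one: transport the basin/Julia decomposition of $g$ through the David integrating homeomorphism $\Psi_F$ from Proposition~\ref{deltoid_circle_prop}, and then use David uniqueness to identify the dynamics on $\overline{\mathfrak{T}^\infty}$ with $N$. The first half of your argument is essentially complete: since the surgery Beltrami coefficient is supported on $\mathcal{B}_g(c_1)\cup\mathcal{B}_g(c_2)\cup\mathcal{B}_g(c_3)$ and vanishes on $\mathcal{B}_g(c_0)$, $\Psi_F$ carries $\mathcal{B}_g(c_0)$ conformally onto $\mathfrak{B}_\infty$, carries the other three basins onto $\mathfrak{T}^\infty$, and carries $\mathcal{J}(g)$ onto $\mathfrak{L}$; since $\Psi_F$ is a global homeomorphism, the boundary statement $\partial\mathfrak{B}_\infty=\mathfrak{L}=\partial\mathfrak{T}^\infty$ follows from the corresponding statement for $g$, and $\Psi_F\vert_{\overline{\mathcal{B}_g(c_0)}}$ supplies the first conjugacy.

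The second half has a genuine gap. You propose to apply Theorem~\ref{T:DavidS} a second time, to $F$ with $n=1$ on $\mathfrak{B}_\infty^{\mathrm{imm}}$. But Theorem~\ref{T:DavidS} is stated for a critically periodic \emph{anti-rational map} on $\widehat{\C}$, and $F$ is a piecewise Schwarz reflection map defined only on $\overline{\mathfrak{D}_1}\cup\overline{\mathfrak{D}_2}$; the hypotheses are simply not satisfied, and the proof (hyperbolicity, Koebe distortion on iterated preimages, removability of quasicircles) would have to be redone in the Schwarz reflection setting. Moreover, even granting the second surgery, your reduction to the one-step surgery of Proposition~\ref{Nielsen_recovery_prop} requires showing both that the composition $\widetilde{\Psi}\circ\Psi_F$ is David (compositions of David maps are not David in general, so the disjoint-support structure of the two Beltrami coefficients and an area-distortion estimate on the conformal region are genuinely needed) and that its Beltrami coefficient actually equals $\mu_4$; neither is automatic and both are glossed over.

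A cleaner route that avoids the second surgery entirely is to compare the two David homeomorphisms already at hand. Let $\Psi_3$ be the integrating map for the $3$-component surgery producing $F$, and $\Psi_4$ the integrating map from Proposition~\ref{Nielsen_recovery_prop} for the $4$-component surgery producing $N$. The corresponding Beltrami coefficients $\mu_3$ and $\mu_4$ agree on $\mathcal{B}_g(c_1)\cup\mathcal{B}_g(c_2)\cup\mathcal{B}_g(c_3)$ (both are the spread of the same modification on $U_1,U_2,U_3$), while on compact subsets of that open set they are bounded away from $1$. Hence $\Theta:=\Psi_4\circ\Psi_3^{-1}$ is a homeomorphism of $\widehat{\C}$ that is conformal on $\mathfrak{T}^\infty$. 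Since the modified branched covers $f_H^{(3)}$ and $f_H^{(4)}$ coincide off $U_0$, the conjugacy relations $F=\Psi_3\circ f_H^{(3)}\circ\Psi_3^{-1}$ and $N=\Psi_4\circ f_H^{(4)}\circ\Psi_4^{-1}$ (up to M\"obius) give that $\Theta$ conjugates $F$ to $N$ on $\overline{\mathfrak{T}^\infty}\setminus\bigcup_{i=1}^3\Int\mathfrak{T}^0_i$, conformally on the interior, which is exactly the second conjugacy. This makes the uniqueness bookkeeping you flag as a difficulty unnecessary.
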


\begin{corollary}\label{gaskets_homeo}
$\mathfrak{L}$ is homeomorphic to the classical Apollonian gasket $\Lambda_H$.
\end{corollary}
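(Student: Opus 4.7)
The plan is to derive the homeomorphism $\mathfrak{L}\cong\Lambda_H$ directly from the topological conjugacy furnished by Theorem~\ref{them:schwarz_mating}. That theorem provides a conjugating homeomorphism $\Phi$ between the domain $\overline{\mathfrak{T}^\infty}\setminus\bigcup_{i=1}^3\Int{\mathfrak{T}_i^0}$ of $F$ and the domain $\bigcup_{f\in\F_\T}D_f$ of the Nielsen map $N$, satisfying $\Phi\circ F=N\circ\Phi$ wherever both sides are defined.

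My first step would be to note that $\mathfrak{L}=\partial \mathfrak{T}^\infty$ lies in the domain of $\Phi$: being the topological boundary of $\mathfrak{T}^\infty$, it cannot meet the open set $\bigcup_i\Int{\mathfrak{T}_i^0}\subset\Int{\mathfrak{T}^\infty}$. Likewise $\Lambda_H\subset\bigcup_f D_f$, because $\Int{T^0}\subset\Omega_H$ is disjoint from $\Lambda_H$ (by Proposition~\ref{Prop:FDforH} combined with the definition of the limit set).

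Next I would identify $\Phi(\mathfrak{L})$ with $\Lambda_H$ using the dynamical characterization of both sets. By definition $\mathfrak{T}^\infty=\bigcup_{n\ge 0}F^{-n}(\mathfrak{T}^0)$, so $\mathfrak{L}=\overline{\mathfrak{T}^\infty}\setminus\mathfrak{T}^\infty$; on the other hand, Proposition~\ref{prop:domain_discont} gives $\Omega_H=\bigcup_{n\ge 0}N^{-n}(T^0)$, so $\Lambda_H=\rs\setminus\Omega_H$. Because $\Phi$ intertwines $F$ with $N$ and, by the construction of the conjugacy in Theorem~\ref{them:schwarz_mating}, matches the fundamental-tile structure on the $F$-side with the fundamental-tile structure on the $N$-side, $\Phi$ sends each iterated preimage $F^{-n}(\mathfrak{T}^0)$ bijectively onto the corresponding iterated preimage $N^{-n}(T^0)$. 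Hence $\Phi$ identifies $\mathfrak{T}^\infty\cap\mathrm{dom}(\Phi)$ with $\Omega_H\cap\mathrm{dom}(N)$, and consequently the complement $\mathfrak{L}$ inside $\mathrm{dom}(\Phi)$ with the complement $\Lambda_H$ inside $\mathrm{dom}(N)$. Restricting the homeomorphism $\Phi$ to the compact set $\mathfrak{L}$ yields a continuous bijection onto the Hausdorff space $\Lambda_H$, hence the desired homeomorphism.

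The only step that requires genuine care is verifying that the conjugacy of Theorem~\ref{them:schwarz_mating} really does respect the two fundamental-tile partitions. This is built into the David surgery construction (Theorem~\ref{T:DavidS} and Corollary~\ref{David_surgery_cor}), but one has to track how the three surgered fixed Fatou components of $g$ correspond to three of the four vertex disks of the Apollonian packing, with the remaining fourth vertex disk corresponding to $\mathfrak{B}_\infty^\mathrm{imm}$ on the $g$-side of the mating (compare Proposition~\ref{prop:desired_touching}). I expect this bookkeeping to be the only real obstacle; once it is made explicit, the corollary follows immediately from the conjugacy equation $\Phi\circ F=N\circ\Phi$.
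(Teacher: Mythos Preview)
Your approach has a genuine gap. You read the second conjugacy in Theorem~\ref{them:schwarz_mating} as furnishing a homeomorphism $\Phi$ from $\overline{\mathfrak{T}^\infty}\setminus\bigcup_{i=1}^3\Int{\mathfrak{T}_i^0}$ onto the full domain $\bigcup_{f\in\F_\T}D_f$ of $N$, and then argue that $\Phi$ carries $\mathfrak{T}^\infty$ onto $\Omega_H$. But these spaces are not homeomorphic: the fundamental tile $T^0$ for $N$ has \emph{four} components (one inside each packing disk $D_v$) while $\mathfrak{T}^0$ has only three, so already at level zero your claimed bijection between $F^{-n}(\mathfrak{T}^0)$ and $N^{-n}(T^0)$ fails. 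More fundamentally, $\overline{\mathfrak{T}^\infty}=\widehat{\C}\setminus\mathfrak{B}_\infty$ has infinitely many complementary components (the iterated $F$-preimages of $\mathfrak{B}_\infty^{\mathrm{imm}}$), whereas the codomain of $N$ is the whole sphere. You actually notice the underlying issue in your last paragraph---that the fourth packing disk corresponds to $\mathfrak{B}_\infty^{\mathrm{imm}}$---but this is not mere bookkeeping: it is precisely what prevents $\Phi$ from existing as you describe, and it is inconsistent with your earlier identification of $\mathfrak{T}^\infty$ with all of $\Omega_H$.

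The paper gives no explicit proof of the corollary, viewing it as immediate from Theorem~\ref{them:schwarz_mating}. A correct reading of the second conjugacy is that it identifies $\overline{\mathfrak{T}^\infty}$ with the complement of the $H$-orbit of \emph{one} of the four packing disks (the one corresponding to the unsurgered Fatou component of $g$), with $N$ restricted accordingly; then $\Phi(\mathfrak{L})$ is the boundary of that orbit, which equals $\Lambda_H$ by minimality of the limit set. Perhaps simpler is to use the first conjugacy in Theorem~\ref{them:schwarz_mating}: it identifies $\mathfrak{L}=\partial\mathfrak{B}_\infty$ with $\mathcal{J}(g)$, and then Theorem~\ref{Thm_IdentificationOfLimitAndJulia} gives $\mathcal{J}(g)\cong\Lambda_H$.
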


\begin{remark}[Broken symmetry] 
Despite the fact that $\mathfrak{L}$ is homeomorphic to the classical Apollonian gasket $\Lambda_H$ and the Julia set $\mathcal{J}(g)$, each of which has equal homeomorphism and quasisymmetry groups, the group of quasisymmetries of $\mathfrak{L}$ is a strict subgroup of its homeomorphism group. Indeed, there is a homeomorphism of $\mathfrak{L}$ (induced by a tetrahedral symmetry) which carries $\partial\mathfrak{U}_1$ onto $\partial\mathfrak{B}_\infty^{\textrm{imm}}$, and sends the fixed points (of $F$) on $\partial\mathfrak{U}_1$ to those on $\partial\mathfrak{B}_\infty^{\textrm{imm}}$ (see Figure~\ref{fig:apollo}). Since one of the fixed points on $\partial\mathfrak{U}_1$ is an inward pointing cusp, and all three fixed points on $\partial\mathfrak{B}_\infty^{\textrm{imm}}$ are outward pointing cusps, it follows that this homeomorphism cannot be a quasisymmetry. This observation implies that while $\textrm{Aut}^{\mathcal{T}}(\widehat{\C})$ is isomorphic to the symmetric group $S_4$, only six of the corresponding homeomorphisms of $\mathfrak{L}$ are quasisymmetric; namely the ones generated by $2\pi/3$-rotation and complex conjugation. In fact, we believe that $\textrm{QS}(\mathfrak{L})$ is isomorphic to $S_3\ltimes H$. As a result, the quasisymmetry groups of $\mathfrak{L}$ and $\Lambda_H$ allow one to distinguish the two fractals quasiconformally.
\end{remark}

\end{document}